\newcommand\nlra{\nleftrightarrow}
\newcommand{\ba}{\boldsymbol{\alpha}}
\newcommand{\bb}{\boldsymbol{\beta}}
\setlist[itemize]{itemsep=1pt, topsep=4pt}
\setlist[enumerate]{itemsep=1pt, topsep=4pt}
\title{Universality for the random-cluster model on isoradial graphs}
\author{
  Hugo Duminil-Copin \thanks{Institut des Hautes Études Scientifiques}\ \thanks{Université de Genève},
  Jhih-Huang Li \addtocounter{footnote}{-1}\footnotemark,
  Ioan Manolescu \thanks{Universit\'e de Fribourg}
}
\begin{document}

\maketitle

\vspace{2cm}

\begin{abstract}
  We show that the canonical random-cluster measure associated to isoradial graphs is critical for all $q \geq 1$.
  Additionally, we prove that the phase transition of the model is of the same type on all isoradial graphs:
  continuous for $1 \leq q \leq 4$ and discontinuous for $q > 4$.
  For $1 \leq q \leq 4$, the arm exponents (assuming their existence) are shown to be the same for all isoradial graphs. 
  In particular, these properties also hold on the triangular and hexagonal lattices.
  Our results also include the limiting case of quantum random-cluster models in $1+1$ dimensions. 
\end{abstract}

\vspace{1cm}

\newpage

\tableofcontents

\newpage

\section{Introduction}

The random-cluster model is a dependent percolation model that generalises Bernoulli percolation.
It was introduced by Fortuin and Kasteleyn in~\cite{ForKas72} to unify percolation theory, electrical network theory and the Potts model.
The spin correlations of the Potts model get rephrased as cluster connectivity properties of its random-cluster representation, and can therefore be studied using probabilistic techniques coming from percolation theory. 

The random-cluster model on the square lattice has been the object of intense study in the past few decades.
A duality relation enables to prove that the model undergoes a phase transition at the self-dual value $p_c=\tfrac{\sqrt{q}}{1+\sqrt{q}}$ of the edge-parameter~\cite{BefDum12} (see also~\cite{DumMan16,DumRaoTas16,DumRaoTas17}).
It can also be proved that the distribution of the size of finite clusters has exponential tails when the model is non-critical.
Also, the critical phase is now fairly well understood: the phase transition of the model is continuous if the cluster-weight belongs to $[1,4]$~\cite{DumSidTas13} and discontinuous if it is greater than $4$~\cite{DumGanHar16}.
When the cluster-weight is equal to 2, the random-cluster model is coupled with the Ising model, and is known to be conformally invariant~\cite{Smi10,CheDumHon14} (we also refer to~\cite{DumSmi12a} for a review).

A general challenge in statistical physics consists in understanding universality, 
i.e., that the behaviour of a certain model is not affected by small modifications of its definition.
This is closely related to the so-called conformal invariance of scaling limits: 
when we scale out the model at criticality, the resulting limit should be preserved under conformal transformations, including translations, rotations and Möbius maps.

The goal of this paper is to prove a form of universality for a certain class of random-cluster models. 
Specifically we aim to transfer results obtained for the square lattice to a larger class of graphs called {\em isoradial graphs}, i.e., planar graphs embedded in the plane in such a way that every face is inscribed in a circle of radius one.
A specific random-cluster model is associated to each such graph, where the edge-weight of every edge is an explicit function of its length. 
Moreover, the edge-weight is expected to compensate the inhomogeneity of the embedding and render the model conformally invariant in the limit.

Isoradial graphs were introduced by Duffin in~\cite{Duf68} in the context of discrete complex analysis, and later appeared in the physics literature in the work of Baxter~\cite{Bax78}, where they are called $Z$-invariant graphs.
The term isoradial was only coined later by Kenyon, who studied discrete complex analysis on these graphs~\cite{Ken02}.
Since then, isoradial graphs have been studied extensively; we refer to~\cite{CheSmi12, KS-quad-graphs, Mer01} for literature on the subject.
Several mathematical studies of statistical mechanics on isoradial graphs have appeared in recent years.
The connection between the dimer and Ising models on isoradial graphs was studied in~\cite{BouTil10,BouTil11}.
In \cite{CheSmi12}, the scaling limit of the Ising model and that of its associated random-cluster  model with $q = 2$ was shown to be the same on isoradial graphs as on the square lattice. 
For other values of $q \geq 1$, the existence of the scaling limit of the random-cluster  model is still out of reach.
However, for Bernoulli percolation (which corresponds to $q = 1$) a universality result for isoradial graphs was obtained in~\cite{GriMan13,GriMan13a,GriMan14}.
In the present paper, we generalise the result of~\cite{GriMan14} to all random-cluster models with $q \geq 1$. 

\subsection{Definition of the model}

\paragraph{Isoradial graphs}

An \emph{isoradial graph} $\bbG = (\bbV, \bbE)$ is a planar graph embedded in the plane in such a way that (i) every face is inscribed in a circle of radius 1 and (ii) the centre of each circumcircle is contained in the corresponding face.
We sometimes call the embedding \emph{isoradial}. 
Note that an isoradial graph is necessarily infinite.

Given an isoradial graph (which we call the {\em primal graph}), we can construct its {\em dual graph} $\bbG^* = (\bbV^*, \bbE^*)$ as follows: $\bbV^*$ is composed of the centres of circumcircles of faces of $\bbG$.
By construction, every face of $\bbG$ is associated with a dual vertex.
Then, $\bbE^*$ is the set of edges between dual vertices whose corresponding faces share an edge in $\bbG$.
Edges of $\bbE^*$ are in one-to-one correspondence with those of $\bbE$.
We denote the dual edge associated to $e \in \bbE$ by $e^*$.
When constructed like this, $\bbG^*$ is also an isoradial graph. 

The {\em diamond graph} $\bbG^\diamond$ associated to $\bbG$ (and $\bbG^*$) has vertex set $\bbV \cup \bbV^*$ and an edge between vertices $u \in \bbV$ and $v \in \bbV^*$ if $v$ is the centre of a face containing $u$. All edges of $\bbG^\diamond$ are of length $1$, and $\bbG^\diamond$ is a rhombic tiling of the plane. Conversely, each rhombic tiling of the plane corresponds to a primal/dual pair of isoradial graphs. 
It will be often convenient to think of isoradial graphs through their diamond graphs. 
See Figure~\ref{fig:isoradial} for an illustration.

\begin{figure}[htb]
  \centering
  \includegraphics[width=0.4\textwidth, page=1]{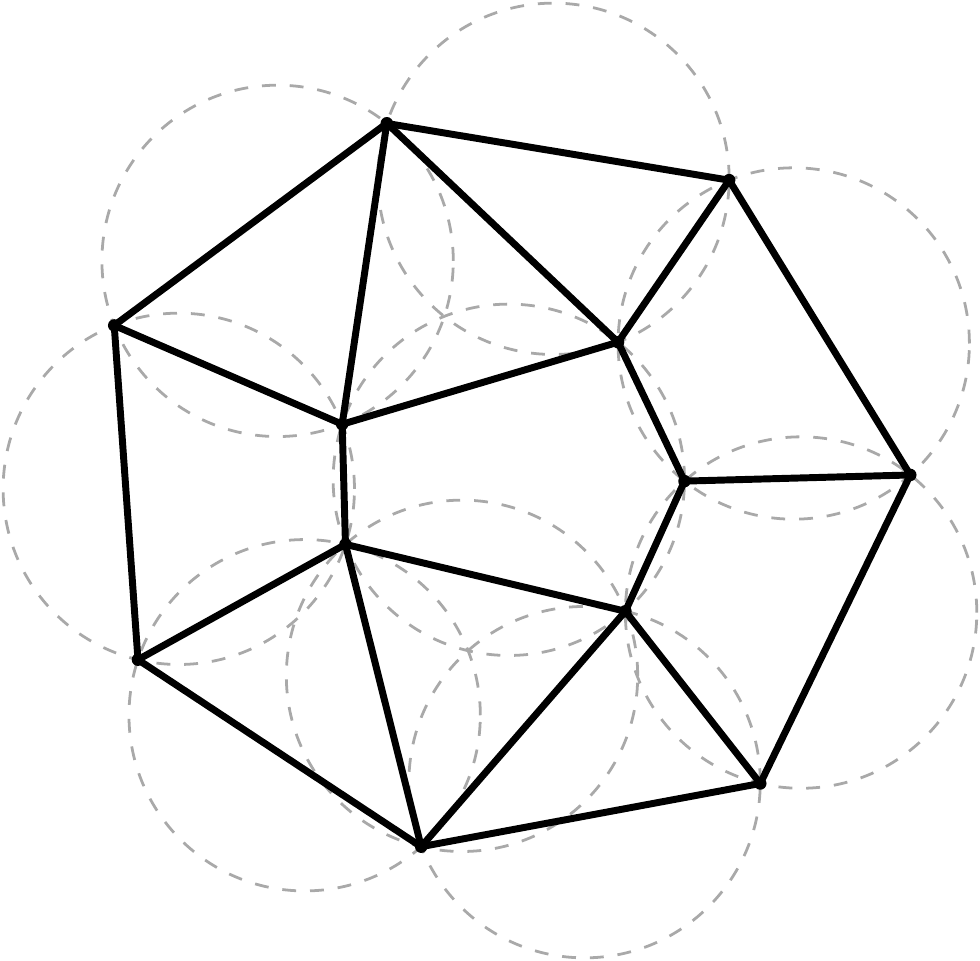}
  \hspace{1cm}
  \includegraphics[width=0.4\textwidth, page=2]{isoradial.pdf}
  \caption{The black graph is (a finite part of) an isoradial graph.
    All its finite faces can be inscribed into circumcircles of radius one. 
    The dual vertices (in white) have been drawn in such  a way that they are the centres of these circles; the dual edges are in dotted lines.
    The diamond graphs is drawn in gray in the right picture.}
  \label{fig:isoradial}
\end{figure}

The isoradial graphs $\bbG$ considered in this paper are assumed to be \emph{doubly-periodic}, in the sense that they are invariant under the action of a certain lattice $\Lambda \approx \bbZ \oplus \bbZ$.
In such case, $\bbG/ \Lambda$ is a finite graph embedded in the torus $\bbT := \bbR^2 / \Lambda$.
We will always translate $\bbG$ so that $0$ is a vertex of $\bbG$, which we call the origin.

\paragraph{The random-cluster model.}

Fix an isoradial graph $\bbG= (\bbV, \bbE)$. 
For $q \geq 1$ and $\beta > 0$, each edge $e \in \bbE$ is assigned a weight $p_e(\beta)$ given by
\begin{align}
  \text{if } 1 \leq q < 4, \quad &
  y_e(\beta) = \beta \sqrt{q} \, \tfrac{\sin( r (\pi - \theta_e))}{\sin( r \theta_e)}, 
  & \quad \text{where }  r = \tfrac{1}{\pi} \cos^{-1} \left( \tfrac{\sqrt{q}}{2} \right); \nonumber \\
  \text{if } q = 4, \quad & 
  y_e(\beta) = \beta \, \tfrac{2 (\pi - \theta_e)}{\theta_e}; \nonumber \\
  \text{if } q > 4, \quad  &
  y_e(\beta) = \beta \sqrt{q} \, \tfrac{\sinh( r (\pi - \theta_e))}{\sinh( r\theta_e)}, 
  & \quad \text{where } r = \tfrac{1}{\pi} \cosh^{-1} \left( \tfrac{\sqrt{q}}{2} \right),
  \label{eq:parameters}
\end{align}
where $y_e(\beta) = \frac{p_e(\beta)}{1 - p_e(\beta)}$ and $\theta_e \in (0, \pi)$ is the angle subtended by $e$.
That is, $\theta_e$ is the angle at the centre of the circle corresponding to any of the two faces bordered by $e$; see Figure~\ref{fig:subtended_angle}.
This family of values can be found in~\cite[Sec.~5.3 and Prop.~2]{Kenyon-dimer} and~\cite{BefDumSmi12}.
Note that the expression for $q = 4$ is the common limit $q \rightarrow 4$ of the expressions for $q<4$ and $q>4$.

\begin{figure}[htb]
  \centering
  \includegraphics[scale=1.2]{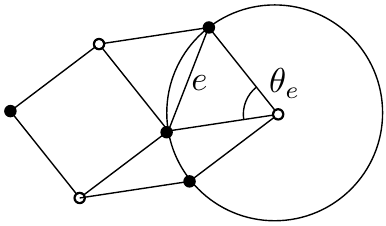}
  \caption{The edge $e \in \bbE$ and its subtended angle $\theta_e$.}
  \label{fig:subtended_angle}
\end{figure}

The \emph{random-cluster model} on a finite subgraph $G=(V,E)$ of $\bbG$ is defined as follows (see~also \cite{Gri06} for a manuscript on the subject).
A random-cluster configuration $\omega = (\omega_e : e \in E)$ is an element of $\{0,1\}^E$.
A configuration can be seen as a graph with vertex set $V$ and edge set $\{ e \in E : \omega_e = 1 \}$.
Write $k_0(\omega)$ for the number of connected components, also called \emph{clusters}, of the graph $\omega$.
For $q > 0$ and $\beta > 0$, the probability of a configuration $\omega$ is equal to
\begin{equation}
  \rcisolaw{\beta}{G}{0}(\omega) :=
  \frac{\dis q^{k_0(\omega)} \, \prod_{e\in E} y_e(\beta)^{\omega_e}}{Z^0(G,\beta,q)},
\end{equation}
where $Z^0(G,\beta,q)$ is a normalising constant called the partition function; it is chosen such that $\rcisolaw{\beta}{G}{0}$ is a probability measure. 
The measure $\rcisolaw{\beta}{G}{0}$ is called the random-cluster measure with \emph{free boundary conditions}.
Similarly, one defines the random-cluster measures $\rcisolaw{\beta}{G}{1}$ with \emph{wired boundary conditions} as follows. 
Let $\partial G$ be the set of vertices of $G$ with at least one neighbour outside of $G$: 
\begin{align*}
  \partial G = \big\{ u \in V:\, \exists v \in \bbV \setminus V \text{ such that } \{u,v\} \in \bbE\big\}.
\end{align*}
Write $k_1(\omega)$ for the number of connected components of $\omega$, when all connected components intersecting $\partial G$ are counted as $1$. 
Then, $\rcisolaw{\beta}{G}{1}$ is defined as $\rcisolaw{\beta}{G}{0}$, with $k_1$ instead of $k_0$. 

Other boundary conditions may be defined and stand for connections outside of $G$. 
They are represented by partitions of $\partial G$. 
\Rcms with such boundary conditions are defined as above,
with the number of connected components intersecting $\partial G$ being computed in a way that accounts for connections outside~$G$. 

For a configuration $\omega$ on $G$, its {\emph dual configuration} $\omega^*$ is the configuration on $G^*$ defined by 
$\omega^*(e^*) = 1 - \omega(e)$ for all $e \in E$.
If $\omega$ is chosen according to $\rcisolaw{\beta}{G}{\xi}$ for some boundary conditions $\xi$, 
then $\omega^*$ has law $\rcisolaw{\beta^{-1}}{G^*}{\xi^*}$, where $\xi^*$ are boundary conditions that depend on $\xi$. 
Most notably, if $\xi \in \{0,1\}$ then $\xi^* = 1-\xi$. 
Considering the above, one may be tempted to declare the models with $\beta = 1$ self-dual. 
Note however that the dual graph is generally different from the primal, and the tools associated with self-duality do not apply.

For $q \geq 1$, versions of these measures may be obtained for the infinite graph $\bbG$ 
by taking weak limits of measures on finite subgraphs $G$ of $\bbG$ that increase to $\bbG$ (see ~\cite[Sec.~4]{Gri06}).
The measures on $G$ should be taken with free or wired boundary conditions; 
the limiting measures are then denoted by $\rcisolaw{\beta}{\bbG}{0}$ and $\rcisolaw{\beta}{\bbG}{1}$, respectively,
and are called {\em infinite-volume measures} with free and wired boundary conditions.

We will be interested in connectivity properties of the (random) graph $\omega \in \{0,1\}^{\bbE}$.
For $A, B \subset \bbR^2$, we say that $A$ and $B$ are \emph{connected}, denoted by $A \leftrightarrow B$, if there exists a connected component of $\omega$ intersecting both $A$ and $B$ (here we see edges in $\omega$ as subsets of the plane).
Similarly, for a region $R \subset \bbR^2$, we say that $A$ and $B$ are \emph{connected in $R$}, denoted by $A \xleftrightarrow{R} B$, if there exists a connected component of $\omega \cap R$ 
intersecting both $A$ and $B$.
For $u \in \bbV$, write $u \leftrightarrow \infty$ if $u$ is in an infinite connected component of $\omega$. 

Let $B_n$ be the ball of radius $n$ for the Euclidean distance, and $\partial B_n$ its boundary.
Below, we will often identify a subset $S$ of the plane with the subgraph of $\bbG$ induced by the vertices (of $\bbG$) within it. 

\subsection{Results for the classical random-cluster model}

The square lattice embedded so that each face is a square of side-length $\sqrt 2$ is an isoradial graph. 
We will denote it abusively by $\bbZ^2$ and call it the {\em regular square lattice}. 
The edge-weight associated to each edge of $\bbZ^2$ by \eqref{eq:parameters} is $\frac{\sqrt q}{1 + \sqrt q}$. 
This was shown in \cite{BefDum12} to be the critical parameter for the random-cluster  model on the square lattice. 
Moreover, the phase transition of the model was shown to be continuous when $q \in [1,4]$ \cite{DumSidTas13} and discontinuous when $q > 4$ \cite{DumGanHar16}.
The following two theorems generalise these results to periodic isoradial graphs. 

\begin{thm} \label{thm:main}
  Fix a doubly-periodic isoradial graph $\bbG$ and $1 \leq q \leq 4$.
  Then,
  \begin{itemize}
  \item $\rcisolaw{1}{\bbG}{1} [ 0 \leftrightarrow \infty ] = 0$ and $\rcisolaw{1}{\bbG}{0} = \rcisolaw{1}{\bbG}{1}$;
  \item there exist $a, b > 0$ such that for all $n \geq 1$,
    \begin{align*}
      n^{-a} \leq \rcisolaw{1}{\bbG}{0} \big[ 0 \leftrightarrow \pd B_n \big] \leq n^{-b};
    \end{align*}
  \item for any $\rho > 0$, there exists $c = c(\rho)>0$ such that for all $n \geq 1$,
    \begin{align*}
      \rcisolaw{1}{R}{0} \big[ \calC_h(\rho n,n) \big] \geq c,
    \end{align*}
    where $R = [-(\rho+1)n, (\rho+1)n] \times [-2n, 2n]$ and $\calC_h(\rho n, n)$ is the event that there exists a path in $\omega \cap [-\rho n, \rho n] \times [-n, n]$ from $\{ -\rho n \} \times [-n, n]$ to $\{ \rho n \} \times [-n, n]$.
  \end{itemize}
\end{thm}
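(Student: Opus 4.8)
The plan is to transfer the three conclusions from the regular square lattice $\bbZ^2$, where they are already known (criticality at $\beta=1$ from \cite{BefDum12}, continuity of the phase transition and RSW-type box-crossing estimates for $q\in[1,4]$ from \cite{DumSidTas13}), to an arbitrary doubly-periodic isoradial graph $\bbG$. The key tool will be the \emph{star-triangle transformation} (Yang--Baxter / $Z$-invariance), which leaves the random-cluster measure with the isoradial weights \eqref{eq:parameters} invariant while locally altering the underlying rhombic tiling. As in \cite{GriMan14}, one builds finite sequences of star-triangle moves that carry a large region of $\bbZ^2$ into a large region of $\bbG$ (and vice versa), tracking how connection events are transformed. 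Since star-triangle moves only ever displace a bounded number of edges and can at worst merge or split short pieces of a path, a crossing of a box of size $\rho n \times n$ in one graph is mapped to a crossing of a comparable box (size changed by at most a multiplicative constant depending only on the periodicity of $\bbG$) in the other, up to boundary effects that are controlled by the periodicity. This is the content of the transportation machinery that I expect to be set up in the body of the paper preceding this theorem.

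Concretely, I would proceed as follows. First, establish the box-crossing (RSW) statement, the third bullet: using the star-triangle transportation of crossing events together with the known uniform lower bound for horizontal crossings of quads on $\bbZ^2$ (with $q\in[1,4]$), deduce $\rcisolaw{1}{R}{0}[\calC_h(\rho n,n)]\ge c(\rho)>0$ on $\bbG$. The free boundary conditions on $R$ are handled by comparison (monotonicity in boundary conditions, since $q\ge 1$), noting that the enlarged box $R$ provides enough room that the transported configuration's boundary conditions are dominated appropriately. Second, derive the polynomial bounds on the one-arm probability, the second bullet: the upper bound $\rcisolaw{1}{\bbG}{0}[0\leftrightarrow\pd B_n]\le n^{-b}$ follows from iterating box-crossing estimates for the \emph{dual} model — dual circuits in annuli, each occurring with probability bounded below by the RSW estimate applied on $\bbG^*$ (which is again a doubly-periodic isoradial graph with dual parameter $\beta^{-1}=1$) — and a standard independence-free argument using the FKG inequality and quasi-multiplicativity; the lower bound $n^{-a}\le \rcisolaw{1}{\bbG}{0}[0\leftrightarrow\pd B_n]$ follows dually, building a primal connection from $0$ to $\pd B_n$ out of crossings of a logarithmic number of concentric annuli, again glued by FKG. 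Third, conclude the first bullet: $\rcisolaw{1}{\bbG}{1}[0\leftrightarrow\infty]=0$ is immediate from $\rcisolaw{1}{\bbG}{1}[0\leftrightarrow\pd B_n]\le n^{-b}\to 0$ together with the fact that $\rcisolaw{1}{\bbG}{1}[0\leftrightarrow\infty]=\lim_n\rcisolaw{1}{\bbG}{1}[0\leftrightarrow\pd B_n]$ (the wired one-arm estimate follows from the free one by self-duality-type comparison and the RSW bounds, or directly by the same annulus argument performed in the dual); the equality $\rcisolaw{1}{\bbG}{0}=\rcisolaw{1}{\bbG}{1}$ then follows from the standard dichotomy for random-cluster measures — uniqueness of the infinite-volume measure holds at every $\beta$ for which there is no infinite cluster under the wired measure, which is exactly what we have just shown.

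The main obstacle, I expect, is the bookkeeping in the transportation step: one must show that a bounded-range sequence of star-triangle transformations genuinely maps a macroscopic crossing event on $\bbZ^2$ to a macroscopic crossing event on $\bbG$ with only bounded distortion of scales, and that the push-forward of the measure is exactly the isoradial random-cluster measure on the new graph (this is where $Z$-invariance of the weights \eqref{eq:parameters} is essential, and why the precise trigonometric form matters). Controlling the boundary conditions and the geometry near the boundary of the transported region — ensuring the moves needed to flatten a patch of $\bbG$ into a patch of $\bbZ^2$ stay within a controlled enlargement of the box — is the delicate point; this is precisely the difficulty resolved in \cite{GriMan13,GriMan13a,GriMan14} for percolation, and the work here is to check it survives the dependence introduced by $q>1$, where one no longer has independence and must lean on FKG and monotonicity throughout. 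A secondary technical point is that, unlike the square lattice, $\bbG$ is not self-dual, so the dual estimates must be run on $\bbG^*$ separately; but since $\bbG^*$ is itself a doubly-periodic isoradial graph with dual edge-parameter again equal to the critical one at $\beta=1$, the same machinery applies verbatim.
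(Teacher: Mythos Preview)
Your outline is correct and follows essentially the same route as the paper: establish the RSW property on $\bbG$ by star-triangle transport from $\bbZ^2$ (first to isoradial square lattices via track exchanges, then to general doubly-periodic graphs), and then deduce the first two bullets from RSW by the standard annulus-circuit arguments you describe. The main technical refinements the paper adds to your sketch are the \emph{convexification} of finite blocks to make track exchanges possible while controlling boundary conditions, and the use of \emph{circuit} events (rather than crossings with free endpoints) as the quantity actually transported, since endpoints of open paths can be eroded by star-triangle moves; you correctly flagged this bookkeeping as the delicate point.
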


The last property is called the \emph{strong RSW property} (or simply RSW property) 
and may be extended as follows: for any boundary conditions $\xi$, 
\begin{align} \label{eq:strong_RSW}
  c \leq \rcisolaw{1}{R}{\xi} \big[ \calC_h(\rho n, n) \big] \leq 1-c,
\end{align}
for any $n\geq 1$ and some constant $c> 0$ depending only on $\rho$. 
In words, crossing probabilities remain bounded away from 0 and 1 uniformly in boundary conditions and in the size of the box (provided the aspect ratio is kept constant).
For this reason, in some works (e.g. \cite{GriMan14}) the denomination {\em box crossing property} is used.

The strong RSW property was known for Bernoulli percolation on the regular square lattice from the works of Russo and Seymour and Welsh~\cite{Russo-note,SW-percolation}, hence the name. 
The term strong refers to the uniformity in boundary conditions; weaker versions were developed in \cite{BefDum12} for the square lattice. 
Hereafter, we say the model has the {strong RSW property} if~\eqref{eq:strong_RSW} is satisfied.

The strong RSW property is indicative of a continuous phase transition and has numerous applications in describing the critical phase. 
In particular, it implies the first two points of Theorem~\ref{thm:main}.
It is also instrumental in the proofs of mixing properties and the existence of certain critical exponents and subsequential scaling limits of interfaces.
We refer to~\cite{DumSidTas13} for details.

%
\begin{thm}\label{thm:main2}
  Fix a doubly-periodic isoradial graph $\bbG$ and $q > 4$. Then, 
  \begin{itemize}
  \item $\rcisolaw{1}{\bbG}{1} [0 \leftrightarrow \infty ] >0$;
  \item there exists $c > 0$ such that for all $n \geq 1$, $\rcisolaw{1}{\bbG}{0} [ 0 \leftrightarrow \pd B_n ] \leq \exp(-cn).$
  \end{itemize}
\end{thm}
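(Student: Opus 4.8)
\emph{Overall strategy.} The plan is to transfer to a general doubly-periodic isoradial graph $\bbG$ the analogous facts known on the regular square lattice $\bbZ^2$, following the strategy developed by Grimmett and Manolescu for Bernoulli percolation in \cite{GriMan13,GriMan13a,GriMan14}. Recall that the weight \eqref{eq:parameters} assigns to every edge of $\bbZ^2$ at $\beta=1$ the critical value $\tfrac{\sqrt q}{1+\sqrt q}$; for $q>4$, \cite{DumGanHar16} shows that $\rcisolaw{1}{\bbZ^2}{0}$ has exponentially decaying connectivities and that $\rcisolaw{1}{\bbZ^2}{1}[0\leftrightarrow\infty]>0$. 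I would first prove the second bullet for every doubly-periodic isoradial graph by transferring the exponential decay from $\bbZ^2$, and then deduce the first bullet from the second.

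\emph{Transfer of exponential decay via star--triangle transformations.} The central tool is the $Z$-invariance of the random-cluster model with the weights \eqref{eq:parameters}, going back to Baxter \cite{Bax78}: the star--triangle move on the diamond graph, accompanied by the appropriate change of edge-weights, leaves the law of the model invariant, and can moreover be realised as an explicit measure-preserving coupling of the configurations before and after the move, under which primal and dual connections are preserved outside the bounded region that is modified (see \cite{GriMan14} for this coupling when $q=1$). Since $\bbG$ is doubly-periodic it has only finitely many rhombus angles, all contained in a compact subinterval of $(0,\pi)$, so the ``bounded angles'' condition of \cite{GriMan14} holds automatically and the weights remain well-defined throughout. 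Using \emph{track exchanges} --- finite sequences of star--triangle moves that interchange two adjacent tracks of the diamond graph --- one transforms a large finite piece of $\bbG$ (with prescribed boundary conditions $\xi\in\{0,1\}$) into an isoradial graph that agrees with $\bbZ^2$ on a ball of radius of order $n$. Since a single track exchange moves any given vertex by $O(1)$ and the displacements accumulate only linearly in $n$, the box $B_n$ is distorted by at most a multiplicative constant $C=C(\bbG)\geq 1$; tracking the connection event $\{0\leftrightarrow\pd B_n\}$ through the coupling (and, carefully, through the passage to infinite volume and the bookkeeping of boundary conditions) then yields
\begin{align*}
  \rcisolaw{1}{\bbG}{0}\big[\,0 \leftrightarrow \pd B_{n}\,\big]\;\leq\;\rcisolaw{1}{\bbZ^2}{0}\big[\,0 \leftrightarrow \pd B_{n/C}\,\big]\;\leq\; e^{-c_0 n / C},
\end{align*}
which is the second bullet.

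\emph{Deduction of the first bullet.} Apply the second bullet to $\bbG^*$, which is again a doubly-periodic isoradial graph: $\rcisolaw{1}{\bbG^*}{0}$ has exponentially decaying connectivities. Because $y_e\, y_{e^*}=q$, at $\beta=1$ the random-cluster model on $\bbG$ is dual to the one on $\bbG^*$, and $\rcisolaw{1}{\bbG}{1}$ dualises to $\rcisolaw{1}{\bbG^*}{0}$. A standard planar-duality argument then applies: exponential decay under $\rcisolaw{1}{\bbG^*}{0}$ makes the total probability of a dual open circuit surrounding $0$, or of a dual open radial crossing, in a dyadic annulus $B_{2^{k+1}}\setminus B_{2^k}$ summable in $k$; by Borel--Cantelli, $\rcisolaw{1}{\bbG}{1}$-almost surely every sufficiently large such annulus is crossed by a primal open circuit surrounding $0$ and, across two consecutive annuli, by a primal open radial path. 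These circuits are pairwise connected through the radial paths, so $\rcisolaw{1}{\bbG}{1}$-almost surely there is an infinite cluster; by translation invariance, $\rcisolaw{1}{\bbG}{1}[0\leftrightarrow\infty]>0$. (Alternatively, one may transfer directly from $\bbZ^2$ the fact that wired crossings of fixed-aspect-ratio rectangles are bounded away from $0$ --- on $\bbZ^2$ they even tend to $1$, by self-duality together with the exponential decay of $\rcisolaw{1}{\bbZ^2}{0}$ --- and glue such crossings around the origin.)

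\emph{Main obstacle.} The heart of the proof is the second step: organising the track exchanges so that every intermediate isoradial graph keeps its angles bounded away from $0$ and $\pi$ (so the weights stay valid and $Z$-invariance applies), bounding the scale distortion by a single constant uniformly in $n$, and --- a difficulty absent when $q=1$ --- carrying the free and wired boundary conditions separately through the transformations and the infinite-volume limit, which is essential precisely because the phase transition is discontinuous.
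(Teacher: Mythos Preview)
Your overall architecture and the deduction of the first bullet from the second (via duality applied to $\bbG^*$, Borel--Cantelli on dual circuits, then finite energy) match the paper. The gap is in the transfer step for the second bullet: the displayed inequality
\[
  \rcisolaw{1}{\bbG}{0}\big[0 \leftrightarrow \pd B_{n}\big]\;\leq\;\rcisolaw{1}{\bbZ^2}{0}\big[0 \leftrightarrow \pd B_{n/C}\big]
\]
is not what the paper proves, and your sketch does not justify it. The issue you yourself flag --- boundary conditions --- is not merely a bookkeeping nuisance but the whole difficulty. Track exchanges act on a \emph{convexified} finite block; the convexification adds uncontrolled structure to the boundary, and for $q>4$ the effect of this propagates into the bulk. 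So one cannot simply ``track the event through the coupling'' and land on the free measure on $\bbZ^2$.

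The paper's route is substantially more elaborate. First, a reduction lemma (Proposition~\ref{prop:exp_decay_half_full}) shows it suffices to prove exponential decay for the \emph{half-plane} measure $\phi^0_{\HR(N;N)}$, uniformly in $N$. Second, the input on $\bbZ^2$ is not the free measure but the half-plane measure with \emph{mixed} wired/free boundary conditions $\phi_\bbH^{1/0}$ (Proposition~\ref{prop:input_exp}); proving that even this measure has exponential decay already requires a separate self-duality argument. Third, the actual transfer (Propositions~\ref{prop:sq_lat_q>4}, \ref{prop:sq_lat2_q>4}, \ref{prop:db_per_q>4}) is done in three distinct stages, each with its own trick: the connection event is split into connection to the top versus the sides of a thin rectangle $\HR(n;\delta_0 n)$; the lateral parts are handled by finite energy (paying a cost $e^{\eta\delta_0 n}$ and then using that the endpoints on the base are fixed by the transformations); the vertical part uses the adaptation of Proposition~\ref{prop:vertical_transport}. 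For general isoradial square lattices and then doubly-periodic graphs, special convexifications and a limit $M\to\infty$ are used to neutralise the uncontrolled boundary. None of this is captured by ``carefully''.
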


Note that the above result is also of interest for regular graphs such as the triangular and hexagonal lattices.
Indeed, the transfer matrix techniques developed in~\cite{DumGanHar16} are specific to the square lattice and do not easily extend to the triangular and hexagonal lattices.

The strategy of the proof for Theorems~\ref{thm:main} and~\ref{thm:main2} is the same as in~\cite{GriMan14}.
There, Theorem~\ref{thm:main} was proved for $q = 1$ (Bernoulli percolation).
The authors explained how to transfer the RSW property from the regular square lattice model to more general isoradial graphs by modifying the lattice step by step.
The main tool used for the transfer is the \stt.

In this article, we will follow the same strategy, with two additional difficulties:
\begin{itemize} 
\item The model has long-range dependencies, and one must proceed with care when handling boundary conditions.
\item For $q \leq 4$, the RSW property is indeed satisfied for the regular square lattice (this is the result of~\cite{DumSidTas13}), and may be transferred to other isoradial graphs. 
  This is not the case for $q > 4$, where a different property needs to be transported, and some tedious new difficulties arise.
\end{itemize}

The results above may be extended to isoradial graphs which are not periodic but satisfy the so-called \emph{bounded angles property} and an additional technical assumption termed the \emph{square-grid property} in~\cite{GriMan14}. We will not discuss this generalisation here and simply stick to the case of doubly-periodic graphs.
Interested readers may consult~\cite{GriMan14} for the exact conditions required for $\bbG$; the proofs below adapt readily.


A direct corollary of the previous two theorems is that isoradial random-cluster models are critical for $\beta = 1$.
This was already proved for $q>4$ in~\cite{BefDumSmi12} using different tools. 

\begin{cor} \label{cor:phase_transition}
  Fix $\bbG$ a doubly-periodic isoradial graph and $q \geq 1$. 
  Then, for any $\beta \ne 1$, one has $\rcisolaw{\beta}{\bbG}{1}=\rcisolaw{\beta}{\bbG}{0}$ and 
  \begin{itemize} 
  \item when $\beta < 1$, there exists $c_\beta>0$ such that for any $x, y \in \bbV$, 
    $$\rcisolaw{\beta}{\bbG}{1}[ x \leftrightarrow y] \leq \exp(-c_\beta \|x-y\|);$$
  \item when $\beta > 1$, $\rcisolaw{\beta}{\bbG}{0} [ x \leftrightarrow \infty ] > 0$ for any $x \in \bbV$.
  \end{itemize}
\end{cor}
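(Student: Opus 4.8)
The plan is to derive the corollary from Theorems~\ref{thm:main} and~\ref{thm:main2}, applied both to $\bbG$ and to its dual $\bbG^*$ (which is again a doubly-periodic isoradial graph with the same $q$), combined with the sharpness of the phase transition for random-cluster models and with classical planar-duality arguments; this is essentially the scheme of~\cite{BefDumSmi12} (where the case $q>4$ is carried out), fed with the new inputs coming from the two theorems above. For a doubly-periodic isoradial graph, write $\beta_c(\bbG):=\inf\{\beta>0:\rcisolaw{\beta}{\bbG}{1}[0\leftrightarrow\infty]>0\}$, and similarly $\beta_c(\bbG^*)$.

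First I would invoke the sharpness of the phase transition. Since $\bbG$ is doubly-periodic, hence quasi-transitive and amenable, and $q\geq1$, the result of Duminil-Copin, Raoufi and Tassion~\cite{DumRaoTas16} applies: for every $\beta<\beta_c(\bbG)$ there is $c_\beta>0$ with $\rcisolaw{\beta}{\bbG}{0}[0\leftrightarrow\pd B_n]\leq e^{-c_\beta n}$ for all $n$, whereas $\rcisolaw{\beta}{\bbG}{1}[0\leftrightarrow\infty]>0$ for every $\beta>\beta_c(\bbG)$. Moreover, exponential decay of the free two-point function forces uniqueness of the infinite-volume measure (by the standard mixing arguments, see~\cite{Gri06}), so $\rcisolaw{\beta}{\bbG}{0}=\rcisolaw{\beta}{\bbG}{1}$ for all $\beta<\beta_c(\bbG)$. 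The same statements hold verbatim for $\bbG^*$.

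Next I would establish $\beta_c(\bbG)\,\beta_c(\bbG^*)=1$, using the duality recalled above --- $\rcisolaw{\beta}{\bbG}{0}$ and $\rcisolaw{\beta}{\bbG}{1}$ dualise to $\rcisolaw{\beta^{-1}}{\bbG^*}{1}$ and $\rcisolaw{\beta^{-1}}{\bbG^*}{0}$ respectively --- together with the standard two-dimensional dichotomy that a translation-invariant configuration and its dual cannot simultaneously contain an infinite cluster (see e.g.~\cite{Gri06}). For $\beta<\beta_c(\bbG)$, exponential decay under $\rcisolaw{\beta}{\bbG}{0}$ produces, by Borel--Cantelli over dual circuits surrounding the origin in dyadic annuli, an infinite cluster under $\rcisolaw{\beta^{-1}}{\bbG^*}{1}$, so $\beta^{-1}\geq\beta_c(\bbG^*)$; letting $\beta\uparrow\beta_c(\bbG)$ gives $\beta_c(\bbG)\beta_c(\bbG^*)\leq1$. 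For $\beta>\beta_c(\bbG)$ outside the at most countable set where the free and wired measures on $\bbG$ differ (see~\cite{Gri06}), the common measure percolates (by monotonicity in $\beta$ and the definition of $\beta_c$), so its dual $\rcisolaw{\beta^{-1}}{\bbG^*}{1}$ does not percolate, giving $\beta^{-1}\leq\beta_c(\bbG^*)$; letting $\beta\downarrow\beta_c(\bbG)$ gives $\beta_c(\bbG)\beta_c(\bbG^*)\geq1$. I would then pin down $\beta_c(\bbG)=\beta_c(\bbG^*)=1$ by feeding in the theorems: for $1\leq q\leq4$, Theorem~\ref{thm:main} (applied to $\bbG$ and to $\bbG^*$) gives $\rcisolaw{1}{\bbG}{1}[0\leftrightarrow\infty]=0$, so $\beta_c(\bbG),\beta_c(\bbG^*)\geq1$, and the product relation forces both to equal $1$; for $q>4$, Theorem~\ref{thm:main2} gives $\rcisolaw{1}{\bbG}{1}[0\leftrightarrow\infty]>0$, so $\beta_c(\bbG),\beta_c(\bbG^*)\leq1$, and again both equal $1$.

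To conclude: if $\beta<1=\beta_c(\bbG)$, the steps above give $\rcisolaw{\beta}{\bbG}{0}=\rcisolaw{\beta}{\bbG}{1}$ and exponential decay of $\rcisolaw{\beta}{\bbG}{0}[0\leftrightarrow\pd B_n]$, and a routine argument using periodicity and finite energy upgrades this to $\rcisolaw{\beta}{\bbG}{1}[x\leftrightarrow y]\leq e^{-c_\beta\|x-y\|}$ for all $x,y\in\bbV$. If $\beta>1$, then $\beta^{-1}<1=\beta_c(\bbG^*)$, so the sharpness step applied to $\bbG^*$ yields $\rcisolaw{\beta^{-1}}{\bbG^*}{0}=\rcisolaw{\beta^{-1}}{\bbG^*}{1}$; dualising both sides gives $\rcisolaw{\beta}{\bbG}{1}=\rcisolaw{\beta}{\bbG}{0}$, while $\beta>1=\beta_c(\bbG)$ together with monotonicity of the wired measure in $\beta$ gives $\rcisolaw{\beta}{\bbG}{1}[0\leftrightarrow\infty]>0$, hence $\rcisolaw{\beta}{\bbG}{0}[0\leftrightarrow\infty]>0$; finite energy and the FKG inequality then extend the latter to any $x\in\bbV$. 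The hard part will be the sharpness input~\cite{DumRaoTas16}, which I must make sure is available with periodic inhomogeneous edge-weights on a quasi-transitive graph; everything else is standard two-dimensional random-cluster duality, the points requiring care being the planar dichotomy used to relate $\bbG$ and $\bbG^*$ and the identification of the free and wired percolation thresholds via the countability of the set of discontinuities.
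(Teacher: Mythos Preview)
Your argument is correct, and its overall architecture---establish $\beta_c(\bbG)=1$ via duality plus Theorems~\ref{thm:main}/\ref{thm:main2}, then feed this into a general sharpness theorem---is sound. It does, however, differ from what the paper does, and in one regime it is less economical.

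For $q>4$ the paper does not appeal to any external sharpness result at all. Theorem~\ref{thm:main2} already furnishes exponential decay for $\phi^0_{\bbG,1,q}$; by monotonicity in $\beta$ this immediately yields exponential decay for every $\beta<1$, with no need to identify $\beta_c$ first or to invoke~\cite{DumRaoTas16}. Uniqueness for $\beta<1$ then comes from the countability of non-uniqueness points together with $\phi^1_{\bbG,\beta,q}\leq\phi^0_{\bbG,1,q}$ and~\cite[Thm.~5.33]{Gri06}, and the supercritical statements follow by dualising. Your route works, but here it trades a one-line monotonicity argument for a heavier black box.

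For $1\leq q\leq4$ some sharpness input is genuinely needed, and here the paper uses the differential inequality of~\cite{GraGri11} in the manner of~\cite{DumMan16}, exploiting the strong RSW property of Theorem~\ref{thm:main} to control influences. Your choice of~\cite{DumRaoTas16} is equally valid and arguably cleaner, since it treats all $q\geq1$ uniformly; the point you flag---that the result must accommodate a single parameter $\beta$ driving periodic inhomogeneous edge-weights---is indeed handled by that framework, as the OSSS argument only requires monotone couplings in $\beta$ and quasi-transitivity. One small wording issue: ``exponential decay of the free two-point function forces uniqueness by mixing'' is a shortcut; the actual mechanism, as in the paper, is that non-percolation of $\phi^1$ (obtained via the countability trick and domination by a nearby $\phi^0$) gives $\phi^0=\phi^1$ through~\cite[Thm.~5.33]{Gri06}.
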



\medbreak

For $1 \leq q \leq 4$, arm exponents at the critical point $\beta = 1$ are believed to exist and to be universal (that is they depend on $q$ and the dimension, but not on the structure of the underlying graph).
Below we define the arm events and effectively state the universality of the exponents, but do not claim their existence. 

Fix $k \in \{ 1 \} \cup 2\bbN$.
For $N > n$, define the \emph{$k$-arm event} $A_k(n, N)$ to be the event that there exists $k$ disjoint paths $\calP_1, \dots, \calP_k$ in counterclockwise order, contained in $[-N,N]^2 \setminus (-n,n)^2$, connecting $\pd [-n, n]^2$ to $\pd [-N, N]^2$,
with $\calP_1,\calP_3,\dots$ contained in $\omega$ and $\calP_2,\calP_4,\dots$ contained in $\omega^*$. 
Note that this event could be void if $n$ is too small compared to $k$; we will always assume $n$ is large enough to avoid such degenerate situations. 

For continuous phase transitions (that is for $q \in [1,4]$) it is expected that, 
\begin{align*}
  \rcisolaw{1}{R}{0}[A_k(n, N)] = \Big(\frac{n}{N}\Big)^{\alpha_k + o(1)},
\end{align*}
for some $\alpha_k >0$ called the \emph{$k$-arm exponent}. 
The RSW theory provides such polynomial upper and lower bounds, but the exponents do not match.

The one-arm exponent of the model describes the probability for the cluster of a given point to have large radius under the critical measure; 
the four-arm exponent is related to the probability for an edge to be pivotal for connection events.

\begin{thm}[Universality of arm exponents] \label{thm:universality}
  Fix $\bbG$ a doubly-periodic isoradial graph and $1 \leq q \leq 4$.
  Then, for any $k \in \{ 1 \} \cup 2 \bbN$, there exists a constant $c > 0$ such that, for all $N > n$ large enough,
  \begin{align*}
    c \, \rcisolaw{1}{\bbZ^2}{0} [ A_k(n, N) ]
    \leq \rcisolaw{1}{\bbG}{0} [ A_k(n, N) ]
    \leq c^{-1} \rcisolaw{1}{\bbZ^2}{0} [ A_k(n, N) ].
  \end{align*}
\end{thm}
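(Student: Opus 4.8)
The plan is to transport the arm events $A_k(n,N)$ between the regular square lattice and $\bbG$ by deforming one isoradial graph into the other through star-triangle transformations, following the blueprint of~\cite{GriMan13a,GriMan14} but carrying the random-cluster measure, together with its boundary conditions, in place of a Bernoulli measure. The essential input is Theorem~\ref{thm:main}: it supplies the strong RSW property~\eqref{eq:strong_RSW} at $\beta=1$ simultaneously on $\bbG$, on $\bbZ^2$, and on the auxiliary isoradial graphs that arise along the deformation, all of which satisfy the bounded-angles and square-grid properties under which Theorem~\ref{thm:main} persists.

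\smallskip\noindent\emph{Step 1 (RSW consequences for arm events).} On any such graph the box-crossing estimate~\eqref{eq:strong_RSW} yields, through the standard gluing arguments, three facts: (i) \emph{quasi-multiplicativity}, $\rcisolaw{1}{\bbG}{0}[A_k(n,N)] \asymp \rcisolaw{1}{\bbG}{0}[A_k(n,m)]\,\rcisolaw{1}{\bbG}{0}[A_k(m,N)]$ for $n\le m\le N$ (with $n$ large enough in terms of $k$); (ii) comparison of $A_k(n,N)$ with its \emph{well-separated} variant, in which the $k$ arms are required to land in prescribed, mutually distant arcs of $\pd[-n,n]^2$ and of $\pd[-N,N]^2$; and (iii) insensitivity of $\rcisolaw{1}{\cdot}{\xi}[A_k(n,N)]$, up to multiplicative constants, to the boundary conditions $\xi$ imposed on the boundary of any box of size comparable to $N$, as well as to $O(1)$ changes of $n$ and $N$. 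The proofs are the classical percolation ones, with independence replaced by the finite-energy and FKG properties of the random-cluster measure.

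\smallskip\noindent\emph{Step 2 (transport under a star-triangle transformation and a track exchange).} A star-triangle transformation alters the isoradial graph, and hence the configuration, only inside the hexagonal cell on which it acts, where it preserves the random-cluster measure. The combinatorial heart of the matter, the analogue of the path-transformation lemmas of~\cite{GriMan13,GriMan13a}, is that such a move displaces every open or dual-open path by at most one mesh, cannot merge two disjoint arms, and preserves the counterclockwise cyclic order of a family of arms. Composing such moves along a \emph{track exchange}---a finite sequence of star-triangle transformations swapping two adjacent parallel tracks of the diamond graph---a configuration realizing the well-separated $A_k(n,N)$ is sent to one realizing $A_k$ in a slightly fattened annulus with slightly displaced landing arcs; by Step~1(ii)--(iii) this costs only a bounded factor. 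Since the track exchange touches only a thin slab, the boundary conditions felt by the arms are modified only through that slab, and~\eqref{eq:strong_RSW} bounds the resulting loss uniformly in $n$ and $N$.

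\smallskip\noindent\emph{Step 3 (global deformation and conclusion).} As $\bbG$ is doubly-periodic, only finitely many track directions occur, so the star-triangle transformations can be organized into finitely many macroscopic track exchanges that turn $\bbG$ into an isoradial graph $\bbG'$ coinciding with a rotated, rescaled copy of $\bbZ^2$ throughout the box $[-CN,CN]^2$ while remaining isoradial outside it. Applying Step~2 across this deformation gives $\rcisolaw{1}{\bbG}{0}[A_k(n,N)] \asymp \rcisolaw{1}{\bbG'}{0}[A_k(n,N)]$, with constants independent of $n$ and $N$. Finally $A_k(n,N)$ depends only on $\omega$ inside $[-N,N]^2$, where $\bbG'$ agrees with $\bbZ^2$; conditioning on a circuit surrounding $[-N,N]^2$ and contained in the annulus between $[-N,N]^2$ and $[-CN,CN]^2$, an event of probability bounded below by~\eqref{eq:strong_RSW}, fixes the boundary conditions felt inside and decouples the two measures up to bounded factors, whence $\rcisolaw{1}{\bbG'}{0}[A_k(n,N)] \asymp \rcisolaw{1}{\bbZ^2}{0}[A_k(n,N)]$. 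Undoing the global similarity, which changes $n$ and $N$ by bounded factors and is absorbed via Step~1(iii), completes the proof.

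\smallskip\noindent The main obstacle, absent from the Bernoulli setting of~\cite{GriMan14}, is the long-range dependence of the random-cluster model: in Step~2 the slab affected by a track exchange cannot be analysed in isolation, so the boundary conditions induced on it by the remainder of the (conditioned) configuration must be propagated through every star-triangle transformation and controlled uniformly. Dovetailing this with the bookkeeping that keeps the $k$ arms disjoint, correctly coloured, and cyclically ordered---which is precisely why the well-separation estimates of Step~1 are needed, namely to leave enough transverse room for the arms to be rerouted without collisions---is where the bulk of the work lies.
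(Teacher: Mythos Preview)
Your overall strategy is sound, but there is a real gap in how you pass from a single track exchange to the full deformation. In Step~2 you argue that one track exchange costs only a bounded multiplicative factor on the well-separated arm probability, and in Step~3 you then ``apply Step~2 across this deformation''. The deformation from $\bbG$ to a square lattice in a box of scale $N$, however, requires on the order of $N$ track exchanges (each itself a composition of $O(N)$ star-triangle moves); compounding a bounded factor per exchange yields a factor exponential in $N$, which is useless. A related issue is that arm endpoints can genuinely shrink under star-triangle transformations (Figure~\ref{fig:path_transfo_ep}), so tracking the landing arcs through $O(N)$ exchanges is not merely bookkeeping---the arms may retract all the way.

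The paper sidesteps this accumulation entirely by replacing $A_k(n,N)$ with a modified event $\tilde A_k(n,N)$ in which all the relevant endpoints are required to lie \emph{on the base} (the row of vertices just below $t_0$). Since none of the star-triangle transformations used in the track exchanges ever touches a rhombus of $t_0$, connections between base vertices are preserved \emph{exactly}: one gets the equality $\phi_{G_\mix}[\tilde A_k(n,N)] = \phi_{\tilde G_\mix}[\tilde A_k(n,N)]$ with no loss of constant whatsoever (Proposition~\ref{prop:transfer_arms}, and similarly for the passage from doubly-periodic $\bbG$ to a square lattice via Proposition~\ref{prop:bi_to_square}). The arm-separation machinery of your Step~1 is then invoked only twice, at the two ends of the deformation, to compare $\tilde A_k$ with $A_k$ (Lemma~\ref{lem:equiv_arm_events}). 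This anchoring of endpoints to the untouched base is the key device you are missing; once you have it, the long-range dependence of the random-cluster model causes no extra difficulty here, contrary to what your final paragraph suggests.
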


\subsection{Results for the quantum random-cluster model}


The random-cluster model admits a quantum version, as described in~\cite[Sec.~9.3]{Gri10} for $q = 2$. 
Consider the set $\bbZ \times \bbR$ as a system of vertical axis. 
Let $\calC$ and $\calB$ be two independent Poisson point processes with parameters $\lambda$ and $\mu$ respectively, 
the first on $\bbZ \times \bbR$, the second on $(\tfrac12+\bbZ) \times \bbR$. 
Call the points of the former \emph{cuts} and those of the latter \emph{bridges}. 
For any realisation of the two processes, let $\omega$ be the subset of $\bbR^2$ formed of: 
\begin{itemize}
\item the set $\bbZ\times \bbR$ with the exception of the points in $\calB$;
\item a horizontal segment of length $1$ centered at every point of $\calC$. 
\end{itemize}
For a rectangle $R = [a,b] \times [c,b] \subset \bbR^2$ with $a,b\in \bbZ$, 
define the quantum \rcm on $R$ by weighing each configuration $\omega$ with respect to the number of clusters in $\omega$.
More precisely, we define $\phi_{\calQ, R, \lambda, \mu}$ to be the quantum \rcm with parameters $\lambda, \mu$ and $q > 0$ by
\begin{align*}
  \dd \quantumrc{0}{R} (\omega) \propto q^{k(\omega)} \dd \bbP_{\lambda, \mu}  (\omega)
\end{align*}
where $\bbP_{\lambda, \mu}$ is the joint law of the Poisson point processes $\calB$ and $\calC$, 
and $k(\omega)$ is the number of connected components of $\omega \cap R$ (notice that this number is a.s. finite).

Similarly, one may define measures with wired boundary conditions $\phi^1_{\calQ, R, \lambda, \mu}$ by altering the definition of $k$. 
Infinite-volume measures may be defined by taking limits over increasing rectangular regions $R$, as in the classical case. 

As will be discussed in Section~\ref{sec:quantum}, the quantum model may be seen as a limit of isoradial models on increasingly distorted embeddings of the square lattice. 
As a result, statements similar to Theorems~\ref{thm:main},~\ref{thm:main2} and Corollary~\ref{cor:phase_transition} apply to the quantum setting.
In particular, we identify the critical parameters as those with $\frac{\mu}{\lambda} = q$.
This critical value has already been computed earlier in~\cite{Pfeuty-QI, BG-QI-sharp} for the case of the quantum Ising model ($q = 2$).

\begin{thm} \label{thm:quantum}
  If $q \in [1,4]$ and $\mu/\lambda = q$, then
  \begin{itemize}
  \item $\quantumrc{1}{} [ 0 \leftrightarrow \infty ] = 0$ and $\quantumrc{0}{} = \quantumrc{1}{}$;
  \item there exist $a,b > 0$ such that for all $n \geq 1$,
    \begin{align*}
      n^{-a} \leq \quantumrc{0}{} \big[ 0 \leftrightarrow \pd B_n \big] \leq n^{-b};
    \end{align*} 
  \item for any $\rho > 0$, there exists $c = c(\rho)>0$ such that for all $n \geq 1$,
    \begin{align*}
      \quantumrc{0}{R} \big[ \calC_h(\rho n, n) \big] \geq c,
    \end{align*}
    where $R = [-(\rho+1)n, (\rho+1)n] \times [-2n, 2n]$ and $\calC_h(\rho n, n)$ is the event that there exists a path in $\omega \cap [-\rho n, \rho n] \times [-n, n]$ from $\{ -\rho n \} \times [-n, n]$ to $\{ \rho n \} \times [-n, n]$.
  \end{itemize}
  If $q > 4$ and $\mu/\lambda = q$, then    
  \begin{itemize}
  \item $\quantumrc{1}{} [ 0 \leftrightarrow \infty ] > 0$;
  \item there exists $c > 0$ such that for all $n \geq 1$, 
    $\quantumrc{0}{} [ 0 \leftrightarrow \pd B_n ] \leq \exp(-cn).$
  \end{itemize}
  Finally, if $\mu/\lambda \neq q$, then $\quantumrc{0}{} = \quantumrc{1}{}$ and
  \begin{itemize} 
  \item when $\mu/\lambda < q$, there exists $c_{\mu/\lambda} > 0$ such that for any $x, y \in \bbZ \times \bbR$, 
    $$\quantumrc{0}{} [ x \leftrightarrow y ] \leq \exp(-c_{\mu/\lambda}\|x-y\|).$$    
  \item when $\mu/\lambda > q$, $\quantumrc{0}{} [ 0 \leftrightarrow \infty ] > 0$.
  \end{itemize}
\end{thm}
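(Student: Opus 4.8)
The plan is to realise the quantum random-cluster model as a degenerate limit of isoradial random-cluster models and then invoke Theorems~\ref{thm:main},~\ref{thm:main2} and Corollary~\ref{cor:phase_transition} together with a stability argument for the crossing estimates. Concretely, I would consider the square lattice embedded as a rhombic tiling in which the rhombi are very flat: take the subtended angle of ``vertical'' edges to be $\varepsilon$ and that of ``horizontal'' edges to be $\pi - \varepsilon$, rescale the vertical direction by a factor of order $\varepsilon$ so that the lattice spacing in the $\mathbb{R}$-direction tends to $0$ while the horizontal spacing stays macroscopic. Feeding these angles into~\eqref{eq:parameters}, one checks that as $\varepsilon \to 0$ the vertical edges are open with probability tending to $1$ at rate $\sim \lambda\,\varepsilon$ of being \emph{closed} (producing the cuts $\mathcal{C}$ as a Poisson process in the limit) while the horizontal edges are open with probability $\sim \mu\,\varepsilon$ (producing the bridges $\mathcal{B}$); the $q$-dependent constants in~\eqref{eq:parameters} combine so that the self-dual relation $\beta = 1$ becomes exactly $\mu/\lambda = q$. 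This identification is essentially the computation already present in~\cite{BefDumSmi12} and \cite[Sec.~9.3]{Gri10} for $q=2$; I would carry it out for general $q \geq 1$ in the section advertised as Section~\ref{sec:quantum}, establishing weak convergence of the finite-volume isoradial measures (with free or wired boundary conditions) on a fixed rectangle $R$ to the corresponding quantum measures $\quantumrc{0}{R}$, $\quantumrc{1}{R}$.

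Once this convergence is in place, each bullet of the theorem follows by transferring the corresponding isoradial statement through the limit. For the RSW/crossing estimate, the key point is \emph{uniformity}: Theorem~\ref{thm:main} (third bullet) gives a crossing probability bounded below by $c(\rho) > 0$ \emph{that does not depend on which doubly-periodic isoradial graph we use}, in particular not on $\varepsilon$; hence the bound survives the limit $\varepsilon \to 0$ and yields $\quantumrc{0}{R}[\mathcal{C}_h(\rho n, n)] \geq c(\rho)$, after checking that the event $\mathcal{C}_h$ is (up to negligible boundary effects) a continuity event for the weak convergence. The same uniformity in~\eqref{eq:strong_RSW} gives the two-sided crossing bounds under arbitrary boundary conditions, which then imply the one-arm polynomial bounds $n^{-a} \le \quantumrc{0}{}[0\leftrightarrow \partial B_n] \le n^{-b}$ and the uniqueness/absence of percolation statement $\quantumrc{1}{}[0\leftrightarrow\infty]=0$, $\quantumrc{0}{}=\quantumrc{1}{}$, exactly as these implications are obtained in the classical setting. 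For $q > 4$ one instead transfers the conclusions of Theorem~\ref{thm:main2}: the exponential decay $\quantumrc{0}{}[0\leftrightarrow \partial B_n] \le \exp(-cn)$ and the strict positivity $\quantumrc{1}{}[0\leftrightarrow\infty] > 0$. Finally, the off-critical statements ($\mu/\lambda \ne q$) come from Corollary~\ref{cor:phase_transition}: tuning $\mu/\lambda$ away from $q$ corresponds to taking $\beta \ne 1$ in the isoradial approximants, and the exponential decay of connectivities (for $\mu/\lambda < q$) and existence of an infinite cluster (for $\mu/\lambda > q$), as well as $\quantumrc{0}{}=\quantumrc{1}{}$, descend from the classical conclusions.

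There is a subtlety I should address carefully: the infinite-volume quantum measures are defined by limits over increasing rectangles, while the isoradial statements quantify over the infinite-volume isoradial measures, and one must not interchange the two limits ($\varepsilon \to 0$ and $R \uparrow \mathbb{R}^2$) carelessly. The clean way around this is to prove everything at the level of finite rectangles with explicit boundary conditions and uniform-in-$\varepsilon$, uniform-in-$R$ constants — which is precisely the form in which the strong RSW property~\eqref{eq:strong_RSW} and the estimates of Theorems~\ref{thm:main} and~\ref{thm:main2} are stated — and only then pass to infinite volume in the quantum model. I expect the main obstacle to be the first part: setting up the limiting procedure rigorously, i.e.\ controlling the weak convergence of the random-cluster measures under the degenerating family of isoradial embeddings (the graphs are becoming non-uniformly elliptic, so one needs a careful Poisson-approximation estimate for the edge states together with a diagonal/compactness argument to extract the limit), and verifying that the relevant connection events are continuity events for this convergence. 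The remainder is then a matter of bookkeeping, since all the hard probabilistic input — the RSW property and the sharpness of the phase transition — is already supplied by the earlier theorems, whose constants are uniform over the class of isoradial graphs we are perturbing within.
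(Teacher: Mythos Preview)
Your overall architecture is right --- the quantum model is indeed obtained as a limit of isoradial square lattices $\bbG^\eps$ with subtended angles $\eps$ and $\pi-\eps$, and this is precisely what the paper does in Section~\ref{sec:quantum}. The weak convergence of the finite-volume measures (your first paragraph) is handled by Propositions~\ref{prop:Qhv} and~\ref{prop:Qboundary} via an elementary Poisson approximation, and is not the hard part.

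The genuine gap is in your second paragraph. You assert that the crossing constant $c(\rho)$ in Theorem~\ref{thm:main} ``does not depend on which doubly-periodic isoradial graph we use, in particular not on $\varepsilon$''. This is false: all estimates in Section~\ref{sec:q<4} depend on the bounded-angles parameter, and the graphs $\bbG^\eps$ leave every class $\calG(\eps_0)$ as $\eps \to 0$. The paper states this explicitly just before Proposition~\ref{prop:RSW_sq_cond_quantum}: ``Notice that for any fixed $\eps > 0$, the RSW property obtained in Theorem~\ref{thm:main} provides us with bounds\ldots which are uniform in $n$. However, these are not necessarily uniform in $\eps$. Indeed, all estimates of Section~\ref{sec:q<4} crucially depend on angles being bounded uniformly away from $0$.'' Concretely, the quantity $\eta$ in~\eqref{eq:eta_sandpile} satisfies $1-\eta \sim \zeta(q)\eps$ and the $\delta$ in~\eqref{eq:est_delta} degenerates as $\eps\to 0$, so Propositions~\ref{prop:horizontal_transport} and~\ref{prop:vertical_transport} give useless constants in the limit.

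The actual work in Section~\ref{sec:quantum} is to reprove these transport estimates with constants uniform in $\eps$, exploiting the particular structure of $\bbG^\eps$. For the horizontal drift (Proposition~\ref{prop:Gepsilon_h}), a coarse-graining argument on the last-passage process is needed to beat the $\log(1/\eps)$ loss; for the vertical drift (Proposition~\ref{prop:Gepsilon_v}), the Bernoulli increments in~\eqref{eq:distribution} must be upgraded to geometric random variables with parameter $C\eps$, using the alternating angles $\eps,\pi-\eps$ to control lateral displacement. The $q>4$ case (Proposition~\ref{prop:quantum_sq_lat_q>4}) similarly requires redoing the argument of Section~\ref{sec:sq_lat_q>4} with these sharpened propositions. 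None of this is bookkeeping; it is the substantive content of the section.
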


Notice that multiplying both $\lambda$ and $\mu$ by a factor $\alpha$ is tantamount to dilating the configuration $\omega$ 
vertically by a factor of $1/\alpha$. 
Hence it is natural that only the ratio $\mu/\lambda$ plays a role in determining criticality. 

However, for $q \in [1,4]$, there are reasons to believe that for the specific values 
$$
\lambda = \frac{4r}{\sqrt{q(4-q)}} \quad \text{ and } \quad 
\mu = \frac{4r \sqrt q}{\sqrt{4-q}},
$$
the model is rotationally invariant at large scale, as will be apparent from the link to isoradial graphs.

\paragraph{Organisation of the paper}
Section~\ref{sec:star-triangle} contains background on the \stt and how it acts on isoradial graphs. 
It also sets up the strategy for gradually transforming the regular square lattice into general isoradial graphs.
This is done in two stages: first the regular square lattice is transformed into general isoradial square lattices, 
then into bi-periodic isoradial graphs. 
This two-stage process is repeated in each of the following two sections.

The proofs of Theorems~\ref{thm:main} and~\ref{thm:universality} are contained in Section~\ref{sec:q<4}, 
while that of Theorem~\ref{thm:main2} in Section~\ref{sec:q>4}.
The reason for this partition is that the tools in the case $1\leq q \leq 4$ and $q > 4$ are fairly different. 
Section~\ref{sec:quantum} contains the adaptation to the quantum case (Theorem~\ref{thm:quantum}).

Several standard computations involving the random-cluster model and the RSW technology are necessary. 
In order to not over-burden the paper, readers can refer to appendixes of~\cite{Li-thesis}.

\paragraph{Acknowledgments} 
This research is supported by the NCCR SwissMAP, the ERC AG COMPASP, the Swiss NSF and the IDEX Chair from Paris-Saclay.
The third author would like to thank his PhD advisor, G.~Grimmett, for introducing him to this topic. 

\section{\Stt} \label{sec:star-triangle}

In this section, we introduce the main tool of our article: the \emph{star-triangle transformation}, also known as the \emph{Yang-Baxter relation}.
This transformation was first discovered by Kennelly in 1899 in the context of electrical networks~\cite{Kennelly}.
Then, it was discovered to be a key relation in different models of statistical mechanics~\cite{Onsager, Baxter-exactly-solvable} indicative of the integrability of the system.

\subsection{Abstract \stt}

For a moment, we consider graphs as combinatorial objects without any embedding.
Consider the triangle graph $\triangle = (V, E)$ and the star graph $\starg = (V', E')$ shown in Figure~\ref{fig:simple_transformation}; the boundary vertices of both graphs are $\{A, B, C\}$.
Write $\Omega = \{ 0, 1 \}^E$ and $\Omega' = \{ 0, 1 \}^{E'}$ for the two spaces of percolation configurations associated to these two graphs.
Additionally, consider two triplets of parameters, $\boldsymbol{p} = (p_a, p_b, p_c) \in (0, 1)^3$ for the triangle and $\boldsymbol{p'} = (p_a', p_b', p_c') \in (0, 1)^3$ for the star, associated with the edges of the graph as indicated in Figure~\ref{fig:simple_transformation}.
For boundary conditions $\xi$ on $\{ A, B, C \}$, denote by $\trianglelaw$ (and $\starlaw$) the \rcm on $\triangle$ (and $\starg$, respectively) with cluster-weight $q$ and parameters $\boldsymbol{p}$ (and $\boldsymbol{p'}$, respectively). 
For practical reasons write 
$$
y_i = \frac{p_i}{1-p_i} \quad \text{ and } \quad y_i' = \frac{p_i'}{1-p_i'}.
$$

\begin{figure}[htb]
  \centering
  \includegraphics[scale=0.8]{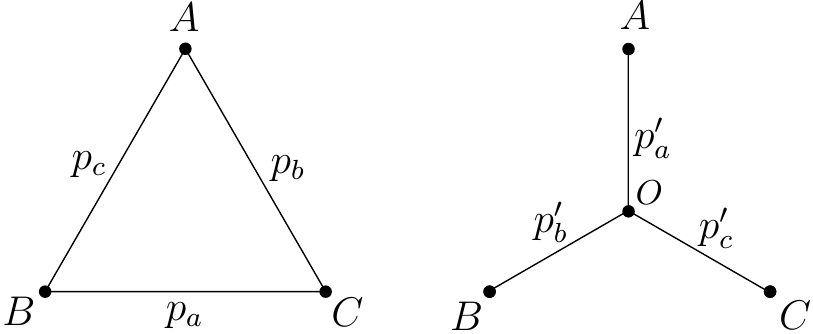}
  \caption{Triangle and star graphs with parameters indicated on edges.}
  \label{fig:simple_transformation}
\end{figure}

The two measures are related via the following relation. 

\begin{prop}[\Stt] \label{prop:stt}
  Fix a cluster weight $q \geq 1$ and suppose the following conditions hold:
  \begin{align}
    y_a y_b y_c + y_a y_b + y_b y_c + y_c y_a & = q,  \label{eq:1st_relation} \\
    y_i y_i' & = q, \qquad \forall i \in \{ a, b, c \}. \label{eq:2nd_relation}
  \end{align}
  Then, for any boundary conditions $\xi$, 
  the connections between the points $A,B,C$ inside the graphs $\triangle$ and $\starg$ have same law under $\trianglelaw$ and $\starlaw$, respectively. 
\end{prop}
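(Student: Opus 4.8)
The plan is to prove Proposition~\ref{prop:stt} by a direct computation of the relevant partition functions, exploiting the fact that both $\triangle$ and $\starg$ have only three boundary vertices, so the ``law of connections between $A,B,C$'' is a probability distribution on the (small) set of partitions of $\{A,B,C\}$. Concretely, for each of the five partitions $\pi$ of $\{A,B,C\}$ (namely $\{A|B|C\}$, $\{AB|C\}$, $\{AC|B\}$, $\{BC|A\}$, $\{ABC\}$) and each boundary condition $\xi$, one expresses $\trianglelaw[\,A,B,C\text{ are connected according to }\pi\,]$ as a ratio $Z^\xi_\triangle(\pi)/Z^\xi_\triangle$, where $Z^\xi_\triangle(\pi)$ is the sum of the weights $q^{k_\xi(\omega)}\prod y_i^{\omega_i}$ over configurations $\omega$ on $\triangle$ inducing partition $\pi$, and similarly for $\starg$. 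Since there are only $2^3=8$ configurations on $\triangle$ and $2^3=8$ on $\starg$, these sums are completely explicit. The claim is then equivalent to the statement that there is a constant $C=C(\boldsymbol{p},\boldsymbol{p'},q)>0$, independent of $\pi$ and $\xi$, such that $Z^\xi_\triangle(\pi) = C\cdot Z^\xi_\starg(\pi)$ for all $\pi$; dividing by the total partition functions then gives equality of the conditional laws.

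The key steps, in order: First I would enumerate the eight configurations on $\triangle$, group them by the partition $\pi$ of $\{A,B,C\}$ they induce, and record for each the weight $q^{k(\omega)}\prod_i y_i^{\omega_i}$ — being careful that the cluster count $k(\omega)$ here should be the one with \emph{free} boundary on $\{A,B,C\}$, since boundary conditions $\xi$ will be incorporated afterward. Then do the same for $\starg$, whose eight configurations involve the three star-edges with weights $y_a', y_b', y_c'$ and the internal vertex $O$. Second, I would observe that passing from free boundary conditions to a general $\xi$ only changes $k(\omega)$ by a quantity depending on $\xi$ and on $\pi$ alone (the partition tells you exactly how $\partial$-clusters merge), so it suffices to prove $Z_\triangle(\pi) = C\cdot Z_\starg(\pi)$ for the free-boundary weights. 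Third, I would write out these five identities explicitly: e.g. the ``all separated'' term on the triangle side is $q^3(1) + q^2(y_a + y_b + y_c)$-type combinations must be sorted carefully by which single edges or pairs leave $A,B,C$ disconnected, while on the star side the ``all separated'' configuration is just the empty one with weight $q^3$ (wait—one must recount: empty star has clusters $\{A\},\{B\},\{C\},\{O\}$, i.e. $q^4$), etc. Fourth, I would check that the five resulting equations in the unknowns are all satisfied, with a common proportionality constant, precisely when \eqref{eq:1st_relation} and \eqref{eq:2nd_relation} hold; in fact one finds $C = y_a'y_b'y_c'/q = q^2/(y_ay_by_c)$ works, using $y_i' = q/y_i$.

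The main obstacle I anticipate is purely bookkeeping: correctly counting connected components in each of the sixteen configurations (including the ``hidden'' internal vertex $O$ of the star, which is always present and contributes a factor of $q$ unless it is joined to the boundary), and correctly accounting for how external boundary conditions $\xi$ modify these counts uniformly across the two graphs. The algebraic verification that the five linear relations reduce to \eqref{eq:1st_relation}–\eqref{eq:2nd_relation} is short once the weights are tabulated — the by-pair and by-single-edge terms immediately force $y_iy_i'=q$, and the fully-connected term then forces \eqref{eq:1st_relation}. A cleaner alternative I would consider, to avoid case analysis, is to prove the stronger statement that the star and triangle are equivalent as ``generalized'' weighted networks: i.e. show that for \emph{any} further graph $H$ glued to $\triangle$ (resp. $\starg$) along $\{A,B,C\}$, the full random-cluster partition functions agree up to the same constant $C$ — this follows from the finite-state computation above by conditioning on the partition induced on $\{A,B,C\}$ by the configuration restricted to $H$, and it is this form that is actually used later when the transformation is applied inside a large isoradial graph. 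I would present the five-identity computation as the heart of the proof and remark that the ``glue an arbitrary $H$'' consequence is then immediate.
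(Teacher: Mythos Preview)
Your proposal is correct and follows essentially the same approach as the paper: both enumerate the eight configurations on each graph, group them by the induced partition of $\{A,B,C\}$, and verify that the resulting five weights are proportional with common ratio $q^2/(y_ay_by_c)=y_a'y_b'y_c'/q$. The only minor difference is that the paper tabulates the weights separately for each of the three non-symmetric boundary conditions $\xi$, whereas you observe (correctly) that the effect of $\xi$ on the cluster count depends only on the partition $\pi$, allowing a reduction to the free case; this is a slight streamlining but not a different argument.
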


\begin{rmk}
  In light of~\eqref{eq:2nd_relation}, the relation~\eqref{eq:1st_relation} is equivalent to
  \begin{equation} \label{eq:dual_relation}
    y_a' y_b' y_c' - q (y_a' + y_b' + y_c') = q^2.
  \end{equation}
\end{rmk}

The proof of the proposition is a straightforward computation of the probabilities of the different possible connections between $A$, $B$ and $C$ in the two graphs. 

\begin{proof}
  The probabilities of the different possible connections between $A$, $B$ and $C$ in $\triangle$ and $\starg$ with different boundary conditions are summarized in the following tables.
  For ease of notation, the probabilities are given up to a multiplicative constant; the multiplicative constant is the inverse of the sum of all the terms in each column. 
  Different tables correspond to different boundary conditions; each line to one connection event.
  We exclude symmetries of boundary conditions. 

  \smallskip 

  \noindent 
  \begin{center}
    \begin{tabular}{|c|cc|}
      \hline
      $\{ \{A, B\}, C\}$ & In $\triangle$ & In $\starg$  \\ 
      \hline
      all disconnected & $q$ & $q(q + y_a' + y_b' + y_c')$ \\
      $A \leftrightarrow B \nlra C$ & $y_c q$ & $y_a' y_b' q$ \\
      $B \leftrightarrow C \nlra A$ & $y_a$ & $y_b' y_c'$ \\
      $C \leftrightarrow A \nlra B$ & $y_b$ & $y_c' y_a'$  \\
      $A \leftrightarrow B \leftrightarrow C$ & $y_a y_b + y_b y_c + y_c y_a + y_a y_b y_c$ & $y_a' y_b' y_c'$ \\
      \hline
      \multicolumn{3}{c}{} \\
      \hline
      $\{ A, B, C\}$ & In $\triangle$ & In $\starg$  \\ 
      \hline
      all disconnected & $1$ & $q + y_a' + y_b' + y_c'$ \\
      $A \leftrightarrow B \nlra C$ & $y_c$ & $y_a' y_b'$ \\
      $B \leftrightarrow C \nlra A$ & $y_a$ & $y_b' y_c'$ \\
      $C \leftrightarrow A \nlra B$ & $y_b$ & $y_c' y_a'$  \\
      $A \leftrightarrow B \leftrightarrow C$ & $y_a y_b + y_b y_c + y_c y_a + y_a y_b y_c$ & $y_a' y_b' y_c'$ \\
      \hline
      \multicolumn{3}{c}{} \\
      \hline
      $\{ \{ A\}, \{ B\}, \{ C\} \}$ & In $\triangle$ & In $\starg$ \\ 
      \hline
      all disconnected & $q^2$ & $q^2 (y_a' + y_b' + y_c' + q)$ \\
      $A \leftrightarrow B \nlra C$ & $y_c q$ & $y_a' y_b' q$ \\
      $B \leftrightarrow C \nlra A$ & $y_a q$ & $y_b' y_c' q$ \\
      $C \leftrightarrow A \nlra B$ & $y_b q$ & $y_c' y_a' q$  \\
      $A \leftrightarrow B \leftrightarrow C$ & $y_a y_b + y_b y_c + y_c y_a + y_a y_b y_c$ & $y_a' y_b' y_c'$ \\
      \hline
    \end{tabular}
    \captionof{table}{Probabilities of different connection events with different boundary conditions.}
  \end{center}
  \smallskip

  It is straightforward to check that the corresponding entries in the two columns of each table are proportional, with ratio (right quantity divided by the left one) $q^2 / y_a y_b y_c$ each time.
\end{proof}

In light of Proposition~\ref{prop:stt}, the measures $\trianglelaw$ and $\starlaw$ may be coupled in a way that preserves connections. 
For the sake of future applications, we do this via two random maps $T$ and $S$ from $\{ 0, 1\}^{\smallstarg}$ to $\{ 0, 1 \}^{\triangle}$, and conversely. 
These random mappings are described in Figure~\ref{fig:simple_transformation_coupling};
when the initial configuration is such that the result is random, the choice of the resulting configuration is done independently of any other randomness. 

\begin{figure}[htb]
  \centering
  \includegraphics{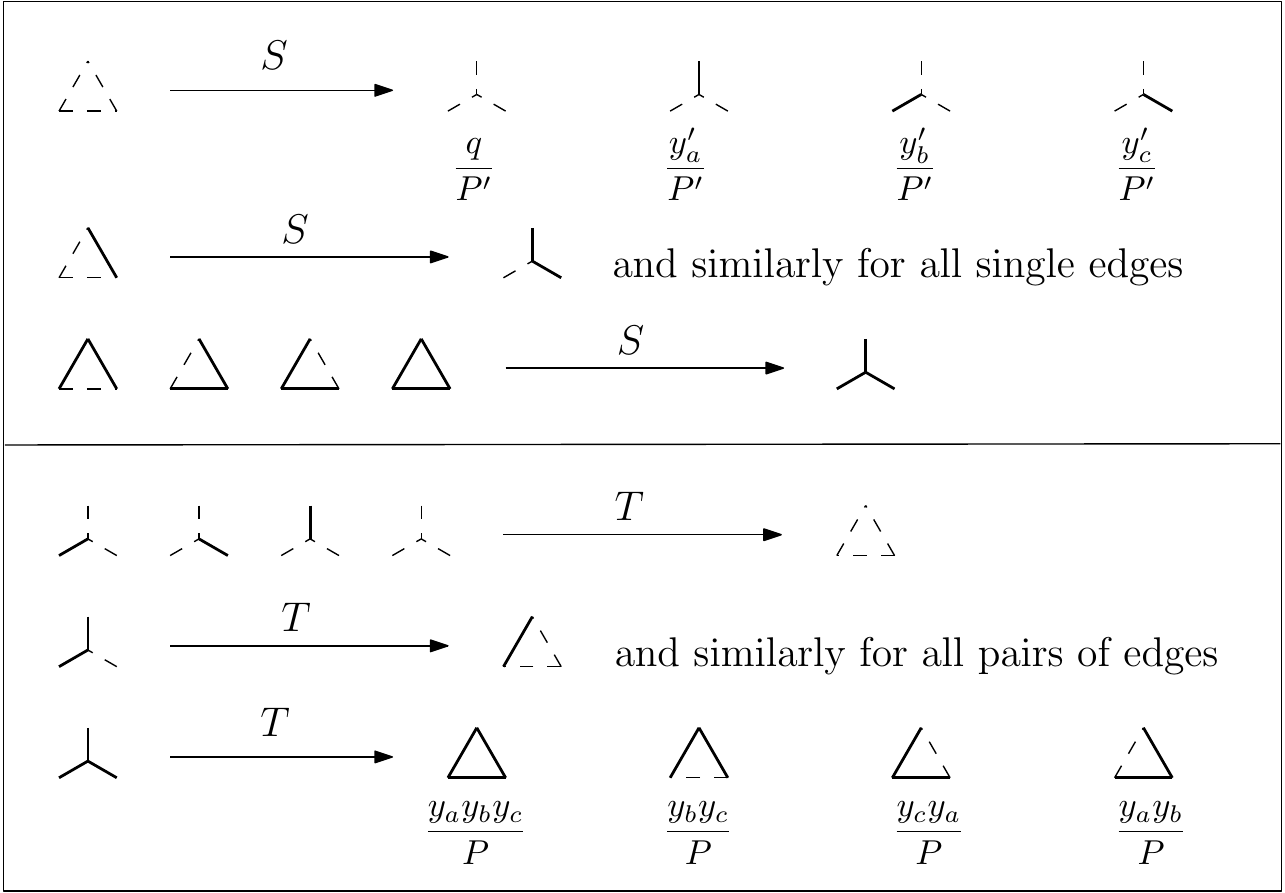}
  \caption{The random maps $T$ and $S$. Open edges are represented by thick segments, closed edges by dashed ones. 
    In the first and last lines, the outcome is random: it is chosen among four possibilities with probabilities indicated below. 
    The normalizing constants are $P' = q +y_a' + y_b' + y_c' = y_a' y_b' y_c' / q$ and $P = y_a y_b y_c + y_a y_b + y_b y_c + y_c y_a = q$.}
  \label{fig:simple_transformation_coupling}
\end{figure}

\begin{prop}[Star-triangle coupling] \label{prop:coupling}
  Fix $q \geq 1$, boundary conditions $\xi$ on $\{ A, B, C \}$ 
  and triplets $\boldsymbol{p} \in (0, 1)^3$ and $\boldsymbol{p'} \in (0, 1)^3$ 
  satisfying~\eqref{eq:1st_relation} and~\eqref{eq:2nd_relation}. 
  Let $\omega$ and $\omega'$ be configurations chosen according to $\trianglelaw$ and $\starlaw$, respectively. 
  Then,
  \begin{enumerate}
  \item $S(\omega)$ has the same law as $\omega'$,
  \item $T(\omega')$ has the same law as $\omega$,
  \item for $x, y \in \{ A, B, C \}$, $x \xleftrightarrow{\triangle,\; \omega} y$ 
    if and only if $x \xleftrightarrow{\smallstarg,\; S(\omega)} y$,
  \item for $x, y \in \{ A, B, C \}$, $x \xleftrightarrow{\smallstarg,\; \omega'} y$ 
    if and only if $x \xleftrightarrow{\triangle,\; T(\omega')} y$.
  \end{enumerate}
\end{prop}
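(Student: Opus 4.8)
The plan is to take $S$ and $T$ to be \emph{exactly} the maps depicted in Figure~\ref{fig:simple_transformation_coupling}: each is deterministic on every configuration except one degenerate one (for $S$, the all‑closed configuration on $\triangle$; for $T$, the all‑open configuration on $\starg$), where the image is drawn at random --- independently of all other randomness --- among the relevant configurations on the other graph, with the weights prescribed in the figure. The first thing I would record is the single combinatorial fact on which everything rests: \emph{both $S$ and $T$ preserve the partition of $\{A,B,C\}$ into clusters of open edges internal to the graph}. This is checked by running through the eight configurations on each side; since a three‑element set has only five partitions, and on a triangle or a star every partition other than ``all connected'' and ``all disconnected'' is realised by a unique configuration, the bookkeeping is short. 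Granting this, properties (3) and (4) are immediate, because for $x,y\in\{A,B,C\}$ the relation $x\xleftrightarrow{\triangle,\omega}y$ says precisely that $x$ and $y$ lie in the same class of this partition.

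For property (1) I would decompose $\trianglelaw$ and $\starlaw$ according to the induced partition $\pi$ of $\{A,B,C\}$. By Proposition~\ref{prop:stt} the law of $\pi$ is the same under both measures and for every boundary condition $\xi$; call it $\nu$. It then suffices to show that, for each $\pi$, the map $S$ transports the conditional law $\trianglelaw(\cdot\mid\pi)$ onto $\starlaw(\cdot\mid\pi)$; integrating against $\nu$ gives $S_\ast\trianglelaw=\starlaw$. For the three non‑degenerate $\pi$ both conditional laws are Dirac masses at the unique realising configuration, and $S$ maps one onto the other by construction. For $\pi$ the all‑disconnected partition, $\trianglelaw(\cdot\mid\pi)$ is the Dirac mass at the all‑closed triangle configuration, whose random image under $S$ is, by definition, distributed over the four star configurations realising $\pi$ with weights proportional to $(q,y_a',y_b',y_c')$; a direct computation of random‑cluster weights shows this is exactly $\starlaw(\cdot\mid\pi)$ (the normalising constant being the $P'$ of the figure, equal to $y_a'y_b'y_c'/q$ by \eqref{eq:dual_relation}). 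For $\pi$ the all‑connected partition, $\starlaw(\cdot\mid\pi)$ is a Dirac mass, and $S$ sends all four triangle configurations realising $\pi$ onto it, so the pushforward mass of that star configuration is $\nu(\pi)=\starlaw(\pi)$, as needed. Property (2) is obtained symmetrically, exchanging the roles of $\triangle$ and $\starg$: here the non‑trivial class is the all‑connected one, where $T$ spreads the all‑open star configuration over $\{a,b\},\{a,c\},\{b,c\},\{a,b,c\}$ with weights proportional to $(y_ay_b,y_by_c,y_cy_a,y_ay_by_c)$, whose sum is $P=q$ by \eqref{eq:1st_relation}, and this reproduces $\trianglelaw(\cdot\mid\pi)$.

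The one step that requires genuine care, and that I would write out in full, is the verification that inside the two degenerate classes the conditional random‑cluster laws on $\triangle$ and on $\starg$ coincide with the weights hard‑coded into $S$ and $T$ \emph{and are independent of the boundary condition} $\xi$. The reason is that within such a class the vertices $A,B,C$ (and, in the cases that matter, the centre of the star) are either all forced into one cluster or all kept in a fixed pattern of distinct clusters, so the factor $q^{k_\xi(\cdot)}$ is the same constant for every configuration in the class and drops out, leaving only products of edge‑weights $y_i$ (resp. $y_i'$). This $\xi$‑independence is exactly what makes the single pair of maps $(S,T)$ work simultaneously for all boundary conditions, which is the content of the proposition.
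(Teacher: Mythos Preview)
Your proof is correct and is essentially a fleshed-out version of the paper's own argument, which simply states that (3) and (4) are trivial from Figure~\ref{fig:simple_transformation_coupling} and that (1) and (2) ``follow by direct computation from the construction of $S$ and $T$'' together with the independence of the auxiliary randomness. Your decomposition by the induced partition $\pi$ of $\{A,B,C\}$ and the explicit verification that the conditional laws within each degenerate class are $\xi$-independent is exactly the content of that direct computation, organized cleanly.
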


\begin{proof}
  The points 3 and 4 are trivial by Figure~\ref{fig:simple_transformation_coupling}. 
  Points 1 and 2 follow by direct computation from the construction of $S$ and $T$, respectively, with the crucial remark that the randomness in $S$ and $T$ is independent of that of $\omega$ and $\omega'$, respectively. 
\end{proof}

\subsection{\Stt on isoradial graph} \label{sec:stt_rc_iso}


Next, we study the \stt for isoradial graphs.
We will see that when \stts are applied to isoradial graphs with the \rcm given by isoradiality when $\beta = 1$, what we get is exactly the \rcm on the resulting graph.

\begin{prop} \label{prop:stt_iso_preserved}
  Fix $q \geq 1$ and $\beta = 1$.
  Then, the random-cluster model is preserved under \stts in the following sense.
  \begin{itemize}
  \item For any triangle $ABC$ contained in an isoradial graph, the parameters $y_{AB}$, $y_{BC}$ and $y_{CA}$
    associated by~\eqref{eq:parameters} with the edges $AB$, $BC$ and $CA$, respectively, satisfy~\eqref{eq:1st_relation}. 
    Moreover, there exists a unique choice of point $O$ such that, if the triangle $ABC$ is replaced by the star $ABCO$, 
    the resulting graph is isoradial and the parameters associated with the edges $CO$, $AO$, $BO$ by~\eqref{eq:parameters}
    are related to $y_{AB}$, $y_{BC}$ and $y_{CA}$ as in~\eqref{eq:2nd_relation}.
  \item For any star $ABCO$ contained in an isoradial graph, the parameters $y_{OC}$, $y_{OA}$ and $y_{OB}$
    associated by~\eqref{eq:parameters} with the edges $CO$, $AO$ and $BO$, respectively, satisfy~\eqref{eq:1st_relation}. 
    Moreover, if the star $ABCO$ is replaced by the triangle $ABC$, 
    the resulting graph is isoradial and the parameters associated with the edges $AB$, $BC$, $CA$  by~\eqref{eq:parameters}
    are related to $y_{OC}$, $y_{OA}$ and $y_{OB}$ as in~\eqref{eq:2nd_relation}.
  \end{itemize}
\end{prop}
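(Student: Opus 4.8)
The plan is to reduce everything to plane geometry of rhombic tilings plus a trigonometric identity. First I would recall the local picture: a triangle $ABC$ in an isoradial graph corresponds, in the diamond graph, to three rhombi meeting at a common vertex, one for each edge $AB$, $BC$, $CA$; the three rhombi share the dual vertex sitting at the center of the circumcircle of $ABC$ (for the triangular face) and have unit side length. Each edge $e$ of the triangle is the diagonal of its rhombus joining the two primal endpoints, and the subtended angle $\theta_e$ is exactly the angle of that rhombus at the primal vertices — equivalently, the rhombus angle at the two dual vertices is $\pi-\theta_e$. The key geometric fact is that the three rhombus angles at the dual vertex of $ABC$ must sum to $2\pi$ going around that point; writing $a,b,c$ for the half-angles or for the appropriate rhombus angles one gets a linear constraint like $(\pi-\theta_{AB})+(\pi-\theta_{BC})+(\pi-\theta_{CA}) = \pi$, i.e. $\theta_{AB}+\theta_{BC}+\theta_{CA}=2\pi$. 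I would set $\theta_{AB}=\pi-a$, etc., so that $a+b+c=\pi$ with $a,b,c\in(0,\pi)$, which is the natural ``angle'' parametrization.

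Next, given this parametrization, I would plug into the definition \eqref{eq:parameters} of $y_e$. For $1\le q<4$, with $r=\tfrac1\pi\cos^{-1}(\sqrt q/2)$, we have $y_{AB}=\sqrt q\,\tfrac{\sin(ra)}{\sin(r(\pi-a))}$ and cyclically, and the claim \eqref{eq:1st_relation}, namely $y_a y_b y_c + y_a y_b + y_b y_c + y_c y_a = q$, becomes a pure trigonometric identity in $a,b,c$ subject to $a+b+c=\pi$, involving the parameter $r$ only through the combination $\cos(\pi r)=\sqrt q/2$. This identity is classical (it is the Yang–Baxter / star-triangle relation for the random-cluster model, appearing in the references \cite{Kenyon-dimer,BefDumSmi12}); I would verify it by standard manipulations — expressing products of sines via sum-to-product and using $r a + r b + r c = \pi r$. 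The case $q>4$ is identical with $\sin\to\sinh$ and $\cos^{-1}\to\cosh^{-1}$, so $\cosh(\pi r)=\sqrt q/2$, and $q=4$ is the $r\to\cdot$ limit already noted in the text, so it requires no separate argument.

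For the second half — existence and uniqueness of the star point $O$ — I would argue again in the diamond graph. Replacing the triangular face $ABC$ by a star means splitting that region into three rhombi meeting at a new dual–vertex-free configuration: concretely, one removes the three rhombi around the old dual vertex and the small triangular face and inserts three rhombi around the new primal vertex $O$, with $O$ itself at distance $1$ from each of $A$, $B$, $C$ along the new edges. Geometrically $O$ is forced: it is the reflection of the old dual (circumcenter) vertex through... more precisely, $O$ is the unique point such that $OA=OB=OC$ holds in the rhombic sense and the six outer rhombus edges close up; this is exactly the statement that a ``cube flip'' (rhombic-tiling move) exists and is unique, a well-known fact about rhombic tilings / the octahedron recurrence. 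I would cite or reprove this: the three outer vertices of the hexagon formed by the six rhombi are fixed, and a hexagon of unit rhombi has exactly two tilings, the ``triangle'' one and the ``star'' one, determined by each other. Having located $O$, its three new subtended angles are $a,b,c$ themselves (the complements of the old $\pi-a,\pi-b,\pi-c$), so $y_{OA}=\sqrt q\,\tfrac{\sin(r(\pi-a))}{\sin(ra)} = q/y_{BC}$ and cyclically — which is precisely \eqref{eq:2nd_relation}. The reversed bullet (star to triangle) is the same computation run backwards, using uniqueness of the hexagon's other tiling.

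The main obstacle is bookkeeping rather than depth: getting the angle identifications exactly right — which rhombus angle equals $\theta_e$ versus $\pi-\theta_e$, and checking that the ``going around the dual vertex'' relation really is $\theta_{AB}+\theta_{BC}+\theta_{CA}=2\pi$ and not something shifted — and then matching the new star angles to the old triangle angles with correct orientation, since a sign slip there would swap $y$ with $q/y$ incorrectly. I expect the cleanest route is to fix the half-angle notation $a+b+c=\pi$ at the outset, express all six $y$'s in it, and observe that \eqref{eq:2nd_relation} is then immediate from the $\sin(r a)\leftrightarrow\sin(r(\pi-a))$ swap, leaving only the single identity \eqref{eq:1st_relation} to check — and that identity, together with the $\cos(\pi r)=\sqrt q/2$ substitution, is where I would spend the actual computational effort, though it is routine.
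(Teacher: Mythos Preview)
Your approach is essentially the paper's: identify the angle constraint $\theta_{AB}+\theta_{BC}+\theta_{CA}=2\pi$ from the three rhombi fitting around the dual (circumcentre) vertex, invoke the trigonometric identity for \eqref{eq:1st_relation}, perform the rhombic hexagon flip to locate $O$ uniquely, and read off the complementary subtended angles to obtain \eqref{eq:2nd_relation}; the paper handles the second bullet by passing to the dual graph rather than by reversing the flip, but that is the only difference. Your self-diagnosed bookkeeping wobbles are real --- the rhombus angle at the dual vertex \emph{is} $\theta_e$ (not $\pi-\theta_e$), and the correct pairing is $\theta_{OA}=\pi-\theta_{BC}$, so while your displayed relation $y_{OA}=q/y_{BC}$ is right it does not actually follow from the assignment $\theta_{AB}=\pi-a$ you made earlier --- but these are precisely the labeling fixes you anticipated, and once straightened out the argument coincides with the paper's.
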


\begin{proof}
  We only give the proof of the first point; the second may be obtained by considering the dual graph. 
  Let $ABC$ be a triangle contained in an isoradial graph $G$.
  Write $a,b,c$ for the angles subtended to the edges $BC$, $AC$, $AB$, respectively. 
  Then, $a+b+c = 2\pi$. 
  A straightforward trigonometric computation shows that then 
  $y_a y_b y_c + y_a y_b + y_b y_c + y_c y_a - q = 0$.
  
  Permute the three rhombi of $G^\diamond$ corresponding to the edges $AB$, $BC$, $CA$ as described in Figure~\ref{fig:local_relation}
  and let $O$ be their common point after permutation. 
  Let $\tilde G$ be the graph obtained from $G$ by adding the vertex $O$ and connecting it to $A,B$ and $C$ 
  and removing the edges $AB$, $BC$, $CA$.
  Since $\tilde G$ has a diamond graph (as depicted in Figure~\ref{fig:local_relation}), it is isoradial. 
  Moreover, the angles subtended by the edges $OA$, $OB$ and $OC$ are $\pi-a$, $\pi-b$ and $\pi-c$, respectively.
  It follows from~\eqref{eq:parameters} that the parameters of the edges $OA$, $OB$ and $OC$ 
  are related to those of the edges  $AB$, $BC$ and $CA$ by~\eqref{eq:2nd_relation}.
\end{proof}

\begin{figure}[htb]
  \centering
  \includegraphics[scale=1.1]{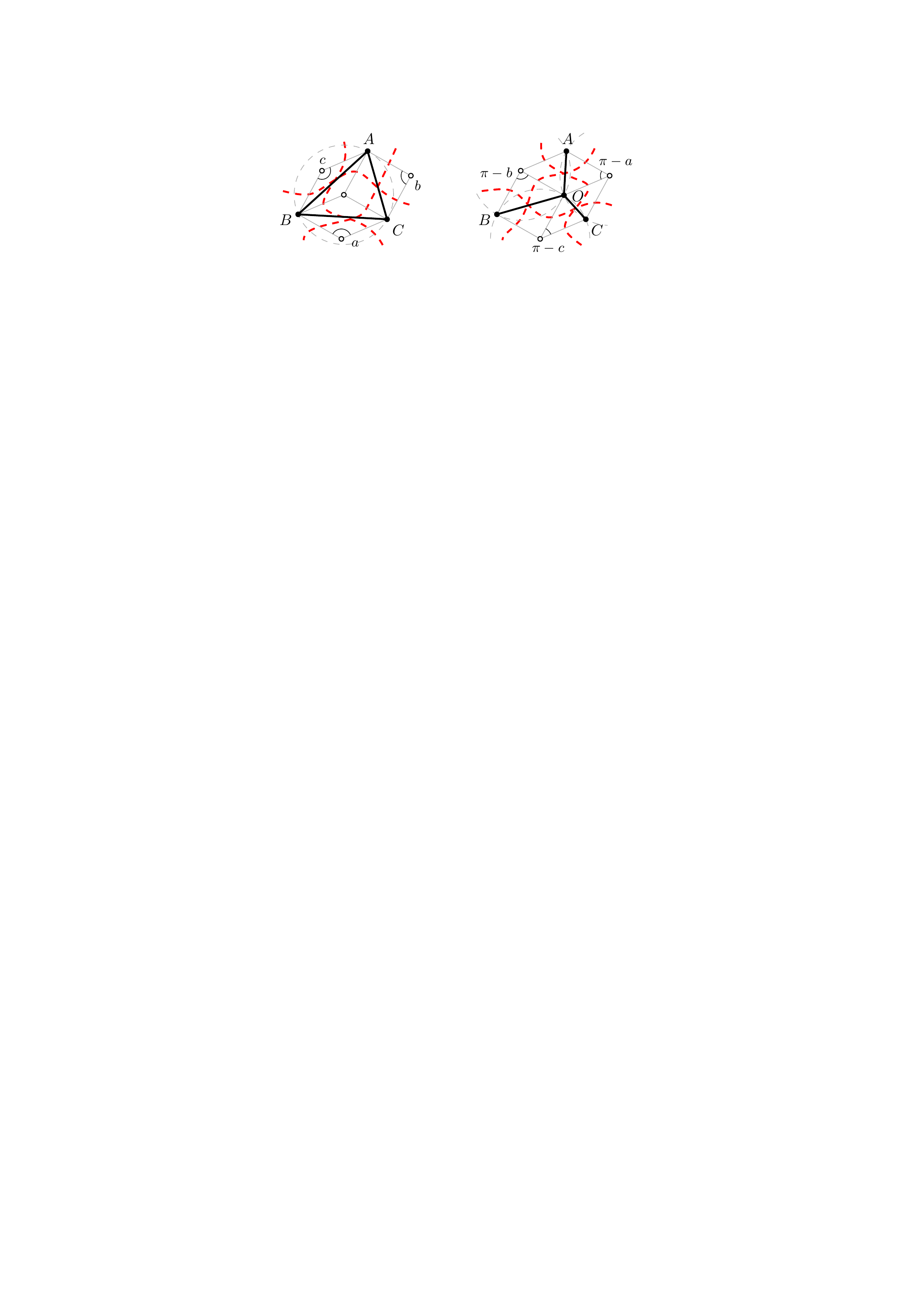}
  \caption{A local triangle subgraph with corresponding subtended angles $a$, $b$ and $c$. Note that $a + b + c = 2 \pi$.
    The order of crossing of the three tracks involved is changed.}
  \label{fig:local_relation}\label{fig:transfo_iso_tracks}
\end{figure}

Triangles and stars of isoradial graphs correspond to hexagons formed of three rhombi in the diamond graph. 
Thus, when three such rhombi are encountered in a diamond graph, 
they may be permuted as in Figure~\ref{fig:local_relation} using a \stt. 
We will call the three rhombi the {\em support} of the \stt. 

Let $\omega$ be a configuration on some isoradial graph $G$ and $\sigma$ a \stt that may be applied to $G$. 
When applying $\sigma$ to $G$, the coupling of Proposition~\ref{prop:coupling} yields a configuration that we will denote by $\sigma(\omega)$. 

Consider an open path $\gamma$ in $\omega$. 
Then, define $\sigma(\gamma)$ the image of $\gamma$ under $\sigma$ to be the open path of $\sigma(\omega)$ described as follows.
\begin{itemize}
\item If an endpoint of $\gamma$ is adjacent to the support of $\sigma$, then we set $\sigma(\gamma)$ to be $\gamma$ plus the additional possibly open edge if the latter has an endpoint on $\gamma$, which is given by the first line of Figure~\ref{fig:simple_transformation_coupling}.
\item If $\gamma$ does not cross (and is not adjacent to) the support of $\sigma$, we set $\sigma(\gamma) = \gamma$.
\item Otherwise, $\gamma$ intersects the support of $\sigma$ in one of the ways depicted in the first two lines of Figure~\ref{fig:path_transformation1}.
  Then, we set $\sigma(\gamma)$ to be identical to $\gamma$ outside the support of $\sigma$. 
  And in the support of \stt, since $\sigma$ preserves connections, the part of $\gamma$ inside may be replaced by an open path as in the same figure. 
  Notice the special case when $\gamma$ ends in the centre of a star and the corresponding edge is lost when applying $\sigma$ (third line of Figure~\ref{fig:path_transformation1}).
\end{itemize}

\begin{figure}[htb]
  \centering
  \includegraphics[scale=1]{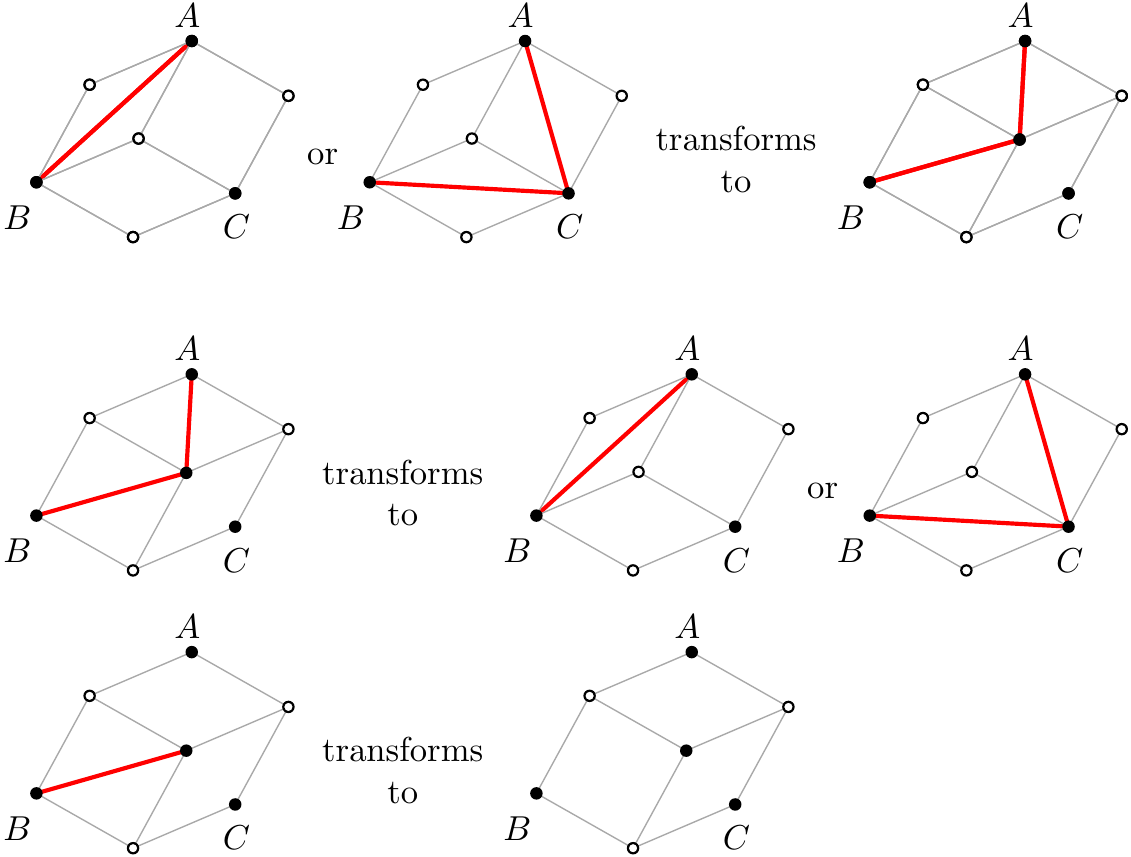}
  \caption{The effect of a \stt on an open path. In the second line, the second outcome is chosen only if the edge $AB$ is closed. 
    The fact that the result is always an open path is guaranteed by the coupling that preserves connections. 
    In the last case, the open path may loose one edge.}
  \label{fig:path_transformation1}
\end{figure}

\subsection{Details on isoradial graphs: train tracks and bounded-angles property}

Let $\bbG$ be an isoradial graph.
Recall that $\bbG^\diamond$ is the diamond graph associated with $\bbG$, whose faces are rhombi. 
Each edge $e$ of $\bbG$ corresponds to a face of $\bbG^\diamond$, and the angle $\theta_e$ associated to $e$ is one of the two angles of that face.
We say that $\bbG$ satisfies the {\em bounded-angles property} with parameter $\eps>0$ if all the angles $\theta_e$ of edges of $e \in \bbE$ are contained in $[\eps, \pi - \eps]$. Equivalently, edges of $\bbG$ have parameter $p_e$ bounded away from $0$ and $1$ uniformly. 
The property also implies that the graph distance on $\bbG^\diamond$ or $\bbG$ and the euclidean distance are quasi-isometric. 

Write $\calG(\eps)$ for the set of double-periodic isoradial graphs satisfying the bounded-angles property with parameter $\eps > 0$.

Define a \emph{train track} as a double-infinite sequence of faces $(r_i)_{i \in \bbZ}$ of $\bbG^\diamond$ 
such that the intersections $(r_i \cap r_{i+1})_{i \in \bbZ}$ are non-empty, distinct and parallel segments (Figure~\ref{fig:train_tracks}).

A train track as above may also be viewed as an arc in $\bbR^2$ which connects the midpoints of the edges $(r_i \cap r_{i+1})_{i \in \bbZ}$.
These edges are called the \emph{transverse segments} of the track, and the angle they form with the horizontal line is called the \emph{transverse angle} of the track. 

Write $\calT(\bbG)$ for the set of train tracks of $\bbG$.
Notice that $\calT(\bbG) = \calT(\bbG^*)$ since the diamond graph is the same for the primal and dual graphs.
Most commonly, $\calT(\bbG)$ is regarded only up to homeomorphism. 
Then, it only encodes the structure of $\bbG$; the embedding of $\bbG$ may be recovered from the values of the transverse angles of the tracks. 

\begin{figure}[htb]
  \centering
  \includegraphics[scale=0.7, page=3]{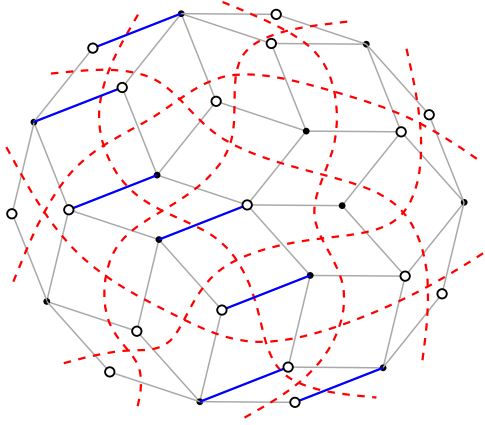}
  \caption{The train track representation (in dotted red lines) of the isoradial graph in Figure~\ref{fig:isoradial}.
    Transverse edges of a track are drawn in blue.}
  \label{fig:train_tracks}
\end{figure}

One can easily check that the rhombi forming a track are all distinct, thus a track does not intersect itself.
Furthermore, two distinct tracks can only have at most one intersection.
A converse theorem has been shown by Kenyon and Schlenker~\cite{KS-quad-graphs}.

Each face of $\bbG^\diamond$ corresponds to an intersection of two train tracks. 
A hexagon in $\bbG^\diamond$ (that is a star or triangle in $\bbG$) corresponds to the intersection of three train tracks, as in Figure~\ref{fig:transfo_iso_tracks}.
The effect of a \stt is to locally permute the three train tracks involved in the hexagon 
by ``pushing'' one track over the intersection of the other two.


\subsection{Switching between isoradial graphs} \label{sec:switching_isos}

As explained in the introduction, the strategy of the proof is to transform the regularly embedded square lattice into arbitrary doubly-periodic isoradial graphs using \stts.
This will enable us to transfer estimates on connection probabilities from the former to the latter. 
Below, we explain the several steps of the transformation. 

\subsubsection{From regular square lattice to isoradial square lattice} \label{sec:mix}

In this section we will consider isoradial embeddings of the square lattice.
As described in \cite{GriMan14}, a procedure based on \emph{track exchanges} transforms one isoradial embedding of the square lattice into a different one. In addition to \cite{GriMan14}, the effect of boundary conditions needs to be taken into account; a construction called {\em convexification} is therefore required.

Isoradial embeddings of the square lattice may be encoded by two doubly-infinite sequences of angles. Let $\ba = (\alpha_n)_{n \in \bbZ}$ and $\bb = (\beta_n)_{n \in \bbZ}$ be two sequences of angles in $[0,\pi)$ such that
\begin{align} \label{eq:bounded_angles}
  \sup\{\alpha_n : n \in \bbZ\} & < \inf \{\beta_n : n \in \bbZ\}, \nonumber \\
  \inf\{\alpha_n : n \in \bbZ\} & > \sup \{\beta_n : n \in \bbZ\} - \pi.
\end{align}
Then, define $\bbG_{\ba, \bb}$ to be the isoradial embedding of the square lattice with vertical train tracks $(s_{n})_{n \in \bbZ}$ with transverse angles $(\alpha_n)_{n \in \bbZ}$ and horizontal train tracks $(t_{n})_{n \in \bbZ}$ with transverse angles $(\beta_n)_{n \in \bbZ}$.
Condition \eqref{eq:bounded_angles} ensures that $\bbG_{\ba, \bb}$ satisfies the bounded-angles property for $\eps = \inf \{ \beta_n - \alpha_m, \alpha_n - \beta_m + \pi : m, n \in \bbZ \} > 0$.

\begin{figure}[htb]
  \centering
  \includegraphics[scale=1.1]{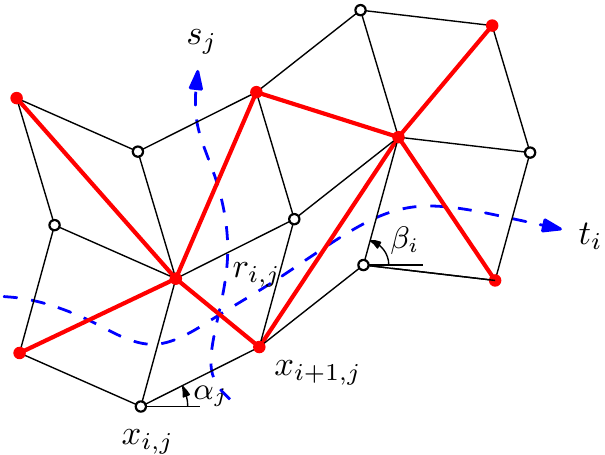}
  \caption{A piece of an isoradial embedding of a square lattice (the square lattice in red, the diamond graph in black and the tracks in blue).}
  \label{fig:isoradial_square}
\end{figure}

\medbreak

In the following, we mainly consider doubly-periodic isoradial graphs, 
hence periodic sequences $(\alpha_n)_{n \in \bbZ}$ and $(\beta_n)_{n \in \bbZ}$.
The bounded-angles property is then automatically ensured if it is satisfied for a period of $(\alpha_n)$ and $(\beta_n)$. 

The same notation may be used to denote ``rectangular'' finite subgraphs of isoradial square lattices.
Indeed, for finite sequences $\ba = (\alpha_n)_{M_- \leq n \leq M_+}$ and $\bb = (\beta_n)_{N_- \leq n \leq N_+}$,
define $G_{\ba,\bb}$ to be a (finite) isoradial square lattice with $M_+ - M_- +1$ vertical tracks and $N_+ - N_- +1$ horizontal tracks.
We will think of this graph as part of an infinite isoradial graph, thus we call the 
right boundary of $G_{\ba,\bb}$ the vertices to the right of $s_{M_+}$, 
the left boundary those to the left of $s_{M_-}$,
the top boundary the vertices above $t_{N_+}$ and the bottom boundary those below $t_{N_-}$.
The term rectangular refers to the diamond graph rather than to $G_{\ba,\bb}$; 
the boundary denominations are also used for $G_{\ba,\bb}^\diamond$. 

The {\em regular square lattice} is the embedding corresponding to sequences $\beta_n = \frac{\pi}2$ and $\alpha_n = 0$ for all $n \in \bbZ$.

\subtitle{Track exchange}
Let us start by describing a simple but essential operation composed of \stts, which we call \emph{track exchange}.
In the language of transfer matrices, this amounts to that the transfer matrices associated with two adjacent rows commute with each other, which is the usual formulation of the Yang-Baxter transformation.

Let $G$ be a finite rectangular subgraph of an isoradial square lattice and $t$ and $t'$ be two parallel adjacent horizontal train tracks. 
Suppose that we want to switch their positions using \stts.
That is, we would like to perform a series of \stts that changes the graph $G$ into an identical graph, 
with the exception of the train tracks $t$ and $t'$ that are exchanged (or equivalently that their transverse angles are exchanged). 
We will suppose here that the transverse angles of $t$ and $t'$ are distinct, otherwise the operation is trivial. 

Since $t$ and $t'$ do not intersect, no \stt may be applied to them.
Suppose however that $G$ contains one additional rhombus (gray in Figure~\ref{fig:track_exchange}) at either the left or right end of $t$ and $t'$ that corresponds to the intersection of these two tracks. (Depending on the transverse angles of the tracks, there is only one possible position for this rhombus.) 
Then, a series of \stts may be performed as in Figure~\ref{fig:track_exchange}.
In effect, these transformations ``slide'' the gray rhombus from one end of the tracks to the other, and exchange the two tracks in the process. 

\begin{figure}[htb]
  \centering
  \includegraphics[scale=0.75]{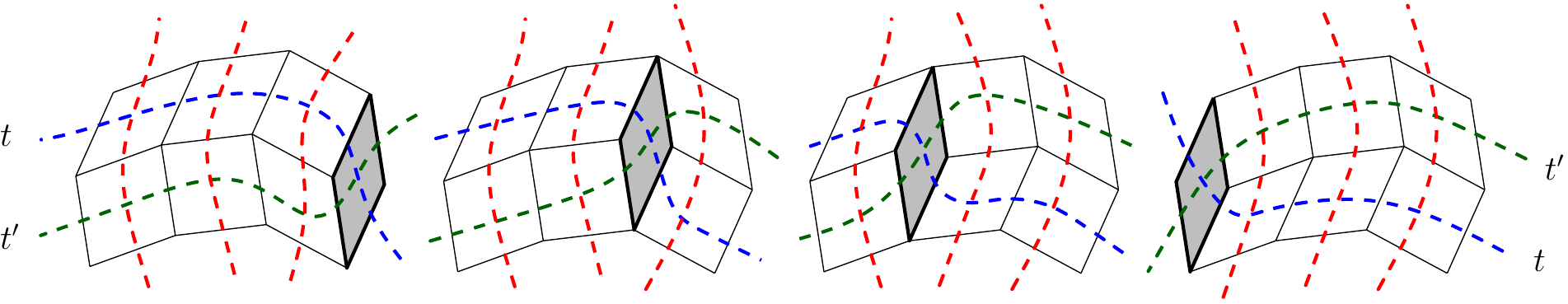}
  \caption{We move the gray rhombus from the right to the left by a sequence of \stt. Observe that these transformations only affect the tracks $t$ and $t'$, and that their ultimate effect is to exchange them.}
  \label{fig:track_exchange}
\end{figure}

As seen in Section~\ref{sec:stt_rc_iso}, each \stt of an isoradial graph preserves the \rcm and connection properties.
Thus, the procedure above, which we call a \emph{track exchange}, allows us to deduce connection properties of the resulting graph from those of the initial graph. 

In~\cite{GriMan14}, the gray rhombus was added before exchangin the tracks and removed afterwards. 
Thus, the track exchange could be perceived as a measure- and connection-preserving transformation between isoradial square lattices.
By repeating such track exchanges, blocks of tracks of a square lattice were exchanged, and RSW-type estimates were transported from one block to another.  

In the present context, adding a rhombus (and hence an edge) to a graph affects the \rcm of the entire graph. We therefore prefer to ``prepare'' the graph by adding all necessary gray rhombi for all the track exchanges to be performed at once. 
The operation is called the \emph{convexification} of a finite part of a square lattice. 

\subtitle{Convexification}
Consider a finite rectangular portion $G = G_{\ba,\bb}$ of an isoradial square lattice, with $\ba$ and $\bb$ two finite sequences of angles.
Suppose that $\bb = (\beta_n)_{0 \leq n \leq N}$ for some $N > 0$. 
We call the vertices below $t_0$ (in the present case the bottom boundary) the \emph{base level} of $G$. 

We say that $\tilde{G}$ is a \emph{convexification} of $G = G_{\ba, \bb}$ if
\begin{itemize}
\item $G$ is a subgraph of $\tilde{G}$ and $\tilde{G}$ has no other tracks than those of $G$;
\item the top and bottom boundary of $G^\diamond$ are also boundaries of $\tilde{G}^\diamond$;
\item as we follow the boundary of $\tilde{G}^\diamond$ in counterclockwise direction, the segment between the top and bottom boundaries (which we naturally call the left boundary) and that between the bottom and top boundaries (called the right boundary) are convex.
\end{itemize}

The second condition may be read as follows: in $\tilde G$, the vertical tracks $(s_n)$ only intersect the horizontal tracks $(t_n)$; however, additionally to $G$, $\tilde G$ may contain intersections between horizontal tracks. 

The third condition is equivalent to asking that all horizontal tracks of $G$ with distinct transverse angles intersect in $\tilde{G}$. Indeed, the left and right boundaries of $\tilde G^\diamond$ are formed of the transverse segments of the horizontal tracks of $G$, each track contributing once to each segment of the boundary. That both the left and right boundaries of  $\tilde G^\diamond$ are convex means that the transverse segments of two tracks $t_i$, $t_j$ with distinct transverse angles appear in alternative order along the boundary of $\tilde G^\diamond$, when oriented in counterclockwise direction. Hence, they necessarily intersect in $\tilde G$. The converse may also be easily checked.

\begin{figure}[htb]
  \centering
  \includegraphics[scale=0.85, page=1]{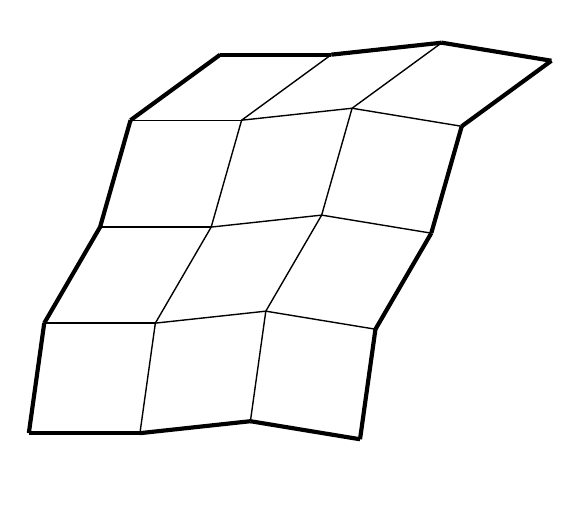}
  \hspace{0.5cm}
  \includegraphics[scale=0.85, page=2]{images/convexification.pdf}
  \caption{An isoradial square lattice and a convexification of it. Only the diamond graph is depicted.}
  \label{fig:convexification}
\end{figure}

Below, we will sometimes call $G$ the \emph{square lattice block} of $\tilde G$; $\tilde G \setminus G$ is naturally split into a left and a right part. 

The following two simple lemmas will come in useful when performing track exchanges. 

\begin{lem} \label{lem:convexification}
  For any adjacent horizontal tracks $t, t'$ of $G$ with distinct transverse angles, 
  there exists a convexification $\tilde G$ of $G$ 
  in which the rhombus corresponding to the intersection of $t$ and $t'$ is adjacent to $G^\diamond$. 
\end{lem}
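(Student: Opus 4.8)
The plan is to prove Lemma~\ref{lem:convexification} by constructing an explicit convexification tailored to the chosen pair $t, t'$. Recall that in a finite isoradial square lattice $G = G_{\ba,\bb}$, the horizontal tracks $t_0, \dots, t_N$ have transverse angles $\beta_0, \dots, \beta_N$, and two horizontal tracks $t_i, t_j$ with distinct transverse angles intersect in a convexification precisely when their transverse segments alternate along the boundary of $\tilde G^\diamond$. The key observation is that the position (left versus right side of $\tilde G$) at which two horizontal tracks $t_i$ and $t_j$ must be made to cross is forced by their transverse angles together with their vertical order: given $\beta_i \neq \beta_j$, exactly one of the two placements of the extra rhombus is geometrically consistent (this is the same remark made in the description of the track exchange, where ``there is only one possible position for this rhombus''). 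So a convexification amounts to choosing, for every pair of horizontal tracks with distinct angles, to realise their forced intersection, and the only freedom is the \emph{order} in which these intersections are stacked up the left and right sides.

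First I would set up the combinatorial bookkeeping: sort the horizontal tracks by transverse angle. For tracks whose angles are increasing as one goes up, the crossings accumulate on one side; for decreasing angles, on the other. The convexity requirement on the left and right boundaries of $\tilde G^\diamond$ translates into: along each side, the transverse segments of the horizontal tracks appear in monotone order of their angles. This is exactly the condition that makes both side-boundaries convex polygonal arcs, and it determines the convexification essentially uniquely up to the placement of tracks with equal angles (which never need to cross and can be slid freely past each other). I would check that this prescription is consistent — i.e. that one can actually build a rhombic tiling realising all these crossings simultaneously — by adding the crossing-rhombi one at a time in order of angle, at each step extending a convex arc by one transverse segment; since at each step the new segment is appended at the correct end to preserve monotonicity of angles, convexity is maintained and no two added rhombi overlap. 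This yields \emph{a} convexification $\tilde G$ of $G$.

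Now for the specific requirement: $t$ and $t'$ are \emph{adjacent} horizontal tracks of $G$ with distinct transverse angles, say $t = t_k$ and $t' = t_{k+1}$. I want a convexification in which their mutual crossing-rhombus is adjacent to $G^\diamond$. Adjacency to $G^\diamond$ means the rhombus sits immediately against the left or right boundary of the square-lattice block, with no other added rhombus between it and $G^\diamond$. Since $t_k$ and $t_{k+1}$ are vertically adjacent, on the side where their forced crossing lives they are consecutive in the vertical order, so their crossing-rhombus is the \emph{first} crossing encountered when moving outward from $G^\diamond$ along that side — provided I order the construction so that the crossings involving the pair $\{t_k, t_{k+1}\}$ are placed before any crossing involving a track $t_j$ with $j \notin \{k, k+1\}$ sitting between them. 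But there is no such $t_j$, precisely because $t_k, t_{k+1}$ are adjacent. Hence in \emph{any} convexification respecting the monotone-angle rule, the rhombus dual to $t_k \cap t_{k+1}$ is automatically adjacent to $G^\diamond$ on the appropriate side. So it suffices to exhibit one convexification at all, which the previous paragraph does.

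The main obstacle I anticipate is not the existence of the combinatorial recipe but verifying that the recipe genuinely produces a legitimate isoradial graph — that the greedily-added rhombi tile a topological disc with the prescribed top and bottom boundaries of $G^\diamond$ and with convex left/right boundaries, and in particular that no unwanted extra intersections (e.g. between a horizontal track and a vertical track, forbidden by the second bullet of the definition of convexification) are created. This is a matter of carefully tracking the transverse segments and invoking the Kenyon--Schlenker correspondence between rhombic tilings and isoradial graphs (cited after Figure~\ref{fig:train_tracks}) to guarantee that a consistent family of tracks with prescribed transverse angles and crossing pattern is realised by an actual embedding. I would handle this by an explicit inductive construction adding one horizontal track at a time to the square-lattice block, at each stage gluing on a fan of rhombi on the left and right so as to keep the boundary convex; the bounded-angles property of $G$ ensures all the relevant angles stay in $(0,\pi)$, so every added rhombus is non-degenerate. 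Once that construction is in hand, the statement about $t, t'$ follows immediately as explained above, and the companion bookkeeping is reused verbatim in the next lemma.
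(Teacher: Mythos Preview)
Your construction of a convexification is along the right lines and matches the paper's greedy algorithm in spirit, but the central claim --- that for adjacent tracks $t_k, t_{k+1}$ the crossing rhombus is \emph{automatically} adjacent to $G^\diamond$ in every convexification --- is false. Consider three horizontal tracks $t_0, t_1, t_2$ with transverse angles $\beta_0 > \beta_1 > \beta_2$, so that all three pairwise crossings occur on the same side, say the right. One valid convexification adds the rhombi in the order $R_{12}, R_{02}, R_{01}$ (this is one of the two reduced words for the longest element of $S_3$). After $R_{12}$ is placed, the right boundary of the partial graph reads $e_0, e_2, e_1$ from bottom to top: the transverse segment of $t_2$ now sits between those of $t_0$ and $t_1$. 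The rhombus $R_{01}$ is eventually glued to the right of $R_{02}$ and $R_{12}$, and shares no edge with $G^\diamond$. So vertical adjacency of $t_k$ and $t_{k+1}$ in $G$ does not force their intersection rhombus to be adjacent to $G^\diamond$.

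The flaw in your reasoning lies in the proviso ``provided I order the construction so that the crossings involving $\{t_k, t_{k+1}\}$ are placed before any crossing involving a track sitting between them''. You verify there is no track between $t_k$ and $t_{k+1}$ in the original vertical order of $G$ --- but what matters is the order along the \emph{evolving boundary} of the partially convexified graph, and adding a rhombus such as $R_{k+1,k+2}$ first can insert $t_{k+2}$'s transverse segment between those of $t_k$ and $t_{k+1}$ on that boundary. The fix is exactly what the paper does: run the greedy algorithm (find a concave corner $j$ with $\langle e_{j+1}-e_j,(1,0)\rangle>0$, add the corresponding rhombus, repeat), but choose $j$ to be the index of $t$ the \emph{first} time a concave corner is selected. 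Then $R_{t,t'}$ is the first rhombus placed and is trivially adjacent to $G^\diamond$. Your invocation of Kenyon--Schlenker to justify embeddability is also heavier than needed: the iterative rhombus-addition is manifestly a rhombic tiling at every step, so no external result is required.
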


\begin{lem} \label{lem:convexification2}
  For any two convexifications $\tilde G$ and $\tilde G'$ of $G$, there exists a sequence of \stts that transforms $\tilde G$ into $\tilde G'$ and that does not affect any rhombus of $G^\diamond$. 
\end{lem}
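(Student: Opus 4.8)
The plan is to argue that any convexification of $G$ is obtained from any other by a sequence of \stts supported entirely outside $G^\diamond$, by reducing both to a common ``canonical'' convexification. First I would record the combinatorial data that a convexification $\tilde G$ of $G$ carries: by the third bullet in the definition, the left and right parts of $\tilde G \setminus G$ are rhombic regions whose faces are exactly the intersection points of pairs of horizontal tracks $t_i, t_j$ with distinct transverse angles, each such pair contributing one rhombus to the left part and one to the right part. Thus, as an abstract rhombic tiling, $\tilde G \setminus G$ is determined up to the \emph{order} in which these crossings occur along the left (resp.\ right) boundary; equivalently, a convexification is specified by a linear extension, on each side, of the partial order on horizontal tracks given by their transverse angles, refined into a total order whenever two angles coincide (tracks with equal angles never cross, so their relative order along the boundary is forced to be ``parallel,'' i.e.\ they contribute no rhombus between them). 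The embedding is then rigid once the combinatorics and the transverse angles are fixed, since a rhombic tiling with prescribed track directions is determined by its track structure.

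Next I would show that two convexifications differing by a single adjacent transposition in this boundary order are related by one track exchange of the type described in Figure~\ref{fig:track_exchange}, performed on the two tracks in question but with the sliding rhombus moving \emph{within} $\tilde G \setminus G$ rather than being added and removed. Concretely, if $t$ and $t'$ are two horizontal tracks that are adjacent in the stacking order of some convexification $\tilde G$ and whose crossing rhombus sits (say) in the left part, then the hexagon formed by that crossing rhombus together with the two transverse segments of $t,t'$ on the appropriate side admits a \stt; applying the sequence of \stts of Figure~\ref{fig:track_exchange} slides this rhombus across, swapping the order of $t$ and $t'$ and producing exactly the convexification $\tilde G'$ with the transposed order. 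Crucially, the support of every \stt in this sequence consists of three rhombi all lying in $\tilde G \setminus G$ (they involve only horizontal tracks and the sliding rhombus), so no rhombus of $G^\diamond$ is touched. By Proposition~\ref{prop:stt_iso_preserved} each step keeps the graph isoradial, so all intermediate graphs are genuine convexifications of $G$.

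Then I would conclude by a sorting argument: the left boundary orders of $\tilde G$ and $\tilde G'$ are two total orders each compatible with the same partial order (coming from the transverse angles), hence each can be brought to a fixed reference order — say the one in which tracks appear sorted by transverse angle, ties broken by index — by a finite sequence of adjacent transpositions, each of which is a legal track exchange in the sense above; the same is done independently on the right side. Composing the sequence taking $\tilde G$ to the reference convexification with the reverse of the sequence taking $\tilde G'$ to it yields the desired sequence of \stts transforming $\tilde G$ into $\tilde G'$ without affecting $G^\diamond$. The main obstacle is the bookkeeping in the second step: one must verify that at each stage the crossing rhombus of the pair of tracks to be transposed is genuinely adjacent (so that the hexagon needed for the \stt is present) and that the whole sliding sequence stays inside $\tilde G \setminus G$ and preserves convexity of the two boundary arcs — in other words, that an adjacent transposition in the order really does correspond to a single, fully supported track exchange and not something that temporarily destroys the convexification structure. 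This is where Lemma~\ref{lem:convexification} (existence of a convexification realising a prescribed adjacency) and the rigidity of rhombic tilings with given track directions do the real work.
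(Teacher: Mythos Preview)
The paper's proof is much shorter than your plan: it treats the left and right parts of $\tilde G \setminus G$ separately, observes that the corresponding parts of $\tilde G'\setminus G$ are rhombic tilings with the \emph{same boundary} (the inner side being the right/left boundary of $G^\diamond$, the outer side being the transverse segments sorted by angle, which is forced by convexity), and then invokes a general theorem of Kenyon \cite[Thm.~5]{Kenyon-tiling} asserting that any two rhombic tilings of the same simply-connected region are connected by a finite sequence of hexagon flips, i.e.\ \stts. No further bookkeeping is needed beyond matching the two boundaries.

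Your plan is in spirit an attempt to reprove that connectivity result in this special (zonotopal) case, and a sorting strategy can indeed be made to work, but the combinatorial encoding you propose is not right. A convexification is \emph{not} specified by ``a linear extension of the partial order on horizontal tracks given by their transverse angles'': the exit order along the convex boundary is already forced (tracks leave in increasing angle), so every convexification realises the same linear extension and this data distinguishes nothing. What does vary between convexifications is the \emph{internal} order in which the crossings occur as one sweeps across $G_r$; in Coxeter language, a tiling of the right part corresponds to a commutation class of reduced words for the permutation sorting $(\beta_0,\dots,\beta_N)$, and two such classes differ by braid moves $s_i s_{i+1} s_i \leftrightarrow s_{i+1} s_i s_{i+1}$, each of which is a single hexagon flip involving \emph{three} tracks, not a two-track ``adjacent transposition''. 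Your appeal to the track exchange of Figure~\ref{fig:track_exchange} is also misplaced: that procedure slides a crossing rhombus across a square-lattice block, whereas here the relevant local move is a single \stt\ on a hexagon formed by three mutually crossing horizontal tracks inside $\tilde G\setminus G$. If you want a self-contained argument avoiding the citation, the clean route is a height-function or reduced-word argument showing every non-canonical tiling admits a hexagon flip towards a fixed reference tiling; Lemma~\ref{lem:convexification} is not the tool for this.
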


\begin{proof}[Proof of Lemma~\ref{lem:convexification}]
  We start by describing an algorithm that constructs a convexification of $G$. 
  Let $\langle , \rangle$ be the scalar product on $\bbR^2$.
  \begin{enumerate}
  \item Set $H = G$, which is the graph to be convexified.
  \item Orient the edges on the right boundary of $H^\diamond$ above the base level upwards and denote the corresponding unit vectors by $\bdryedge{0}, \dots, \bdryedge{N}$.
  \item If there exists $j$ such that $\langle\bdryedge{j+1} - \bdryedge{j},(1,0)  \rangle > 0$, fix such a value $j$ and proceed to Step 4. Otherwise, go to the Step 5.
  \item Add a rhombus to $H^\diamond$ whose boundary is given by $\bdryedge{j}, \bdryedge{j+1}, -\bdryedge{j}$ and $-\bdryedge{j+1}$ to the right of the edges $ \bdryedge{j}$, $\bdryedge{j+1}$. Set $H$ to be the graph thus obtained, and go back to Step~2. 
  \item Proceed the same for the left boundary of $G$. 
  \end{enumerate}
  Each rhombus added in Step 4 corresponds to an intersection of two  horizontal tracks of $G$. As such, only a finite number of such rhombi may be added, which shows that the algorithm necessarily terminates. Moreover, it is obvious to see that when it terminates, the resulting graph, which we denote by $\tilde G$, is indeed a convexification of $G$.

  The construction of $\tilde G$ does not ensure that the successive tracks $t$ and $t'$ intersect in $\tilde G$ adjacently to $G$.
  However, 
  we may choose $j$ corresponding to the index of $t$ the first time the algorithm arrives at Step 3 for either the right or left boundary.
  If such choice is made, the intersection of the tracks $t$ and $t'$ in the resulting graph $\tilde G$ will be adjacent to $G$. 
\end{proof}

\begin{proof}[Proof of Lemma~\ref{lem:convexification2}]
  By symmetry, it is sufficient to show that there exists a sequence of \stts that transforms the right part (call it $G_r$) of $\tilde G \backslash G$ into the right part of $\tilde G' \backslash G$ (which we call $G_r'$) without affecting any rhombus of $G^\diamond$.
  Notice that $G_r^\diamond$ and $(G'_r)^\diamond$ have the same boundary.
  Indeed, the left boundaries of $G_r^\diamond$ and $(G'_r)^\diamond$ coincide both with the right boundary of $G^\diamond$. 
  Their right boundaries are both formed of the segments of length $1$, with angles $\bb$, arranged in increasing order.
  Then,~\cite[Thm.~5]{Kenyon-tiling} ensures the existence of the transformations as required. 
\end{proof}

\medbreak

Consider a finite rectangular region $G$ of an isoradial square lattice and consider any of its convexification $\tilde G$.
Using the previous two lemmas, one can switch the transverse angles of any two neighbouring horizontal train tracks by a sequence of \stts.
A more precise statement is given below.

\begin{cor}
  Let $G = G_{\ba, \bb}$ be as above and let $t$ and $t'$ be two adjacent horizontal train tracks with distinct transverse angles. 
  Then, for any convexification $\tilde G$ of $G$, there exists a sequence of \stts $\sigma_1, \dots, \sigma_k$ that may be applied to $\tilde G$ with the following properties: 
  \begin{itemize}
  \item  there exists $0 \leq \ell < k$ such that the transformations $\sigma_1, \dots, \sigma_\ell$ only affect either the right or the left side of $\tilde G \setminus G$;
  \item in $(\sigma_\ell \circ \dots \circ \sigma_1) (\tilde G)$, the tracks $t$ and $t'$ intersect at a rhombus adjacent to $G$;
  \item the transformations $\sigma_{\ell + 1}, \dots, \sigma_k$ applied to $(\sigma_\ell \circ \dots \circ \sigma_1) (\tilde G)$ are ''sliding'' the intersection of $t$ and $t'$ from one side of $G$ to the other, as described in Figure~\ref{fig:track_exchange}.
  \end{itemize} 
  Write $\Sigma_{t, t'} = \sigma_k \circ \dots \circ \sigma_1$. 
  If $\tau$ denotes the transposition of the indices of tracks $t$ and $t'$, 
  then $\Sigma_{t, t'}(G)$ is a convexification of $G_{\ba, \tau\bb}$.
\end{cor}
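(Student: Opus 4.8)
The plan is to write $\Sigma_{t,t'}$ as a concatenation of two blocks of \stts: a first block $\sigma_1,\dots,\sigma_\ell$ that rearranges the rhombi of $\tilde G\setminus G$ until the intersection of $t$ and $t'$ comes to sit next to the square-lattice block $G$, and a second block $\sigma_{\ell+1},\dots,\sigma_k$ which is the track exchange of Figure~\ref{fig:track_exchange}. For the first block, Lemma~\ref{lem:convexification} supplies a convexification $\tilde G_0$ of $G$ in which the rhombus corresponding to $t\cap t'$ is adjacent to $G^\diamond$, and Lemma~\ref{lem:convexification2} supplies a sequence of \stts transforming $\tilde G$ into $\tilde G_0$ without disturbing any rhombus of $G^\diamond$; these will be $\sigma_1,\dots,\sigma_\ell$. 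The refinement in the first bullet — that each $\sigma_i$ stays within a single side of $\tilde G\setminus G$ — is not in the statement of Lemma~\ref{lem:convexification2} but is read off from its proof, where the right and the left parts of $\tilde G\setminus G$ are rearranged separately among the tilings with fixed boundary via \cite[Thm.~5]{Kenyon-tiling}, so the \stts used for one part never touch the other part nor $G^\diamond$. With $(\sigma_\ell\circ\cdots\circ\sigma_1)(\tilde G)=\tilde G_0$ this also gives the second bullet.

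For the second block I would apply the track exchange to $\tilde G_0$. In $\tilde G_0$ the rhombus $t\cap t'$ lies at the unique admissible end (in the sense discussed before Figure~\ref{fig:track_exchange}) of the two-row strip carried by $t$ and $t'$, adjacent to $G^\diamond$, so the \stts of Figure~\ref{fig:track_exchange} apply: each flips the hexagon made of $t\cap t'$ together with its two neighbouring rhombi inside $G^\diamond$, sliding $t\cap t'$ one step along the strip, until it reaches the opposite end. These are $\sigma_{\ell+1},\dots,\sigma_k$; each of them affects only the tracks $t$ and $t'$, and the rectangular structure of $G$ guarantees that at every stage the three rhombi to be permuted genuinely form a hexagon of the current diamond graph. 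Their net effect on $G=G_{\ba,\bb}$ is to interchange the rows carried by $t$ and $t'$, producing $G_{\ba,\tau\bb}$.

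It then remains to check that $\Sigma_{t,t'}(\tilde G):=(\sigma_k\circ\cdots\circ\sigma_1)(\tilde G)$ is a convexification of $G_{\ba,\tau\bb}$, and here I would invoke three invariance properties of a \stt: it changes neither the region tiled by the diamond graph (it merely permutes three rhombi inside a fixed hexagon, cf.\ Figure~\ref{fig:local_relation}), nor the set of train tracks, nor the set of pairs of tracks that cross; and by Proposition~\ref{prop:stt_iso_preserved} the result is again isoradial. Consequently $\Sigma_{t,t'}(\tilde G)^\diamond$ tiles the same region as $\tilde G^\diamond$, its train tracks are those of $G$, hence those of $G_{\ba,\tau\bb}$ (since $\tau\bb$ is a permutation of $\bb$), and any two horizontal tracks with distinct transverse angles still meet inside it — which, as noted after the definition of convexification, is exactly the convexity of its left and right boundaries. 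Combining this with the fact that $\Sigma_{t,t'}(\tilde G)$ contains $G_{\ba,\tau\bb}$ and with the direct observation from Figure~\ref{fig:track_exchange} that the top and bottom boundaries of $G_{\ba,\tau\bb}^\diamond$ bound $\Sigma_{t,t'}(\tilde G)^\diamond$ yields the claim, with $\Sigma_{t,t'}=\sigma_k\circ\cdots\circ\sigma_1$.

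The main obstacle I anticipate is not a single sharp estimate but the bookkeeping in the two blocks. For the first block one must extract from the proofs of Lemmas~\ref{lem:convexification} and~\ref{lem:convexification2} the precise structural claim that the passage from the arbitrary convexification $\tilde G$ to the convenient $\tilde G_0$ can be done using \stts each confined to one side of $\tilde G\setminus G$ and never touching $G^\diamond$; for the second block one must verify that every hexagon flip of the slide is legitimate given only that $t\cap t'$ is adjacent to $G^\diamond$ after the first block, and track which rhombi it moves. By contrast, once the region-, track- and crossing-invariance of \stts recorded above are in hand, the final identification of $\Sigma_{t,t'}(\tilde G)$ as a convexification of $G_{\ba,\tau\bb}$ is routine.
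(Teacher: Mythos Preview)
Your proposal is correct and follows essentially the same route as the paper: invoke Lemma~\ref{lem:convexification} to get a target convexification $\tilde G_0$ with $t\cap t'$ adjacent to $G^\diamond$, use Lemma~\ref{lem:convexification2} to reach it by \stts that avoid $G^\diamond$, and then slide the rhombus across as in Figure~\ref{fig:track_exchange}. The paper's proof is slightly sharper on the first bullet: since $t$ and $t'$ have distinct transverse angles, their intersection in $\tilde G$ lies on a determined side of $G$ (say the right), and one may choose $\tilde G_0$ to have the \emph{same} left part as $\tilde G$; then Lemma~\ref{lem:convexification2} needs only rearrange the right part, so \emph{all} of $\sigma_1,\dots,\sigma_\ell$ act on that single side, which is the intended reading of the statement rather than the weaker ``each $\sigma_i$ stays within some side'' that you extract.
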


\begin{proof}
  Suppose for simplicity that the tracks $t$ and $t'$ intersect in $\tilde G$ to the right of $G$ 
  (which is to say that the transverse angle of the lower track is greater than that of the above).
  
  Write $\tilde G'$ for a convexification of $G$ 
  in which the tracks $t$, $t'$ intersect in a rhombus adjacent to $G^\diamond$ (as given by Lemma~\ref{lem:convexification}). 
  It is obvious that the left side of $\tilde G'$ may be chosen identical to that of $\tilde G$, and we will work under this assumption. 
  
  Let $\sigma_1,\dots,\sigma_\ell$ be a sequence of \stts as that given by Lemma~\ref{lem:convexification2} that affects only the right side of $\tilde G$ and that transforms $\tilde G$ into $\tilde G'$. Let $\sigma_{\ell+1},\dots, \sigma_k$ be the series of \stts that slides the intersection of $t$ and $t'$ from right to left of $G$, as in Figure~\ref{fig:track_exchange}.
  Then, $\sigma_1,\dots,\sigma_k$ obviously satisfies the conditions above. 
\end{proof}

In the following, we will apply repeated line exchanges $\Sigma_{t_{i},t_{j}}$ to a convexification $\tilde G$ of some finite portion of a square lattice.
Thus, we will implicitly assume $\Sigma_{t_{i},t_{j}}$ is a series of \stts as in the lemma above, adapted to the convexification to which it is applied. 
When $t_i$ and $t_{j}$ have same transverse angles, we will simply write $\Sigma_{t_{i},t_{j}}$ for the empty sequence of transformations. 
We note that tracks are indexed with respect to the starting graph and are not reindexed when track exchanges are applied.
This is the reason why neighboring tracks do not necessarily have indices which differ by 1; thus, we call them $t_i$ and $t_j$ with the only constraint $i \neq j$.

All of the above may be summarised as follows.
A convexification of $G$ provides all the horizontal track intersections necessary to exchange any two horizontal tracks (that is the gray rhombus in Figure~\ref{fig:track_exchange} for any pair of horizontal tracks).
In order to exchange two adjacent horizontal tracks $t_i$ and $t_{j}$, the sequence of transformations $\Sigma_{t_i, t_j}$ starts from bringing the intersection of $t_i$ and $t_j$ next to $G$ (this is done through \stts that do not affect $G$), then slides it through $t_i$ and $t_j$. 

In certain arguments below, it will be more convenient to work with a ``double'' strip of square lattice $G = G_{\ba, \bb}$ where $\ba$ and $\bb$ are finite sequences of angles and $\bb = (\beta_n)_{-N \leq n \leq N}$ for some $N > 0$. 
We will then separately convexify the upper half $G_{\ba,(\beta_0,\dots, \beta_N)}$ and $G_{\ba,(\beta_{-N},\dots, \beta_{-1})}$ (as in Figure~\ref{fig:gmix}).
Track exchanges will only be between tracks above $t_0$ or strictly below $t_0$; 
the base (that is the vertices between $t_{-1}$ and $t_0$) will never be affected by track exchanges.

\subtitle{Construction of the mixed graph by gluing}
Consider two isoradial square lattices with same sequence $\ba$ of transverse angles for the vertical tracks. 
Write $\graph{1} = G_{\ba, \bb^{(1)}}$ and $\graph{2} = G_{\ba, \bb^{(2)}}$.
Additionally, suppose that they both belong to $\calG(\eps)$ for some $\eps > 0$.

Fix integers $N_1,N_2,M \in \bbN$. 
We create an auxiliary graph $G_\mix$, called the \emph{mixed graph}, 
by superimposing strips of $\graph{1}$ and $\graph{2}$ of width $2M+1$, then convexifying the result.
More precisely, let $\tilde \bb = (\beta^{(1)}_0, \dots, \beta^{(1)}_{N_1}, \beta^{(2)}_{0},\dots, \beta^{(2)}_{N_2})$ and $\tilde \ba = (\alpha_n)_{-M \leq n \leq M}$. 
Define $G_\mix$ to be a convexification of $G_{\tilde \ba,\tilde \bb}$.

Write $G^{(1)} = G_{\tilde \ba, \tilde \bb^{(1)}}$ and $G^{(2)} = G_{\tilde \ba, \tilde \bb^{(2)}}$, 
where 
$$
\tilde \bb^{(1)} = (\beta^{(1)}_0, \dots, \beta^{(1)}_{N_1})
\qquad \text{ and } \qquad
\tilde \bb^{(2)} = (\beta^{(2)}_0, \dots, \beta^{(2)}_{N_2}).
$$
These are both subgraphs of $G_\mix$; we call them the blocks of $\graph{1}$ and $\graph{2}$ inside $G_\mix$.

Next, we aim to switch these two blocks of $G_\mix$ using \stts.
That is, we aim to transform $G_\mix$ into a graph $G_\mix'$ obtained as above, with the sequence $\tilde \bb$ replaced by 
$(\beta^{(2)}_0, \dots, \beta^{(2)}_{N_2}, \beta^{(1)}_{0},\dots, \beta^{(1)}_{N_1})$.
There are two ways of doing this, each having its own advantages. 

One way is to use track exchanges to send the tracks  $t_{N_{1}+1},\dots, t_{N_1+ N_2+1}$ of $G_\mix$ all the way down, one by one. 
Using the notation of the previous section, this corresponds to the following sequence of track exchanges
\begin{align*}
  \Sigma^\downarrow
  =
  \Sigma^\downarrow_{N_1 + N_2 + 1} \circ \cdots \circ \Sigma^\downarrow_{N_1+1},
\end{align*}
where $\Sigma^\downarrow_k = \Sigma_{t_0, t_k} \circ \cdots \circ \Sigma_{t_{N_1},t_k}$
is a sequence of \stts sending the track $t_k$ to the bottom of the block $G^{(1)}$ in $G_\mix$.
This will be useful in the proof of Proposition~\ref{prop:horizontal_transport}, where we need to control the upward drift of an open path.

The other is to push the tracks $t_{N_{1}},\dots, t_{0}$ all the way up, one by one. 
It formally reads
\begin{align*}
  \Sigma^\uparrow 
  =
  \Sigma^\uparrow_0 \circ \cdots \circ \Sigma^\uparrow_{N_1},
\end{align*}
where $\Sigma^\uparrow_k = \Sigma_{t_k, t_{N_1 + N_2 +1}} \circ \cdots \circ \Sigma_{t_k, t_{N_1+1}}$
is a sequence of \stts sending the track $t_k$ to the top of the block $G^{(2)}$ in $G_\mix$.
This will be used to study the downward drift of an open path in Proposition~\ref{prop:vertical_transport}.

One may easily check that the sequences $\Sigma^\downarrow$ and $\Sigma^\uparrow$ may be applied to $G_\mix$.
That is that whenever a track exchange $\Sigma_{t,t'}$ is applied, 
the previous track exchanges are such that the tracks $t$ and $t'$ are adjacent. 
The two sequences of track exchanges are illustrated in Figure~\ref{fig:Sigma_up_down}.

\begin{figure}[htb]
  \centering
  \includegraphics[scale=0.75, page=1]{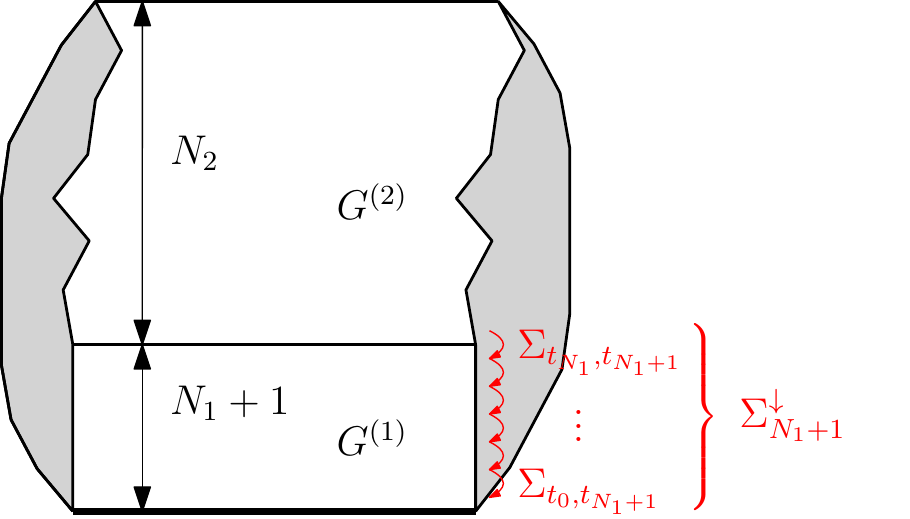}
  \includegraphics[scale=0.75, page=2]{images/Sigma_up_down.pdf}
  \caption{The graph $G_\mix$ is obtained by superimposing $G^{(1)}$ and $G^{(2)}$ then convexifying the result (in gray).
    \textbf{Left:} The sequence $\Sigma^\downarrow_{N_1+1}$ moves the track $t_{N_1+1}$ below the block $G^{(1)}$.
    \textbf{Right:} The sequence $\Sigma^\uparrow_{N_1}$ moves the track $t_{N_1}$ above the block $G^{(2)}$.}
  \label{fig:Sigma_up_down}
\end{figure}

The resulting graphs $\Sigma^\uparrow(G_\mix)$ and $\Sigma^\downarrow(G_\mix)$ 
both contain the desired block of isoradial square lattice, but their convexification may differ. 
However, by Lemma~\ref{lem:convexification2}, we may fix one convexification $G_\mix'$ of the resulting square lattice block and add \stts at the end of both $\Sigma^\uparrow$ and $\Sigma^\downarrow$ that only affect the convexification and such that 
$\Sigma^\uparrow(G_\mix) = \Sigma^\downarrow(G_\mix) = G_\mix'$.
Henceforth, we will always assume that both $\Sigma^\uparrow$ and $\Sigma^\downarrow$ contain these \stts. 

Since each \stt preserves the \rcm, we have
$$\Sigma^\uparrow \phi^\xi_{G_\mix} = \Sigma^\downarrow \phi^\xi_{G_\mix} = \phi^\xi_{G_\mix'}$$
for all boundary conditions $\xi$. 
Above, $\phi^\xi_{G_\mix}$ and $\phi^\xi_{G_\mix'}$ denote the \rcms with $\beta = 1$ 
and boundary conditions $\xi$ on $G_\mix$ and $G_\mix'$ respectively. 
The action of $\Sigma^\uparrow$ (and $\Sigma^\downarrow$) should be understood as follows. 
For a configuration $\omega$ chosen according to $\phi^\xi_{G_\mix}$, 
the sequence $\Sigma^\uparrow$ of \stts is applied to $\omega$
with the resulting configuration sampled as described in Figure~\ref{fig:simple_transformation_coupling}, 
independently for each \stt.
Then the final configuration follows $\phi^\xi_{G_\mix'}$. The same holds for~$\Sigma^\downarrow$.

The reader may note that we do not claim that $\Sigma^\uparrow (\omega)$ and $\Sigma^\downarrow(\omega)$ have the same law for any \emph{fixed} configuration $\omega$ on $G_\mix$;
this is actually not the case in general.

\begin{figure}[htb]
  \centering
  \includegraphics[scale=0.6, page=1]{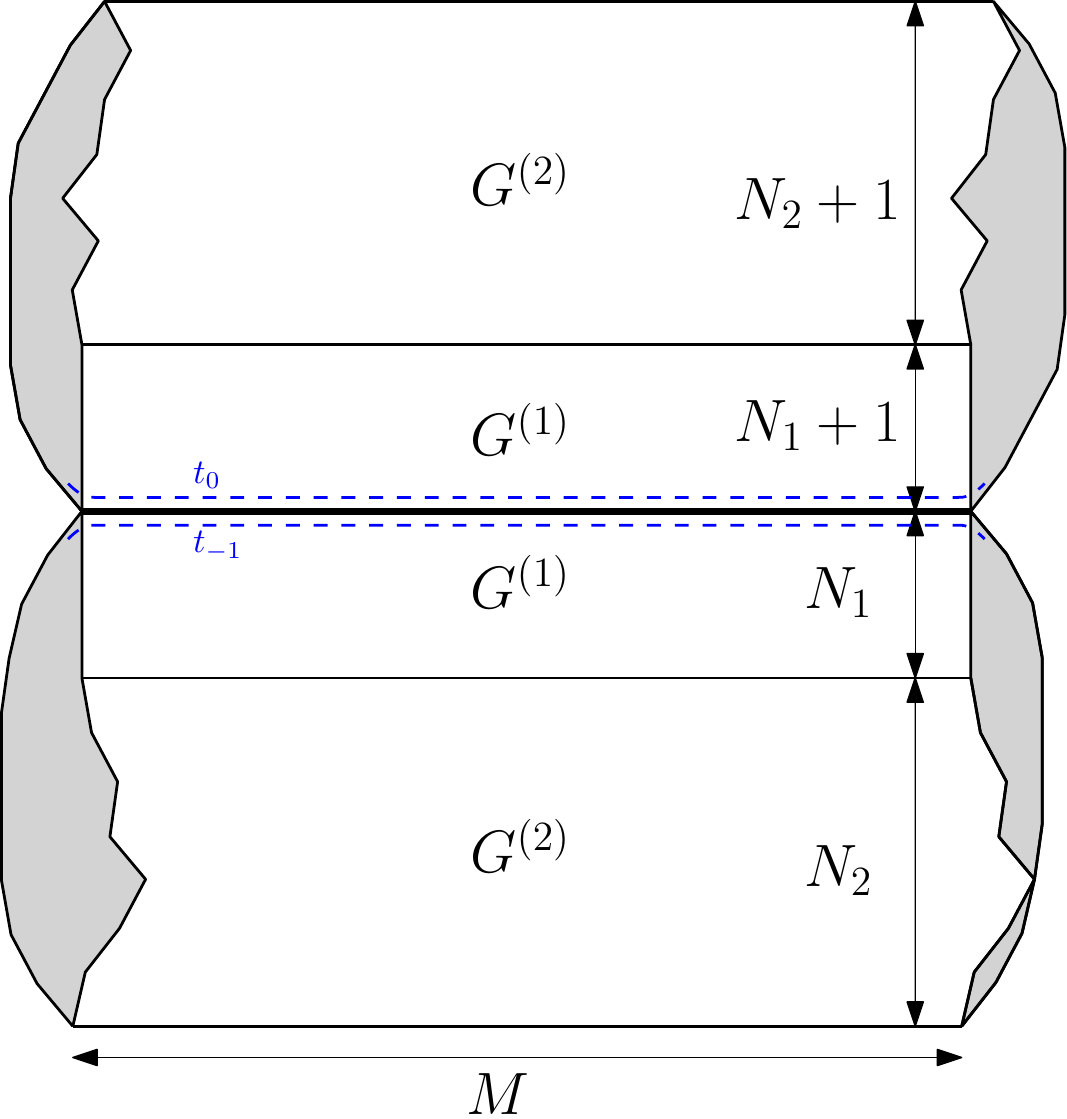}
  \hspace{1cm}
  \includegraphics[scale=0.6, page=2]{images/gmix3.pdf}
  \caption{\textbf{Left:} The graph $G_\mix$ constructed in both the upper and lower half plane. The convexification is drawn in gray
    \textbf{Right:} By exchanging tracks, the relative positions of $G^{(1)}$ and $G^{(2)}$ are switched,  
    the resulting graph is $\Sigma^\uparrow(G_\mix) = \Sigma^\downarrow(G_\mix) = G_\mix'$.
    Note that there is a slight assymetry in the upper-half and the lower-half planes.}
  \label{fig:gmix}
\end{figure}

In certain parts of the proofs that follow, we construct a mixture as described above, in both the upper and lower half-plane, as depicted in Figure~\ref{fig:gmix}. 
That is, we set 
$$
\tilde \bb = ( \beta^{(2)}_{-N_2},\dots, \beta^{(2)}_{-1},\beta^{(1)}_{-N_1}, \dots, \beta^{(1)}_{N_1}, \beta^{(2)}_{0},\dots, \beta^{(2)}_{N_2})
$$
and $\tilde \ba = (\alpha_n)_{-M \leq n \leq M}$ and define the base as the vertices of $G_{\tilde \ba, \tilde \bb}^\diamond$ between $t_{-1}$ and $t_0$. 
Then, set $G_\mix$ to be the separate convexification of the portions of $G_{\tilde \ba, \tilde \bb}$ above and below the base.
We will call $G_\mix$ the \emph{symmetric} version of the \emph{mixed graph}. 

The sequences $\Sigma^\uparrow$ and $\Sigma^\downarrow$ of track exchanges are defined in this case by 
performing the procedure described above separately on both sides of the base. 
For instance, $\Sigma^\uparrow$ is the sequence of \stts that pushes $t_{N_1}$ all the way to the top and $t_{-N_1}$ all the way to the bottom, then  $t_{N_1-1}$ and $t_{-N_1+1}$ all the way to the top and bottom respectively, etc.
Observe that the blocks below the base, and therefore the number of line exchanges applied, differ by one from those above due to the track $t_0$.

%
%

\subtitle{Local behaviour of an open path}
In the proofs of the coming sections we will utilize the line exchanges defined above to transport certain connection estimates from $\graph{1}$ to $\graph{2}$.
To that end, we will need to control the effect that the line exchanges have on open paths.
Recall that the coupling of Figure~\ref{fig:simple_transformation_coupling} is designed to preserve connections.
As such, any open path before a \stt has a corresponding open path in the resulting configuration. 

Let $G_\mix$ be a mixed graph and $t,t'$ be two adjacent horizontal tracks.
Let $\omega$ be a configuration on  $G_\mix$ and $\gamma$ be a simple path, open in $\omega$, 
and contained in the square lattice block of $G_\mix$. 
Then, the intersection of $\gamma$ with the tracks $t$ and $t'$ 
may be split into disjoint segments of two edges (or of one edge if the endpoint of $\gamma$ is on the line between $t$ and $t'$).
The effect of the transformations on $\gamma$ may therefore be understood simply by studying how each individual segment is affected. 
Each segment is actually only affected by at most three consecutive \stts of $\Sigma_{t,t'}$, and the effect of these is summarized in Figure~\ref{fig:path_transformations}.

A very similar analysis appears in~\cite[Sec.~5.3]{GriMan14}.
The only difference between Figure~\ref{fig:path_transformations} and~\cite[Fig.~5.5.]{GriMan14} 
is in the probabilities of secondary outcomes, which are adapted to the random-cluster model.
The exact values will be relevant in Section~\ref{sec:quantum}, when studying the quantum model.

\begin{figure}[htb]
  \centering
  \includegraphics[scale=0.9]{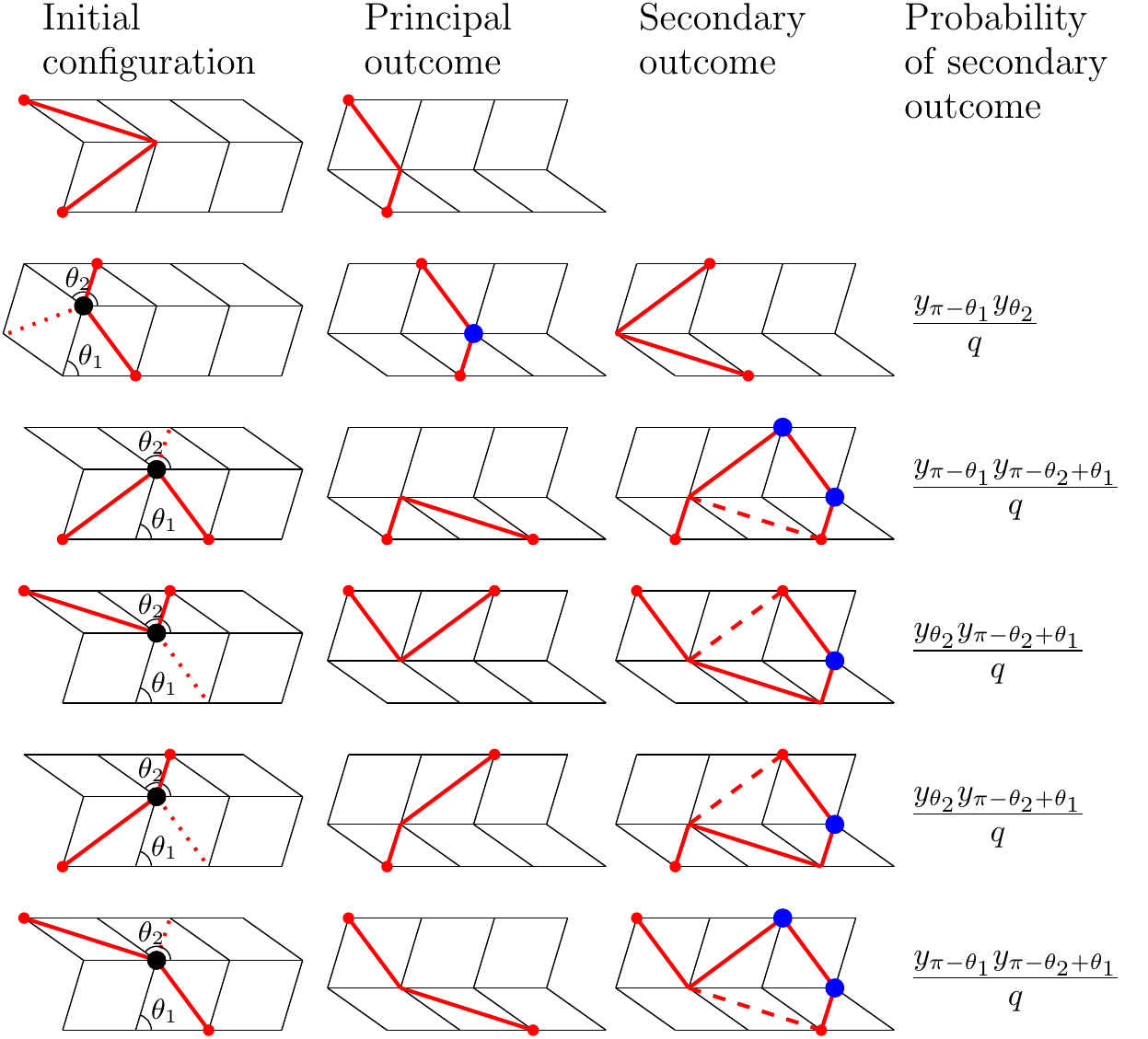}
  \caption{Path transformations. 
    The left column exhausts all the possible intersections of $\gamma$ (in thick red lines) with $t$ and $t'$. 
    The second column depicts the ``principal'' outcome, which arises with probability $1$ when there is no secondary outcome or when the dotted red edge in the initial diagram is closed.
    Otherwise, the resulting configuration is random: either the principal or the secondary outcome (third column) appear, the latter with the probability given in the last column.
    Dashed edges in the secondary outcome are closed.
    The randomness comes from a \stt, and hence is independent of any other randomness.}
  \label{fig:path_transformations}
\end{figure}

Finally, if an endpoint of $\gamma$ lies between the two adjacent horizontal tracks $t$ and $t'$, 
a special segment of length $1$ appears in the intersection of $\gamma$ with $t$ and $t'$. 
This segment obeys separate rules; in particular it may be contracted to a single point, as shown in Figure~\ref{fig:path_transfo_ep}.

\begin{figure}[htb]
  \centering
  \includegraphics[scale=0.92]{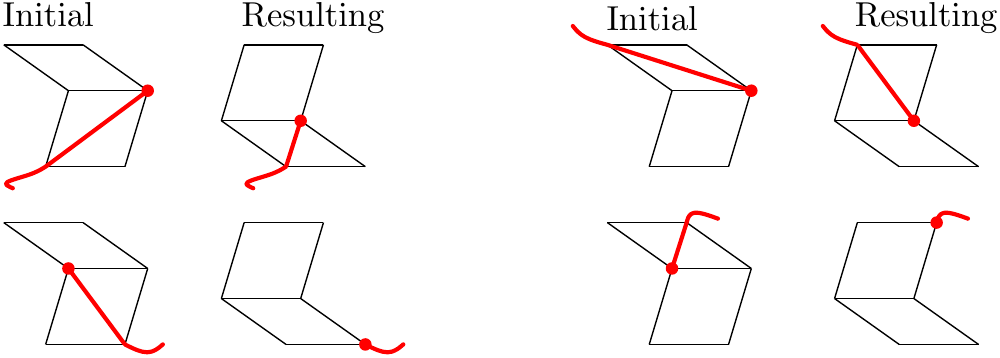}
  \caption{If an endpoint of a path lies between two tracks, the corresponding edge is sometimes contracted to a single point.}
  \label{fig:path_transfo_ep}
\end{figure}

\subsubsection{From isoradial square lattices to general graphs}

Let $\bbG$ be an isoradial graph. 
We call a \emph{grid} of $\bbG$ two  bi-infinite families of tracks $(s_{n})_{n \in \bbZ}$ and $(t_{n})_{n \in\bbZ}$ of $\bbG$ with the following properties.
\begin{itemize}
\item The tracks of each family do not intersect each other.
\item All tracks of $\bbG$ not in $(t_{n})_{n \in \bbZ}$ intersect all those of $(t_{n})_{n \in \bbZ}$.
\item All tracks of $\bbG$ not in $(s_{n})_{n \in \bbZ}$ intersect all those of $(s_{n})_{n \in \bbZ}$.
\item The intersections of $(s_{n})_{n \in \bbZ}$ with $t_0$ appear in order along $t_0$ (according to some arbitrary orientation of $t_0$) and the same holds for the intersections of $(t_{n})_{n \in \bbZ}$ with $s_0$. 
\end{itemize}
The tracks $(s_n)_{n \in \bbZ}$ and $(t_n)_{n \in \bbZ}$ are called \emph{vertical} and \emph{horizontal} respectively.
The vertices of $\bbG^\diamond$ below and adjacent to $t_0$ are called the base of $\bbG$.

In our setting, the existence of a grid is guaranteed by the following lemma.

\begin{lem} \label{lem:square_grid}
  Let $\bbG$ be an isoradial graph.
  Then,
  \begin{itemize}
  \item if $\bbG$ is doubly-periodic, it contains a grid;
  \item $\bbG$ is an embedding of the square lattice if and only if any of its grid contains all its tracks.
  \end{itemize}
\end{lem}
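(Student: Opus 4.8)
The plan is to prove the two items separately, both by working with the train-track picture of $\bbG$ and its diamond graph.

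For the first item, assume $\bbG$ is doubly-periodic, invariant under a lattice $\Lambda \approx \bbZ \oplus \bbZ$. Each train track $t$ has a transverse angle, and since tracks of $\bbG^\diamond$ cannot intersect themselves and two distinct tracks intersect at most once, periodicity forces the set of tracks to split into finitely many $\Lambda$-orbits. The key observation is that a family of pairwise non-intersecting tracks all share the same transverse direction (if two tracks have distinct transverse angles then, following the argument already used in the convexification discussion --- their transverse segments must appear in alternating order along any large enough region --- they must cross). So to build a grid I would: pick a direction realized by some track, let $(t_n)_{n\in\bbZ}$ be a maximal family of pairwise non-crossing tracks containing a $\Lambda$-translate-closed subfamily with that transverse angle; pick a second track direction transverse to the first (one exists, else $\bbG$ would be degenerate/one-dimensional, contradicting that it tiles the plane by rhombi), and let $(s_n)_{n\in\bbZ}$ be the analogous family. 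Any track not in $(t_n)$ has a different transverse angle from the $t_n$'s, hence crosses all of them; similarly for $(s_n)$. Periodicity guarantees these families are bi-infinite and that one can index them so that the intersections with $t_0$ (resp.\ $s_0$) appear in order --- this ordering is essentially the cyclic order of directions around a point, made monotone by choosing the indexing consistently with a $\Lambda$-generator. This gives all four grid properties.

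For the second item, one direction is immediate: if $\bbG$ is an embedding of the square lattice, its diamond graph has exactly two families of parallel tracks (the ``vertical'' and ``horizontal'' ones in the $G_{\ba,\bb}$ notation of Section~\ref{sec:mix}), each family pairwise non-crossing, and every track lies in one of the two families; taking $(s_n),(t_n)$ to be these two families yields a grid containing all tracks. Conversely, suppose some grid $(s_n)_{n\in\bbZ},(t_n)_{n\in\bbZ}$ of $\bbG$ contains every track of $\bbG$. Then every track is either an $s_n$ or a $t_n$; the $s$'s are pairwise non-crossing, the $t$'s are pairwise non-crossing, and by the grid axioms every $s_n$ crosses every $t_m$ exactly once. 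Each face of $\bbG^\diamond$ is the intersection of two tracks, hence of one $s_n$ and one $t_m$, and for each pair $(n,m)$ there is exactly one such face. So the faces of $\bbG^\diamond$ are in bijection with $\bbZ^2$, with the face $(n,m)$ adjacent along its four sides to faces $(n\pm1,m)$ and $(n,m\pm1)$ (consecutive transverse segments along a track): this is precisely a rhombic tiling combinatorially isomorphic to that of the regular square lattice, and the corresponding isoradial graph $\bbG$ (primal vertices = one color class of the tiling) is an embedding of $\bbZ^2$.

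The main obstacle is the first item: producing the grid with the \emph{ordering} property (the last bullet in the definition of a grid), and more basically arguing that a doubly-periodic isoradial graph admits \emph{two} transverse families that between them cross all remaining tracks. The cleanest route is to use that tracks in a non-crossing family share a transverse angle, so the distinct transverse angles occurring in $\bbG$ are finite in number (by periodicity), pick two of them, $\theta_1\neq\theta_2$, and let $(t_n)$ (resp.\ $(s_n)$) be \emph{all} tracks with transverse angle $\theta_1$ (resp.\ $\theta_2$); non-crossing within each family is then automatic, cross-crossing between the families and with all other tracks follows from the distinct-angle $\Rightarrow$ cross principle, bi-infiniteness follows from periodicity, and the monotone indexing follows by ordering each family by a $\Lambda$-translation vector not parallel to the tracks. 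I would cite the Kenyon--Schlenker correspondence~\cite{KS-quad-graphs} for the standing facts about tracks used throughout.
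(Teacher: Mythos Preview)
Your argument for the second item is fine and matches the paper's ``straightforward'' claim.

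The first item, however, has a genuine gap. Your key claim --- that pairwise non-crossing tracks share a transverse angle, and conversely that distinct transverse angles force an intersection --- is false. One direction does hold: two tracks with the \emph{same} transverse angle cannot cross, since a shared rhombus would have two distinct pairs of parallel sides with the same angle. But the converse fails already for the isoradial square lattices $\bbG_{\ba,\bb}$ of Section~\ref{sec:mix}: the vertical tracks $(s_n)$ have transverse angles $\alpha_n$, which in a doubly-periodic example can take several distinct values, yet no two vertical tracks intersect. If you let $(t_n)$ be all tracks with transverse angle $\alpha_0$, then any vertical track $s_m$ with $\alpha_m\neq\alpha_0$ lies outside $(t_n)$ but intersects none of the $(t_n)$, violating the grid axiom ``all tracks not in $(t_n)$ intersect all of $(t_n)$''.

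The fix, and what the paper does, is to classify tracks not by transverse angle but by \emph{asymptotic direction}: periodicity forces each track to stay within bounded distance of a line $a\tau_1+b\tau_2$, and one calls two tracks parallel if they have the same such direction. Then non-parallel tracks must intersect (different asymptotic directions), while parallel tracks cannot intersect (else by periodicity they would intersect infinitely often, contradicting ``at most one intersection''). Taking $(t_n)$ and $(s_n)$ to be two full parallelism classes, ordered along $s_0$ and $t_0$ respectively, gives the grid. The point is that parallelism classes are coarser than transverse-angle classes, and it is the coarser partition that makes the grid axioms hold.
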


It may be worth mentioning that if $\bbG$ has a grid $(s_n)_{n \in \bbZ}$ and $(t_n)_{n \in \bbZ}$  and $\sigma_1, \dots, \sigma_K$ are \stts that may be applied to $\bbG$, then the tracks $(s_n)_{n \in \bbZ}$ and $(t_n)_{n \in \bbZ}$ of $(\sigma_K \circ \dots \circ \sigma_1) (\bbG)$ also form a grid of the transformed graph $(\sigma_K \circ \dots \circ \sigma_1) (\bbG)$.
Observe also that generally, grids are not unique.

\begin{proof}
  Let $\bbG$ be a doubly-periodic isoradial graph, 
  invariant under the translation by two linearly independent vectors $\tau_1,\tau_2 \in \bbR^2$.
  First notice that, by the periodicity of $\bbG$, 
  each track $t$ of $\bbG$ is also invariant under some translation $a \tau_1 + b \tau_2$ 
  for a certain pair $(a, b) \in \bbZ^2 \backslash \{ (0, 0) \}$. 
  Thus, $t$ stays within bounded distance of the line of direction $a \tau_1 + b \tau_2$,
  which we now call the \emph{asymptotic direction} of $t$.
  Call two tracks \emph{parallel} if they have the same asymptotic direction. 
  
  By the periodicity of $\bbG$, the set of all asymptotic directions of tracks of $\bbG$ is finite. 
  Thus, the tracks of $\bbG$ may be split into a finite number of sets of parallel tracks. 
  It is immediate that two tracks which are not parallel intersect. 
  Conversely, if two parallel tracks intersect, they must do so infinitely many times, due to periodicity. 
  This is impossible, since two tracks can intersect at most once. 
  In conclusion, tracks intersect if and only if they are not parallel. 
  
  Let $t_0$ and $s_0$ be two intersecting tracks of $\bbG$. Orient each of them in some arbitrary direction. 
  Write $\dots, t_{-1}, t_0, t_1, \dots$ for the tracks parallel to $t_0$, ordered by their intersections with $s_0$.
  Similarly, let $\dots, s_{-1}, s_0, s_1, \dots$ be the tracks parallel to $s_0$, in the order of their intersections with $t_0$.
  
  Then, the two families $(s_n)_{n \in \bbZ}$ and $(t_n)_{n \in \bbZ}$ defined above form a grid for $\bbG$:
  the tracks of each family do not intersect each other since they are parallel, 
  but intersect all other tracks, since these have distinct asymptotic directions. 
  
  The second point of the lemma is straightforward. 
\end{proof}

In an isoradial graph $\bbG$ with grid $(s_n)_{n \in \bbZ}$ and $(t_n)_{n \in \bbZ}$, write $\rect(i, j; k, \ell)$ for the region enclosed by $s_i$, $s_j$, $t_k$ and $t_\ell$, 
including the four boundary tracks.
We say that $\rect(i, j; k, \ell)$ has a \emph{square lattice structure} if it is the subgraph of some isoradial square lattice. 
This will be applied to local modifications of bi-periodic graphs,
thus inside $\rect(i,j;k,\ell)$ there may exist tracks not belonging to $(s_n)_{i \leq n \leq j}$ which do not intersect any of the tracks $(s_n)_{i \leq n \leq j}$.  
Such tracks would be vertical in a square lattice containing $\rect(i, j; k, \ell)$, but are not vertical in $\bbG$.
See the right-hand side of Figure~\ref{fig:square_black_pts} for an illustration.

\begin{figure}[htb]
  \centering
  \includegraphics[scale=0.94]{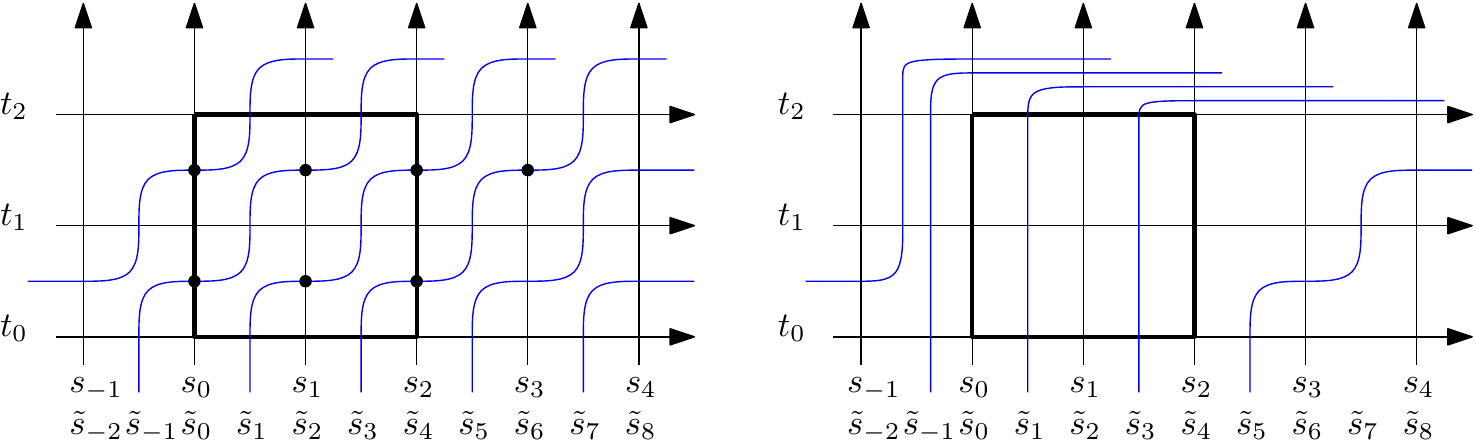}
  \caption{\textbf{Left:} The train tracks of a portion of a doubly-periodic isoradial graph $\bbG$.
    A grid of $\bbG$ is given by horizontal tracks $(s_n)$ and vertical tracks $(t_n)$.
    We denote by $(\tilde s_n)$ its non-horizontal tracks.
    We want to make the region $\rect(0, 2; 0; 2)$ have a square lattice structure by removing all the black points using \stts.
    \textbf{Right:} The black points are removed (from the top) from the region $\rect(0, 2; 0; 2)$, making a square structure appear inside.
    This region contains tracks $\tilde s_1$ and $\tilde s_3$ which would be vertical in a square lattice containing $\rect(0, 2; 0, 2)$ but are not vertical in the original graph $\bbG$ on the left.}
  \label{fig:square_black_pts}
\end{figure}

In the second stage of our transformation from the regular square lattice to arbitrary doubly-periodic isoradial graphs, we use \stts to transfer crossing estimates from isoradial square lattices to periodic graphs. To that end, given a doubly-periodic isoradial graph, 
we will use \stts to construct a large region with a square lattice structure. 
The proposition below is the key to these transformations. 

A \stt is said to act between two tracks $t$ and $t'$ if the three rhombi affected by the transformation are all between $t$ and $t'$, including potentially on $t$ and $t'$.

\begin{prop} \label{prop:bi_to_square}
  Let $\bbG$ be a doubly-periodic isoradial graph with grid $(s_n)_{n \in \bbZ}$, $(t_n)_{n \in \bbZ}$.
  There exists $d \geq 1$ such that for all $M, N \in \bbN$, there exists a finite sequence of \stts $(\sigma_k)_{1 \leq k \leq K}$, 
  each acting between $s_{-(M+dN)}$ and $s_{M+dN}$ and between $t_N$ and $t_0$, none of them affecting any rhombus of $t_0$ 
  and such that in the resulting graph $(\sigma_k \circ \cdots \circ \sigma_1)(\bbG)$, the region $\rect(-M, M; 0, N)$ has a square lattice structure.
\end{prop}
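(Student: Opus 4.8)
The plan is to fix the horizontal tracks $t_0,\dots,t_N$ of the grid and to use \stts to remove, one at a time and working from the top, every track intersection that occurs strictly between $t_0$ and $t_N$ and that does not involve two of the grid tracks $(s_n)$. Concretely, consider the slab $\mathcal{S}_N$ bounded below by $t_0$ and above by $t_N$. Inside it live the vertical grid tracks $s_n$, but also possibly other tracks (the $\tilde s_n$ in Figure~\ref{fig:square_black_pts}) which are not parallel to the $s_n$ and hence cross some of them; these extra crossings are exactly the obstructions to $\rect(-M,M;0,N)$ having a square lattice structure. For a region with a square lattice structure we need: inside it, the only intersections are between a ``vertical'' track and a ``horizontal'' track $t_k$, and no two horizontal tracks meet and no two vertical tracks meet. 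I would first argue that, by double-periodicity, within any bounded region there are only finitely many such obstruction-intersections, and that each \stt applied to a hexagon lying strictly between $t_0$ and $t_N$ either slides such an intersection or, when the hexagon is ``topmost'', pushes one intersection out above $t_N$ (or merges it into the already-square part). This is the same ``pushing black points out of the box'' picture as in Figure~\ref{fig:square_black_pts}.

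The key steps, in order, would be: (1) Identify the asymptotic directions of all tracks, as in the proof of Lemma~\ref{lem:square_grid}; the tracks fall into finitely many parallel classes, $(t_n)$ being one class and $(s_n)$ another, and any non-grid track $\tilde s$ crossing the slab has an asymptotic direction distinct from that of the $s_n$, so it crosses each $s_n$ exactly once. (2) Choose $d\ge 1$ large enough (depending only on the periods of $\bbG$ and on the transverse angles, hence only on $\bbG$) that every track crossing the vertical strip between $s_{-M}$ and $s_M$ within the slab $\mathcal{S}_N$ enters and exits through $s_{-(M+dN)}$ and $s_{M+dN}$ and through $t_0$ and $t_N$; the linear growth in $N$ comes from the bounded-angles property, which makes each track drift horizontally by at most a bounded amount per horizontal track it crosses, so over $N$ horizontal tracks the drift is $O(N)$. (3) Run an elimination algorithm: as long as there is an intersection strictly between $t_0$ and $t_N$ involving a non-grid track inside $\rect(-(M+dN),M+dN;0,N)$, pick a ``highest'' hexagon (three rhombi) carrying such an intersection and apply the corresponding \stt; by Proposition~\ref{prop:stt_iso_preserved} the result is again isoradial, and by construction the \stt acts between $s_{-(M+dN)}$ and $s_{M+dN}$ and between $t_0$ and $t_N$ and affects no rhombus of $t_0$. (4) Check termination via a monovariant — e.g.\ the total number of intersection points lying strictly between $t_0$ and $t_N$ and strictly between $s_{-(M+dN)}$ and $s_{M+dN}$ that are not of vertical–horizontal grid type, which strictly decreases each time a ``topmost'' such intersection is pushed above $t_N$ — together with an argument that a topmost such hexagon always exists when the set is nonempty. (5) Conclude that in the final graph, inside $\rect(-M,M;0,N)$ every intersection is between a track transverse to the slab and one of $t_0,\dots,t_N$, no two of $t_0,\dots,t_N$ meet there, and the transverse tracks meet the $t_k$ in a consistent left-to-right order, which is precisely the definition of a square lattice structure (allowing, as the statement explicitly does, extra ``vertical-in-the-square-lattice'' tracks such as $\tilde s_1,\tilde s_3$).

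The main obstacle I expect is step (4), the termination and ``topmost hexagon exists'' argument, done carefully enough to also guarantee that the \stts never need to touch $t_0$ and never need to act outside the prescribed window $s_{\pm(M+dN)}$, $t_0$, $t_N$. One has to set up the right partial order on intersection points inside the slab (roughly, order by which horizontal track $t_k$ lies just below the intersection, then resolve ties), show that a maximal element of the obstruction set is always the apex of a hexagon whose three rhombi lie between $t_0$ and $t_N$, and verify that performing the \stt there does not create new obstructions lower down — it only permutes three tracks locally, moving one crossing ``upward'' past the intersection of the other two, so the monovariant genuinely decreases. A secondary technical point is pinning down $d$: one must quote the bounded-angles property (the graph lies in $\calG(\eps)$ for some $\eps$, since it is doubly-periodic) to bound the horizontal excursion of any track across $N$ horizontal tracks linearly in $N$, uniformly, which is what forces the $M+dN$ bound rather than something growing faster; this is essentially the quasi-isometry between graph distance on $\bbG^\diamond$ and Euclidean distance mentioned after the definition of the bounded-angles property. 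Everything else — that \stts preserve isoradiality and that grids are preserved under \stts — is already available from Proposition~\ref{prop:stt_iso_preserved} and the remark following Lemma~\ref{lem:square_grid}.
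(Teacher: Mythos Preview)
Your proposal is correct and follows essentially the same approach as the paper: eliminate the ``black points'' (intersections of two non-horizontal tracks lying between $t_0$ and $t_N$ and within the relevant horizontal window) one at a time by pushing each above $t_N$ via \stts with the successive horizontal tracks, using periodicity to bound the horizontal extent by $M+dN$. The only sharpening the paper makes over your sketch is the precise notion of ``maximal'' black point (no further black point along either of its two tracks in the upward direction), which is exactly what guarantees that the hexagon with the next horizontal track is available at every step of the push; your step~(4) correctly identifies this as the crux.
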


This is a version of~\cite[Lem.~7.1]{GriMan14} with a quantitative control over the horizontal position of the \stts involved.
Obviously, the lemma may be applied also below the base level $t_0$ by symmetry. 

\begin{proof}
  We only sketch this proof since it is very similar to the corresponding one in~\cite{GriMan14}.
  We will only refer below to the track system of $\bbG$; we call an intersection of two tracks a \emph{point}. 
  Fix $M, N \in \bbN$. 

  Index all non-horizontal tracks of $\bbG$ as $(\tilde s_n)_{n\in \bbZ}$, in the order of their orientation with $t_0$, such that $\tilde s_0 = s_0$.
  Then the vertical tracks $(s_n)_{n\in \bbZ}$ of $\bbG$ form a periodically distributed subset of $(\tilde s_n)_{n\in \bbZ}$.
  Let $M_+$ and $M_-$ be such that $\tilde s_{M_+} = s_M$ and $\tilde s_{M_-} = s_{-M}$.

  We will work with $\bbG$ and transformations of $\bbG$ by a finite number of \stts.
  The tracks of any such transformations are the same as those of $\bbG$, we therefore use the same indexing for them. 
  Call a \emph{black point} of $\bbG$, or of any transformation of $\bbG$, an intersection of a track $\tilde s_k$ with $M_- \leq k \leq M_+$ 
  with a non-horizontal track, contained between $t_0$ and $t_N$.
  See Figure~\ref{fig:square_black_pts} for an example.

  Observe that, if in a transformation $(\sigma_k \circ \cdots \circ \sigma_1)(\bbG)$ of $\bbG$ there are no black points, then $(\sigma_k \circ \cdots \circ \sigma_1)(\bbG)$ has the desired property. 
  The strategy of the proof is therefore to eliminate the black points one by one as follows. 

  Orient all non-horizontal tracks of $\bbG$ upwards (that is from their intersection with $t_0$ to that with $t_1$). 
  We say that a black point is \emph{maximal} if, along any of the two tracks whose intersection gives this black point, there is no other black point further. 
  One may then check (see the proof of~\cite{GriMan14}) that if black points exist, then at least one maximal one exists.
  Moreover, a maximal black point may be eliminated by a series of \stts involving its two intersecting tracks and the horizontal tracks between it and $t_N$. 
  Thus, black points may be eliminated one by one, until none of them is left (by the fact that $(s_n)_{n \in \bbZ}$ and $(t_n)_{n \in \bbZ}$ form a grid, only finitely many black points exist to begin with). 
  Call $\sigma_1, \dots, \sigma_K$ the successive \stts involved in this elimination. 
  Then $(\sigma_K \circ \dots \circ \sigma_1)(\bbG)$ has a square lattice structure in $\rect(-M, M; 0, N)$.

  We are left with the matter of controlling the region where \stts are applied. 
  Notice that $\sigma_1,\dots, \sigma_K$ each involve exactly one horizontal track $t_k$ with $0 < k \leq N$. 
  Thus, they all only involve rhombi between $t_0$ and $t_N$, but none of those along $t_0$. 

  Also observe that, due to the periodicity of $\bbG$, between $t_0$ and $t_N$, a track $\tilde s_k$ intersects only tracks $\tilde s_j$ with $|j - k|\leq c N$ for some constant $c$ depending only on the fundamental domain of $\bbG$.
  It follows, by the periodicity of the tracks $(s_n)_{n \in \bbZ}$ in  $(\tilde s_n)_{n \in \bbZ}$,
  that all black points are initially in  $\rect(-M-dN, M+dN; 0,N)$ for some constant $d \geq 0$ depending only on the fundamental domain of $\bbG$. 
  Finally, since all \stts $(\sigma_k)_{0 \leq k \leq K}$ involve one horizontal track and two others intersecting at a black point, 
  each $\sigma_k$ acts in the region of $(\sigma_{k-1} \circ \cdots \circ \sigma_1)(\bbG)$ delimited by $s_{-M-dM}$, $s_{M+dN}$, $t_0$ and~$t_N$.
\end{proof}

\section{Proofs for $1 \leq q \leq 4$} \label{sec:q<4}

Starting from now, fix  $q\in [1,4]$ and let $\bbG$ be a doubly-periodic graph with grid $(s_n)_{n \in \bbZ},(t_n)_{n \in \bbZ}$. 
Recall that $\bbG \in \calG(\eps)$ for some $\eps > 0$. All the constants below depend on the value of $\eps$. 
Write $\phi_\bbG^\xi := \rcisolaw{1}{\bbG}{\xi}$ for the \rcm with parameters $q$ and $\beta = 1$ 
and boundary conditions $\xi \in \{0,1\}$ on $\bbG$.

\subsection{Notations and properties} \label{sec:q<=4_notation}

For integers $i \leq j$ and $k \leq \ell$ recall that $\rect(i,j; k,\ell)$ is the subgraph of $\bbG$ contained between tracks $s_i$ and $s_j$ and between $t_k$ and $t_\ell$ (including the boundary tracks).
Write $\rect(i; k)$ for the centred rectangle $\rect(-i, i; k, k)$ and $\sq(n) = \rect(n; n)$.
The same notation applies to $\bbG^\diamond$ and $\bbG^*$.
We define $R$ and $\Lambda$ in the same way using Euclidean distances.
Note that $\rect$ and $\sq$ are domains with respect to a grid of $\bbG$ whereas $R$ and $\Lambda$ are Euclidean ones and they should all be seen as subregions of $\bbR^2$.

Similarly to the crossings events defined in the introduction, set 
\begin{itemize}
\item $\calC_h (i, j; k, \ell)$: the event that there exists an open path in $\rect(i, j; k, \ell)$
  with one endpoint left of the track $s_i$ and the other right of the track $s_j$.
  This is called a \emph{horizontal crossing} of $\rect(i, j; k, \ell)$.
\item $\calC_v (i, j; k, \ell)$: the event that there exists an open path in $\rect(i, j; k, \ell)$
  with one endpoint below $t_k$ and the other above $t_\ell$.
  This is called a \emph{vertical crossing} of $\rect(i, j; k, \ell)$.
\end{itemize}
The crossings $\calC_h$ and $\calC_v$ can also be defined for symmetric rectangular domains $\rect(m; n)$, in which case we write $\calC_h(m; n)$ and $\calC_v(m; n)$.
Also write $\calC_h^* (i, j; k, \ell)$, $\calC_v^* (i, j; k, \ell)$, $\calC_h^*(m; n)$ and $\calC_v^*(m; n)$ for the corresponding events for the dual model.

To abbreviate the notation, we will henceforth say that $\bbG$ satisfies the RSW property 
if the random-cluster model on $\bbG$ with $\beta = 1$ satisfies this property.
It will be easier to work with the crossing events defined above, rather than the one of the introduction, hence the following lemma. 

\begin{lem} \label{lem:equiv_RSW}
  Fix $\rho > 1$ and $\nu > 0$. 	
  Then, $\bbG$ has the RSW property if and only if 
  there exists $\delta := \delta_1(\rho,\nu) > 0$ such that for all $n \geq 1$,
  \begin{align}
    \phi^0_{\rect((\rho + \nu)n,(1+\nu)n)} \big[\calC_h(\rho n; n)  \big] & \geq \delta, &
    \phi^1_{\rect((\rho + \nu)n,(1+\nu)n)} \big[\calC_h^*(\rho n; n)  \big] & \geq \delta, \nonumber \\
    \phi^0_{\rect((1 + \nu)n,(\rho+\nu)n)} \big[\calC_v(n; \rho n) \big] & \geq \delta, &
    \phi^1_{\rect((1 + \nu)n,(\rho+\nu)n)} \big[\calC_v^*(n; \rho n) \big] & \geq \delta.&
    \tag{BXP($\rho$, $\nu$)} \label{eq:BXP3}
  \end{align}
\end{lem}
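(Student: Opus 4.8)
The plan is to show the equivalence by relating the box-crossing events stated in the introduction (crossings of Euclidean rectangles $R$) to the track-rectangle crossings $\calC_h(\rho n; n)$, $\calC_v(n;\rho n)$ and their duals. The key structural input is the bounded-angles property: since $\bbG \in \calG(\eps)$, the diamond graph (hence $\bbG$ and $\bbG^*$) is quasi-isometric to $\bbR^2$, and moreover the grid $(s_n),(t_n)$ is doubly-periodic, so there is a constant $\kappa = \kappa(\eps, \bbG) \geq 1$ such that the region $\rect(i,j;k,\ell)$ is sandwiched between two Euclidean rectangles whose side-lengths differ by a factor at most $\kappa$, and vice versa. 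Thus a track-rectangle of ``aspect ratio $\rho$'' contains, and is contained in, Euclidean rectangles of aspect ratios comparable to $\rho$ up to the fixed distortion $\kappa$.

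First I would prove the easy direction: assume the RSW property~\eqref{eq:strong_RSW}. Given $\rho>1$, $\nu>0$ and $n\geq 1$, the event $\calC_h(\rho n;n)$ contains the event that there is an open horizontal crossing of a suitable Euclidean sub-rectangle of $\rect(\rho n;n)$; choosing that sub-rectangle of aspect ratio $\rho' = \rho'(\rho,\kappa)$ and applying~\eqref{eq:strong_RSW} with free boundary conditions on the larger domain $\rect((\rho+\nu)n,(1+\nu)n)$ — which by monotonicity in boundary conditions only decreases the probability relative to the infinite-volume lower bound, or one applies~\eqref{eq:strong_RSW} directly on a Euclidean box contained in $\rect((\rho+\nu)n,(1+\nu)n)$ and containing the crossing box — yields the first inequality of~\eqref{eq:BXP3} with $\delta = \delta(\rho,\nu,\eps)>0$. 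The dual estimate $\phi^1[\calC_h^*(\rho n;n)]\geq\delta$ follows identically, using that under $\phi^1_{G}$ the dual configuration on $G^*$ has wired-type (in fact free) boundary conditions and that $\bbG^*$ also satisfies a bounded-angles property with the same $\eps$, together with the upper bound $1-c$ in~\eqref{eq:strong_RSW}. The vertical crossings are handled by the same argument with the roles of the two directions exchanged.

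The converse is the substantive direction and, I expect, the main obstacle. Assume~\eqref{eq:BXP3} holds for one pair $(\rho,\nu)$ with $\rho>1$. One must bootstrap these four one-scale, fixed-aspect-ratio estimates with free/wired boundary conditions into the full strong RSW property~\eqref{eq:strong_RSW}: crossings of arbitrary aspect ratio, with arbitrary boundary conditions, and both lower and upper bounds. The standard route is: (i) a combination step, gluing a horizontal crossing of a track-rectangle with vertical crossings of overlapping track-rectangles (using the FKG inequality and the primal/dual crossing estimates together with the comparison between boundary conditions) to produce crossings of longer rectangles, iterating to reach any aspect ratio $\rho'>1$; (ii) passing from the conditioned/finite-volume estimates with $0$ and $1$ boundary conditions to estimates valid under \emph{arbitrary} boundary conditions $\xi$, via the comparison inequalities $\phi^0 \leq \phi^\xi \leq \phi^1$ and a suitable separation-of-arms / pushing argument to absorb the boundary effect a bounded distance away from the crossing; (iii) deriving the upper bound $1-c$ from the lower bound on a dual crossing in the orthogonal direction. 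Each of these is a now-standard manipulation in random-cluster RSW theory for $q\in[1,4]$ — indeed essentially carried out in \cite{DumSidTas13} and adapted in \cite{GriMan14} — so rather than reproducing it I would invoke those references and the standard computations collected in the appendices of \cite{Li-thesis}, checking only that the bounded-angles property supplies the geometric comparisons needed to translate between track-rectangles and Euclidean rectangles. The delicate point to watch is the long-range dependence of the model: when gluing crossings one must be careful that conditioning on crossing events in neighbouring boxes does not create unfavourable boundary conditions, which is precisely where the uniformity over $\xi$ in the target statement~\eqref{eq:strong_RSW} — and the finite-volume domains with extra margin $\nu n$ in~\eqref{eq:BXP3} — are used.
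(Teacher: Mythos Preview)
Your proposal is correct and matches the paper's own treatment: the paper does not give a full proof either, stating only that the argument is elementary and relies on the quasi-isometry between Euclidean and $\bbG^\diamond$-distance, the FKG inequality, and comparison between boundary conditions, with details deferred to \cite[App.~B]{Li-thesis} and the analogous Bernoulli statement \cite[Prop.~4.2]{GriMan14}. Your plan invokes exactly these ingredients in the same way, so there is nothing to add.
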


In other words, crossing estimates for Euclidean rectangles and rectangles in $\bbG^\diamond$ imply each other. 
Moreover, the aspect ratio $\rho$ and distance $\nu n$ to the boundary conditions is irrelevant;
indeed it is a by-product of the lemma that the conditions \eqref{eq:BXP3} with different values of $\rho > 1$ and $\nu > 0$ are equivalent (obviously with different values for $\delta > 0$).

In general, one would also require crossing estimates as those of \eqref{eq:BXP3} 
for translates of the rectangles $\rect(n;\rho n)$ and $\rect(\rho n; n)$. 
This is irrelevant here due to periodicity. 

The proof of the lemma is elementary. 
It emploies the quasi-isometry between Euclidean distance and the graph distance of $\bbG^\diamond$, the FKG inequality and the comparison between boundary conditions. A similar statement was proved in~\cite[Prop.~4.2]{GriMan14} for Bernoulli percolation. 
Since the boundary conditions matter, additional care is needed here, and the proof is slightly more technical.
Details are skipped here and are given in~\cite[App.~B]{Li-thesis}.

\begin{figure}[htb]
  \centering
  \includegraphics[scale=0.85]{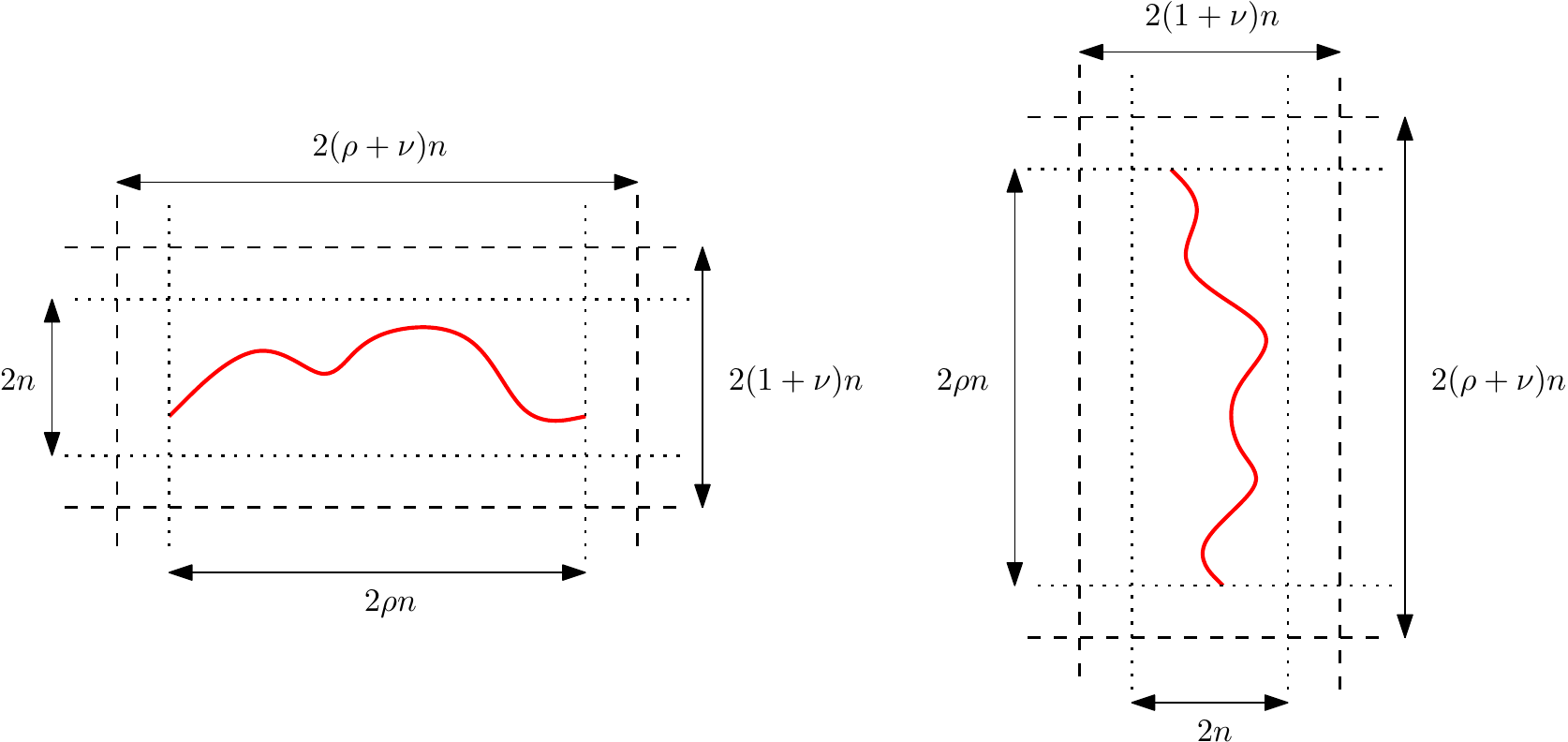}
  \caption{Crossing events in the condition~\eqref{eq:BXP3}. 
    The dotted lines represent the tracks enclosing the domain in which the event takes place, 
    the dashed lines represent the domain in which the \rcm is defined.
  }
  \label{fig:BXP2}
\end{figure}

It is straightforward (as will be seen in Section~\ref{sec:conclusion_q<=4}) 
that the RSW property implies the rest of the points of Theorem~\ref{thm:main} for $1 \leq q \leq 4$. 
The following two sections will thus focus on proving the RSW property for isoradial square lattices (Section~\ref{sec:iso_square_RSW}), then on general doubly-periodic isoradial graphs (Section~\ref{sec:biperiodic_iso}), when $1 \leq q \leq 4$.

\subsection{Isoradial square lattices} \label{sec:iso_square_RSW}

The relevant result for the first stage of the proof (the transfer from regular to arbitrary square lattices) is the following. 

\begin{prop} \label{prop:transfer_RSW}
  Let $\graph 1 = \bbG_{\ba, \bb^{(1)}}$ and $\graph 2 = \bbG_{\ba,\bb^{(2)}}$ be two isoradial square lattices in $\calG(\eps)$. 
  If $\graph 1$ satisfies the RSW property, then so does $\graph 2$.
\end{prop}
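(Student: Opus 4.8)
\textbf{Proof strategy for Proposition~\ref{prop:transfer_RSW}.}

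The plan is to use the symmetric mixed-graph construction from Section~\ref{sec:switching_isos} to interpolate between $\graph 1$ and $\graph 2$ while controlling crossing probabilities through the track exchanges $\Sigma^\uparrow$ and $\Sigma^\downarrow$. Since both $\graph 1$ and $\graph 2$ share the sequence $\ba$ of vertical transverse angles (this can always be arranged by first passing through an auxiliary graph, or one simply treats the vertical tracks as interlaced), I would build $G_\mix$ by superimposing a wide strip of $\graph 1$ of height $\approx 2N_1+1$ with a strip of $\graph 2$ of height $\approx 2N_2+1$ (both in the upper and lower half-plane, the symmetric version), then convexify. The starting estimate is a crossing of a large box in the $\graph 1$-block, which holds by hypothesis via Lemma~\ref{lem:equiv_RSW}; the goal estimate is a crossing of a (smaller) box that, after the track exchanges have swapped the two blocks, sits inside the $\graph 2$-block.

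The core of the argument has two halves. First, a \emph{horizontal transport} step: starting from a horizontal crossing of a long rectangle inside $\graph 1$, push the $\graph 2$-tracks downwards one by one using $\Sigma^\downarrow$. Because each \stt preserves connections (Proposition~\ref{prop:coupling}) and the path transformations of Figure~\ref{fig:path_transformations} only move a path by a bounded amount per track and only ever lose an edge at an endpoint, an open horizontal crossing is preserved up to a controlled \emph{upward drift}; choosing the strip heights $N_1, N_2$ and the width $M$ appropriately (with $M$ large compared to $N_1+N_2$, using the periodicity bound on how far a track extends horizontally, as in Proposition~\ref{prop:bi_to_square}), the image path still crosses a rectangle of comparable aspect ratio, now lying in the region occupied by $\graph 2$. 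Symmetrically, a \emph{vertical transport} step uses $\Sigma^\uparrow$ to push $\graph 1$-tracks up, transferring a vertical crossing and controlling the \emph{downward drift}. Dualising (recall the dual of an isoradial square lattice is again one, and $\beta=1$ is the self-dual point up to passing to $\bbG^*$) gives the corresponding $\calC_h^*$ and $\calC_v^*$ estimates. Combining horizontal and vertical crossings in $\graph 2$ via the FKG inequality and the standard RSW gluing (which, for $q\in[1,4]$, is available on any isoradial square lattice once one has crossings of some fixed aspect ratio — this is the content of~\cite[App.~B]{Li-thesis} type arguments) boosts these into the full family \eqref{eq:BXP3} for $\graph 2$, hence the RSW property.

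Two technical points need care. One must handle boundary conditions: the mixed graph is finite, so free/wired boundary conditions on $G_\mix$ are pushed through the \stts to $G_\mix'$ (each \stt preserves $\phi^\xi$), but one also needs the distance $\nu n$ between the crossed rectangle and the support of the boundary conditions to survive the transport — this is why $G_\mix$ is taken large and why Lemma~\ref{lem:equiv_RSW} allows flexibility in $\nu$. The second point is that $\Sigma^\uparrow(\omega)$ and $\Sigma^\downarrow(\omega)$ need not agree for a fixed $\omega$, but they do induce the same law on $G_\mix'$; so one runs the horizontal-transport argument entirely within the $\Sigma^\downarrow$-coupling and the vertical one within the $\Sigma^\uparrow$-coupling, and only compares the resulting \emph{probabilities}.

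\textbf{Main obstacle.} The delicate part is the drift control: bounding, uniformly in the box size $n$, how far the endpoints and the body of a crossing path can migrate vertically (for $\Sigma^\downarrow$) or horizontally (for $\Sigma^\uparrow$) after all the track exchanges, so that the transported path is still guaranteed to cross a rectangle of the right aspect ratio entirely inside the target block. This requires a quantitative version of Figure~\ref{fig:path_transformations} combined with the periodicity estimates on track geometry (the constant $d$ in Proposition~\ref{prop:bi_to_square}), and is where the long-range dependence of the random-cluster model makes the bookkeeping heavier than in the Bernoulli case of~\cite{GriMan14}; I expect the bulk of the work — carried out in the analogues of Propositions~\ref{prop:horizontal_transport} and~\ref{prop:vertical_transport} — to go into making that drift bound effective and into the RSW gluing that promotes single-aspect-ratio crossings to \eqref{eq:BXP3}.
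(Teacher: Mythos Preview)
Your overall architecture is correct --- the symmetric mixed graph, the use of $\Sigma^\downarrow$ for the horizontal step and $\Sigma^\uparrow$ for the vertical step, the appeal to duality for the starred events, and the observation that one only compares laws rather than couples the two sequences --- but there is a genuine gap in the horizontal transport, and a missing structural device that the paper uses to close it.

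You propose to transport a \emph{horizontal crossing} of a rectangle through $\Sigma^\downarrow$. The difficulty is precisely the endpoint phenomenon you mention in passing: by the last case of Figure~\ref{fig:path_transformation1} (and Figure~\ref{fig:path_transfo_ep}), each track exchange may contract the terminal edge of a path to a point. A horizontal crossing has two free endpoints sitting at arbitrary heights inside the irregular block, and over the $\Theta(N_1 N_2)$ track exchanges in $\Sigma^\downarrow$ there is no uniform control preventing the crossing from being eaten away from both ends. The ``controlled upward drift'' you refer to applies to the \emph{body} of the path, not to the survival of its endpoints; this is exactly why the argument of \cite{GriMan14} (and the present paper) does not transport $\calC_h$ directly.

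The paper's fix is to replace crossings by \emph{circuits}: the events $\calC(m_1,m_2;n)$ of Section~\ref{sec:alternative_RSW} are open loops surrounding a base segment, hence have no endpoints and are immune to shrinkage. Proposition~\ref{prop:horizontal_transport} is stated for these circuit events, not for $\calC_h$. To make this usable, the paper reformulates the RSW property via Lemma~\ref{lem:RSW_sq_cond} and its converse Lemma~\ref{lem:RSW_to_cond}: the criterion~\eqref{eq:RSW_sq_cond} involves (i) large circuits $\calC(3an,bn;bn)$ with probability $\geq 1-\delta_h$, (ii) vertical crossings $\calC_v(an;2n)$, and (iii) small circuits $\calC(an,3an;n)$, all under a \emph{common} (possibly random) boundary condition $\xi$. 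Item (i) is the shielding mechanism you were looking for: an exploration of the outermost dual circuit decouples the inner events from $\xi$, which is how the long-range dependence is neutralised --- your proposal of simply ``taking $G_\mix$ large'' does not achieve this, since free boundary conditions on the convexification induce uncontrolled boundary conditions on the square-lattice block. The actual proof then reads: start from~\eqref{eq:RSW_sq_cond2} on $\graph 1$ (Lemma~\ref{lem:RSW_to_cond}), transport the circuit events via Proposition~\ref{prop:horizontal_transport} and the vertical crossing via Proposition~\ref{prop:vertical_transport}, and land in~\eqref{eq:RSW_sq_cond} on $\graph 2$.

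A minor point: Proposition~\ref{prop:bi_to_square} and its constant $d$ concern the general-graph-to-square-lattice stage and play no role here; the horizontal extent in Propositions~\ref{prop:horizontal_transport}--\ref{prop:vertical_transport} is controlled directly by the number of track exchanges, not by periodicity of an ambient graph.
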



The proposition is proved in the latter subsections of this section.
For now, let us see how it implies the following corollary.

\begin{cor} \label{cor:sq_RSW}
  For any $1 \leq q \leq 4$ and any isoradial square lattice $\bbG \in \calG(\eps)$, 
  $\bbG$ satisfies the RSW property. 
\end{cor}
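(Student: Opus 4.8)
\textbf{Proof plan for Corollary~\ref{cor:sq_RSW}.}
The statement should follow by combining Proposition~\ref{prop:transfer_RSW} with the known RSW property for the regular square lattice, so the plan is to exhibit the regular square lattice as a fixed reference point and transport the property to every isoradial square lattice in $\calG(\eps)$. First I would recall that for $1 \leq q \leq 4$ the random-cluster model on the regular square lattice $\bbZ^2$ at its self-dual point $p_c = \tfrac{\sqrt q}{1+\sqrt q}$ satisfies the strong RSW property; this is exactly the content of~\cite{DumSidTas13}. By Lemma~\ref{lem:equiv_RSW}, this is equivalent to the crossing estimates \eqref{eq:BXP3}, so $\bbZ^2$ (in its isoradial embedding with $\alpha_n \equiv 0$, $\beta_n \equiv \tfrac\pi2$) "satisfies the RSW property" in the sense used here. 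Note that $\bbZ^2 \in \calG(\eps)$ for every $\eps \leq \tfrac\pi2$, so it is a legitimate starting graph.

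Next, given an arbitrary isoradial square lattice $\graph 2 = \bbG_{\ba,\bb} \in \calG(\eps)$, I would simply apply Proposition~\ref{prop:transfer_RSW} with $\graph 1 = \bbZ^2$ and $\graph 2$ as given. There is one point to check: Proposition~\ref{prop:transfer_RSW} requires $\graph 1$ and $\graph 2$ to share the \emph{same} sequence $\ba$ of transverse angles for their vertical tracks. To arrange this, I would first transfer the RSW property from the regular lattice to the isoradial square lattice $\bbG_{\ba, \bb^{\mathrm{reg}}}$ with the target vertical-track angles $\ba$ and constant horizontal-track angles, and then from $\bbG_{\ba, \bb^{\mathrm{reg}}}$ to $\bbG_{\ba, \bb}$; alternatively, one observes that the roles of horizontal and vertical tracks are symmetric (a rotation of the plane by $\tfrac\pi2$ exchanges them), so Proposition~\ref{prop:transfer_RSW} may be applied in both directions, allowing the two sequences $\ba, \bb$ to be changed one at a time. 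Either way, after at most two applications of Proposition~\ref{prop:transfer_RSW} we reach $\bbG$ and conclude that it satisfies the RSW property.

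The only subtlety worth flagging is the bounded-angles bookkeeping: the intermediate graph $\bbG_{\ba,\bb^{\mathrm{reg}}}$ must itself lie in $\calG(\eps')$ for some $\eps' > 0$ so that Proposition~\ref{prop:transfer_RSW} applies, and the constant $\delta$ in \eqref{eq:BXP3} obtained at the end depends on this $\eps'$. Since $\ba$ and $\bb$ come from a graph in $\calG(\eps)$, condition \eqref{eq:bounded_angles} holds for the pair $(\ba,\bb)$, and one checks that replacing $\bb$ by a constant sequence of a suitably chosen angle (e.g. the midpoint of $[\sup_n \alpha_n, \inf_n \beta_n]$ extended appropriately, or simply $\tfrac\pi2$ if that is compatible) keeps \eqref{eq:bounded_angles} valid with a possibly smaller parameter; if no single constant works one may instead pass through the regular lattice by changing one angle sequence at a time as above. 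Since all our isoradial square lattices are doubly-periodic here, only finitely many distinct angles occur and the infima/suprema in \eqref{eq:bounded_angles} are attained, so there is no issue with the bounds degenerating. No genuinely hard step arises: the corollary is a direct consequence of Proposition~\ref{prop:transfer_RSW} once the reference RSW input from~\cite{DumSidTas13} is in place and the angle-sequence matching is handled.
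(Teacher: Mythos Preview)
Your approach is correct and essentially the same as the paper's: start from $\bbZ^2 = \bbG_{0,\pi/2}$ using~\cite{DumSidTas13}, apply Proposition~\ref{prop:transfer_RSW} once, rotate to swap the roles of $\ba$ and $\bb$, and apply the proposition again. The paper makes the rotation step explicit --- it rotates by $\beta_0$ rather than $\tfrac\pi2$, observing that $\bbG_{\ba,\beta_0}$ is the rotation by $\beta_0$ of $\bbG_{0,\tilde\ba-\beta_0+\pi}$ (with $\tilde\ba$ the reversed sequence), which automatically lies in $\calG(\eps)$ and so resolves exactly the bounded-angles bookkeeping you flag.
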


\begin{proof}[Proof of Corollary~\ref{cor:sq_RSW}]
  For the regular square lattice $\bbG_{0, \frac\pi2}$, 
  the random-cluster measure associated by isoradiality (see~\eqref{eq:parameters}) 
  is that with edge-parameter $p_e = \frac{\sqrt q }{1 + \sqrt q}$ .
  It is then known by~\cite{DumSidTas13} that $\bbG_{0, \frac\pi2}$ satisfies the RSW property.
  
  It follows from the application of Proposition~\ref{prop:transfer_RSW} that for any sequence $\bb \in [\eps, \pi - \eps]^\bbZ$, the graph $\bbG_{0, \bb}$ also satisfies the RSW property. 

  Let $\bbG_{\ba, \bb} \in \calG(\eps)$ be an isoradial square lattice.
  Below $\beta_0$ stands for the constant sequence equal to $\beta_0$. 
  Then, $\bbG_{\ba, \beta_0}$ is the rotation by $\beta_0$ of the graph $\bbG_{0, \tilde \ba - \beta_0 + \pi}$, where $\tilde \ba$ is the sequence $\ba$ with reversed order.
  By the previous point, $\bbG_{0, \tilde \ba - \beta_0 + \pi}$ satisfies the RSW property, and hence so does $\bbG_{\ba, \beta_0}$.
  Finally, apply again Proposition~\ref{prop:transfer_RSW} to conclude that $\bbG_{\ba, \bb}$ also satisfies the RSW property.
\end{proof}

%

The rest of the section is dedicated to proving Proposition~\ref{prop:transfer_RSW}. 

\subsubsection{RSW: an alternative definition} \label{sec:alternative_RSW}

Fix an isoradial square lattice $\bbG = \bbG_{\ba, \bb} \in \calG(\eps)$ for some $\eps > 0$. 
Recall that $q \in [1,4]$ is fixed; the estimates below depend only on $q$ and $\eps$. 
Let $x_{i,j}$ be the vertex of $\bbG^\diamond$ between tracks $s_{i-1}, s_{i}$ and $t_{j-1},t_{j}$.
Suppose that $\bbG$ is such that its vertices are those $x_{i, j}$ with $i + j$ even. 
The base of $\bbG$ is then the set $\{(x_{i, 0}: i \in \bbZ \}$.
Moreover, $\bbG$ is translated so that $x_{0,0}$ is the origin $0$ of the plane. 

Define $\calC(m_1, m_2; n)$ to be the event that there exists an open (primal) circuit contained in $\rect(m_2; n)$ that surrounds the segment of the base between vertices $x_{-m_1,0}$ and $x_{m_1,0}$
\footnote{Formally, we allow the circuit to visit vertices of the base, 
  but it is not allowed to cross the base between $x_{-m_1,0}$ and $x_{m_1,0}$.}.
Write $\calC^*(.,.;.)$ for the same event for the dual model.
See figure~\ref{fig:circuits} for an illustration.  

\begin{figure}[htb] \centering
  \begin{minipage}{.48\textwidth}
    \centering
    \includegraphics[scale=1, page=1]{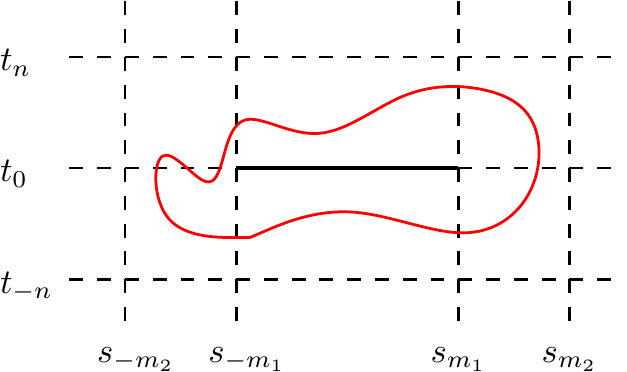}
  \end{minipage}
  \caption{The event $\calC(m_1, m_2; n)$. Such a circuit should not cross the bold segment.}
  \label{fig:circuits}
\end{figure}

The following two results offer a convenient criterion for the RSW property. 
The advantage of the conditions of \eqref{eq:RSW_sq_cond} 
is that they are easily transported between different isoradial square lattices, unlike those of \eqref{eq:BXP3}. 
The main reason is that, due to the last case of Figure~\ref{fig:path_transformation1}, paths may shrink at their endpoints during \stts. 
Circuits avoid this problem.

\begin{lem} \label{lem:RSW_sq_cond}
  Suppose $\bbG$ is as above and suppose that the following conditions hold. 
  There exists $\delta_v > 0$ such that for any $\delta_h > 0$, there exist constants $a \geq 3$ and $b > 3a$ such that for all $n$ large enough, there exist boundary conditions $\xi$ on ${\sq(bn)}$ such that
  \begin{align}\nonumber
    \phi_{\sq(bn)}^\xi \big[ \calC(3an, bn; bn) \big] \geq 1- \delta_h \quad 
    &\text{and} \quad \phi_{\sq(bn)}^\xi \big[ \calC^*( 3an , bn ; bn) \big] \geq 1- \delta_h, \nonumber \\
    \phi_{\sq(bn)}^\xi \big[ \calC_v(an; 2n) \big]  \geq \delta_v \quad
    &\text{and} \quad \phi_{\sq(bn)}^\xi \big[ \calC_v^*(an; 2n) \big]  \geq \delta_v, \nonumber\\
    \phi_{\sq(bn)}^\xi \big[ \calC(an, 3an; n) \big]  \geq \delta_v \quad 
    &\text{and} \quad \phi_{\sq(bn)}^\xi \big[ \calC^*(an, 3an; n) \big]  \geq \delta_v.
    \label{eq:RSW_sq_cond}
  \end{align}
  Then $\bbG$ has the RSW property. 
\end{lem}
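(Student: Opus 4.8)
The plan is to verify condition~\eqref{eq:BXP3} of Lemma~\ref{lem:equiv_RSW} for one convenient choice of $\rho>1$ and $\nu>0$; the RSW property then follows from that lemma. Throughout, $\delta_v>0$ is the fixed constant supplied by the hypothesis, while $\delta_h$ may be taken as small as we please, at the cost of larger $a$ and $b$.

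First I would dispose of the unknown boundary conditions $\xi$. Each of the six events in~\eqref{eq:RSW_sq_cond} is either increasing (the primal crossings and circuits $\calC$, $\calC_v$) or decreasing (their dual counterparts $\calC^*$, $\calC^*_v$), so comparing $\phi^\xi_{\sq(bn)}$ with $\phi^1_{\sq(bn)}$, resp.\ $\phi^0_{\sq(bn)}$, by monotonicity in boundary conditions, every primal estimate of~\eqref{eq:RSW_sq_cond} holds under the wired measure $\phi^1_{\sq(bn)}$ and every dual estimate under the free measure $\phi^0_{\sq(bn)}$; the condition $\xi$ no longer appears. Moreover the whole set of hypotheses, as well as the four conditions of~\eqref{eq:BXP3}, are symmetric under exchanging the primal and dual models, which on an isoradial square lattice amounts to passing to the dual isoradial square lattice $\bbG^*$ (still in $\calG(\eps)$, and satisfying the same hypotheses); it therefore suffices to prove the two \emph{primal} conditions of~\eqref{eq:BXP3}.

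Next I would build the seed crossings. Working under $\phi^1_{\sq(bn)}$, the increasing events $\calC(an, 3an; n)$ and $\calC_v(an; 2n)$ each have probability at least $\delta_v$; intersecting them, via the FKG inequality, with a bounded number of suitably overlapping horizontal translates of themselves (legitimate by the periodicity of the lattice), I obtain an event whose probability is bounded below uniformly in $n$ and on which the circuits and crossings involved merge into a single open cluster. A standard planar argument then extracts from this cluster open crossings of rectangles of any fixed, large aspect ratio, in both directions; the analogous statement holds under $\phi^0_{\sq(bn)}$ for the dual model. These are exactly the inputs required by the random-cluster RSW theory of~\cite{DumSidTas13}. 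The one point still to settle is that these estimates hold under $\phi^1_{\sq(bn)}$, resp.\ $\phi^0_{\sq(bn)}$, whereas~\eqref{eq:BXP3} asks for free, resp.\ wired, boundary conditions on a box larger than the crossed rectangle.

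Bridging this gap is, I expect, the main obstacle, and it is here that the high-probability circuits $\calC(3an, bn; bn)$ and $\calC^*(3an, bn; bn)$ are used. The mechanism is the usual decoupling: conditioning $\phi^1_{\sq(bn)}$ on the outermost open primal circuit $\Gamma$ surrounding the central segment, the configuration in the region enclosed by $\Gamma$ is the wired random-cluster measure on that region and no longer depends on the boundary condition prescribed on $\partial\sq(bn)$; since $\Gamma$ is present with probability at least $1-\delta_h$ with $\delta_h$ arbitrarily small, this conditioning costs essentially nothing, and together with its dual counterpart under $\phi^0_{\sq(bn)}$ it allows one to sandwich the crossing probabilities obtained above between those of the free and of the wired measures on boxes of comparable size, uniformly in $n$ and in all boundary conditions. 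Unlike in the Bernoulli percolation argument of~\cite{GriMan14}, the long-range dependence of the random-cluster model makes this step delicate: the conditionings must be arranged so that both the hypothetical $\xi$ and the boundary conditions of the larger boxes in~\eqref{eq:BXP3} really drop out, which is precisely why the hypotheses require $\calC(3an, bn; bn)$ and $\calC^*(3an, bn; bn)$ to have probability close to $1$ rather than merely bounded away from $0$. Once uniform crossing estimates are available, feeding them into the RSW combination procedure of~\cite{DumSidTas13} (see also the appendices of~\cite{Li-thesis}) gives~\eqref{eq:BXP3}, and Lemma~\ref{lem:equiv_RSW} concludes.
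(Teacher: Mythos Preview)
Your proposal contains a genuine gap at the very first step, and it is precisely the point the paper singles out as crucial: the six bounds of~\eqref{eq:RSW_sq_cond} must all hold under the \emph{same} measure $\phi^\xi_{\sq(bn)}$. By immediately pushing the primal events to $\phi^1_{\sq(bn)}$ and the dual events to $\phi^0_{\sq(bn)}$, you lose the ability to combine a primal crossing with a dual circuit. Concretely, the target of~\eqref{eq:BXP3} is a lower bound on, say, $\phi^0_{\rect}[\calC_v(\cdot;\cdot)]$. After your first step you only know $\phi^1_{\sq(bn)}[\calC_v(an;2n)]\geq\delta_v$ and $\phi^0_{\sq(bn)}[\calC^*(3an,bn;bn)]\geq 1-\delta_h$; conditioning $\phi^1$ on its primal circuit induces \emph{wired} boundary conditions inside (useless), while the dual circuit lives under a different measure $\phi^0$ where you have no control over $\calC_v$. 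There is no way to ``sandwich'' across two different boundary conditions here.

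The paper's proof avoids this by keeping $\xi$ throughout and using the union bound (not FKG, since one event is increasing and the other decreasing):
\[
\phi^\xi_{\sq(bn)}\big[\calC^*(3an,bn;bn)\cap\calC_v(an;2n)\big]\ \geq\ \delta_v-\delta_h\ \geq\ \delta_h,
\]
with $\delta_h$ chosen so that $\delta_h\leq\delta_v/2$. Since the primal vertical path cannot cross the dual circuit, it lies inside it; exploring the outermost dual circuit $\Gamma^*$ from outside, the measure in $\Int(\Gamma^*)$ is $\phi^0_{\Int(\Gamma^*)}$, whence $\phi^0_{\sq(bn)}[\calC_v(an;2n)]\geq\delta_h$ by comparison of boundary conditions. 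The same argument applied to $\calC(an,3an;n)$ yields $\phi^0_{\sq(bn)}[\calC_h(an;n)]\geq\delta_h$. These, together with the symmetric dual estimates, are then combined (via FKG and overlapping rectangles) into~\eqref{eq:BXP3}. The moral is that the dual circuit is not there to shield the \emph{dual} crossings but to shield the \emph{primal} ones; your separation of measures at the outset makes this impossible.
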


Let us mention that the boundary conditions $\xi$ above may be random, 
in which case $\phi_{\sq(bn)}^\xi$ is simply an average of \rcms with different fixed boundary conditions. 
The only important requirement is that they are the same for all the bounds. 

Again, if we were to consider also non-periodic graphs $\bbG$, 
we would require~\eqref{eq:RSW_sq_cond} also for all translates of the events  above. 

The conditions of the lemma above should be understood as follows. 
The last two lines effectively offer lower bounds for the probabilities of vertical and horizontal crossings of certain rectangles.
For Bernoulli percolation, these estimates alone would suffice to prove the RSW property;
for the random-cluster model however, boundary conditions come into play.
The first line is then used to shield the crossing events from any potentially favorable boundary conditions. 
Notice that the fact that $\delta_v > 0$ is fixed and $\delta_h$ may be taken arbitrarily small ensures that events such as those estimated in the first and second (or third) lines must occur simultaneously with positive probability. 
This is the key to the proof. 

Even though the proof is standard (and may be skipped by those familiar with the RSW techniques for the \rcm), 
we present it below. 

\begin{proof}
  Suppose to start that the condition~\eqref{eq:RSW_sq_cond} is satisfied.
  Let $\delta_v > 0$ be fixed.
  Choose $\delta_h \leq \delta_v/2$. 
  Fix $a, b$ as given by the condition.
  Then, for $n$ large enough, by assumption and the inclusion-exclusion formula, there exists $\xi$ such that  
  \begin{align*}
    \phi_{\sq(bn)}^\xi \big[ \calC^*(3a n,b n;bn) \cap \calC_v(an; 2n) \big] \geq \delta_v - \delta_h \geq \delta_h.
  \end{align*}
  Notice that the vertical path defining $\calC_v(an; 2n)$ is necessarily inside the dual circuit defining $\calC^*(3an, bn; bn)$, 
  since the two may not intersect.
  Also, notice that $\calC_v(an; 2n)$ induces a vertical crossing of $\rect(an; 2n)$.
  Thus, we can use the following exploration argument to compare boundary conditions.

  For a configuration $\omega$, define $\Gamma^*(\omega)$ to be the outmost dually-open circuit as in the definition of $C^*(3an, bn; bn)$ 
  if such a circuit exists.
  Let $\Int(\Gamma^*)$ be the region surrounded by $\Gamma^*$, seen as a subgraph of $\bbG$.
  We note that $\Gamma^*$ can be explored from the outside and as a consequence, the \rcm in $\Int(\Gamma^*)$, conditionally on $\Gamma^*$, is given by $\phi_{\Int(\Gamma^*)}^0$.
  Thus,
  \begin{align*}
    \phi_{\sq(bn)}^\xi \big[ \calC^*(3an, bn; bn) \cap \calC_v(an; 2n) \big] 
    & = \sum_{\gamma^*} \phi_{\sq(bn)}^\xi \big[ \calC_v(an; 2n) \,\big|\, \Gamma^* = \gamma^* \big]  \phi_{\sq(bn)}^\xi \big[ \Gamma^* = \gamma^* \big] \\
    & = \sum_{\gamma^*} \phi_{\Int(\Gamma^*)}^0 \big[ \calC_v(an; 2n) \big]  \phi_{\sq(bn)}^\xi \big[ \Gamma^* = \gamma^* \big] \\
    & \leq \sum_{\gamma^*} \phi_{\sq(bn)}^0 \big[ \calC_v(an; 2n) \big]  \phi_{\sq(bn)}^\xi \big[ \Gamma^* = \gamma^* \big] \\
    & \leq \phi_{\sq(bn)}^0 \big[ \calC_v(an; 2n) \big],
  \end{align*}
  where the summations are over all possible realisations $\gamma^*$ of $\Gamma^*$.
  The first inequality is based on the comparison between boundary conditions and on the fact that $\Int(\gamma^*) \subset \sq(bn)$ for all $\gamma^*$.
  Hence, we deduce that,
  \begin{align*}
    \phi_{\sq(bn)}^0 \big[ \calC_v(an; 2n) \big] \geq \delta_h.
  \end{align*}
  Similarly, observe that 
  \begin{align*}
    \phi_{\sq(bn)}^\xi \big[ \calC^*(3an, bn; bn) \cap \calC(an, 3an; n) \big] 
    \geq \delta_h.
  \end{align*}
  Again, the circuit defining $\calC(an, 3an; n)$ is necessarily inside the dual circuit defining $\calC^*(3an, bn; bn)$ 
  and it therefore induces a horizontal crossing of $\rect(an; n)$.
  Using the same exploration argument as above, we deduce that
  \begin{align} \label{eq:RSW_Ch}
    \phi_{\sq(bn)}^0 \big[ \calC_h(an; n) \big] \geq \delta_h.
  \end{align}
  The same may be performed for the dual model.
  Since these computations hold for arbitrary $n$ large enough, we obtain for all $n \geq 1$
  \begin{align*}
    \phi_{\sq(bn)}^0 \big[ \calC_v (an; 2n) \big] \geq \delta_h,
    & \qquad \phi_{\sq(bn)}^0 \big[ \calC_h (an; n) \big] \geq \delta_h
    \qquad \text{and} \\
    \phi_{\sq(bn)}^1 \big[ \calC_v^* (an; 2n) \big] \geq \delta_h,
    & \qquad \phi_{\sq(bn)}^1 \big[ \calC_h^* (an; n) \big] \geq \delta_h.	
  \end{align*}

  We claim that~\eqref{eq:BXP3} follows from the above.
  Indeed, the inequalities above for horizontal crossing are of the desired form. 
  However, vertical crossings are only bounded for short and potentially wide rectangles. 
  Notice however that, by combining crossings as in Figure~\ref{fig:alternative_RSW_crossing} and using the FKG inequality, 
  \begin{align} \label{eq:RSW_Cv}
    \phi^0_{\sq (abn)} \big[ \calC_v (an; a^2n) \big]
    \geq \phi_{\sq (bn)}^0 \big[ \calC_v (an; 2n) \big]^{a^2-1} \phi_{\sq(bn)}^0 \big[ \calC_h (an; n) \big]^{a^2-1}  
    \geq \delta_h^{2a^2 - 2}
  \end{align}

  \begin{figure}[htb] \centering
    \includegraphics[scale=0.8]{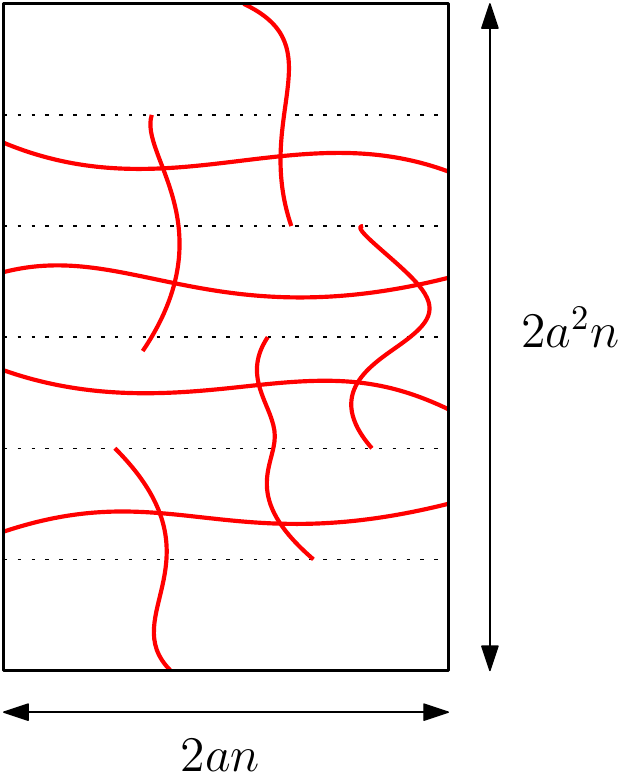}
    \caption{A vertical crossing in $\rect(an; a^2n)$ created by superimposing shorter vertical and horizontal crossings. The distance between two consecutive horizontal dotted lines is $2n$.}
    \label{fig:alternative_RSW_crossing}
  \end{figure}

  Equations~\eqref{eq:RSW_Ch} and~\eqref{eq:RSW_Cv} imply~\eqref{eq:BXP3} with $\rho = a$ and $\nu = a(b-a)$, and Lemma~\ref{lem:equiv_RSW} may be used to conclude.
\end{proof}

As the next lemma suggests, condition~\eqref{eq:RSW_sq_cond} is actually equivalent to the RSW property.
The following statement may be viewed as a converse to Lemma~\ref{lem:RSW_sq_cond}.

\begin{lem} \label{lem:RSW_to_cond}
  Assume that $\bbG$ has the RSW property.
  Fix $a > 1$. 
  Then, there exists $\delta_v > 0$ such that for any $\delta_h > 0$, there exist $b > 3a$ 
  such that for all $n$ large enough, the following condition holds,
  \begin{align}\nonumber
    \phi_{\sq(bn)}^0 \big[ \calC(3an, bn; bn) \big] \geq 1- \delta_h/2 \quad 
    & \text{and} \quad \phi_{\sq(bn)}^1 \big[ \calC^*( 3an , bn ; bn) \big] \geq 1- \delta_h/2, \nonumber \\
    \phi_{\sq(bn)}^0 \big[ \calC_v( \tfrac{an}{2}; \tfrac{an}{2}) \big]  \geq 2\delta_v \quad
    & \text{and} \quad \phi_{\sq(bn)}^1 \big[ \calC_v^*( \tfrac{an}{2}; \tfrac{an}{2}) \big]  \geq 2\delta_v, \nonumber\\
    \phi_{\sq(bn)}^0 \big[ \calC(an, 2an; \tfrac{n}{a}) \big] \geq 2 \delta_v \quad 
    & \text{and} \quad \phi_{\sq(bn)}^1 \big[ \calC^*(an, 2an; \tfrac{n}{a}) \big] \geq 2\delta_v.
    \label{eq:RSW_sq_cond2}
  \end{align}
\end{lem}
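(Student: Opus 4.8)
The plan is to establish the three pairs of estimates in \eqref{eq:RSW_sq_cond2} one at a time, using only the RSW property for $\bbG$ together with the FKG inequality and comparison between boundary conditions; the dual estimates will then follow from the primal ones by running the same arguments for the dual model, since the dual of $\phi^1_{\sq(bn)}$ has free boundary conditions and the RSW property of $\bbG$ also controls dual crossings. Two standard consequences of the RSW property will be used throughout. First, by Lemma~\ref{lem:equiv_RSW} and periodicity, any rectangle of a fixed aspect ratio (and any translate of it) is crossed in its long direction with probability at least a constant $\delta>0$ depending only on that aspect ratio, $q$ and $\eps$, under the free measure on a slightly enlarged rectangle; by comparison between boundary conditions this lower bound then also holds under $\phi^0_{\sq(bn)}$ as soon as $\sq(bn)$ contains the enlarged rectangle, because the restriction of the free measure on a larger domain stochastically dominates the free measure on a smaller one. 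Second, intersecting four such crossings of rectangles forming an annulus of fixed modulus, and using FKG, such an annulus carries a primal open circuit around its hole with probability at least a constant $\delta>0$, uniformly over the boundary condition on its outer boundary (free being extremal for this increasing event) and over a free boundary condition on its inner boundary. Below, all constants $\delta$ depend only on $q$, $\eps$ and the fixed parameter $a$; the parameter $b$ is chosen only at the end, depending also on $\delta_h$.

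For the second line of \eqref{eq:RSW_sq_cond2}, note that $\calC_v(\tfrac{an}{2};\tfrac{an}{2})$ is a vertical crossing of the square $\sq(\tfrac{an}{2})$, which has aspect ratio $1$; since $b>3a$ gives $\sq(an)\subset\sq(bn)$, the first tool yields $\phi^0_{\sq(bn)}[\calC_v(\tfrac{an}{2};\tfrac{an}{2})]\geq\delta$. For the third line, a primal open circuit inside $\rect(2an;\tfrac{n}{a})$ surrounding the base segment between $x_{-an,0}$ and $x_{an,0}$ will be produced by intersecting four crossings of bounded aspect ratio: horizontal crossings of $\rect(-2an,2an;\tfrac{n}{2a},\tfrac{n}{a})$ and of its mirror image below the base, and vertical crossings of $\rect(-2an,-an;-\tfrac{n}{a},\tfrac{n}{a})$ and $\rect(an,2an;-\tfrac{n}{a},\tfrac{n}{a})$. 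These four rectangles overlap pairwise in squares of side of order $n$, so their simultaneous occurrence, which by FKG and the first tool has $\phi^0_{\sq(bn)}$-probability at least $\delta^4$, forces a circuit as required; since the vertical crossings reach the base only at abscissae of modulus at least $an$, the circuit does not cross the base between the two marked vertices. We then set $2\delta_v$ to be the smaller of the two constants obtained in this paragraph, so that $\delta_v$ indeed depends only on $q$, $\eps$ and $a$.

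It remains to treat the first line of \eqref{eq:RSW_sq_cond2}, which is where the freedom in $b$ is used. Set $R_j=4an\cdot 2^j$ for $j\geq 0$, let $k$ be the largest integer with $R_k\leq bn$ (so $k$ is of order $\log_2(b/a)$), and let $E_j$ be the event that the annulus $\sq(R_{j+1})\setminus\sq(R_j)$ contains a primal open circuit surrounding $\sq(R_0)$. Since $R_0=4an>3an$, any such circuit lies outside $\sq(3an)$, hence surrounds the base segment between $x_{-3an,0}$ and $x_{3an,0}$ without crossing it, and it lies in $\sq(R_k)\subseteq\sq(bn)$; therefore $E_0\cup\dots\cup E_{k-1}\subseteq\calC(3an,bn;bn)$ and it is enough to bound $\phi^0_{\sq(bn)}[\neg E_0\cap\dots\cap\neg E_{k-1}]$. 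Revealing the configuration of $\sq(bn)$ from the outside inward, for each $j$ the event $\neg E_{j+1}\cap\dots\cap\neg E_{k-1}$ is measurable with respect to the edges outside $\sq(R_{j+1})$; conditionally on those edges, the law inside $\sq(R_{j+1})$ is a random-cluster measure whose boundary condition on $\partial\sq(R_{j+1})$ is, for the increasing event $E_j$, dominated by the free one, under which $E_j$ has probability at least $\delta$ by the annulus estimate. Hence $\phi^0_{\sq(bn)}[\neg E_j\mid\text{edges outside }\sq(R_{j+1})]\leq 1-\delta$, and multiplying these conditional bounds gives $\phi^0_{\sq(bn)}[\neg E_0\cap\dots\cap\neg E_{k-1}]\leq(1-\delta)^k$. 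As $\delta$ is fixed and $k\to\infty$ when $b\to\infty$, we may finally choose $b>3a$ large (depending on $q$, $\eps$, $a$ and $\delta_h$) so that $(1-\delta)^k\leq\delta_h/2$, which gives $\phi^0_{\sq(bn)}[\calC(3an,bn;bn)]\geq 1-\delta_h/2$. Throughout, $n$ is only required large enough that the smallest scale $4an$ lies in the regime where the RSW estimates apply.

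I expect the main obstacle to be this last estimate: one must argue carefully that, while exploring the nested annuli from outside, the boundary condition induced on each $\partial\sq(R_{j+1})$ is worst-case free for the increasing circuit event $E_j$, so that the RSW lower bound genuinely applies conditionally and the failure probabilities multiply to $(1-\delta)^k$. The first two pairs are routine combinations of box crossings via FKG and comparison of boundary conditions, the only mild point being to choose the four rectangles in the constant-order circuit estimate so that the resulting circuit does not illegally cross the base between the two marked base vertices.
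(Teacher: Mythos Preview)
Your argument is correct and follows the same route the paper indicates (the paper itself defers the proof to \cite[App.~C]{Li-thesis} and only highlights the key fact $\phi^0_{\sq(bn)}[\calC(3an,bn;bn)]\to 1$ as $b\to\infty$ uniformly in $n$, which is precisely what your nested-annuli computation yields). One phrasing slip: when you write that the induced boundary condition on $\partial\sq(R_{j+1})$ is ``dominated by the free one'', you mean the opposite --- any induced boundary condition stochastically \emph{dominates} the free one, so for the increasing event $E_j$ the free measure on $\sq(R_{j+1})$ provides a lower bound, which is exactly what you use (and what you say correctly in your final paragraph).
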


The proof is a standard application of the RSW theory and readers are referred to~\cite[App.~C]{Li-thesis}.
Let us only mention that it uses the fact that 
$$
\phi_{\sq(bn)}^0 \big[ \calC(3an, bn; bn) \big]  \xrightarrow[b \to \infty]{}1, \qquad \text{uniformly in } n.
$$
This is a typical consequence of the \emph{strong} RSW property; it appears in other forms in various applications. 

\subsubsection{Transporting RSW: proof of Proposition~\ref{prop:transfer_RSW}} \label{sec:path_transport}

Fix $\graph{1} = \bbG_{\ba, \bb^{(1)}}$ and $\graph{2} = \bbG_{\ba,\bb^{(2)}}$ two isoradial square lattices in $\calG(\eps)$. 
Suppose $\graph{1}$ satisfies the RSW property.

Let $G_\mix$ be the symmetric mixed graph of $\graph{1}$ and $\graph{2}$ constructed in Section~\ref{sec:mix}, 
where the width of each strip is $2M+1$ and the height is $N = N_1 = N_2$ (for $M$ and $N$ to be mentioned below). 
We use here the construction both above and below the base, where each side is convexified separately.  
Let $\tilde G_\mix = \Sigma^\uparrow (G_\mix) = \Sigma^\downarrow (G_\mix)$ be the graph obtained after exchanging the tracks $t_0, \dots, t_N$ of $G_\mix$ with $t_{N+1}, \dots, t_{2N+1}$ and $t_{-1}, \dots, t_{-N}$ with $t_{-(N+1)}, \dots,t_{-2N}$.
Write $\phi_{G_\mix}$ and $\phi_{\tilde G_\mix}$ for the \rcms on $G_\mix$ and $\tilde G_\mix$, respectively, 
with parameters $q \in [1, 4]$, $\beta =1$ and free boundary conditions. 

The estimates below are the key to the proof of Proposition~\ref{prop:transfer_RSW}.
They correspond to similar statements in~\cite{GriMan14} for Bernoulli percolation.

\begin{prop}[Prop.~6.4 of~\cite{GriMan14}] \label{prop:horizontal_transport}
  There exist  $\lambda,n_0 > 1$, depending on $\eps$ only, such that, for all $\rho_{\rmout} > \rho_{\rmin}> 0$, $n \geq n_0$ and sizes $M \geq (\rho_{\rmout} + \lambda) n $ and $N \geq \lambda n$, 
  \begin{align*}
    \phi_{\tilde G_\mix} \big[ \calC(\rho_{\rmin} n, (\rho_{\rmout} + \lambda) n; \lambda n) \big]
    \geq (1- \rho_{\rmout} e^{-n}) \phi_{G_\mix} \big[ \calC (\rho_{\rmin} n, \rho_{\rmout} n; n) \big].
  \end{align*}
\end{prop}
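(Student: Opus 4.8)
\textbf{Proof proposal for Proposition~\ref{prop:horizontal_transport}.}

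The plan is to track a horizontal (circuit) crossing of $\rect(\rho_{\rmin} n, \rho_{\rmout} n; n)$ through the sequence $\Sigma^\downarrow$ of track exchanges that transforms $G_\mix$ into $\tilde G_\mix$, and to control how much such a crossing can ``drift upward'' as the tracks $t_{N_1+1},\dots,t_{N_1+N_2+1}$ are pushed down past the block $G^{(1)}$. The key point is that in the symmetric mixed graph the crossing event $\calC(\rho_{\rmin}n,\rho_{\rmout}n;n)$ is defined inside the square-lattice block, so at the start of $\Sigma^\downarrow$ an open circuit realising it lives in a region of bounded height; after applying $\Sigma^\downarrow$ the same circuit (its image under the connection-preserving coupling of Proposition~\ref{prop:coupling}) is still open, still surrounds the relevant base segment, and still sits inside $\rect(\cdot;\cdot\,n)$ horizontally, but it may have been displaced vertically. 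The reason we use $\Sigma^\downarrow$ rather than $\Sigma^\uparrow$ is precisely the one flagged after the definition of $\Sigma^\downarrow$: it lets us bound the \emph{upward} drift of the path, which is what we need here.

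First I would recall from Section~\ref{sec:switching_isos} (and Figure~\ref{fig:path_transformations}) the local effect of a single track exchange on a segment of the path: the path is modified only within the two tracks being exchanged, and in each elementary move it is displaced by a bounded amount; crucially, the secondary (``bad'') outcome — the one that can move the path in the unfavourable direction or make it climb — occurs with probability bounded by a constant $<1$ depending only on $\eps$, and this randomness is independent across the \stts involved (it comes from independent \stt couplings). So along the whole sequence $\Sigma^\downarrow$, the vertical position of any fixed horizontal segment of $\gamma$ performs something dominated by a random walk with a strong downward (or at worst bounded) drift, and the probability that it drifts upward by more than a linear amount $c\cdot(\text{number of moves})$ is exponentially small. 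Choosing $\lambda$ large enough compared to the per-track constants, and noting that the number of track exchanges affecting any given segment is $O(N_2)=O(n)$ while the vertical ``budget'' available in $\tilde G_\mix$ is $\lambda n$, a union bound over the $O(\rho_{\rmout} n)$ relevant horizontal segments of $\gamma$ gives the factor $(1-\rho_{\rmout}e^{-n})$: with at least that probability, the image circuit stays within $\rect(\rho_{\rmin}n,(\rho_{\rmout}+\lambda)n;\lambda n)$ and still separates the base segment as required, hence realises $\calC(\rho_{\rmin}n,(\rho_{\rmout}+\lambda)n;\lambda n)$.

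To assemble this cleanly I would (i) fix a configuration $\omega$ on $G_\mix$ realising $\calC(\rho_{\rmin}n,\rho_{\rmout}n;n)$ and a witnessing circuit $\gamma$; (ii) apply $\Sigma^\downarrow$ to $\omega$ using the coupling, obtaining $\Sigma^\downarrow(\omega)$ distributed as $\phi_{\tilde G_\mix}$ and an image circuit $\Sigma^\downarrow(\gamma)$ open in it; (iii) decompose $\gamma$ into its maximal segments crossing each pair of adjacent exchanged tracks, apply the local analysis of Figure~\ref{fig:path_transformations} segment by segment, and bound the total vertical excursion via the independent-increments / large-deviations estimate above; (iv) check that horizontal coordinates are unaffected (track exchanges only involve horizontal tracks, so the vertical tracks $s_i$ are untouched) so the circuit stays left of $s_{\rho_{\rmin}n}$-side and right appropriately, i.e. it still surrounds $[x_{-\rho_{\rmin}n,0},x_{\rho_{\rmin}n,0}]$ and is contained between $s_{-(\rho_{\rmout}+\lambda)n}$ and $s_{(\rho_{\rmout}+\lambda)n}$; and (v) take $n_0$ and $\lambda$ depending only on $\eps$ so that all the exponential bounds and the bounded-angles quasi-isometry estimates kick in, then integrate over $\omega$. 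The main obstacle, as in \cite{GriMan14}, is step (iii): making the ``upward drift is unlikely'' statement precise — one must argue that the secondary outcomes that push a segment up are genuinely bounded away from probability $1$ uniformly (this uses the bounded-angles property and the explicit \stt weights), that their effects on a single segment are independent across the moves it undergoes, and that the circuit cannot ``accumulate'' drift by interacting with itself in a correlated way; handling the endpoints/contractions of Figure~\ref{fig:path_transfo_ep} and the fact that a circuit has no endpoints actually simplifies matters here, which is why the statement is phrased with $\calC$ rather than $\calC_h$.
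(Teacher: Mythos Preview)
Your overall strategy matches the paper's: track a circuit $\gamma$ through $\Sigma^\downarrow$, use that circuits have no endpoints so the bad case of Figure~\ref{fig:path_transfo_ep} does not arise, and show that the image circuit stays in the enlarged box except on an event of probability at most $\rho_{\rmout}e^{-n}$. Two points deserve correction or sharpening.

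First, step~(iv) contains a genuine error. Horizontal coordinates are \emph{not} unaffected by track exchanges: a glance at Figure~\ref{fig:path_transformations} shows that each exchange can shift a segment of $\gamma$ horizontally by $\pm 1$ (this is exactly why the width must grow from $\rho_{\rmout}n$ to $(\rho_{\rmout}+\lambda)n$). The correct reason the image circuit still surrounds the base segment $[x_{-\rho_{\rmin}n,0},x_{\rho_{\rmin}n,0}]$ is different and simpler: no \stt\ of $\Sigma^\downarrow$ touches any rhombus of $t_0$, so the base is pointwise fixed and the topological class of the circuit relative to it is preserved. You should replace your argument in~(iv) by this observation; as written, the justification is wrong even though the conclusion happens to be right.

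Second, your step~(iii) --- decomposing $\gamma$ into segments and running an independent-increments random walk on each --- is morally right but awkward to make rigorous, because segments can split, merge, and drift horizontally into each other as successive exchanges are applied. The paper sidesteps this by not tracking segments at all: it introduces a growing ``sandpile'' region $H^k\subset\bbZ\times\bbN$ that deterministically widens by $1$ on each side at every step and whose columns independently grow in height by $1$ with probability $\eta<1$ (the $\eta$ coming from the third and last lines of Figure~\ref{fig:path_transformations}, bounded uniformly via the bounded-angles property). One then shows by induction that the set of vertices visited by $\gamma^{(k)}$ is contained in $H^k$, reducing~\eqref{eq:up_drift2} to a first-passage-type bound $\bbP[\max\{j:(i,j)\in H^{\lambda n}\}\geq \lambda n]<\rho_{\rmout}e^{-n}$, which follows from~\cite[Lem.~3.11]{GriMan13}. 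This packaging handles the horizontal and vertical drift simultaneously and avoids any delicate bookkeeping of how segments interact; I would recommend adopting it in place of your segment-by-segment union bound.
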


\begin{prop}[Prop.~6.8 of~\cite{GriMan14}] \label{prop:vertical_transport}
  There exist $\delta \in (0, \frac12)$ and $c_n >0$ satisfying $c_n\to 1$ as $n \to \infty$ such that, for all $n$ and sizes $M \geq 4n$ and $N \geq n$, 
  \begin{align*}
    \phi_{\tilde G_\mix} \big[ \calC_v(4n; \delta n) \big] \geq c_n \phi_{G_\mix} \big[ \calC_v(n; n) \big].
  \end{align*}
\end{prop}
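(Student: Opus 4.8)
The proof follows the template of Proposition~\ref{prop:horizontal_transport} (i.e.\ of~\cite[Prop.~6.8]{GriMan14}), using this time the sequence $\Sigma^\uparrow$ of star-triangle transformations, which was designed to control the \emph{downward} drift of an open path. Sample $\omega\sim\phi_{G_\mix}$ and apply $\Sigma^\uparrow$ to it, the auxiliary randomness of the transformations being independent of $\omega$; by Propositions~\ref{prop:coupling} and~\ref{prop:stt_iso_preserved} the resulting configuration $\Sigma^\uparrow(\omega)$ is distributed according to $\phi_{\tilde G_\mix}$. On the event $\calC_v(n;n)$ pick a canonical open vertical crossing $\gamma=\gamma(\omega)$ of $\sq(n)$ (say the left-most one, to make it measurable). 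Since $n\le N$ and $n\le M$, the path $\gamma$ together with its two endpoints lies inside the central square-lattice block of $G_\mix$, namely the $\graph{1}$-block. Let $\Sigma^\uparrow(\gamma)$ be the corresponding open path in $\Sigma^\uparrow(\omega)$, built from $\gamma$ by the local path-transformation rules of Figures~\ref{fig:path_transformation1},~\ref{fig:path_transformations} and~\ref{fig:path_transfo_ep}. The whole content of the proposition is the claim that, conditionally on $\gamma$, with probability at least some $c_n$ tending to $1$, the path $\Sigma^\uparrow(\gamma)$ contains an open vertical crossing of $\rect(4n;\delta n)$ in $\tilde G_\mix$; averaging over $\omega$ and $\gamma$ then gives the stated inequality.

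To establish the claim one analyses the effect of $\Sigma^\uparrow$ on $\gamma$ one track exchange at a time, using Figures~\ref{fig:path_transformations} and~\ref{fig:path_transfo_ep} to see how each pair of successive transverse edges of $\gamma$ is modified. Two features must be controlled. \emph{(i) Vertical reach:} because star-triangle moves preserve connections and tracks are not reindexed, the path $\Sigma^\uparrow(\gamma)$ runs between the transformed endpoints of $\gamma$, which the geometry of $\Sigma^\uparrow$ places below $t_{-\delta n}$ and above $t_{\delta n}$ respectively; hence $\delta\in(0,\tfrac12)$ can be chosen (depending on $\eps$ only) so that the vertical extent of the target rectangle is no obstruction. \emph{(ii) Horizontal confinement:} one shows that, while running between $t_{-\delta n}$ and $t_{\delta n}$, the path $\Sigma^\uparrow(\gamma)$ never leaves the strip between $s_{-4n}$ and $s_{4n}$ (and, en route, never leaves the square-lattice block of $G_\mix$, which is why $M\ge 4n$ and $N\ge n$ are imposed). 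Each track exchange displaces a given transverse segment of the path by a bounded \emph{principal} amount plus, with a probability bounded away from $1$ uniformly over $\calG(\eps)$, an extra \emph{secondary} amount; summing the principal contributions yields the deterministic part of the drift, pinned down by the constant $4$, and a large-deviation bound on the accumulated secondary contributions yields the failure probability $1-c_n$, exponentially small in $n$.

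Putting (i) and (ii) together: on $\calC_v(n;n)$ intersected with the (conditionally likely) confinement event, $\Sigma^\uparrow(\omega)$ realises $\calC_v(4n;\delta n)$, and since $\Sigma^\uparrow(\omega)\sim\phi_{\tilde G_\mix}$ the proposition follows.

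The hard part is step (ii), the drift estimate. Compared with the Bernoulli computation of~\cite{GriMan14} there are two additional difficulties, both flagged in the introduction. First, the secondary outcomes of the path transformations (Figure~\ref{fig:path_transformations}) now carry the random-cluster probabilities, so the uniform lower bound on ``no secondary outcome occurs'' must be read off the explicit normalising constants. Second, and more seriously, $\phi_{G_\mix}$ has long-range dependence: one must reveal $\gamma$ first and then apply the star-triangle transformations one at a time, each with its own independent randomness, so that the secondary displacements form a sequence to which a concentration inequality can be applied; keeping this order of conditioning honest is what makes the argument delicate, after which the bookkeeping runs parallel to the proof of Proposition~\ref{prop:horizontal_transport}.
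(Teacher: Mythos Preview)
Your sketch has the two difficulties reversed, and this is a genuine gap rather than a matter of presentation.

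Claim (i) is false. The top endpoint of $\gamma$ sits at height $n$, between two horizontal tracks of the $\graph{1}$-block that $\Sigma^\uparrow$ is about to push above the $\graph{2}$-block. As each $\Sigma^\uparrow_{n-k}$ slides $t_{n-k}$ upward, that track eventually passes over the current top of the path, and exactly then the endpoint-contracting case of Figure~\ref{fig:path_transfo_ep} may occur: the top edge of the path is lost. Nothing in ``the geometry of $\Sigma^\uparrow$'' prevents this from happening at every one of the $n+1$ macro-steps, so a priori the height could shrink to zero. Controlling this loss of height is the entire probabilistic content of the proposition, and the constant $\delta$ is not a free parameter chosen to make the target rectangle easy to hit: it is fixed by a finite-energy computation (see~\eqref{eq:est_delta}) as the probability that the height survives one macro-step.

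Conversely, your claim (ii) is essentially deterministic and easy. One does not follow a single path but the maximal height $h^k$ reachable by \emph{any} open path inside explicit trapezoids $D^k$ (and intermediates $D^k_j$). A case check on Figure~\ref{fig:path_transformations} shows that any path in $D^k_j$ is mapped into $D^k_{j+1}$ by one further track exchange; no large-deviation bound is needed, and the constant $4$ in $\calC_v(4n;\delta n)$ simply reflects the deterministic lateral growth of these domains over $n+1$ macro-steps.

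The actual argument runs as follows. For each macro-step, $h^k$ drops by at most $1$, and with probability at least $2\delta$ it does not drop (equation~\eqref{eq:hjk4}). The random-cluster dependence enters precisely here. One cannot fix a single path at time zero and follow it; instead, at each intermediate configuration $\omega^k_j$ one re-selects the left-most path $\Gamma$ of maximal height. The event $\{\Gamma=\gamma\}$ then factors as $\{\gamma\text{ open}\}\cap N_\gamma$ with $N_\gamma$ decreasing, so FKG gives a lower bound on the conditional probability that two specific edges near the top of $\gamma$ are closed (inequality~\eqref{eq:CF_proba}). Given those closures, the coupling of Figure~\ref{fig:simple_transformation_coupling} yields a uniformly positive chance that the endpoint survives the passage of the track. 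Stochastic comparison of $(h^k)$ with the random walk $H^k=n+\sum_{i\le k}\Delta_i$, $\Delta_i\in\{0,-1\}$, and the law of large numbers then give~\eqref{eq:vertical_h_control}. Your final paragraph correctly senses that the conditioning is delicate, but you have attached it to the wrong coordinate.
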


The proofs of the two statements are similar to those of~\cite{GriMan14}. 
They do not rely on the independence of the percolation measure, they do however use crucially the independence of the randomness appearing in the \stts. 
More details about this step are given in Section~\ref{sec:details_hv_transport_quantum} when we will treat the quantum case, since more explicit estimates will be needed.
However, we will not provide full proofs since they are very similar to the corresponding statements in~\cite{GriMan14}.

Let us admit the two propositions above for now and finish the proof of Proposition~\ref{prop:transfer_RSW}.

\begin{proof}[Proof of Proposition~\ref{prop:transfer_RSW}]
  Fix parameters $n_0, \lambda > 1$ and $\delta > 0$ as in Propositions~\ref{prop:horizontal_transport} and~\ref{prop:vertical_transport}.
  Since $\graph 1$ satisfies the RSW property, Lemma~\ref{lem:RSW_to_cond} applies to it.
  Choose $a = \max \{ \lambda, \frac2\delta, 1 \}$ and an arbitrary $\delta_h > 0$. 
  By Lemma~\ref{lem:RSW_to_cond}, there exist $b > 3a$ and $\delta_v > 0$ such that, for all $n$ large enough, 
  \begin{align}\nonumber
    & \phi_{\sq({bn})}^0 \big[ \calC(3an, bn ; bn) \big] \geq 1- \delta_h/2, \\
    & \phi_{\sq({bn})}^0 \big[ \calC_v(\tfrac{an}{2}; \tfrac{an}{2}) \big] \geq 2\delta_v,  \nonumber\\
    & \phi_{\sq({bn})}^0 \big[ \calC(an, 2an; \tfrac{n}{a}) \big] \geq 2\delta_v.
    \label{eq:HVH1}
  \end{align}
  We will prove that $\graph 2$ satisfies~\eqref{eq:RSW_sq_cond} 
  for these values of $a$, $\delta_v$ and $\delta_h$, with $b$ replaced by $\tilde b = (1+\lambda)b$. 
  The boundary conditions $\xi$ will be fixed below.
  We start by proving~\eqref{eq:RSW_sq_cond} for the primal events. 
  
  Take $M = N \geq (\lambda+1) b n$ for constructing $G_\mix$. 
  Then, since the balls of radius $bn$ in $G_\mix$ and in $\graph 1$ are identical, we deduce from the above that 
  \begin{align*}
    & \phi_{G_\mix} \big[ \calC(3an , bn; bn) \big] \geq 1- \delta_h/2, \\
    & \phi_{G_\mix} \big[ \calC_v(\tfrac{an}{2}; \tfrac{an}{2}) \big] \geq 2\delta_v, \\
    & \phi_{G_\mix} \big[ \calC(an, 2an; \tfrac{n}{a}) \big] \geq 2\delta_v. 
  \end{align*}
  We used here that the boundary conditions on $\sq(bn)$ in~\eqref{eq:HVH1} are the least favorable for the existence of open paths. 
  
  For $n \geq a n_0$, Propositions~\ref{prop:horizontal_transport} and~\ref{prop:vertical_transport} then imply
  \begin{align*}
    & \phi_{\tilde G_\mix} \big[ \calC(3an ,(\lambda +1) bn ;\lambda bn) \big] \geq (1- e^{-bn}) (1-\delta_h/2),\\
    & \phi_{\tilde G_\mix} \big[ \calC_v(an; \tfrac{\delta}2 an) \big]  \geq 2c_n  \delta_v, \\
    & \phi_{\tilde G_\mix} \big[ \calC(an, (2a +\tfrac\lambda a)n; \tfrac{\lambda}a n ) \big] \geq 2(1- 2a^2 e^{-n/a}) \delta_v.
  \end{align*}
  Now, take $n$ large enough so that $2e^{-bn} < \delta_h$, $2c_n > 1$ and $a^2 e^{-n/a} < 1/4$.
  These bounds ultimately depend on $\eps$ only.
  Observe that this implies~\eqref{eq:RSW_sq_cond} for the primal model.
  Indeed, set $\tilde b = (\lambda +1)b$, then, due to the choice of $a$,
  \begin{align*}
    &\phi_{\tilde G_\mix} \big[ \calC(3an, \tilde b n; \tilde b n) \big] \geq (1-\delta_h/2)^2 \geq 1-\delta_h, \\
    &\phi_{\tilde G_\mix} \big[ \calC_v(an; n) \big] \geq \delta_v, \\
    &\phi_{\tilde G_\mix} \big[ \calC(an, 3a n; n ) \big] \geq \delta_v.
  \end{align*}
  
  The same procedure may be applied for the dual model to obtain the identical bounds for $\calC^*(.,.;,)$ and $\calC_v^*(.;.)$. 
  
  By choice of $M$ and $N$, the region $\sq(\tilde bn)$ of $\tilde G_\mix$ is also a subgraph of $\bbG^{(2)}$.
  This implies~\eqref{eq:RSW_sq_cond} for $\bbG^{(2)}$.
  The boundary conditions $\xi$ appearing in~\eqref{eq:RSW_sq_cond} are those induced on $\sq(\tilde b n)$ by the free boundary conditions on $\tilde G_\mix$.
  These are random boundary conditions, but do not depend on the events under study.
  In particular, they are the same for all the six bounds of~\eqref{eq:RSW_sq_cond}.
\end{proof}

\subsubsection{Sketch of proof for Propositions~\ref{prop:horizontal_transport} and~\ref{prop:vertical_transport} } \label{sec:details_hv_transport}

The proofs of Propositions~\ref{prop:horizontal_transport} and~\ref{prop:vertical_transport} 
are very similar to those of Propositions 6.4 and 6.8 in~\cite{GriMan14}, with only minor differences.
Nevertheless, we sketch them below for completeness.
The estimates in the proofs are specific to the random-cluster model and will be important in Section~\ref{sec:quantum}. 

We keep the notations $G_\mix$ and $\tilde G_\mix$ introduced in the previous section.

\begin{proof}[Proof of Proposition~\ref{prop:horizontal_transport}]
  We adapt the proof from Proposition 6.4 (more precisely, Lemma 6.7) of~\cite{GriMan14} to our case.

  Recall the definition of $\Sigma^\downarrow$, the sequence of \stts to consider here: above the base level, we push down tracks of $\graph{2}$ below those of $\graph{1}$ one by one, from the bottom-most to the top-most; below the base level, we proceed symmetrically.
  Let $\bbP$ be a probability measure defined as follows.
  Pick a configuration $\omega$ on $G_\mix$ according to $\phi_{G_\mix}$; apply the sequence of \stts $\Sigma^\downarrow$ to it using the coupling described in Figure~\ref{fig:simple_transformation_coupling}, where the randomness potentially appearing in each transformation is independent of $\omega$ and of all other transformations.
  Thus, under $\bbP$ we dispose of configurations on all intermediate graphs in the transformation from $G_\mix$ to $\tilde G_\mix$.
  Moreover, in light of Proposition~\ref{prop:coupling}, $\Sigma^\downarrow(\omega)$ has law $\phi_{\tilde G_\mix}$.

  We will prove the following statement
  \begin{align}
    \bbP \big[ \Sigma^\downarrow(\omega) \in \calC(\rho_{\rmin} n, (\rho_{\rmout} + \lambda) n; \lambda n) \,\big|\, 
    \omega \in \calC(\rho_{\rmin} n, \rho_{\rmout} n; n) \big]
    \geq 1- \rho_{\rmout} e^{-n},
    \label{eq:up_drift1}
  \end{align}
  for any values $\rho_{\rmout} > \rho_{\rmin} > 0$, $n \geq n_0$, $M \geq (\rho_{\rmout} + \lambda) n$ and $N \geq \lambda n$, where $\lambda, n_0 > 1$ will be chosen below.
  This readily implies Proposition~\ref{prop:horizontal_transport}. 

  \medbreak

  Fix $\rho_{\rmout}, \rho_{\rmin}, n, M$ and $N$ as above. 
  Choose $\omega_0 \in  \calC(\rho_{\rmin} n, \rho_{\rmout} n; n)$ and let $\gamma$ be an $\omega_0$-open circuit as in the definition of $\calC(\rho_{\rmin} n, \rho_{\rmout} n; n)$.
  As the transformations of $\Sigma^\downarrow = \sigma_K\circ \dots \circ \sigma_1$ are applied to $\omega_0$, the circuit $\gamma$ is transformed along with $\omega_0$.
  Thus, for each $0 \leq k \leq K$, $(\sigma_k\circ \dots \circ \sigma_1)(\gamma)$ is an open path in $(\sigma_k\circ \dots \circ \sigma_1)(\omega_0)$ on the graph $(\sigma_k\circ \dots \circ \sigma_1)(G_\mix)$. 

  Since no \stt of $\Sigma^\downarrow$ affects the base, $\Sigma^\downarrow(\gamma)$ remains a circuit surrounding the segment of the base between $x_{-\rho_{\rmin} n,0}$ and $x_{\rho_{\rmin} n,0}$.
  Therefore, the only thing that is left to prove is that
  \begin{align} \label{eq:up_drift2}
    \bbP \big[\Sigma^\downarrow(\gamma) \in R \big( (\rho_{\rmout} + \lambda) n; \lambda n \big) \,\big|\,
    \omega = \omega_0 \big]
    \geq 1- \rho_{\rmout} e^{-n}.
  \end{align}
  Set
  \begin{align*}
    \gamma^{(k)} = 
    (\Sigma^\downarrow_{N_1+k} \circ \Sigma^\downarrow_{-(N_1+k)}) \circ \cdots \circ
    (\Sigma^\downarrow_{N_1+1} \circ \Sigma^\downarrow_{-(N_1+1)}),
  \end{align*}
  where $\Sigma^\downarrow_i = \Sigma_{t_0, t_i} \circ \cdots \circ \Sigma_{t_{N_1}, t_i}$ for $i \geq 0$ and $\Sigma^\downarrow_i = \Sigma_{t_{-1}, t_{i}} \circ \cdots \circ \Sigma_{t_{-N_1}, t_i}$ for $i < 0$.
  The path $\gamma^{(k)}$ thus defined is the transformation of $\gamma$ after the first $k$ tracks of $\graph 2$ above the base were sent down, and the symmetric procedure was applied below the base.

  \medbreak

  In~\cite{GriMan14}, the vertices of $G_\mix$ visited by $\gamma^{(k)}$ were shown to be contained in a region whose evolution with $k = 0, \dots, N_2 + 1$ is explicit. 
  This is done separately above and below the base level, and we focus next on the upper half-space. 

  Let $H^0 = \{(i,j) \in \bbZ \times \bbN : -(\rho_{\rmout}+1 )n \leq i - j \text{ and } i+j \leq (\rho_{\rmout}+1)n \text{ and } j \leq n\}$.
  Then, $H^{k+1}$ is defined from $H^k$ as follows. If $(i,j) \in \bbZ \times \bbN$ is such that $(i,j), (i-1,j)$ or $(i+1,j)$ are in $H^k$, then $(i,j) \in H^{k+1}$. 
  Otherwise, if $(i,j-1) \in H^k$, then $(i,j)$ is included in $H^k$ with probability $\eta \in (0, 1)$, independently of all previous choices.
  We will see later how the value of $\eta$ is chosen using the bounded-angles property.

  In consequence, the sets $(H^k)_{0 \leq k \leq N}$ are interpreted as a growing pile of sand, with a number of particles above every $i \in \bbZ$. 
  At each stage of the evolution, the pile grows laterally by one unit in each direction; 
  additionally, each column of the pile may increase vertically by one unit with probability $\eta$ (see Figure~\ref{fig:mountains}).

  \begin{figure}[htb]
    \centering
    \includegraphics[scale=1]{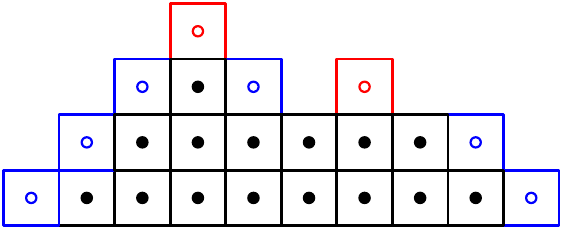}
    \caption{One step of the evolution of $H$: $H^k$ is drawn in black, $H^{k+1}$ contains the additional blue points (since they are to the left or right of vertices in $H^k$) and the red points (these are added due to the random increases in height).  }
    \label{fig:mountains}
  \end{figure}

  Loosely speaking,~\cite[Lem.~6.6]{GriMan14} shows that, if $\eta$ is chosen well, then 
  all vertices $x_{i,j}$ visited by $\gamma^{(k)}$ have $(i,j) \in H^k$ \footnote{This is not actually true, since there is a horizontal shift to be taken into account; let us ignore this technical detail here.}. More precisely, the process $(H^k)_{0 \leq k \leq N}$ may be coupled with the evolution of $(\gamma^{(k)})_{0 \leq k \leq N}$ so that the above is true. 
  This step is proved by induction on $k$, and relies solely on the independence of the \stts and on the estimates of Figure~\ref{fig:path_transformations}.
  Then, \eqref{eq:up_drift2} is implied by the following bound on the maximal height of $H^{N}$:
  \begin{align} \label{eq:Hbound}
    \bbP \big[ \max \{j : (i,j) \in H^{\lambda n} \} \geq \lambda n \big] <  \rho_{{\rmout}}e^{-n}
  \end{align} 
  for some $\lambda > 0$ and all $n$ large enough. 
  The existence of such a (finite) constant $\lambda$ is guaranteed by~\cite[Lem.~3.11]{GriMan13}. 
  It depends on $\eta$, and precisely on the fact that $\eta < 1$~\cite[Lem.~6.7]{GriMan14}. 
  The choice of $\eta <1$ that allows the domination of $(\gamma^{(k)})_{0 \leq k \leq N}$ by $(H^k)_{0 \leq k \leq N}$ is done as follows.


  \medbreak

  We proceed in the same way as in the proofs of Lemmas~6.6 and~6.7 of~\cite{GriMan14}.
  We shall analyze the increase in height of portions of $\gamma^{(k)}$ as given by Figure~\ref{fig:path_transformations}.
  Essentially, the only cases in which $\gamma^{(k)}$ increases significantly in height are depicted in the third and the last line of Figure~\ref{fig:path_transformations}.

  \begin{figure}[htb]
    \centering
    \includegraphics[scale=1]{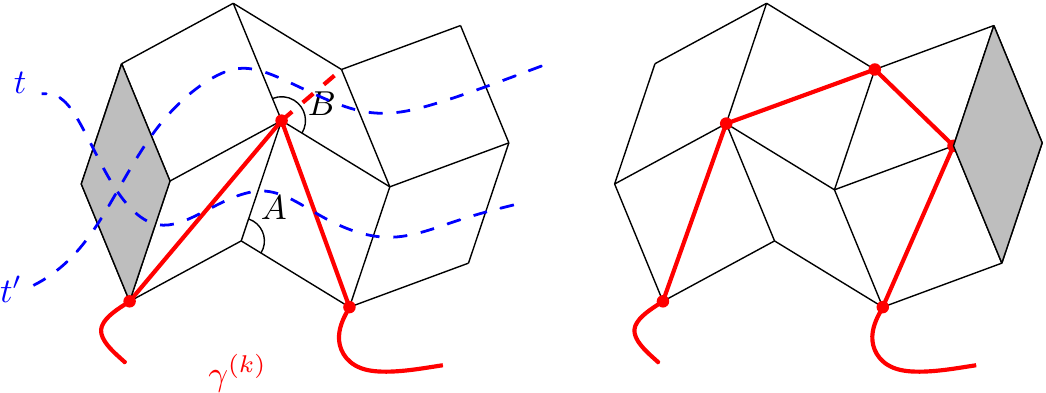}
    \caption{\Stts between tracks $t$ and $t'$ corresponding to the third line of Figure~\ref{fig:path_transformations}.
      The tracks $t$ and $t'$ have transverse angles $A$ and $B$ respectively.
      We assume that a portion of the path $\gamma^{(k)}$ reaches between the tracks $t$ and $t'$ as shown in the figure.
      Moreover, if the dashed edge is open on the left, with probability $\eta_{A, B} = y_{\pi-A} y_{\pi - (B-A)} / q$, the path $\gamma^{(k)}$ drifts upwards by 1 after the track exchange.
    }
    \label{fig:sandpile}
  \end{figure}

  Let us examine the situation which appears in the third line of Figure~\ref{fig:path_transformations} and consider the notations as in Figure~\ref{fig:sandpile}.
  Using the notation of Figure~\ref{fig:sandpile} for the angles $A$ and $B$, the probability that the height of such a $\gamma^{(k)}$ increases by 1 is given by
  \begin{align*}
    \eta_{A, B} & = \frac{y_{\pi - A} y_{\pi - (B-A)}}{q} = \frac{\sin(rA) \sin(r(B-A))}{\sin(r(\pi-A)) \sin(r (\pi- (B-A)))} \\
    & = \frac{ \cos( r(2A-B) ) - \cos( rB ) }{ \cos(r(2A-B)) - \cos(r(2\pi - B)) },
  \end{align*}
  where we recall that $r = \cos^{-1} (\tfrac{\sqrt{q}}{2}) \leq \tfrac13$ and that, due to the $BAP(\eps)$, 
  $A, B \in [\eps, \pi-\eps]$.

  The same computation also applies to the last line of Figure~\ref{fig:path_transformations}.
  Then, $\eta$ may be chosen as
  \begin{align} \label{eq:eta_sandpile}
    \eta := \sup_{A, B \in [\eps, \pi-\eps]} \eta_{A, B} < 1.
  \end{align}
  The domination of the set of vertices of $\gamma^{(k)}$ by $H^k$ is therefore valid for this value of $\eta$, 
  and~\eqref{eq:up_drift2} is proved for the resulting constant $\lambda$. 

\end{proof}

\begin{rmk}
  When we deal with the quantum model in Section~\ref{sec:quantum}, it will be important to have a more precise estimate on $\eta(\eps)$.
  In particular, we will show that, in this special case, $1 - \eta(\eps) \sim \tau(q) \eps$ as $\eps \to 0$ for some constant $\tau := \tau(q)$ depending only on $q \in [1, 4]$. 
\end{rmk}

\begin{proof}[Proof of Proposition~\ref{prop:vertical_transport}]
  We adapt Proposition 6.8 of~\cite{GriMan14} to our case.
  Fix $n$ and $N, M \geq 2n$, and consider the graph $G_\mix$ as described in the previous section. 
  We recall the definition of $\Sigma^\uparrow$, the sequence of \stts we consider here: above the base level, we pull up tracks of $\graph{1}$ above those of $\graph{2}$ one by one, from the top-most to the bottom-most; below the base level, we proceed symmetrically.

  As in the previous proof, write $\bbP$ for the measure taking into account the choice of a configuration $\omega_0$ according to the \rcm $\phi_{G_\mix}$ as well as the results of the \stts in $\Sigma^\uparrow$ applied to the configuration $\omega_0$.

  The events we are interested in only depend on the graph above the base level, 
  hence we are not concerned with what happens below.
  For $0 \leq i \leq N$, recall from Section~\ref{sec:mix} the notation 
  \begin{align*}
    \Sigma^\uparrow_{i} = \Sigma_{t_{i}, t_{2N+1}} \circ \cdots \circ \Sigma_{t_{i}, t_{N+1}}, 
  \end{align*}
  for the sequence of \stts moving the track $t_{i}$ of $\graph{1}$ above $\graph{2}$. 
  Then  $\Sigma^\uparrow  = \Sigma^\uparrow_0 \circ \cdots \circ \Sigma^\uparrow_{N}$.
  

  First, note that if $\omega \in \calC_v(n; n)$, we also have $\Sigma^\uparrow_{n+1} \circ \cdots \circ \Sigma^\uparrow_{N} (\omega) \in \calC_v(n; n)$, since the two configurations are identical between the base and $t_n$.
  We will now write, for $0 \leq k \leq n+1$,
  \begin{align*}
    G^k & = \Sigma^\uparrow_{n-k+1} \circ \cdots \circ \Sigma^\uparrow_N (G_\mix), \\
    \omega^k & = \Sigma^\uparrow_{n-k+1} \circ \cdots \circ \Sigma^\uparrow_N (\omega), \\
    D^k & = \{ x_{u, v} \in G^k : |u| \leq n+2k+v, 0 \leq v \leq N+n \}, \\
    h^k & = \sup \{ h \leq N : \exists u, v \in \bbZ \text{ with } x_{u, 0} \xleftrightarrow{D^k, \omega^k} x_{v, h}  \}.
  \end{align*}
  That is, $h^k$ is the highest level that may be reached by an $\omega^{k}$-open path lying in the trapezoid~$D^k$.
  We note that $G^{n+1} = \tilde G_\mix$ and $\omega^{n+1}$ follows the law of $\phi_{\tilde G_\mix}$.

  With these notations, in order to prove Proposition~\ref{prop:vertical_transport}, it suffices to show the equivalent of~\cite[(6.23)]{GriMan14}, that is
  \begin{align} \label{eq:vertical_h_control}
    \bbP \big[ h^{n+1} \geq \delta n \big]
    \geq c_n \bbP \big[ h^0 \geq n \big],
  \end{align}
  for some $\delta \in (0, \frac12)$ to be specified below and explicit constants $c_n$ with $c_n \to 1$ as $n \to \infty$. 
  Indeed, 
  \begin{align*}
    \bbP [ h^0 \geq n ] \geq \bbP [\omega^0 \in \calC_v(n; n)] =  \phi_{G_\mix}[\calC_v(n; n)].
  \end{align*}
  Moreover, if $h^{n+1} \geq \delta n$, then $\omega^{n+1} \in \calC_v(4n; \delta n)$, 
  and therefore we have $ \phi_{\tilde G_\mix}[ \calC_v(4n; \delta n) ] \geq \bbP(h^{n+1} \geq \delta n)$.

  We may now focus on proving~\eqref{eq:vertical_h_control}.
  To do that, we adapt the corresponding step of~\cite{GriMan14} (namely Lemma 6.9). 
  It shows that $(h^k)_{0 \leq k\leq n+1}$ can be bounded stochastically from below by the Markov process $(H^k)_{0 \leq k\leq n+1}$~\footnote{To be precise, it is shown that for any $k$, the law of $h^k$ dominates that of $H^k$. It is not true that the law of the whole process $(h^k)_{0 \leq k\leq n+1}$ dominates that of $(H^k)_{0 \leq k\leq n+1}$. This step uses \cite[Lem.~3.7]{GriMan13}.}
  given by
  \begin{align}\label{eq:H_Delta}
    H^k = H^0 + \sum_{i=1}^k \Delta_i,
  \end{align}
  where $H^0 = n$ and the $\Delta_i$ are independent random variables with common distribution
  \begin{align} \label{eq:distribution}
    \bbP(\Delta = 0) = 2 \delta, \quad 
    \bbP(\Delta = -1) = 1 - 2 \delta,
  \end{align}
  for some parameter $\delta$ to be specified later.
  Once the above domination is proved, the inequality~\eqref{eq:vertical_h_control} follows by the law of large numbers. 

  The proof of~\eqref{eq:distribution} in \cite{GriMan14} (see equation (6.24) there)
  uses only the independence between different \stts and the finite-energy property of the model.
  Both are valid in our setting. 
  We sketch this below.

  Fix $0 \leq k \leq n$ and let us analyse the $(N-(n-k)+1)^{th}$ step of $\Sigma^\uparrow$, that is $\Sigma^\uparrow_{n-k}$. 
  Write $\Psi_j := \Sigma_{t_{n-k}, t_{N+j}} \circ \cdots \circ \Sigma_{t_{n-k}, t_{N+1}}$ for $0 \leq j \leq N$.
  In other words, $\Psi_j$ is the sequence of \stts that applies to $G^k$ and moves the track $t_{n-k}$ 
  above $j$ tracks of $\graph{2}$, namely $t_{N+1}, \dots, t_{N+j}$.
  Moreover, $\Psi_N = \Sigma^\uparrow_{n-k}$; hence, $\Psi_{N} (G^{k}) = G^{k+1}$.

  \begin{figure}[htb]
    \centering
    \includegraphics[scale=1]{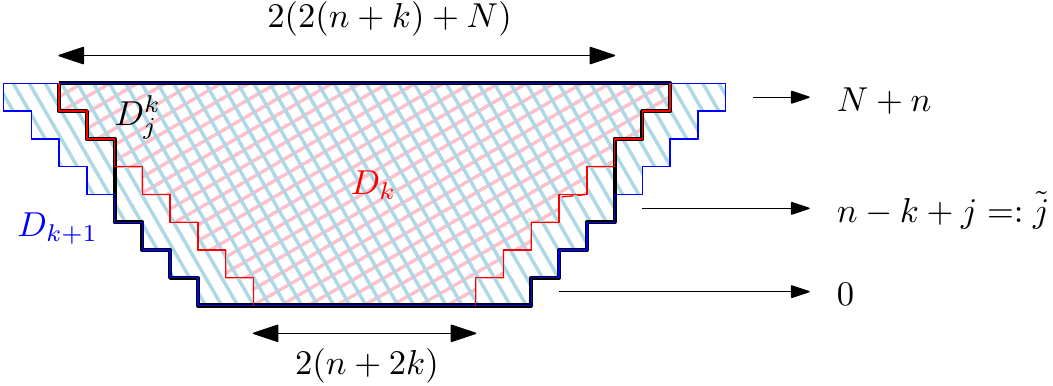}
    \caption{The evolution of $D^{k}$ (red) to $D^{k+1}$ (blue) via intermediate steps $D^k_j$ (black).}
    \label{fig:Dk_classical}
  \end{figure}

  Let $D_j^k$ be the subgraph of $\Psi_j(G^k)$ induced by vertices $x_{u, v}$ with $0 \leq v \leq N+n$ and
  \begin{align*}
    |u| \leq \left\{
      \begin{array}{ll}
        n+2k+v+2 & \text{ if } v \leq \sfj, \\
        n+2k+v+1 & \text{ if } v = \sfj +1, \\
        n+2k+v & \text{ if } v \geq \sfj +2.
      \end{array}
    \right.,
  \end{align*}
  where we let $\sfj = n-k+j$.
  Note that $D^{k} \subseteq D_0^k \subseteq \cdots \subseteq D_{N}^k \subseteq D^{k+1}$, see Figure~\ref{fig:Dk_classical} for an illustration.
  Let $\omega_j^k = \Psi_j(\omega^k)$.
  If $\gamma$ is a $\omega^k_j$-open path living in $D^k_j$, then $\Sigma_{t_{n-k}, t_{N+j+1}} (\gamma)$ is a $\omega^k_{j+1}$-open path living in $D^k_{j+1}$.
  This is a consequence after a careful inspection of Figure~\ref{fig:path_transformations}, where blue points indicate possible horizontal drifts.
  Define also
  \begin{align*}
    h_j^k = \sup \{ h \leq N: \exists u, v \in \bbZ \text{ with } x_{u, 0} \xleftrightarrow{D_j^k, \omega_j^k} x_{v, h} \}.
  \end{align*}
  Then, $h^k \leq h_0^{k}$ and $h_n^{k} \leq h^{k+1}$.
  As in~\cite{GriMan14}, we need to prove that for $0 \leq j \leq N-1$,
  \begin{align}
    & h_{j+1}^k = h_j^k & \text{ if } h_j^k \neq \sfj, \sfj +1, \label{eq:hjk1} \\
    & h_{j+1}^k - h_j^k = 0 \text{ or } 1 & \text{ if } h_j^k = \sfj, \label{eq:hjk2} \\
    & h_{j+1}^k - h_j^k = -1 \text{ or } 0 & \text{ if } h_j^k = \sfj +1, \label{eq:hjk3} \\
    & \bbP(h_{j+1}^k \geq h \,|\, h_j^k = h) \geq 2 \delta & \text{ if } h = \sfj +1. \label{eq:hjk4}
  \end{align}
  The four equations above imply the existence of a process $H^k$ as in~\eqref{eq:distribution}.

  As explained in~\cite{GriMan14}, \eqref{eq:hjk1}, \eqref{eq:hjk2} and \eqref{eq:hjk3} are clear 
  because the upper endpoint of a path is affected by ${\mathsf \Sigma} := \Sigma_{t_{n-k}, t_{N+j+1}}$ only if it is at height $\sfj$ or $\sfj+1$.
  The behavior of the upper endpoint can be analyzed using Figure~\ref{fig:path_transfo_ep}.
  More precisely,
  \begin{itemize}
  \item when it is at height $\sfj +1$, the upper endpoint either stays at the same level or drifts downwards by 1;
  \item when it is at height $\sfj$, it either stays at the same level or drifts upwards by 1.
  \end{itemize}
  Hence, the rest of the proof is dedicated to showing~\eqref{eq:hjk4}.

  We start with a preliminary computation.
  Fix $j$ and let $\calP_j$ be the set of paths $\gamma$ of $\Psi_j(G^{k})$, contained in $D_j^k$, 
  with one endpoint at height $0$, the other at height $h(\gamma)$, and all other vertices with heights between $1$ and $h(\gamma)-1$.

  Assume that in ${\mathsf \Sigma}$, the additional rhombus is slid from left to right and define $\Gamma$ to be the left-most path of $\calP_j$ reaching height $h_j^k$~\footnote{Otherwise $\Gamma$ should be taken right-most.}.
  (Such a path exists due to the definition of $h_j^k$.)
  This choice is relevant since later on, we will need to use negative information in the region on the left of the path $\gamma$.
  Moreover, for $\gamma, \gamma' \in \calP_j$, we write $\gamma' < \gamma$ if $\gamma' \neq \gamma$, $h(\gamma') = h(\gamma)$ and $\gamma'$ does not contain any edge strictly to the right of $\gamma$.

  Denote by $\Gamma = \Gamma(\omega^k_j)$ the $\omega^k_j$-open path of $\calP_j$ that is the minimal element of $\{ \gamma \in \calP_j : h(\gamma) = h^k_j, \gamma \text{ is } \omega^k_j \text{-open} \}$.
  Given a path $\gamma \in \calP_j$, we can write $\{ \Gamma = \gamma \} = \{ \gamma \text{ is } \omega_j^k \text{-open} \} \cap N_\gamma$ where $N_\gamma$ is the decreasing event that
  \begin{enumerate}
  \item[(a)] there is no $\gamma' \in \calP_j$ with $h(\gamma') > h(\gamma)$, all of whose edges not belonging to $\gamma$ are $\omega_j^k$-open;
  \item[(b)] there is no $\gamma' < \gamma$ with $h(\gamma') = h(\gamma)$, all of whose edges not belonging to $\gamma$ are $\omega_j^k$-open.
  \end{enumerate}

  Let $F$ be a set of edges disjoint from $\gamma$, write $C_F$ for the event that all the edges in $F$ are closed.
  We find,
  \begin{align}
    \bbP[ C_F \,|\, \Gamma = \gamma ]
    & = \frac{\bbP[ N_\gamma \cap C_F \,|\, \gamma \text{ is open} ]}{\bbP[ N_\gamma \,|\, \gamma \text{ is open}]} \nonumber \\
    & \geq \bbP[C_F \,|\, \gamma \text{ is open} ] \nonumber \\
    & \geq \phi_K^1[C_F], \label{eq:CF_proba}
  \end{align}
  where the second line is given by the FKG inequality due to the fact that $\bbP[ \cdot \,|\, \gamma \text{ is open}]$ is still a random-cluster measure and both $N_\gamma$ and $C_F$ are decreasing events.
  In the last line, we compare the boundary conditions, where $K$ is the subgraph consisting of rhombi containing the edges of $F$.

  \begin{figure}[htb]
    \centering
    \includegraphics[scale=1, page=2]{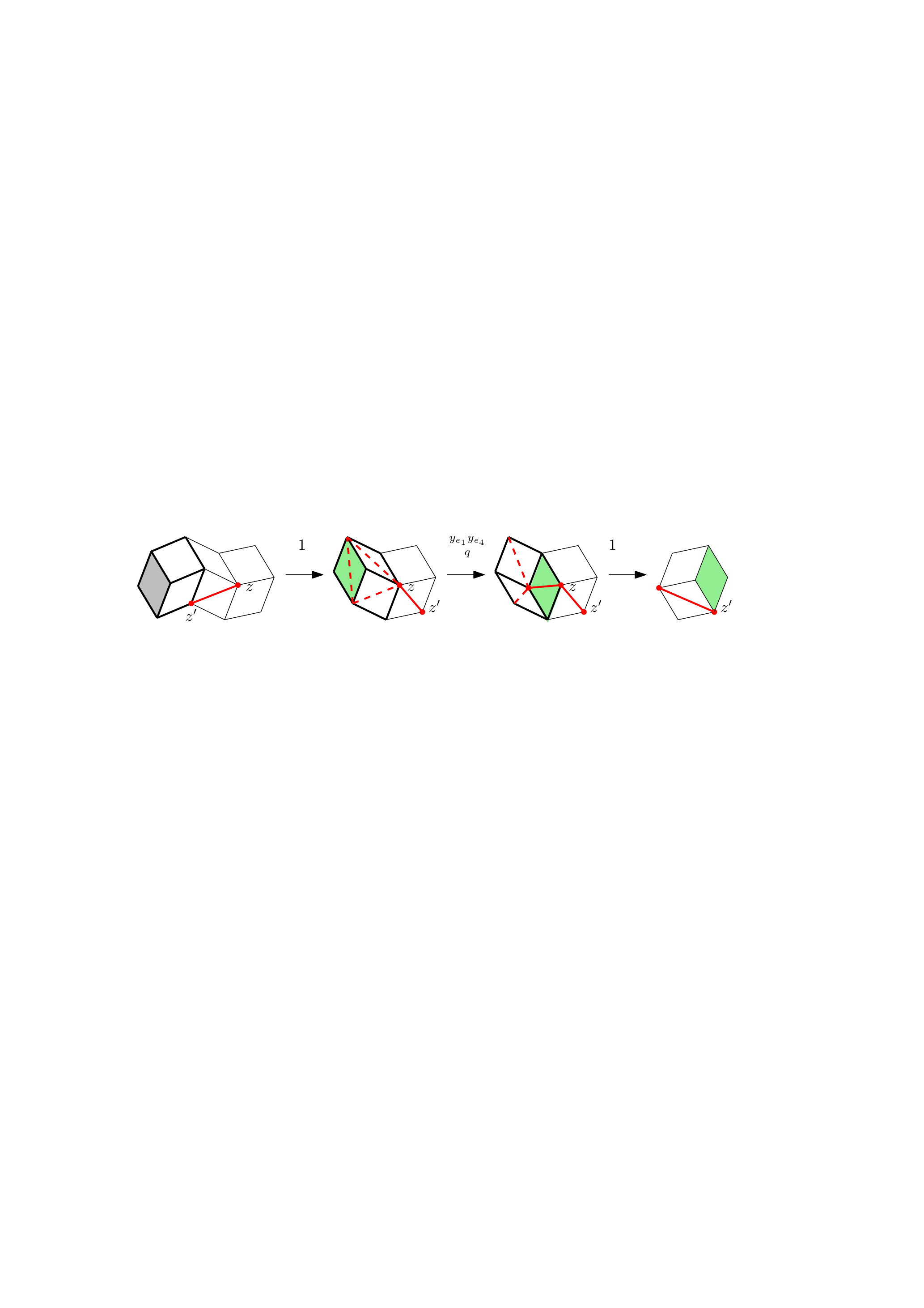}
    \caption{Three \stts contributing to ${\mathsf \Sigma}$ slid the gray rhombus from left to right.
      The dashed edges are closed, the bold edges are open and the state of dotted edge $e_2$ does not really matter.
      The first and last passages occur with probability $1$, and the second with probability $y_{e_1} y_{e_4} / q$.}
    \label{fig:non_decrease}
  \end{figure}

  Now, we are ready to show~\eqref{eq:hjk4}.
  Let $\gamma \in \calP_j$ with $h(\gamma) = \sfj+1$ and assume $\Gamma(\omega_j^k) = \gamma$.
  Now, it is enough to show that
  \begin{align} \label{eq:proba_growth_h}
    \bbP \big[ h ({\mathsf \Sigma} (\gamma) ) \geq \sfj+1 \,\big|\, \Gamma = \gamma \big] \geq 2 \delta.
  \end{align}

  Let $z = x_{u, \sfj+1}$ denote the upper endpoint of $\gamma$ and let $z'$ denote the other endpoint of the unique edge of $\gamma$ leading to $z$.
  Either $z' = x_{u+1, \sfj}$ or $z' = x_{u-1, \sfj}$.
  In the second case, it is always the case that $h({\mathsf \Sigma}(\gamma) ) \geq \sfj+1$.

  Assume that $z' = x_{u+1, \sfj}$ as in Figure~\ref{fig:non_decrease} and consider edges $e_i$ for $i = 1, \dots, 4$ as follow,
  \begin{align*}
    e_1 = \langle x_{u, \sfj+1}, x_{u-1, \sfj+2} \rangle, & \quad
    e_2 = \langle x_{u-1, \sfj+2}, x_{u-2, \sfj+1} \rangle, \\
    e_3 = \langle x_{u-2, \sfj+1}, x_{u-1, \sfj} \rangle, & \quad
    e_4 = \langle x_{u-1, \sfj}, x_{u, \sfj+1} \rangle.
  \end{align*}
  Let us now analyse the \stts that affect $e_1, \dots, e_4$; these are depicted in Figure~\ref{fig:non_decrease}.
  We note that conditioning on the event $C_F \cap \{ \Gamma = \gamma \}$, where $F = \{ e_3, e_4 \}$, we have:
  \begin{enumerate}
  \item[(a)] The edge $e_1$ must be closed due to the conditioning $\{ \Gamma = \gamma \}$.
  \item[(b)] Whichever the state of $e_2$ is, the edge $e_5$ is always closed.
  \item[(c)] The second passage occurs with probability $y_{e_1} y_{e_4} / q$.
  \item[(d)] The third passage is deterministic.
  \end{enumerate}
  Thus,
  \begin{align*}
    \bbP \big[ h ({\mathsf \Sigma}) \geq \sfj+1 \,\big|\, \Gamma = \gamma \big] &
    \geq \frac{y_{e_1} y_{e_4}}{q} \cdot \bbP [ C_F \,|\, \Gamma = \gamma ].
  \end{align*}
  Moreover, the preliminary computation~\eqref{eq:CF_proba} gives that
  \begin{align*}
    \bbP [ C_F \,|\, \Gamma = \gamma ] 
    \geq \rcisolaw{}{K}{1} [ C_F ]
    = (1-p_{e_3}) (1-p_{e_4}),
  \end{align*}
  where $K$ consists only of the two rhombi containing $e_3$ and $e_4$ and we use the fact that in the \rcm $\rcisolaw{}{K}{1}$, these edges are independent (the number of clusters is always equal to 1).
  Finally,
  \begin{align} \label{eq:path_CF_proba}
    \bbP \big[ h ({\mathsf \Sigma}) \geq \sfj+1 \,\big|\, \Gamma = \gamma \big] &
    \geq \frac{y_{e_1} p_{e_4} (1-p_{e_3})}{q}
    \geq \frac{y_{\pi-\eps} p_{\pi-\eps} (1-p_{\eps})}{q} \geq 2 \delta,
  \end{align}
  where 
  \begin{align} \label{eq:est_delta}
    \delta = \frac12 \min \left\{ \frac{y_{\pi-\eps} p_{\pi-\eps} (1-p_{\eps})}{q}, 1 \right\} > 0.
  \end{align}
  To conclude, we have
  \begin{align*}
    \frac{ \bbP [ h^{n+1} \geq \delta n ] }{ \bbP [ h^0 \geq n ] }
    \geq \frac{ \bbP [ H^{n+1} \geq \delta n ] }{ \bbP [ H^0 \geq n ] }
    \geq \bbP [ H^{n+1} \geq \delta n \,|\, H^0 \geq n ] =: c_n(\delta),
  \end{align*}
  and since $H^n / n \rightarrow 2 \delta$ as $n \rightarrow \infty$ due to the law of large numbers, we know that $c_n \rightarrow 1$ as $n \rightarrow \infty$.
\end{proof}

\subsection{Doubly-periodic isoradial graphs} \label{sec:biperiodic_iso}

Now that the RSW property  is proved for isoradial square lattices,
we transfer it to arbitrary doubly-periodic isoradial graphs $\bbG$.
We do this by transforming a finite part of $\bbG$ (as large as we want) into a local isoradial square lattice using \stts.
The approach is based on the combinatorial result Proposition~\ref{prop:bi_to_square}.

\begin{prop} \label{prop:biperiodic_RSW}
  Any doubly-periodic isoradial graph $\bbG$ satisfies the RSW property.
\end{prop}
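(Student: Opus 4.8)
The plan is to transport the RSW property from isoradial square lattices---where it is available by Corollary~\ref{cor:sq_RSW}---to $\bbG$, by using Proposition~\ref{prop:bi_to_square} to create a large square-lattice region inside a transformed copy of $\bbG$, together with the fact (Propositions~\ref{prop:stt_iso_preserved} and~\ref{prop:coupling}) that \stts preserve both the random-cluster measure and all connection events. By Lemma~\ref{lem:equiv_RSW} it suffices to establish the four crossing bounds~\eqref{eq:BXP3} for $\bbG$, for one convenient choice of $\rho>1$ and $\nu>0$.

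First I would fix the grid $(s_n)_{n\in\bbZ}$, $(t_n)_{n\in\bbZ}$ of $\bbG$ and, given $n\geq1$, apply Proposition~\ref{prop:bi_to_square} once above $t_0$ and once below $t_0$ (using the symmetry remarked after its statement), with $M=N$ taken much larger than $n$. This produces a finite sequence of \stts turning $\bbG$ into a graph $\bbG'$ in which $\rect(-M,M;-N,N)$ has a square-lattice structure, all the \stts acting inside $\rect\!\big(-(M+dN),M+dN;-N,N\big)$, with $d$ the constant from Proposition~\ref{prop:bi_to_square}, and none of them affecting $t_0$. Since \stts merely permute train tracks and keep every subtended angle in $[\eps,\pi-\eps]$ (a new star edge opposite an old angle $\theta$ has subtended angle $\pi-\theta$), the square-lattice structure thus obtained extends to a doubly-periodic isoradial square lattice $\widehat{\bbG}\in\calG(\eps)$ (depending possibly on $n$, but always in $\calG(\eps)$). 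Hence, by Corollary~\ref{cor:sq_RSW}, $\widehat{\bbG}$ enjoys the strong RSW property with a constant $c>0$ depending only on $\eps$ and $q$---that is, only on $\bbG$.

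Next I would run the random-cluster model on the box $\sq(L)$, $L=M+dN$, so that $\sq(L)$ contains the supports of all the \stts, with free boundary conditions, and let $\omega\sim\phi^0_{\bbG,\sq(L)}$. Applying to $\omega$ the above \stts through the coupling of Figure~\ref{fig:simple_transformation_coupling} produces, by Proposition~\ref{prop:coupling}, a configuration $\omega'$ on $\bbG'$ of law $\phi^0_{\bbG',\sq(L)}$, the boundary of $\sq(L)$ being untouched. Inside its square-lattice region $\bbG'$ coincides with $\widehat{\bbG}$, so for a Euclidean rectangle $\Lambda$ of aspect ratio $\rho$ and size of order $n$ lying well inside that region, the conditional law of $\omega'$ inside a rectangle slightly larger than $\Lambda$, given the configuration outside it, is a random-cluster measure with the edge weights of $\widehat{\bbG}$ and some (random) boundary conditions. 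This is where it is essential that the RSW property transported is the \emph{strong} one: because~\eqref{eq:strong_RSW} holds for $\widehat{\bbG}$ uniformly over boundary conditions, one gets $\phi^0_{\bbG',\sq(L)}\big[\calC_h(\Lambda)\big]\geq c$ regardless of how the free boundary conditions on $\sq(L)$ propagate through the \stts; the same argument with wired boundary conditions and with the dual lattice $\widehat{\bbG}^*$ (again an isoradial square lattice with bounded angles) gives $\phi^1_{\bbG',\sq(L)}\big[\calC_h^*(\Lambda)\big]\geq c$, and similarly for vertical crossings of a tall rectangle.

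Finally I would pull these estimates back to $\bbG$. Since the \stts preserve the measure and all primal and dual connections, and their supports lie inside $\rect(-(M+dN),M+dN;-N,N)$ while fixing $t_0$, an open (resp.\ dually-open) crossing of $\Lambda$ in $\omega'$ is carried by the inverse \stts to an open (resp.\ dually-open) crossing, in $\omega$, of a rectangle $\Lambda'\supseteq\Lambda$ whose dimensions exceed those of $\Lambda$ by only a controlled amount. Tracking how open paths deform along this sequence of \stts---including the possible loss of an endpoint edge (Figure~\ref{fig:path_transformation1})---would be carried out exactly as in~\cite[Sec.~7]{GriMan14}, and the quantitative control on the horizontal position of the \stts in Proposition~\ref{prop:bi_to_square} is precisely what keeps the enlargement $\Lambda\subseteq\Lambda'$ bounded. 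This gives $\phi^0_{\bbG,\sq(L)}\big[\calC_h(\Lambda')\big]\geq c$, and, with the dual and vertical versions, the full set~\eqref{eq:BXP3} for $\bbG$, once aspect ratios are adjusted and one passes between Euclidean and grid rectangles via the quasi-isometry furnished by the bounded-angles property; Lemma~\ref{lem:equiv_RSW} then concludes. I expect the hard part to be precisely this transfer in the presence of long-range dependence: unlike in the Bernoulli case of~\cite{GriMan14}, crossing events are sensitive to the configuration outside the crossed box, and the plan gets around this only by transporting the boundary-condition-uniform \emph{strong} RSW property of $\widehat{\bbG}$ and by confining all \stts to a domain carrying free (or wired) boundary conditions---which is exactly the purpose of the quantitative refinement in Proposition~\ref{prop:bi_to_square}.
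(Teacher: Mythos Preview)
Your global strategy---create a square-lattice block inside a transformed copy of $\bbG$ via Proposition~\ref{prop:bi_to_square}, invoke Corollary~\ref{cor:sq_RSW} there, and carry the estimate back---is exactly the paper's. Your handling of boundary conditions, by invoking the \emph{strong} RSW of $\widehat{\bbG}$ uniformly over boundary conditions rather than shielding with a dually-open circuit, is a legitimate alternative to the paper's exploration argument. The gap is in the back-transfer.

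You assert that an open crossing of $\Lambda$ in $\omega'$ is carried by the inverse \stts to an open crossing, in $\omega$, of a slightly larger rectangle $\Lambda'$, and you appeal to the path-tracking of~\cite[Sec.~7]{GriMan14}. This step fails. First, the \stts of Proposition~\ref{prop:bi_to_square} are not track exchanges: they push black points one by one past horizontal tracks, and there is no analogue of the controlled-drift analysis (Figures~\ref{fig:path_transformations}--\ref{fig:path_transfo_ep}, Propositions~\ref{prop:horizontal_transport}--\ref{prop:vertical_transport}) for this sequence. Second, and more basically, the endpoints of your crossing sit on the left and right sides of $\Lambda$, away from the base; under each inverse \stt such an endpoint may lie at the centre of a star and be lost (last line of Figure~\ref{fig:path_transformation1}). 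Over the whole sequence of $K$ transformations there is nothing preventing the path from retracting substantially, and even if a given endpoint survives, the relation ``left of $s_{-a}$'' is not stable because the rhombi of $s_{-a}$ are themselves permuted by the \stts. Only the base below $t_0$ is guaranteed untouched.

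The paper avoids this by replacing crossings with \emph{circuits} surrounding a fixed segment of the base. A circuit has no endpoints to lose, and the property ``surrounds the base between $s_{-n}$ and $s_{n}$'' is topological and preserved because no \stt touches $t_0$. After the inverse transformations, the circuit is automatically trapped in the region where all \stts act, namely $\rect(6dn;2n)$; hence it yields a horizontal crossing of a rectangle of size comparable to $n$, which is what~\eqref{eq:BXP3} needs. The paper also runs a concentric dually-open circuit $\tilde\calC^*$; part of its role is the boundary-condition shielding that your strong-RSW argument replaces, but it also pins down where the primal circuit is allowed to traverse the base after transformation, so that the subsequent exploration can be performed in a box of size $O(n)$.
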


\begin{proof}
  Let $\bbG$ be a doubly-periodic isoradial graph with grid $(s_n)_{n \in \bbZ}$ and $(t_n)_{n \in \bbZ}$.
  Fix a constant $d > 1$ as given by Proposition~\ref{prop:bi_to_square} applied to $\bbG$. 
  In the below formula, $\calC_h^{\hp}(n; n)$ denotes the horizontal crossing event in the half-plane rectangular domain $\HR(n; n) := \rect(-n, n; 0, n)$.
  We will show that
  \begin{align}
    \rcisolaw{}{\sq (6dn)}{0} \big[ \calC_h^{\hp} (n; n) \big] =
    \rcisolaw{}{\sq (6dn)}{0} \big[ \calC_h (-n, n; 0, n) \big] \geq \delta, \label{eq:biperiodic_RSW2}
  \end{align}
  for some constant $\delta > 0$ which does not depend on $n$.
  Moreover, a careful inspection of the forthcoming proof shows that $\delta$ only depends the bounded angles parameter $\eps >0$ and on the size of the fundamental domain of $\bbG$.
  The same estimate is valid for the dual model, since it is also a random-cluster model on an isoradial graph with $\beta = 1$.

  The two families of tracks $(s_n)_{n \in \bbZ}$ and $(t_n)_{n \in \bbZ}$ play symmetric roles, therefore~\eqref{eq:biperiodic_RSW2} may also be written 
  \begin{align} 
    \rcisolaw{}{\sq (6dn)}{0} \big[ \calC_v (0, n; -n, n) \big] \geq \delta. \label{eq:biperiodic_RSW3}
  \end{align}
  The two inequalities~\eqref{eq:biperiodic_RSW2} and~\eqref{eq:biperiodic_RSW3} together with their dual counterparts imply the RSW property by Lemma~\ref{lem:equiv_RSW}~\footnote{The conditions of Lemma~\ref{lem:equiv_RSW} differ slightly from \eqref{eq:biperiodic_RSW2} and \eqref{eq:biperiodic_RSW3} in the position of the rectangle and the domain where the measure is defined. Getting from one to the other is a standard application of the comparison between boundary conditions.}.

  \medskip

  The rest of the proof is dedicated to~\eqref{eq:biperiodic_RSW2}.
  In proving~\eqref{eq:biperiodic_RSW2}, we will assume $n$ to be larger than some threshold depending on $\bbG$ only; 
  this is not a restrictive hypothesis.

  Let $(\sigma_k)_{1 \leq k \leq K}$ be a sequence of \stts as in Proposition~\ref{prop:bi_to_square} such that in $\tilde \bbG := (\sigma_K \circ \cdots \circ \sigma_1) (\bbG)$, the region enclosed by $s_{-4n}$, $s_{4n}$, $t_{-2n}$ and $t_{2n}$ has a square lattice structure. 
  Recall that all the transformations $\sigma_k$ act horizontally between $s_{-6dn}$ and $s_{6dn}$ and vertically between $t_{n}$ and $t_{-n}$. 


  Consider the following events for $\tilde \bbG$. 
  Let $\tilde \calC$ be the event that there exists an open circuit contained in the region 
  between $s_{-2n}$ and $s_{2n}$ and between $t_{-n/2}$ and $t_{n/2}$ 
  surrounding the segment of the base between $s_{-n}$ and $s_n$. 
  Let $\tilde \calC^*$ be the event that there exists an open circuit contained in the region 
  between $s_{-3n}$ and $s_{3n}$ and between $t_{-3n/2}$ and $t_{3n/2}$ 
  surrounding the segment of the base between $s_{-2n}$ and $s_{2n}$. 

  %
  Let $\tilde G$ be the subgraph of $\tilde \bbG$ contained between $s_{-4n}$, $s_{4n}$, $t_{-2n}$ and $t_{2n}$. 
  Then $\tilde G$ is a finite section of a square lattice with $4n+1$ horizontal tracks, but potentially more than $8n+1$ vertical ones. 
  Indeed, any track of $\bbG$ that intersects the base between $s_{-4n}$, $s_{4n}$ is transformed into a vertical track of $\tilde G$. 

  Write $(\tilde s_n)_{n \in \bbZ}$ for the vertical tracks of $\tilde G$, with $\tilde s_0$ coinciding with $s_0$ 
  (this is coherent with the notation in the proof of Proposition~\ref{prop:bi_to_square}). 
  Then, the tracks $(s_n)_{n \in \bbZ}$ are a periodic subset of $(\tilde s_n)_{n \in \bbZ}$, with period bounded by the number of tracks intersecting a fundamental domain of $\bbG$.
  It follows that there exist constants $a, b$ depending only on $\bbG$, not on $n$, such that the number of tracks $(\tilde s_n)_{n \in \bbZ}$ between any two tracks $s_i$ and $s_j$ (with $i \leq j$) is between $(j-i)a -b$ and $(j-i)a + b$. 

  By the above discussion, for some constant $c > 1$ and $n$ large enough 
  (larger than some $n_0$ depending only on $a$ and $b$, therefore only on the size of the fundamental domain of $\bbG$),
  the events $\tilde \calC$ and $\tilde \calC^*$ may be created using crossing events as follow:
  \begin{align*}
    \tilde \calH_1 \cap \tilde \calH_2 \cap \tilde \calV_1 \cap \tilde \calV_2 & \subseteq \tilde \calC & \text{and} &
    & \tilde \calH_1^* \cap \tilde \calH_2^* \cap \tilde \calV_1^* \cap \tilde \calV_2^* & \subseteq \tilde \calC^*,
  \end{align*}
  where 
  \begin{align*}
    \tilde \calH_1 & = \calC_h(-(c+1)n, (c+1)n; 0, \tfrac{n}{2} ), & \tilde \calH_1^* & = \calC_h^*(-(2c+1)n, (2c+1)n; n, \tfrac{3n}{2}), \\
    \tilde \calH_2 & = \calC_h(-(c+1)n, (c+1)n; -\tfrac{n}{2}, 0 ), & \tilde \calH_2^* & = \calC_h^*(-(2c+1)n, (2c+1)n; -\tfrac{3n}{2}, -n), \\
    \tilde \calV_1 & = \calC_v(-(c+1)n, -cn; -\tfrac{n}{2}, \tfrac{n}{2} ), & \tilde \calV_1^* & = \calC_v^*(-(2c+1)n, -2cn; -\tfrac{3n}{2}, \tfrac{3n}{2}), \\
    \tilde \calV_2 & = \calC_v(cn, (c+1)n; -\tfrac{n}{2}, \tfrac{n}{2} ), & \tilde \calV_2^* & = \calC_v^*(2cn, (2c+1)n; -\tfrac{3n}{2}, \tfrac{3n}{2}),
  \end{align*}
  are defined in terms of the tracks $(\tilde s_n)_{n \in \bbZ}$ and $(t_n)_{n \in \bbZ}$.
  These horizontal and vertical crossing events are shown in Figure~\ref{fig:biperiodic_RSW}.

  \begin{figure}[htb]
    \centering
    \includegraphics[width=0.6\textwidth]{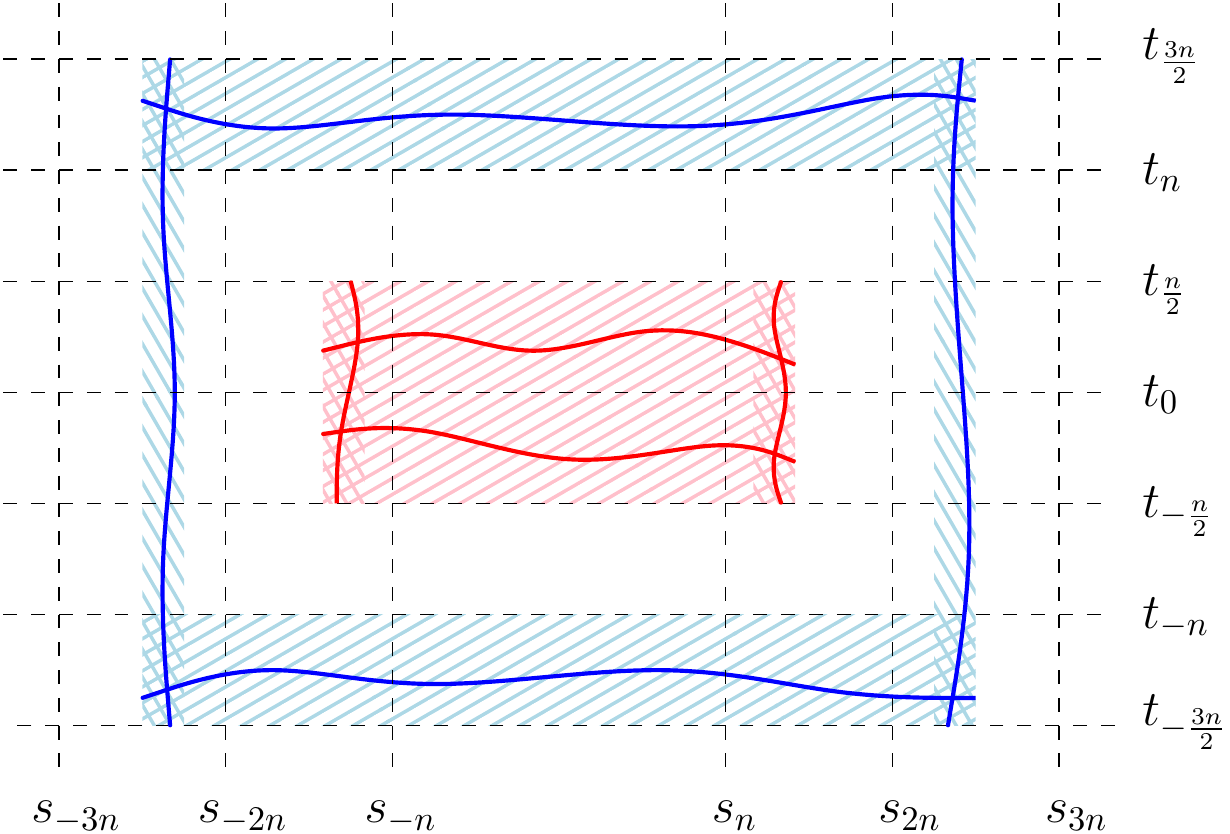}
    \caption{The crossing events $\tilde \calH_1$, $\tilde \calH_2$, $\tilde \calV_1$ and $\tilde \calV_2$ are depicted in red; 
      they induce a circuit around the segment of the base between $s_{-n}$ and $s_{n}$. 
      The events $\tilde \calH_1^*$, $\tilde \calH_2^*$, $\tilde \calV_1^*$ and $\tilde \calV_2^*$ are represented in blue.}
    \label{fig:biperiodic_RSW}
  \end{figure}

  Notice that all events above depend only on the configuration in $\tilde G$.
  Let $\phi_{\tilde \bbG}$ denote some infinite-volume measure on $\tilde \bbG$.
  By the RSW property for the square lattice $\tilde G$ (that is, by Corollary~\ref{cor:sq_RSW}), the comparison between boundary conditions and the FKG inequality, 
  \begin{align*}
    \phi_{\tilde \bbG} \big[ \tilde \calC^* \big]
    \geq \phi_{\tilde G}^1 \big[ \tilde \calH_1^* \big] 
    \phi_{\tilde G}^1 \big[ \tilde \calH_2^* \big] 
    \phi_{\tilde G}^1 \big[ \tilde \calV_1^* \big] 
    \phi_{\tilde G}^1 \big[ \tilde \calV_2^* \big] \geq \delta_1,
  \end{align*}
  for some $\delta_1 > 0$ independent of $n$. 
  Moreover, by the same reasoning, 
  \begin{align*}
    \phi_{\tilde \bbG} \big[ \tilde \calC \,\big|\, \tilde \calC^* \big] \geq
    \phi_{\tilde \rect}^0 \big[ \tilde \calH_1 \big]
    \phi_{\tilde \rect}^0 \big[ \tilde \calH_2 \big]
    \phi_{\tilde \rect}^0 \big[ \tilde \calV_1 \big]
    \phi_{\tilde \rect}^0 \big[ \tilde \calV_2 \big] \geq \delta_2,
  \end{align*}
  for some $\delta_2 > 0$ independent of $n$, where $\tilde \rect = \tilde \rect(2cn; n)$ is defined with respect to the tracks $(\tilde s_n)_{n \in \bbZ}$ and $(t_n)_{n \in \bbZ}$.
  We conclude that 
  \begin{align*}
    \phi_{\tilde \bbG} \big[ \tilde \calC \cap \tilde \calC^* \big] \geq \delta_1 \delta_2 > 0.
  \end{align*}

  Let $\bbP$ be the probability that consists of choosing a configuration $\tilde \omega$ on $\tilde \bbG$ according to $\phi_{\tilde \bbG}$, then applying the inverse transformations $\sigma_K^{-1},\dots, \sigma_1^{-1}$ to it.
  Thus, $\omega := (\sigma_1^{-1}\circ\dots\circ \sigma_K^{-1})(\tilde\omega)$ is a configuration on $\bbG$ chosen according to some infinite-volume measure~$\phi_\bbG$. 

  Let $\tilde \omega \in \tilde \calC \cap \tilde \calC^*$, and write $\tilde \gamma$ and $\tilde \gamma^*$ for two circuits as in the definitions of $\tilde \calC$ and $\tilde \calC^*$ respectively.
  The two circuits $\tilde \gamma$ and $\tilde \gamma^*$ are transformed by $(\sigma_1^{-1} \circ \dots \circ \sigma_K^{-1})$ into circuits on $\bbG$; 
  call $\gamma = (\sigma_1^{-1}\circ\dots\circ \sigma_K^{-1}) (\tilde \gamma)$ and  $\gamma^* = (\sigma_1^{-1} \circ \dots \circ \sigma_K^{-1}) (\tilde \gamma^*)$ their respective images. 
  Then, $\gamma$ is $\omega$-open and $\gamma^*$ is $\omega^*$-open. 

  Since the transformations $\sigma_1^{-1}, \dots, \sigma_K^{-1}$ only affect the region between $s_{-6dn}$, $s_{6dn}$, $t_{-2n}$ and $t_{2n}$, both $\gamma$ and $\gamma^*$ are contained in this region of $\bbG$, that is in $\rect(6dn; 2n)$.
  Additionally, since the transformations do not affect the base, $\gamma^*$ surrounds the segment of the base between $s_{-2n}$ and $s_{2n}$ 
  while $\gamma$ surrounds the segment of the base between $s_{-n}$ and $s_{n}$ but only traverses the base 
  between $s_{-2n}$ and $s_{2n}$.

  Write $\calC$ for the event that a configuration on $\bbG$ has an open circuit contained in $\rect(6dn; 2n)$, 
  surrounding the segment of the base between $s_{-n}$ and $s_n$ and traversing the base only between $s_{-2n}$ and $s_{2n}$. 
  Also, set $\calC^*$ to be the event that a configuration on $\bbG$ has a dually-open circuit contained in $\rect(6dn; 2n)$, 
  surrounding the segment of the base between $s_{-2n}$ and $s_{2n}$.

  Both $\calC$ and $\calC^*$ are reminiscent of the events $\tilde \calC$ and $\tilde \calC^*$, in spite of small differences. 
  Indeed, the discussion above shows that if $\tilde \omega \in \tilde \calC \cap \tilde \calC^*$, then $\omega \in \calC \cap \calC^*$.
  Thus,
  \begin{align*}
    \phi_\bbG \big[ \calC \cap \calC^* \big] 
    = \bbP \big[ \omega \in \calC \cap \calC^* \big]
    \geq \bbP \big[ \tilde \omega \in \tilde \calC \cap \tilde \calC^* \big]
    = \phi_{\tilde \bbG} \big[ \tilde \calC \cap \tilde \calC^* \big]
    \geq \delta_1\delta_2.
  \end{align*}

  For a configuration $\omega$ on $\bbG$, write $\Gamma^*(\omega)$ for the exterior-most dually-open circuit as in the definition of $\calC^*$
  (that is contained in $\rect(6dn; 2n)$ and surrounding the segment of the base between $s_{-2n}$ and $s_{2n}$), if such a circuit exists. 
  Let $\Int(\Gamma^*)$ be the region surrounded by $\Gamma^*$, seen as a subgraph of $\bbG$. 

  It is standard that $\Gamma^*$ may be explored from the outside and therefore that, conditionally on $\Gamma^*$, 
  the \rcm in $\Int(\Gamma^*)$ is $\phi_{\Int(\Gamma^*)}^0$. 

  Observe that for $\omega \in \calC \cap \calC^*$, due to the restrictions over the intersections with the base, 
  any circuit in the definition of $\calC$ is surrounded by any in the definition of $\calC^*$. 
  Thus, if for $\omega \in \calC^*$, the occurence of $\calC$ only depends on the configuration inside $\Int(\Gamma^*)$.
  Therefore,
  \begin{align*}
    \phi_\bbG \big[ \calC \cap \calC^* \big]
    & = \phi_\bbG \big[ \calC \,\big|\, \calC^* \big] \phi_\bbG \big[ \calC^* \big] \\
    & = \sum_{\gamma^*} \phi_\bbG \big[ \calC \,\big|\, \Gamma^* = \gamma^* \big] \phi_\bbG \big[ \Gamma^* = \gamma^* \big] \\
    & = \sum_{\gamma^*} \phi_{\Int(\gamma^*)}^0 \big[ \calC \big] \phi_\bbG \big[ \Gamma^* = \gamma^* \big] \\
    & \leq \sum_{\gamma^*}  \phi_{\rect(6dn; 2n)}^0 \big[ \calC \big] \phi_\bbG \big[ \Gamma^* = \gamma^* \big] \\
    & = \phi_{\rect(6dn; 2n)}^0 \big[ \calC \big] \phi_\bbG \big[ \calC^* \big],
  \end{align*}
  where the sum above is over all deterministic circuits $\gamma^*$ on $\bbG^*$, as in the definition of $\calC^*$.
  In the before last line, we used the fact that $\Int(\gamma^*) \subseteq \rect(6dn; 2n)$,
  where $\rect(6dn; 2n)$ is defined using tracks in $\bbG$,
  and the comparison between boundary conditions to say that the free boundary conditions on $\pd \Int (\gamma^*)$ are less favorable to the increasing event $\calC$ than those on the more distant boundary $\pd \rect(6dn; 2n)$.

  Due to the previous bound on $\phi_\bbG \big[ \calC \cap \calC^* \big]$, we deduce that 
  $$
  \phi_{\rect(6dn; 2n)}^0 \big[ \calC \big]  \geq \delta_1 \delta_2.
  $$
  Finally, notice that any circuit as in the definition of $\calC$ contains a horizontal crossing of $\HR(n; n)$. 
  We conclude from the above that 
  \begin{align*}
    \phi_{\rect(6dn; 2n)}^0 \big[ \HR(n; n) \big]  \geq \delta_1 \delta_2.
  \end{align*}
  This implies~\eqref{eq:biperiodic_RSW2} by further pushing away the unfavorable boundary conditions. 
\end{proof}

\subsection{Tying up loose ends} \label{sec:conclusion_q<=4}

As mentioned already, Theorem~\ref{thm:main} and Corollary~\ref{cor:phase_transition} for $1 \leq q \leq 4$
follow directly from the RSW property (i.e., from Proposition~\ref{prop:biperiodic_RSW}). We mention here the necessary steps. 
They are all standard for those familiar with the random-cluster model; detail are provided in~\cite[App.~C]{Li-thesis}.

Fix $\bbG$ a doubly-periodic isoradial graph and $q \in [1,4]$.  
We start with the following lemma which is the key to all the proofs.

\begin{lem} \label{lem:annulus_proba}
  For $j \geq 1$, define the annuli $A_j = [-2^{j+1},2^{j+1}]^2 \setminus [-2^{j},2^{j}]^2$.
  Then, there exists $c > 0$ such that for all $j \geq 1$ and $\xi = 0, 1$, we have
  \begin{align}\label{eq:annulus_proba}
    \rcisolaw{}{A_j}{\xi} \big[ \text{there exists an open circuit surrounding 0 in } A_j \big] \geq c.
  \end{align}
  By duality, the same also holds for a dually-open circuit.
\end{lem}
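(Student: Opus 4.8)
The plan is to deduce the lemma from the box-crossing property established in Proposition~\ref{prop:biperiodic_RSW} (equivalently, from the strong RSW property~\eqref{eq:strong_RSW}) by realising a circuit surrounding $0$ in $A_j$ as the intersection of a bounded number of rectangle crossings, a standard gluing argument.

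Write $\phi = \rcisolaw{}{A_j}{\xi}$ for brevity. First I would choose, at distance of order $2^j$ from both the inner and the outer boundary of $A_j$, a slightly smaller annulus that is the union of four rectangles $T,B,L,R$ (top, bottom, left, right) of dimensions comparable to $2^j$ and of fixed aspect ratio, arranged so that $T\cup B\cup L\cup R$ still surrounds $[-2^j,2^j]^2$ and so that consecutive rectangles overlap in a square near each corner. Then, whenever $T$ and $B$ are crossed in their long (horizontal) direction and $L$ and $R$ in their long (vertical) direction, the union of the four open paths contains an open circuit in $A_j$ surrounding $0$. Each of the crossing events $\calC_h(T),\calC_h(B),\calC_v(L),\calC_v(R)$ is increasing, so by the FKG inequality the probability of the circuit event is at least $\phi[\calC_h(T)]\,\phi[\calC_h(B)]\,\phi[\calC_v(L)]\,\phi[\calC_v(R)]$, and it suffices to bound each factor from below uniformly in $j\geq 1$ and in $\xi$ (in fact in all boundary conditions).

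Consider one factor, say $\phi[\calC_h(T)]$. Because $T$ lies at distance of order $2^j$ from $\partial A_j$, there is a rectangular box $T^{+}$ with $T\subseteq T^{+}\subseteq A_j$, of bounded aspect ratio, inside which $T$ is crossed the ``easy'' way. Two monotonicity steps then conclude: comparison between boundary conditions (valid since $q\geq 1$ and $\calC_h(T)$ is increasing) gives $\phi[\calC_h(T)]\geq \rcisolaw{}{A_j}{0}[\calC_h(T)]$; and, since $\rcisolaw{}{T^{+}}{0}$ is $\rcisolaw{}{A_j}{0}$ conditioned on the decreasing event that every edge of $A_j\setminus T^{+}$ is closed, $\rcisolaw{}{A_j}{0}[\calC_h(T)]\geq \rcisolaw{}{T^{+}}{0}[\calC_h(T)]$. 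The last probability is bounded below by a constant depending only on $\eps$ and on the (bounded) aspect ratios, hence not on $j$, by Lemma~\ref{lem:equiv_RSW} together with Proposition~\ref{prop:biperiodic_RSW}, using periodicity to dispose of the translation of $T^{+}$. The same applies to $B,L,R$; the finitely many values of $j$ for which $A_j$ is too small for this argument correspond to fixed finite graphs and are absorbed into the constant using the finite-energy property, the circuit event being non-void. This produces the desired $c>0$.

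For the dual circuit I would either repeat the argument verbatim with the dual crossing events $\calC_h^{*},\calC_v^{*}$, whose probabilities are controlled in exactly the same way (the dual estimates are part of Lemma~\ref{lem:equiv_RSW} and of~\eqref{eq:strong_RSW}), or simply note that under $\phi$ the dual configuration is a random-cluster configuration with parameter $\beta^{-1}=1$ on the doubly-periodic isoradial graph $\bbG^{*}$, to which Proposition~\ref{prop:biperiodic_RSW} applies directly. I do not expect a genuine obstacle: the whole argument is a routine combination of FKG, comparison between boundary conditions and the already-proved box-crossing property. The only point requiring a little care is the elementary bookkeeping that the auxiliary boxes $T^{+}$ really fit inside $A_j$ with room to spare while keeping bounded aspect ratio as $j\to\infty$, which is immediate since all the relevant lengths are comparable to $2^j$.
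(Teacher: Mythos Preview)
Your proposal is correct and follows essentially the same approach as the paper, which states only that the lemma ``is proved by combining crossings of rectangles via the FKG inequality, as in Figure~\ref{fig:biperiodic_RSW}.'' You have simply expanded the details the paper leaves implicit: decomposing the circuit into four rectangle crossings, applying FKG, and invoking comparison between boundary conditions together with the strong RSW property of Proposition~\ref{prop:biperiodic_RSW} to bound each factor uniformly in $j$ and $\xi$.
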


\begin{proof}
  This is proved by combining crossings of rectangles via the FKG inequality, 
  as in Figure~\ref{fig:biperiodic_RSW}.
\end{proof}

The estimates of the Lemma~\ref{lem:annulus_proba} for the dual model 
imply an upper bound on the one-arm probability under $\phi^1_{\bbG}$, 
as that in the second point of Theorem~\ref{thm:main}.
Indeed, if a dually-open circuit occurs in $A_j$ for some $j \leq \log_2 n -2$, then the event $\{ 0 \leftrightarrow \partial B_n \}$ fails. 
The fact \eqref{eq:annulus_proba} is uniform in the boundary conditions on $A_j$ allows us to ``decouple'' the events of \eqref{eq:annulus_proba}, and proves that the probability of no circuit occurring in any of $A_1, \dots, A_{\log_2 n -2}$ is bounded above by $(1-c)^{\log_2 n - 2}$.

The converse bound is obtained by a straightforward construction of a large cluster using crossings of rectangles of the form $[0,2^j] \times [0,2^{j+1}]$ and their rotation by $\frac\pi2$, combined using the FKG inequality.

From the above, we deduce that $\phi^1_{\bbG}(0\leftrightarrow \infty) = 0$. 
The uniqueness of the critical infinite volume measure (the first point of Theorem~\ref{thm:main}) follows using a standard coupling argument. 

Finally, to prove Corollary~\ref{cor:phase_transition}, we use the differential inequality of \cite{GraGri11}, as done in~\cite{DumMan16}.

\subsection{Universality of arm exponents: Theorem~\ref{thm:universality}}

%

The proof of universality of arm exponents (Theorem~\ref{thm:universality}) follows exactly the steps of~\cite[Sec.~8]{GriMan14}.
Arm events will be transferred between isoradial graphs using the same transformations as in the previous sections.
As already discussed in Section~\ref{sec:star-triangle}, these transformations alter primal and dual paths, especially at their endpoints. 
When applied to arm events, this could considerably reduce the length of the arms. 
To circumvent such problems and shield the endpoints of the arms from the effect of the \stts, 
we define a variation of the arm events.
It roughly consists in ``attaching'' the endpoints of the arms to a track which is not affected by the transformations. 
Some notation is necessary.

Fix $\eps > 0$ and a doubly-periodic isoradial graph $\bbG \in \calG(\eps)$ with grid $(s_n)_{n \in \bbZ}$ and $(t_n)_{n \in \bbZ}$.
Recall that the vertices of $\bbG^\diamond$ that are below and adjacent to $t_0$ form the base of $\bbG$. 
Also, recall the notation $x \leftrightarrow y$ and write $x \xleftrightarrow{*} y$ for connections in the dual configuration.

For $n < N$ and $k \in \{1\} \cup 2\bbN$, define the event $\tilde A_k(n, N)$ as
\begin{enumerate}
\item for $k=1$: there exist primal vertices $x_1 \in \sq(n)$ and $y_1 \notin \sq(N)$, 
  both on the base, such that $x_1 \leftrightarrow y_1$;
\item for $k=2$: there exist $x_1, x_1^* \in \sq(n)$ and $y_1, y_1^* \notin \sq(N)$, 
  all on the base, such that $x_1 \leftrightarrow y_1$ and $x_1^* \xleftrightarrow{*} y_1^*$;
\item for $k = 2j \geq 4$: $\tilde A_k(n, N)$ is the event that there exist 
  $x_1, \dots, x_j \in \sq(n)$ and $y_1, \dots, y_j \notin \sq(N)$, all on the base, such that
  $x_i \leftrightarrow y_i$ for all $i$ and $x_i \nlra x_{j}$ for all $i \neq j$.  
\end{enumerate}

Notice the resemblance between $\tilde A_k(n, N)$ and $A_k(n, N)$, where the latter is defined just before the statement of Theorem~\ref{thm:universality}. 
In particular, observe that in the third point, the existence of $j$ disjoint clusters uniting $\pd \sq(n)$ to $\pd \sq(N)$ 
indeed induces $2j$ arms of alternating colours in counterclockwise order. 
Two differences between $\tilde A_k(n, N)$  and $A_k(n, N)$ should be noted: the fact that in the former arms are forced to have extremities on the base 
and that the former is defined in terms or graph distance while the latter in terms of Euclidean distance. 
As readers probably expect, this has only a limited impact on the probability of such events. 

For the rest of the section, fix $q \in [1,4]$ and write $\rcisolaw{}{\bbG}{}$ for the unique infinite-volume random-cluster  measure on $\bbG$ with parameters $\beta=1$ and $q$.

\begin{lem}\label{lem:equiv_arm_events}
  Fix  $k \in \{1\} \cup 2\bbN$. 
  There exists $c > 0$ depending only on $\eps$, $q$, $k$ and the fundamental domain of $\bbG$ such that
  \begin{align} \label{eq:equiv_arm_events}
    c \, \rcisolaw{}{\bbG}{} [ A_k(n, N) ]
    \leq \rcisolaw{}{\bbG}{} [ \tilde A_k(n, N) ]
    \leq c^{-1} \rcisolaw{}{\bbG}{} [ A_k(n, N) ]
  \end{align}
  for all $N > n$ large enough.
\end{lem}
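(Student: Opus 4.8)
The plan is to show the two inequalities in \eqref{eq:equiv_arm_events} separately, using the RSW property (Proposition~\ref{prop:biperiodic_RSW}) together with the FKG inequality and the comparison between boundary conditions, which are by now available for the \rcm on $\bbG$ with $\beta=1$. The key structural fact is that by the bounded-angles property the graph distance on $\bbG^\diamond$ is quasi-isometric to the Euclidean distance, so up to fixed multiplicative constants $c_1, c_2$ depending only on $\eps$ and the fundamental domain we have $\sq(c_1 m) \subseteq \Lambda(m) \subseteq \sq(c_2 m)$; thus the switch between Euclidean balls in the definition of $A_k$ and grid rectangles in the definition of $\tilde A_k$ is harmless at the level of polynomial-type bounds, and throughout I would freely enlarge or shrink radii by constant factors.

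For the inequality $\rcisolaw{}{\bbG}{}[\tilde A_k(n,N)] \leq c^{-1}\rcisolaw{}{\bbG}{}[A_k(n,N)]$: the event $\tilde A_k$ forces, in particular, $k$ alternating arms from $\pd\sq(n)$ to $\pd\sq(N)$ (for $k=2j\geq 4$, the $j$ disjoint primal clusters and the dual paths separating them give $2j$ arms of alternating colour in counterclockwise order, as noted in the excerpt; for $k=1,2$ this is immediate). Hence $\tilde A_k(n,N) \subseteq A_k(c_1 n, c_2 N)$ up to the quasi-isometry constants, and a standard ``quasi-multiplicativity'' type argument — iterating the RSW property in a bounded number of dyadic annuli between radii $c_2 N$ and $N$, and between $n$ and $c_1 n$, to extend the arms back out to the original radii while keeping them separated — gives $\rcisolaw{}{\bbG}{}[A_k(c_1 n, c_2 N)] \leq C\,\rcisolaw{}{\bbG}{}[A_k(n,N)]$ for a constant $C$ depending only on $\eps, q, k$ and the fundamental domain. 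Combining yields this direction.

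For the reverse inequality $c\,\rcisolaw{}{\bbG}{}[A_k(n,N)] \leq \rcisolaw{}{\bbG}{}[\tilde A_k(n,N)]$: here the work is to take a configuration realising $A_k(n,N)$ and ``route'' the $k$ arms so that their inner endpoints are attached to the base inside $\sq(n)$ and their outer endpoints to the base outside $\sq(N)$, paying only a constant-factor probability cost. Concretely, inside the annulus $\sq(2n)\setminus\sq(n)$ one builds, using RSW crossings of rectangles of bounded aspect ratio combined by FKG, a configuration of primal and dual paths that (i) connects each of the $j$ incoming primal arms to a distinct primal vertex on the base in $\sq(2n)$ while keeping them disjoint, and (ii) realises the dual arms connecting to the base; similarly in $\sq(2N)\setminus\sq(N)$ for the outer endpoints. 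For $k=2j\geq 4$ one must be careful to preserve the separation of the $j$ clusters: this is done by also prescribing, via RSW, the $j$ dual ``fences'' running alongside them down to the base, exactly as in \cite[Sec.~8]{GriMan14}. Since all these auxiliary crossing events have probability bounded below uniformly in $n, N$ (by the strong RSW property, i.e. uniformly over boundary conditions created by conditioning on the arm event in the complementary region), and since they are independent of or positively correlated with the arm event in the bulk annulus $\sq(N)\setminus\sq(2n)$, the FKG inequality and the comparison between boundary conditions give the claimed lower bound.

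The main obstacle, as in the percolation case treated in \cite[Sec.~8]{GriMan14}, is the bookkeeping in the case $k=2j\geq 4$: one must simultaneously connect $j$ distinct primal clusters to $j$ distinct base vertices \emph{and} maintain that they remain pairwise disconnected, which requires interleaving the primal connections with dual paths that pin the combinatorial (cyclic) order of the arms to the base; the long-range dependence of the \rcm means this gluing must be done while carefully tracking boundary conditions, using exploration of the outermost dual circuits (as in the proof of Proposition~\ref{prop:biperiodic_RSW}) to reduce to free boundary conditions on the region where the new paths are built. The rest is routine RSW surgery, and I would refer to \cite[Sec.~8]{GriMan14} and \cite[App.~C]{Li-thesis} for the details common with Bernoulli percolation.
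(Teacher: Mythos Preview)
Your proposal is essentially correct and aligns with the paper's own treatment: the paper does not give a self-contained proof either, but simply states that the lemma is ``a standard consequence of what is known in the field as the arm separation lemma'' and defers the details to \cite[App.~D]{Li-thesis} (not App.~C as you write) and \cite[Prop.~5.4.2]{Manolescu-thesis}. Your plan --- near-inclusion plus arm extension in one direction, and RSW surgery with separation of arms to reroute endpoints to the base in the other --- is exactly the content of that arm-separation machinery, so the approaches coincide.

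One point worth tightening: in the harder direction you invoke FKG to glue the auxiliary crossings to the arm event, but for $k\geq 2$ the arm event is neither increasing nor decreasing (it involves both primal and dual paths), so FKG alone does not apply. The standard workaround --- and the real content of the arm-separation lemma --- is a conditioning/exploration argument: one first shows that, conditionally on $A_k(n,N)$, the arms can be taken well-separated at scales $n$ and $N$ with uniformly positive probability, and only then does one perform the local surgery in each sector using RSW bounds that are uniform in boundary conditions. Your paragraph on ``exploration of the outermost dual circuits'' gestures at this, but you should name the separation step explicitly rather than fold it into an FKG claim.
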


The above is a standard consequence of what is known in the field as the arm separation lemma~\cite[Lem.~D.1]{Li-thesis}. 
The proofs of the separation lemma and Lemma~\ref{lem:equiv_arm_events} are both fairly standard but lengthy applications of the RSW theory of Theorem~\ref{thm:main}; they are discussed in~\cite[App.~D]{Li-thesis} (see also~\cite[Prop.~5.4.2]{Manolescu-thesis} for a version of these for Bernoulli percolation).

We obtain Theorem~\ref{thm:universality} in two steps, first for isoradial square lattices, then for doubly-periodic isoradial graphs. 
The key to the first step is the following proposition. 

\begin{prop} \label{prop:transfer_arms}
  Let $\graph 1 = \bbG_{\ba, \bb^{(1)}}$ and $\graph 2 = \bbG_{\ba, \bb^{(2)}}$ be two isoradial square lattices in $\calG(\eps)$.
  Fix $k \in \{1\}\cup2\bbN$. Then
  \begin{align*}
    \rcisolaw{}{\graph 1}{\xi} [ \tilde A_k(n, N) ] =  \rcisolaw{}{\graph 2}{\xi} [\tilde A_k(n, N) ], 
  \end{align*}
  for all $n < N$.
\end{prop}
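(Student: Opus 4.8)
The plan is to follow the same scheme as the RSW transfer of Propositions~\ref{prop:horizontal_transport} and~\ref{prop:vertical_transport} (i.e.\ \cite[Sec.~8]{GriMan14}), using the symmetric mixed graph of Section~\ref{sec:mix}, but exploiting a key simplification: unlike crossing events, the events $\tilde A_k(n,N)$ are \emph{preserved exactly} — not merely up to error terms — by the relevant track exchanges. Indeed, $\tilde A_k(n,N)$ is expressed purely in terms of the partition into primal/dual clusters of a finite collection of vertices of the base, together with the requirement that certain of these vertices lie in $\sq(n)$ and the others outside $\sq(N)$; the latter constraints only involve the vertical tracks $s_{\pm n}$, $s_{\pm N}$, which $\graph{1}$ and $\graph{2}$ share. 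Thus $\tilde A_k(n,N)$ depends neither on the horizontal-track structure nor on the embedding — only on the combinatorial data (vertical-track structure, connectivity and disconnectivity among base vertices). In particular no track exchange moves a base vertex or changes its position relative to $s_{\pm n}, s_{\pm N}$.

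First I would work in finite volume. Fix $n<N$ and integers $M, N_1, N_2$ with $N_1$ and $M$ large (to be sent to infinity). As in Section~\ref{sec:mix}, form the symmetric mixed graph $G_\mix$ from a central block of $\graph{1}$ around the base (of height $\sim N_1$) flanked above and below by blocks of $\graph{2}$ (of height $\sim N_2$), of width $2M+1$, with the two halves convexified; let $\tilde G_\mix = \Sigma^\uparrow(G_\mix)$ be obtained by pushing the $\graph{1}$ block past the $\graph{2}$ blocks, arranged as in Section~\ref{sec:mix} so that the base vertices are never contained in the support of any star-triangle transformation, a large neighbourhood of the base in $G_\mix$ agrees with $\graph{1}$, and the corresponding neighbourhood in $\tilde G_\mix$ agrees with $\graph{2}$. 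By Proposition~\ref{prop:coupling} and Section~\ref{sec:stt_rc_iso}, $\Sigma^\uparrow$ realises a coupling of $\phi^\xi_{G_\mix}$ with $\phi^\xi_{\tilde G_\mix}=\Sigma^\uparrow\phi^\xi_{G_\mix}$ under which every star-triangle transformation preserves the full connectivity pattern (primal and dual) among vertices outside the interior of its support; applied to the base vertices, this gives that $\omega\in\tilde A_k(n,N)$ if and only if $\Sigma^\uparrow(\omega)\in\tilde A_k(n,N)$ (here it matters that the coupling preserves both connections and the disconnections $x_i\nlra x_j$ required for $k\ge 4$). Hence
\begin{align*}
  \phi^\xi_{G_\mix}\big[\tilde A_k(n,N)\big] = \phi^\xi_{\tilde G_\mix}\big[\tilde A_k(n,N)\big].
\end{align*}

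It then remains to let $M, N_1, N_2\to\infty$: for $\xi\in\{0,1\}$ the finite-volume measures $\phi^\xi_{G_\mix}$ and $\phi^\xi_{\tilde G_\mix}$ converge weakly to $\rcisolaw{}{\graph{1}}{\xi}$ and $\rcisolaw{}{\graph{2}}{\xi}$ respectively — the graphs agree with $\graph{1}$, resp. $\graph{2}$, on growing neighbourhoods of the base, and the comparison between boundary conditions sandwiches the restricted measures — so the displayed identity passes to the limit and yields the proposition. The main obstacle is precisely this last passage: $\tilde A_k(n,N)$ is neither increasing nor local (an arm may be arbitrarily long), so weak convergence does not apply directly. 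One circumvents this by writing $\tilde A_k(n,N)$ as a countable monotone union of cylinder events (replace $x_i\leftrightarrow y_i$ by $x_i\xleftrightarrow{\sq(m)}y_i$, and similarly the dual and disjointness constraints, then let $m\to\infty$) and exchanging the two limits, exactly as in \cite[Sec.~8]{GriMan14} and \cite[App.~D]{Li-thesis}. Everything else is the bookkeeping of Section~\ref{sec:mix}, checking that $\Sigma^\uparrow$ can be realised on $G_\mix$ so as to move the differing horizontal tracks around the base without ever disturbing it.
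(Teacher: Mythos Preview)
Your proposal is correct and takes essentially the same approach as the paper: construct the symmetric mixed graph, use that the star-triangle transformations preserve the full connectivity pattern among base vertices exactly (so $\tilde A_k(n,N)$ is preserved), and then pass to the infinite-volume limit. The paper's proof differs only cosmetically---it works with free boundary conditions throughout and invokes uniqueness of the critical infinite-volume measure (Theorem~\ref{thm:main}) for the limit rather than your monotone-union-of-cylinders argument, and it phrases the finite-volume step as a one-sided inequality plus symmetry in the roles of $\graph 1,\graph 2$ rather than a direct ``if and only if''.
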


\begin{proof}
  Fix $k\in \{1\}\cup2\bbN$ and take $N > n > 0$ and $M \geq N$ (one should imagine $M$ much larger than $N$).
  Let $G_\mix$ be the symmetric mixed graph of Section~\ref{sec:mix} 
  formed above the base of a block of $M$ rows and $2M+1$ columns of $\graph 2$ 
  superposed on an equal block of $\graph 1$, then convexified, and symmetrically in below the base. 
  Construct $\tilde G_\mix$ in the same way, with the role of $\graph 1$ and $\graph 2$ inversed. 
  Recall, from Section~\ref{sec:mix}, the series of \stts $\Sigma^\downarrow$ that transforms $G_\mix$ into $\tilde G_\mix$.
  
  Write $\phi_{G_\mix}$ and $\phi_{\tilde G_\mix}$ for the \rcms on $G_\mix$ and $\tilde G_\mix$, respectively, with $\beta = 1$ and free boundary conditions. 
  The events $\tilde A_k(n, N)$ are also defined on $G_\mix$ and $\tilde G_\mix$. 
  
  Let $\omega$ be a configuration on $G_\mix $ such that $\tilde A_k(n, N)$ occurs.
  Then, under the configuration $\Sigma^\downarrow(\omega)$ on $\tilde G_\mix$, $\tilde A_k(n, N)$ also occurs.
  Indeed, the vertices $x_i$ and $y_i$ (and $x_1^*$ and $y_1^*$ when $k=2$) are not affected by the \stts in $\Sigma^{\downarrow}$ 
  and connections between them are not broken nor created by any \stt. 
  Thus, $\phi_{G_\mix}[ \tilde A_k(n, N) ] \leq \phi_{\tilde G_\mix} [ \tilde A_k(n, N) ]$. Since the roles of $G_\mix$ and $\tilde G_\mix$ are symmetric, we find
  \begin{align}\label{eq:tilde_A_k_equal1}
    \phi_{G_\mix}[ \tilde A_k(n, N) ] = \phi_{\tilde G_\mix} [ \tilde A_k(n, N) ]
  \end{align}
  Observe that the quantities in \eqref{eq:tilde_A_k_equal1} depend implicitly on $M$. 
  When taking $M \rightarrow \infty$, due to the uniqueness of the infinite-volume \rcms in $\graph 1$ and $\graph 2$, we obtain
  \begin{align*}
    &\phi_{G_\mix}[ \tilde A_k(n, N) ] \xrightarrow[M\to \infty]{} \phi_{\graph 1}[ \tilde A_k(n, N) ] \qquad \text{and} \\
    &\phi_{\tilde G_\mix}[ \tilde A_k(n, N) ] \xrightarrow[M\to \infty]{} \phi_{\graph 2}[ \tilde A_k(n, N) ].
  \end{align*}
  Thus, \eqref{eq:tilde_A_k_equal1} implies the desired conclusion. 
\end{proof}

\begin{cor} \label{cor:transfer_arms2}
  Let $\bbG = \bbG_{\ba, \bb}$ be an isoradial square lattice in $\calG(\eps)$ and fix $k \in \{1\}\cup2\bbN$. 
  Then, there exists $c > 0$ depending only on $\eps$, $q$ and  $k$ such that, 
  \begin{align*}
    c\, \rcisolaw{}{\bbG}{} [ A_k(n, N) ]
    \leq \rcisolaw{}{\bbZ^2}{} [ A_k(n, N) ]
    \leq c^{-1} \rcisolaw{}{\bbG}{} [ A_k(n, N) ],
  \end{align*}
  for any $n < N$.
\end{cor}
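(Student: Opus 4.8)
The plan is to replace the Euclidean arm events $A_k$ by the anchored events $\tilde A_k$ by means of Lemma~\ref{lem:equiv_arm_events}, and then to transport the probability of $\tilde A_k$ from $\bbG$ to $\bbZ^2$ along a short chain of isoradial square lattices, exactly as the RSW property is transported in the proof of Corollary~\ref{cor:sq_RSW}. Two preliminary remarks. First, we may assume $\eps\le\tfrac\pi2$, so that $\bbZ^2=\bbG_{0,\pi/2}$ also belongs to $\calG(\eps)$. Second, for isoradial \emph{square} lattices in $\calG(\eps)$ the constant in Lemma~\ref{lem:equiv_arm_events} may be taken to depend only on $\eps$, $q$ and $k$: it is produced by the arm-separation lemma, whose proof only invokes the RSW property, and by Corollary~\ref{cor:sq_RSW} that property holds on every square lattice of $\calG(\eps)$ with constants depending on $\eps$ alone, so the size of the fundamental domain plays no role. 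Applying Lemma~\ref{lem:equiv_arm_events} on $\bbG$ and on $\bbZ^2$, it is therefore enough to produce $c=c(\eps,q,k)>0$ such that
\[ c\,\rcisolaw{}{\bbG}{}[\tilde A_k(n,N)]\ \le\ \rcisolaw{}{\bbZ^2}{}[\tilde A_k(n,N)]\ \le\ c^{-1}\,\rcisolaw{}{\bbG}{}[\tilde A_k(n,N)] \]
for all $N>n$ large enough (the finitely many remaining small values of $n$ being absorbed by quasi-multiplicativity of arm probabilities, as usual).

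Write $\bbG=\bbG_{\ba,\bb}$. Following the proof of Corollary~\ref{cor:sq_RSW}, transport $\tilde A_k$ along the chain $\bbG_{\ba,\bb}\ \to\ \bbG_{\ba,\beta_0}\ \to\ \bbG_{0,\bb^\sharp}\ \to\ \bbG_{0,\pi/2}=\bbZ^2$, where $\beta_0$ (also denoting the constant sequence equal to it) is the midpoint of $\big(\sup_n\alpha_n,\ \inf_n\alpha_n+\pi\big)$, and $\bbG_{0,\bb^\sharp}$ is the square lattice obtained, exactly as in Corollary~\ref{cor:sq_RSW}, by rotating $\bbG_{\ba,\beta_0}$ by $-\beta_0$ and relabelling the two families of tracks (so that $\bb^\sharp$ is $\ba$ read in reverse order, shifted by $\pi-\beta_0$). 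The bounded-angles property of $\bbG$ forces $\sup_n\alpha_n-\inf_n\alpha_n\le\pi-2\eps$, whence both $\bbG_{\ba,\beta_0}$ and $\bbG_{0,\bb^\sharp}$ lie in $\calG(\eps)$. The first and third links of the chain are then immediate: the pairs $(\bbG_{\ba,\bb},\bbG_{\ba,\beta_0})$ and $(\bbG_{0,\bb^\sharp},\bbZ^2)$ each consist of two square lattices of $\calG(\eps)$ sharing the same sequence of transverse angles for their vertical tracks, so Proposition~\ref{prop:transfer_arms} applies verbatim and gives
\[ \rcisolaw{}{\bbG_{\ba,\bb}}{}[\tilde A_k(n,N)]=\rcisolaw{}{\bbG_{\ba,\beta_0}}{}[\tilde A_k(n,N)],\qquad \rcisolaw{}{\bbG_{0,\bb^\sharp}}{}[\tilde A_k(n,N)]=\rcisolaw{}{\bbZ^2}{}[\tilde A_k(n,N)]. \]

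The middle link, $\bbG_{\ba,\beta_0}\leftrightarrow\bbG_{0,\bb^\sharp}$, is the only point requiring care, and is where I expect the real work to lie: the rotation $\rho$ (by $-\beta_0$) relating these two graphs does \emph{not} preserve $\tilde A_k$, whose definition is tied to the horizontal base of the graph, so one cannot simply equate the two $\tilde A_k$-probabilities. I would get around this by passing through the rotation-covariant Euclidean event $A_k$. Since $\rcisolaw{}{\rho\bbH}{}=\rho_*\rcisolaw{}{\bbH}{}$, one has $\rcisolaw{}{\bbG_{\ba,\beta_0}}{}[A_k(n,N)]=\rcisolaw{}{\bbG_{0,\bb^\sharp}}{}[\rho A_k(n,N)]$, where $\rho A_k(n,N)$ is the $k$-arm event between the tilted boxes $\rho([-n,n]^2)\subset\rho([-N,N]^2)$. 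Because $\rho([-m,m]^2)$ is sandwiched between $[-m,m]^2$ scaled by the universal constants $\tfrac1{\sqrt2}$ and $\sqrt2$, the arm-separation lemma (robustness of arm probabilities under bounded changes of the annulus shape, with constants depending only on $\eps,q,k$ for square lattices) yields $\rcisolaw{}{\bbG_{0,\bb^\sharp}}{}[\rho A_k(n,N)]\asymp\rcisolaw{}{\bbG_{0,\bb^\sharp}}{}[A_k(n,N)]$. Finally, applying Lemma~\ref{lem:equiv_arm_events} on the square lattices $\bbG_{\ba,\beta_0}$ and $\bbG_{0,\bb^\sharp}$ to trade both occurrences of $A_k$ back for $\tilde A_k$ gives $\rcisolaw{}{\bbG_{\ba,\beta_0}}{}[\tilde A_k(n,N)]\asymp\rcisolaw{}{\bbG_{0,\bb^\sharp}}{}[\tilde A_k(n,N)]$, all constants depending only on $\eps,q,k$.

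Chaining the three links yields the required comparison of $\rcisolaw{}{\bbG}{}[\tilde A_k(n,N)]$ with $\rcisolaw{}{\bbZ^2}{}[\tilde A_k(n,N)]$, and hence, by the first paragraph, the corollary. Everything except the middle link is a direct invocation of Proposition~\ref{prop:transfer_arms}, Corollary~\ref{cor:sq_RSW} and Lemma~\ref{lem:equiv_arm_events}; the only genuine difficulty is that rotation step, where the anchored event $\tilde A_k$ has to be exchanged for the shape-robust Euclidean event $A_k$ at the cost of one application of arm separation.
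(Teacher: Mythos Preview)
Your proof is correct and follows essentially the same route as the paper: the chain $\bbG_{\ba,\bb}\to\bbG_{\ba,\beta_0}\to\bbG_{0,\bb^\sharp}\to\bbZ^2$, with Proposition~\ref{prop:transfer_arms} handling the two links that share vertical track angles and an arm-separation/extension argument handling the rotation in the middle. The only cosmetic differences are organizational --- the paper keeps $A_k$ throughout and invokes Lemma~\ref{lem:equiv_arm_events} locally at each application of Proposition~\ref{prop:transfer_arms}, whereas you reduce to $\tilde A_k$ globally and then have to undo this at the rotation step --- and your choice of $\beta_0$ as a midpoint rather than the first term of $\bb$ (both work).
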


\begin{proof}
  Fix $\bbG_{\ba, \bb}$ and $k$ as in the statement. The constants $c_i$ below depend on $\eps$, $q$ and  $k$ only.
  
  Let $\tilde \beta_k = \alpha_{-k} - \beta_0 + \pi$ and write $\tilde \bb$ for the sequence $(\tilde \beta_k)_{k \in \bbZ}$.
  Due to the choice of $\bbG_{\ba, \bb}$, we have $\tilde \bb \in [\eps, \pi-\eps]^\bbZ$.
  Proposition~\ref{prop:transfer_arms} and Lemma~\ref{lem:equiv_arm_events} applied to 
  $\bbZ^2 = \bbG_{0, \frac\pi2}$ and $\bbG_{0, \tilde\bb}$ yield a constant $c_1 > 0$ such that
  \begin{align} \label{eq:arm_equiv1}
    c_1\, \rcisolaw{}{\bbG_{0,\tilde \bb}}{} [ A_k(n, N) ]
    \leq \rcisolaw{}{\bbZ^2}{} [ A_k(n, N) ]
    \leq c_1^{-1} \rcisolaw{}{\bbG_{0,\tilde \bb}}{} [ A_k(n, N) ].
  \end{align}
  
  As in the proof of Corollary~\ref{cor:sq_RSW}, $\bbG_{\ba, \beta_0}$ is the rotation by $\beta_0$ of the graph $\bbG_{0, \tilde \bb}$.
  This does not imply that the arm events have the same probability in both graphs (since they are defined in terms of square annuli).
  However,~\cite[Prop.~D.2]{Li-thesis} about arms extension provides a constant $c_2 > 0$ such that 
  \begin{align} \label{eq:arm_equiv2}
    c_2\, \rcisolaw{}{\bbG_{\ba, \beta_0}}{} [ A_k(n, N) ]
    \leq \rcisolaw{}{\bbG_{0, \tilde \bb}}{} [ A_k(n, N) ]
    \leq c_2^{-1} \rcisolaw{}{\bbG_{\ba, \beta_0}}{} [ A_k(n, N) ].
  \end{align}
  
  Finally apply Proposition~\ref{prop:transfer_arms} and Lemma~\ref{lem:equiv_arm_events}
  to $\bbG_{\ba, \beta_0}$ and $\bbG_{\ba,\bb}$ to obtain a constant $c_3 > 0$ such that
  \begin{align} \label{eq:arm_equiv3}
    c_3\, \rcisolaw{}{\bbG_{\ba, \beta_0}}{} [ A_k(n, N) ]
    \leq \rcisolaw{}{\bbG_{\ba,\bb}}{} [ A_k(n, N) ]
    \leq c_3^{-1} \rcisolaw{}{\bbG_{\ba, \beta_0}}{} [ A_k(n, N) ].
  \end{align}
  
  Writing \eqref{eq:arm_equiv1}, \eqref{eq:arm_equiv2} and \eqref{eq:arm_equiv3} together yields the conclusion with $c =c_1c_2c_3$.
\end{proof}

Theorem~\ref{thm:universality} is now proved for isoradial square lattices. 
To conclude, we extend the result to all doubly-periodic isoradial graphs.

\begin{proof}[Proof of Theorem~\ref{thm:universality}]
  Consider a doubly-periodic graph $\bbG \in \calG(\eps)$ for some $\eps > 0$, 
  with grid $(s_n)_{n \in \bbZ}$ and $(t_n)_{n \in \bbZ}$.
  Fix $k\in \{1\} \cup 2\bbN$.
  The constants $c_i$ below depend on $\eps$, $q$, $k$ and the size of the period of $\bbG$.
  
  Choose $n < N$ and $M \geq N$ (one should think of $M$ as much larger than $N$).  
  Proposition~\ref{prop:bi_to_square} (the symmetrized version) provides \stts $(\sigma_k)_{1 \leq k \leq K}$ 
  such that, in $\tilde \bbG = (\sigma_K \circ \cdots \circ \sigma_1) (\bbG)$, the region $\sq(M)$ has a square lattice structure.
  Moreover, each $\sigma_k$ acts between $s_{-dM}$ and $s_{dM}$ (for some fixed $d \geq 1$) and between $t_M$ and $t_{-M}$, none of them affecting any rhombus of $t_0$.
  
  In a slight abuse of notation (since $(s_n)_{n \in \bbZ},(t_n)_{n \in \bbZ}$ is not formally a grid in $\tilde \bbG$)
  we define $\tilde A_k(n, N)$ for $\tilde \bbG$ as for $\bbG$.
  
  Let $\omega$ be a configuration on $\bbG$ such that $\tilde A_k(n, N)$ occurs.
  Then, the image configuration $(\sigma_K \circ \cdots \circ \sigma_1) (\omega)$ on $\tilde \bbG$ is such that $\tilde A_k(n, N)$ occurs.
  Indeed, the transformations do not affect the endpoints of any of the paths defining $\tilde A_k(n, N)$.
  Thus, 
  \begin{align*}
    \phi_\bbG [\tilde A_k(n, N)] \leq \phi_{\tilde \bbG} [\tilde A_k(n, N)].
  \end{align*}
  The transformations may be applied in reverse order to obtain the converse inequality. 
  In conclusion,
  \begin{align} \label{eq:arms_iso_to_sq1}
    \phi_\bbG [\tilde A_k(n, N)] = \phi_{\tilde \bbG} [\tilde A_k(n, N)].
  \end{align}
  
  The right-hand side of the above depends implicitly on $M$. 
  Write $\bbG^{\rmsq}$ for the isoradial square lattice such that the region $\sq(M)$ of $\tilde\bbG$ is a centered rectangle of $\bbG^{\rmsq}$.
  (It is easy to see that there exists a lattice that satisfies this condition for all $M$ simultaneously). 
  The vertical tracks $s_k$ of $\tilde\bbG$ correspond to vertical tracks in $\bbG^{\rmsq}$ with an index between $k$ and $dk$, 
  where $d$ is the maximal number of track intersection on $t_0$ between two consecutive tracks $s_j$, $s_{j+1}$ in $\bbG$.   
  
  In conclusion, taking $M \rightarrow \infty$ and using the uniqueness of the infinite-volume measure on $\bbG^{\rmsq}$, we find
  \begin{align} \label{eq:arms_iso_to_sq2}
    \rcisolaw{}{\bbG^{\rmsq}}{} [ \tilde A_k(n, d N) ] 
    \leq \lim_{M\to \infty}\rcisolaw{}{\tilde \bbG}{} [  \tilde  A_k(n, N) ] 
    \leq \rcisolaw{}{\bbG^{\rmsq}}{} [  \tilde  A_k(dn, N) ].
  \end{align}
  Due to Lemma~\ref{lem:equiv_arm_events} and to the extension of arms (\cite[Prop.~D.2]{Li-thesis}), 
  \begin{align*}
    c_1\rcisolaw{}{\bbG^{\rmsq}}{} [A_k(n, N) ]
    \leq \rcisolaw{}{\bbG^{\rmsq}}{} [ \tilde A_k(n, d N) ] 
    \leq \rcisolaw{}{\bbG^{\rmsq}}{} [\tilde  A_k(dn, N) ] 
    \leq c_1^{-1}\rcisolaw{}{\bbG^{\rmsq}}{} [A_k(n, N) ],
  \end{align*}
  for some constant $c_1>0$. 
  Using this,~\eqref{eq:arms_iso_to_sq2} and Lemma~\ref{lem:equiv_arm_events}, we find 
  \begin{align*}
    c_2\rcisolaw{}{\bbG^{\rmsq}}{} [A_k(n, N) ]
    \leq \phi_\bbG [ A_k(n, N)]	
    \leq c_2^{-1}\rcisolaw{}{\bbG^{\rmsq}}{} [A_k(n, N)],
  \end{align*}
  for some $c_2 > 0$. 
  Using Corollary~\eqref{cor:transfer_arms2}, we obtain the desired result.
\end{proof}

\section{Proofs for $q > 4$} \label{sec:q>4}

Fix $q > 4$ and $\bbG$ a doubly-periodic isoradial graph with grid $(s_n)_{n\in\bbZ}$, $(t_n)_{n\in\bbZ}$.
Unless otherwise stated, write $\phi_\bbG^\xi$ for the isoradial \rcm on $\bbG$ with parameters $q$, $\beta = 1$ and free ($\xi = 0$) or wired ($\xi = 1$) boundary conditions.
We will use the same notation as in Sections~\ref{sec:q<=4_notation} and~\ref{sec:alternative_RSW}.

The main goal of this section is to prove that there exist constants $C, c > 0$ such that 
\begin{align} \label{eq:exp_decay2}
  \phi_{\bbG}^{0} \big[ 0 \leftrightarrow \pd \sq (n) \big] \leq C \exp(-cn), \qquad \forall n \geq 1.
\end{align}
As we will see in Section~\ref{sec:conclusions_q>4}, Theorem~\ref{thm:main2} and Corollary~\ref{cor:phase_transition} follow from \eqref{eq:exp_decay2} through standard arguments
\footnote{When the graph is not periodic, a condition similar to \eqref{eq:exp_decay2} should be shown for all vertices of $\bbG$, not just $0$. 
  It will be apparent from the proof that the values of $c$ and $C$ only depend on the parameter in the bounded angles property and on the distance between the tracks of the grid. It is then straightforward to adapt the proof to graphs with the conditions of \cite{GriMan14}.}.

The strategy used to transfer \eqref{eq:exp_decay2} from the regular square lattice to arbitrary isoradial graphs is similar to that used in the previous section. However, note that the hallmark of the regime $q > 4$ is that boundary conditions influence the model at infinite distance. The arguments in the previous section were based on local modifications of graphs; in the present context, the \rcm in the modified regions is influenced by the structure of the graph outside. This generates additional difficulties that require more careful constructions.  


We start with a technical result that will be useful throughout the proofs.
For $N,M \geq 1$, write $\HR(N;M) = \rect(-N,N;0,M)$ for the half-plane rectangle which is the subgraph of $\bbG$ contained between $t_{-N}$, $t_{N}$, $s_0$ and $s_M$.

\begin{prop} \label{prop:exp_decay_half_full}
  Suppose that there exist constants $C_0, c_0 > 0$ such that for all $N > n$,
  \begin{align} \label{eq:exp_decay3}
    \phi_{\HR(N;N)}^{0} \big[ 0 \leftrightarrow \pd \sq(n) \big] \leq C_0 \exp(-c_0n).
  \end{align}
  Then, there exist constants $C, c > 0$ such that~\eqref{eq:exp_decay2} is satisfied.
  The constants $C, c$ depend only on $C_0, c_0$, on the parameter $\eps$ such that $\bbG \in \calG(\eps)$ 
  and on the size of the fundamental domain of $\bbG$.  
\end{prop}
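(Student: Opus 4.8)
The plan is to derive the full-plane estimate \eqref{eq:exp_decay2} from the half-plane estimate \eqref{eq:exp_decay3} by an elementary geometric decomposition of the one-arm event, the real work being to control the boundary conditions: for $q>4$ these influence the measure over arbitrarily large distances, so the naive domain comparison runs in the wrong direction and must be replaced by an exploration argument in the spirit of Section~\ref{sec:q<4}.

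First I would carry out the geometric reduction. Assume $0\leftrightarrow\pd\sq(n)$ and fix a self-avoiding open path $\gamma\subseteq\sq(n)$ from the origin to a vertex $v\in\pd\sq(n)$; the vertex $v$ lies on one of the four sides of $\sq(n)$, say (up to exchanging the two track families and reflecting) on the ``far'' side in the direction of a half-plane $\mathbb{H}$ through the origin bounded by $t_0$ or by $s_0$. Let $w$ be the last vertex of $\gamma$ lying on the boundary line of $\mathbb{H}$ (it exists since the origin lies on that line). Then the portion of $\gamma$ from $w$ to $v$ stays in $\mathbb{H}$ and realises, using only edges of $\mathbb{H}$ and inside a box of radius $n$, a connection from $w$ to a vertex at distance at least $n$ from $w$. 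Using periodicity to translate \eqref{eq:exp_decay3} (and its three analogues for the other half-planes through the origin, which hold by the same arguments) to the $O(n)$ possible positions of $w$, and a union bound over $w$ and over the four directions, one is reduced to estimating a single quantity of the form $\phi_\bbG^0[\,w\xleftrightarrow{\ \mathbb{H}\ }\pd(w+\sq(n))\,]$, up to an overall factor $O(n)$.

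The core of the proof is this single term. The event is increasing and measurable with respect to the edges of $\mathbb{H}$, and $\phi_\bbG^0$ restricted to $\mathbb{H}$ \emph{dominates} $\phi_{\mathbb{H}}^0$, so \eqref{eq:exp_decay3} cannot be used directly. Instead, I would reveal the configuration in the complementary half-plane $\mathbb{H}^c$: conditionally, the law in $\mathbb{H}$ is a random-cluster measure with the boundary conditions $\pi$ induced along the separating line by the clusters of $\mathbb{H}^c$. Applying \eqref{eq:exp_decay3} to $\mathbb{H}^c$, with probability at least $1-Cn\exp(-cn)$ none of those clusters joins two points of the separating line inside $w+\sq(2n)$ at distance more than $n/4$; on this typical event $\pi$ is ``local''. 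It then remains to prove that, under a local boundary condition, the half-plane one-arm probability at distance $n$ is still at most $C\exp(-cn)$. This I would obtain by a second exploration: realise $\pi$ as a family of forced-open short edges along the separating line, expose the outermost dually-open arc in $\mathbb{H}$ surrounding $w$ and those edges while staying within distance $n/2$ of $w$ (its absence already forcing a connection to distance $n/2$, handled by \eqref{eq:exp_decay3} after revealing the outside of the arc), and use that inside the region it encloses the boundary conditions are free, so that \eqref{eq:exp_decay3} applies at scale $n/2$. Adding the two exponentially small errors, each reduced term is $\le C'n\exp(-c'n)$.

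Feeding this back into the union bound of the second paragraph gives $\phi_\bbG^0[0\leftrightarrow\pd\sq(n)]\le C''n^2\exp(-c''n)$, and absorbing the polynomial prefactor into a slightly smaller exponent yields \eqref{eq:exp_decay2}; all constants depend only on $C_0,c_0$, on the bounded-angles parameter $\eps$ (through the quasi-isometry between graph- and Euclidean distances and the finite-energy bounds in the explorations) and on the fundamental domain of $\bbG$ (through the periodicity used to translate \eqref{eq:exp_decay3}), as claimed. The genuine obstacle is the passage from free to local boundary conditions in the half-plane estimate: unlike for Bernoulli percolation, where \cite{GriMan14} simply invokes independence, and unlike the case $q\le 4$, where the strong RSW property shields any boundary effect, here one must argue carefully that the long-range influence of the boundary is suppressed, arranging the exploration so that the revealed information is independent of the edges being tested and reabsorbing, via \eqref{eq:exp_decay3}, the still-positive probability that the dual shield dips anomalously deep --- precisely the ``more careful constructions'' flagged in the opening of Section~\ref{sec:q>4}.
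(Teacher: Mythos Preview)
Your proposal has a genuine circularity that the paper's argument avoids. You correctly identify that $\phi_\bbG^0$ restricted to $\mathbb{H}$ \emph{dominates} $\phi_{\mathbb{H}}^0$, so \eqref{eq:exp_decay3} cannot be applied in $\mathbb{H}$ directly; but then you apply \eqref{eq:exp_decay3} to $\mathbb{H}^c$ to argue that the induced boundary condition $\pi$ is ``local''. Exactly the same obstruction applies: the marginal of $\phi_\bbG^0$ on $\mathbb{H}^c$ stochastically \emph{dominates} $\phi_{\mathbb{H}^c}^0$, so the hypothesis \eqref{eq:exp_decay3} (which concerns the free half-plane measure) gives no upper bound whatsoever on connection probabilities in $\mathbb{H}^c$ under $\phi_\bbG^0$. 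The same problem recurs in your second exploration: the absence of the dually-open arc yields a primal connection to distance $n/2$, but under a measure with boundary condition $\pi$, not free, and you have not shown \eqref{eq:exp_decay3} under such conditions. This is precisely the circularity one has to break, and your scheme does not break it.

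The paper's device is different and cleaner: instead of decomposing according to where the path exits $\sq(n)$, it decomposes according to the \emph{lowest point} $X_{\min}$ of the cluster of $0$ in $\sq(N)$. Conditioning on $X_{\min}=x_{i,j}$, the cluster of $x_{i,j}$ lies entirely above the track $t_j$, which forces the existence of a dually-open left--right crossing $\Gamma^*$ of $\sq(2N)$ just above $t_j$ and below the cluster. One explores the \emph{lowest} such $\Gamma^*$; this exploration reveals only edges below and on $\Gamma^*$, so conditionally the region above $\Gamma^*$ carries \emph{free} boundary conditions and is contained in (a translate of) $\HR(2N;2N)$. Now \eqref{eq:exp_decay3} applies legitimately, giving exponential decay at scale $r=\max\{\|x_{i,j}\|,n/2\}$. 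Summing over $x_{i,j}$ and using finite energy to absorb the constant relating $0$ to $x_{i,j}$ yields \eqref{eq:exp_decay2}. The key point is that the ``lowest point'' trick manufactures the dual shield deterministically from the event itself, so no a priori control on $\phi_\bbG^0$ is needed to justify the exploration.
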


Observe that~\eqref{eq:exp_decay3} may seem weaker than~\eqref{eq:exp_decay2}. 
Indeed, while $\phi_\bbG^0$ is the limit of $\phi_{\sq(N)}^{0}$ as $N \to \infty$, 
the limit of the measures $\phi_{\HR(N;N)}^{0}$ is what would naturally 
be called the half-plane infinite-volume measure with free boundary conditions. 
Connections departing from $0$ in the latter measure are (potentially) considerably less likely than in $\phi_\bbG^0$ due to their proximity to a boundary with the free boundary conditions. 

\begin{figure}[htb]
  \centering
  \includegraphics[width=0.9\textwidth]{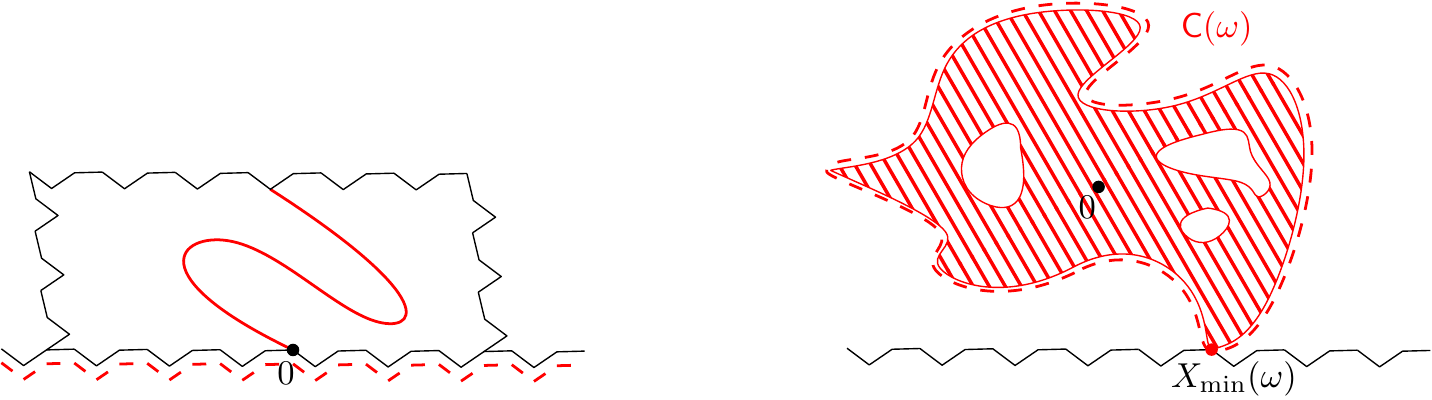}
  \caption{\textbf{Left:} The event of~\eqref{eq:exp_decay3}. 
    \textbf{Right:} If $X_{\min}$ is the lowest point of the cluster of $0$ in $\sq(N)$, 
    then the environment around $X_{\min}$ is less favourable to connections than that of the left image.}
  \label{fig:exp_decay_boundary}
\end{figure}

\begin{proof}
  Fix $N \geq 1$. We will prove \eqref{eq:exp_decay2} for the measure $\phi_{\sq(N)}^0$ instead of $\phi_{\bbG}^0$. 
  It will be apparent from the proof that the constants $c,C$ do not depend on $N$. 
  Thus $N$ may be taken to infinity, and this will provide the desired conclusion.

  For simplicity of notation, let us assume that the grid $(s_n),(t_n)$ of $\bbG$ is such that $\rect(0,1;0,1)$ is a fundamental domain of $\bbG$. 
  Recall that $x_{i,j}$ denotes the vertex of $\bbG$ just to the left of $s_i$ and just below $t_j$. 
  Then, all vertices $x_{i,j}$ are translates of $0$ by vectors that leave $\bbG$ invariant. Write $\|x_{i,j}\| = \max\{|i|,|j|\}$, in accordance with the notation~$\sq(\cdot)$.

  For a random-cluster configuration $\omega$ on $\sq(N)$, let $\sfC(0)$ denote the connected component of the origin.
  Let $X_{\min} = X_{\min}(\omega)$ be a point $x_{i,j}$ of minimal index $j$ such that $\sfC(0)$ intersects $x_{i,j} + \rect(0,1;0,1)$. If several such points exist, choose one according to some rule (e.g., that of minimal $i$). 
  We will estimate the connection probability $\phi_{\sq(N)}^0 \big[ 0 \leftrightarrow \partial \sq(n) \big]$ by studying the possible values of $X_{\min}(\omega)$:
  \begin{align} \label{eq:exp_decay_sum}
    \phi_{\sq(N)}^0 \big[ 0 \leftrightarrow \pd \sq(n) \big]
    = \sum_{\substack{-N \leq i \leq N\\-N \leq j \leq 0}} 
    \phi_{\sq(N)}^0 \big[ 0 \leftrightarrow \pd \sq(n) \text{ and } X_{\min} = x_{i,j} \big].
  \end{align}
  Fix $i,j$ as in the sum and write $\sfC(x_{i,j})$ for the connected component of $x_{i,j}$. 
  By the finite energy property, there exists $\eta$ depending only on the bounded angles property and the size of the fundamental domain of $\bbG$ such that
  \begin{align*}
    \phi_{\sq(N)}^0 \big[ 0 \leftrightarrow \pd \sq(n) \text{ and } X_{\min} = x_{i,j} \big]
    \leq \eta\, \phi_{\sq(N)}^0 \big[ 0 \leftrightarrow x_{i,j} \leftrightarrow \pd \sq(n) \text{ and } X_{\min}= x_{i,j} \big].
  \end{align*}
  Notice that if the event on the right-hand side above occurs, 
  then $x_{i,j}$ is connected to ``distance'' $r := \max\{ \|x_{i,j}\|, \frac{n}{2} \}$; that is $x_{i,j} \leftrightarrow  x_{i,j} + \pd \sq(r)$.
  Moreover, the connected component of $x_{i,j}$ is contained above track $t_j$. 
  By the translation invariance and the comparison between boundary conditions, 
  \begin{align*}
    & \phi_{\sq(N)}^0 \big[ x_{i,j} \leftrightarrow x_{i,j} + \pd \sq(r) \text{ and } \sfC(x_{i,j}) \text{ contained above } t_{j} \big] \\
    & \leq \phi_{\sq(2N)}^0 \big[ 0 \leftrightarrow \pd \sq(r) \text{ and } \sfC(0) \text{ contained above } t_{0} \big]
  \end{align*}
  Let $\Gamma^*$ be the lowest dual left-right crossing of $\sq(2N)$ contained above $t_0$ (actually we allow $\Gamma^*$ to use the faces of $\bbG^\diamond$ below $t_0$ but adjacent to it). If $\sfC(0)$ is contained above $t_{0}$, then $\Gamma^*$ passes under $\sfC(0)$. 
  By conditioning on the values $\gamma^*$ that $\Gamma^*$ may take and using the comparison between boundary conditions we find
  \begin{align*}
    & \phi_{\sq(2N)}^0\big[0 \leftrightarrow \pd \sq(r) \text{ and } \sfC(0) \text{ contained above } t_{0} \big] \\
    & \quad \leq \sum_{\gamma^*}
    \phi_{\sq(2N)}^0 \big[0 \leftrightarrow \pd \sq(r) \text{ and } \sfC(0) \text{ contained above } \gamma^* \,|\, \Gamma^* = \gamma^*\big] \,
    \phi_{\sq(2N)}^0 \big[ \Gamma^* = \gamma^* \big]\\
    & \quad \leq \sum_{\gamma^*} \phi_{\HR(2N;2N)}^0 \big[ 0 \leftrightarrow \pd \sq(r) \big] \,
    \phi_{\sq(2N)}^0 \big[ \Gamma^* = \gamma^* \big]
    \leq C \exp(-cr).
  \end{align*}
  The last inequality is due to~\eqref{eq:exp_decay3}.
  Inserting this into~\eqref{eq:exp_decay_sum} (recall that $r = \max \{ \|x_{i,j}\|, \frac{n}{2} \}$) we find
  \begin{align*}
    \phi_{\sq(N)}^0 \big( 0 \leftrightarrow \partial \sq(n) \big)
    &= \sum_{\substack{-N \leq i \leq N\\-N \leq j \leq 0}} 
    \eta C \exp(- c \max\{ \|x_{i,j}\|; n/2\}) \\
    & \leq \tfrac{n^2}{2} \eta C\exp(- \tfrac{c}2 n) + \sum_{k> n} 2k \eta C \exp(-ck)\\
    & \leq  C'\exp(-c'n),
  \end{align*}
  for some adjusted constants $c',C' > 0$ that do not depend on $n$ or $N$.
  Taking $N \to \infty$, we obtain the desired conclusion.

\end{proof}

The following result will serve as the input to our procedure. 
It concerns only the regularly embedded square lattice and is a consequence of~\cite{DumSidTas13} and~\cite{DumGanHar16}. 
For coherence with the notation above, we consider the square lattice as having edge-length $\sqrt 2$ and rotated by $\frac\pi4$ with respect to its usual embedding. This is such that the diamond graph has vertices $\{ (a,b): a,b  \in \bbZ \}$, with those with $a+b$ even being primal vertices. In a slight abuse of notation, write $\bbZ^2$ for the lattice thus embedded. 

Write $\phi_{\HR(N;N)}^{1/0}$ for the \rcm on the domain $\HR(N; N)$ of $\bbZ^2$ with $\beta = 1$, free boundary conditions on $[-N, N] \times \{ 0 \}$ and wired boundary conditions for the rest of the boundary.
Also define $\bbH = \bbZ \times \bbN$ to be the upper-half plane of $\bbZ^2$.
Write $\phi_{\bbH}^{1/0}$ for the half-plane \rcm which is the weak (decreasing) limit of $\phi_{\HR(N;N)}^{1/0}$ for $N \rightarrow \infty$.

\begin{prop} \label{prop:input_exp}
  For the regular square lattice and $q > 4$, there exist constants $C_0, c_0 >0$ such that, for all $n \geq 1$, 
  \begin{align}
    \phi_{\bbH}^{1/0} \big[ 0 \leftrightarrow \pd \sq(n) \big] \leq C_0 \exp(-c_0 n).
    \label{eq:input_exp}
  \end{align}
\end{prop}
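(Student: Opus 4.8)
The plan is to deduce this from the exponential decay of the free two-point function on $\bbZ^2$ for $q>4$, which is one of the equivalent formulations of the discontinuity of the phase transition established in~\cite{DumGanHar16} (the continuous alternative being ruled out, for $q>4$, by~\cite{DumSidTas13}). First I would reduce to a finite-volume statement. Since $\phi_{\bbH}^{1/0}$ is the decreasing weak limit of $\phi_{\HR(N;N)}^{1/0}$ and $\{0\leftrightarrow\pd\sq(n)\}$ is an increasing event, one has $\phi_{\bbH}^{1/0}[0\leftrightarrow\pd\sq(n)]\le\phi_{\HR(N;N)}^{1/0}[0\leftrightarrow\pd\sq(n)]$ for every $N>n$; conditioning on the configuration outside $\HR(2n;2n)$ and using the comparison between boundary conditions then gives $\phi_{\HR(N;N)}^{1/0}[0\leftrightarrow\pd\sq(n)]\le\phi_{\HR(2n;2n)}^{1/0}[0\leftrightarrow\pd\sq(n)]$. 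It therefore suffices to bound, in a $4n\times 2n$ box with free boundary on the bottom side and wired boundary on the three other sides, the probability that the origin (the midpoint of the free side) reaches Euclidean distance $n$.

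For this I would pass to the dual model. The complement event $\{0\not\leftrightarrow\pd\sq(n)\}$ is exactly the existence of a dual-open arc contained in $\sq(n)$, with both endpoints on the bottom line on opposite sides of the origin, which together with that line encloses the origin. Under the dual measure — which, since $p_c$ is self-dual, is again the critical random-cluster measure, now carrying \emph{wired} boundary conditions along the bottom line (the dual of the primal free side) — this is the event that the cluster of the wired bottom line ``caps off'' the origin inside $\sq(n)$. Thus the proposition is equivalent to the statement that this cap is present except with probability $\le C_0e^{-c_0n}$.

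The engine for this last estimate is the input from~\cite{DumGanHar16}: for $q>4$ at $p_c$, crossing probabilities of rectangles with free boundary conditions decay exponentially in the side length, and this holds at every scale. I would run a multi-scale argument in the spirit of the proof of exponential decay for $q>4$: the absence of the cap forces a primal-open path from the origin to distance $n$, which must traverse $\Theta(n/\ell)$ disjoint regions at a fixed large scale $\ell$; in each such region the presence of the dual-wired bottom line makes a suitable ``blocking'' dual configuration very likely, with failure probability $\eps(\ell)\to0$ as $\ell\to\infty$; revealing these regions from the outside in, one bounds the probability of the long path by a product over the regions, hence by $(\,C\,\eps(\ell)\,)^{\Theta(n/\ell)}$, which is exponentially small once $\ell$ is fixed large (depending only on $q$). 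Proposition~\ref{prop:exp_decay_half_full} then promotes~\eqref{eq:input_exp} to the bulk estimates~\eqref{eq:exp_decay3}–\eqref{eq:exp_decay2}.

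The main obstacle is exactly this multi-scale step. Unlike the regime $q\le 4$, where RSW makes box-by-box arguments modular, for $q>4$ the boundary conditions at infinity genuinely influence local events, so one cannot simply condition on the worst (wired) boundary conditions: a wired boundary on the far side of one of the intermediate regions would make the competing primal crossing cheap, and the naive ``good event'' would not have probability close to $1$. The point is rather to use that the blocking dual configurations are \emph{anchored to the dual-wired bottom line}, and to show — invoking the full strength of the dichotomy of~\cite{DumGanHar16} rather than only the bare exponential-decay statement — that this anchoring keeps them abundant uniformly over whatever has already been revealed, while keeping all constants independent of $n$ and of the auxiliary volume $N$. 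Designing the correct scheme of nested regions and robust good events, and verifying the conditional independence used in the product bound, is where I expect essentially all the work to lie.
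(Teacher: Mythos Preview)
Your proposal identifies the right input from~\cite{DumGanHar16} but then takes an unnecessarily hard route, and the multi-scale step you describe is left as a programme rather than an argument; as you yourself say, ``designing the correct scheme of nested regions and robust good events\dots is where I expect essentially all the work to lie.'' That is a genuine gap: for $q>4$ boundary conditions percolate, and it is not clear how to make your ``blocking dual configurations anchored to the dual-wired bottom line'' occur with probability close to~$1$ \emph{conditionally on what has been revealed}, which is exactly what a product bound requires.

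The paper avoids this difficulty entirely by a qualitative reduction you are missing. The key observation is that to deduce~\eqref{eq:input_exp} from the exponential decay of $\phi_{\bbZ^2}^0$ in~\cite{DumGanHar16}, it suffices to prove the single statement
\[
\phi_\bbH^{1/0}\big[0\leftrightarrow\infty\big]=0.
\]
Indeed, once $\phi_\bbH^{1/0}$ has no infinite cluster, a standard exploration-from-outside argument shows that it is stochastically dominated by $\phi_{\bbZ^2}^0$, and the exponential decay transfers immediately with the same constants. So the whole quantitative burden disappears: one only needs a \emph{qualitative} no-percolation statement for the half-plane measure with wired/free boundary.

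That statement is then proved by contradiction using a second input you do not mention, namely Lemma~2 of~\cite{DumSidTas13}. If $\phi_\bbH^{1/0}$ percolates, then by ergodicity $\sq(N)$ is connected to infinity with probability close to~$1$ for $N$ large; passing to the strip $\bbS_{4N}$ with wired top and free bottom, this means $\sq(N)$ reaches the top of the strip with probability close to~$1$. But~\cite[Lem.~2]{DumSidTas13} gives a uniform positive lower bound on the probability of a dual horizontal crossing of $[-4N,4N]\times[N,3N]$ under $\phi_{\bbS_{4N}}^{1/0}$, and self-duality then lets one attach two dual arms from that crossing down to the free bottom line with uniformly positive conditional probability. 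The resulting dual circuit disconnects $\sq(N)$ from the top with probability bounded below independently of $N$, a contradiction. This is a short, soft argument with no multi-scale machinery.
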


\begin{proof}
  Fix $q > 4$. It is shown in~\cite{DumGanHar16} that the phase transition of the \rcm on $\bbZ^2$ is discontinuous
  and that the critical measure with free boundary conditions exhibits exponential decay. That is, $\phi_{\bbZ^2}^{0}$ satisfies \eqref{eq:exp_decay2} for some constants $C, c > 0$.
  To prove \eqref{eq:input_exp}, it suffices to show that the weak limit $\phi^{1/0}_\bbH$ of the measures $\phi_{\HR(N;N)}^{1/0}$ has no infinite cluster almost surely. Indeed, then $\phi^{1/0}_\bbH$ is stochastically dominated by $\phi^0_{\bbZ^2}$.

  The rest of the proof is dedicated to showing that $\phi^{1/0}_\bbH \big[ 0 \leftrightarrow \infty \big] = 0$, and we do so by contradiction.
  Assume the opposite.
  By ergodicity of $\phi^{1/0}_\bbH$, for any $\varepsilon > 0$, there exists $N > 0$ such that 
  \begin{align} \label{eq:contradiction1}
    \phi^{1/0}_\bbH \big[ \sq(N) \leftrightarrow \infty \big] \geq 1-\eps.
  \end{align}
  Furthermore, $\phi^{1/0}_\bbH$ is also the decreasing limit of the measures $\phi^{1/0}_{\bbS_\ell}$, where $\bbS_\ell = \bbZ \times [0, \ell]$ and $1/0$ refers to the boundary conditions which are wired on the top and free on the bottom of the strip $\bbS_\ell$ 
  (boundary conditions at infinity on the left and right are irrelevant since the strip is essentially one dimensional).
  Therefore, 
  \begin{align}\label{eq:ergo}
    \phi^{1/0}_{\bbS_{4N}} \big[ \sq(N) \leftrightarrow \text{ top of } \bbS_{4N} \big] \geq 1-\eps.
  \end{align}
  In~\cite{DumSidTas13}, Lemma~2~\footnote{Actually a slight adaptation of~\cite[Lem.~2]{DumSidTas13} is necessary to account for the rotation by $\frac{\pi}{4}$ of the lattice.} shows that 
  $$
  \phi^{1/0}_{\bbS_{4N}} \big[ \calC_h^*(-4N, 4N; N, 3N) \big]\geq c_1,
  $$
  for some constant $c_1>0$ not depending on $N$. 
  If $\calC_h^*(-4N, 4N; N, 3N)$ occurs, denote by $\Gamma^*$ the top-most dual crossing in its definition.
  Moreover, let $A$ be the event that $\Gamma^*$ is connected to the line $\bbZ \times \{0\}$ by two dually-open paths 
  contained in $\rect(-4N, -N; 0, 3N)$ and $\rect(N, 4N; 0, 3N)$, respectively (see Figure~\ref{fig:input_exp}).
  Then, using the comparison between boundary conditions and the self-duality of the model, we deduce the existence of $c_2 > 0$ such that 
  \begin{align}
    \phi^{1/0}_{\bbS_{4N}} 
    \big[ A\, 
    \big|\, \calC_h^*(-4N, 4N; N, 3N) \big]
    \geq c_2.
  \end{align}

  \begin{figure}[htb]
    \centering
    \includegraphics[width=0.65\textwidth]{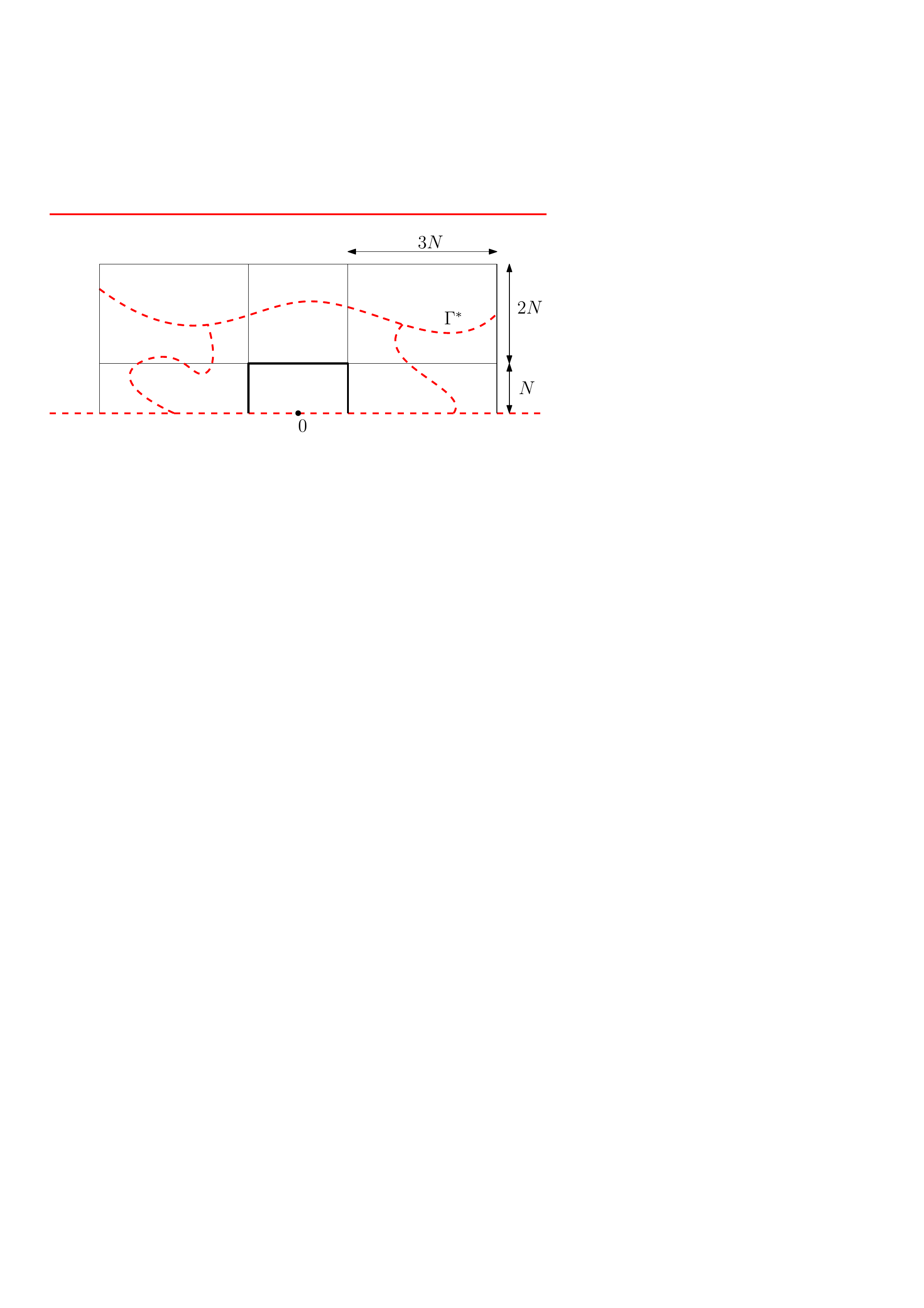}
    \caption{The strip $\bbS_{4N}$ with wired boundary conditions on the top and free on the bottom.
      If $\calC_h^*(-4N, 4N; N, 3N)\cap A$ occurs, then $\sq(N)$ is disconnected from the top of the strip.
      Due to the self-duality, both $\calC_h^*(-4N, 4N; N, 3N)$ 
      and $A$ conditionally on $\calC_h^*(-4N, 4N; N, 3N)$ occur with positive probability.} 
    \label{fig:input_exp}
  \end{figure}

  Notice that, if $\calC_h^*(-4N, 4N; N, 3N)$ and $A$ both occur, then $\sq(N)$ may not be connected to the top of $\bbS_{4N}$ by an open path. Thus
  \begin{align*}
    \phi^{1/0}_{\bbS_{4N}} \big[ \sq(N) \leftrightarrow \text{ top of } \bbS_{4N} \big] \leq 1 - c_1c_2.
  \end{align*}
  This contradicts \eqref{eq:ergo} for $\eps < c_1c_2$, and the proof is complete. 
\end{proof}

The proof of \eqref{eq:exp_decay2} is done in two stages, first it is proved for isoradial square lattices, then for arbitrary doubly-periodic isoradial graphs.  

\subsection{Isoradial square lattices} \label{sec:sq_lat_q>4}

The proof of~\eqref{eq:exp_decay3} for isoradial embeddings of square lattices follows the procedure of Section~\ref{sec:iso_square_RSW}. 
That is, two lattices with same transverse angles for the vertical tracks are glued along a horizontal track. 
Track exchanges are performed, and estimates as those of~\eqref{eq:exp_decay3} are transported from one lattice to the other. 

Transforming the regular lattice $\bbZ^2$ into an arbitrary isoradial one is done in two steps: 
first $\bbZ^2$ is transformed into a lattice with constant transverse angles for vertical tracks;
then the latter (or rather its rotation) is transformed into a general isoradial square lattice. 
For technical reasons, we will perform the two parts separately. 

We should mention that some significant difficulties arise in this step due to the long-range effect of boundary conditions. 
Indeed, recall that in order to perform track exchanges, the graph needs to be convexified. 
This completion affects boundary conditions in an uncontrolled manner, which in this case is crucial. 
Two special arguments are used to circumvent these difficulties; hence the two separate stages in the proof below. 

\medbreak

Recall the notation $\bbG_{\ba, \bb}$ for the isoradial square lattice with transverse angles $\ba = (\alpha_n)_{n \in \bbZ}$ for the vertical train tracks $(s_n)_{n\in \bbZ}$ and $\bb = (\beta_n)_{n \in \bbZ}$ for the horizontal train tracks $(t_n)_{n\in \bbZ}$.
Write $0$ (also written $x_{0,0}$) for the vertex of $\bbG_{\ba, \bb}$ just below track $t_0$ and just to the left of $s_0$. 
We will always assume that $\bbG_{\ba, \bb}$ is indexed such that $0$ is a primal vertex. 

The result of the first part is the following. 

\begin{prop} \label{prop:sq_lat_q>4}
  Let $\bbG_{0, \bb}$ be an isoradial square lattice in $\calG(\eps)$ for some $\eps > 0$, with transverse angles $0$ for all vertical tracks. 
  Then, there exist constants $C, c > 0$ depending on $\eps$ only such that
  \begin{align} \label{eq:exp_decay4}
    \phi_{\HR(N;N)}^{0} \big[ 0 \leftrightarrow \pd \sq(n) \big] \leq C \exp(-c n), \qquad \forall n < N.
  \end{align}
\end{prop}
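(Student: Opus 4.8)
The plan is to transfer the exponential decay estimate~\eqref{eq:input_exp} from the regular lattice $\bbZ^2$ to the isoradial square lattice $\bbG_{0,\bb}$ by the same track-exchange strategy as in Section~\ref{sec:iso_square_RSW}, but adapted to carry the estimate for the half-plane measure with free boundary conditions on the bottom and wired boundary conditions elsewhere. The key observation making this work is that $\bbZ^2$ and $\bbG_{0,\bb}$ share the same transverse angles $\ba = 0$ for their vertical tracks, so we may build a mixed graph $G_\mix$ by superimposing a strip of $\bbG_{0,\bb}$ on top of a strip of $\bbZ^2$ (both of height $N$ and width $2M+1$, with $M$ very large compared to $N$), convexifying, and then using $\Sigma^\uparrow$ to push the $\bbZ^2$-tracks above the $\bbG_{0,\bb}$-tracks. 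Since each \stt preserves connection probabilities, the probability of a connection from the origin (which sits on the base, unaffected by the track exchanges) to $\pd\sq(n)$ is essentially preserved; the only effect of the transformations on a path is the mild local distortion described in Figures~\ref{fig:path_transformations} and~\ref{fig:path_transfo_ep}, which can lengthen the path by at most a bounded multiplicative factor (and shrink it slightly at endpoints, but the origin endpoint is on the base and hence fixed).

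First I would set up the mixed graph and fix the boundary conditions carefully: on $G_\mix$ we want the measure that, restricted to the $\bbZ^2$-block, is dominated by $\phi^{1/0}_{\HR(N;N)}$, i.e.\ free on the bottom (the base, which is never touched) and wired on the remaining sides of the block. This is where the main obstacle lies: convexification adds rhombi, and hence edges, on the left and right sides of each strip, and these extra edges change the random-cluster measure in an uncontrolled way because for $q>4$ boundary conditions propagate at infinite distance. To handle this I would use the comparison between boundary conditions (monotonicity) in the right direction: since we want an \emph{upper} bound on a connection probability and we are starting from \emph{free} boundary conditions on the bottom, adding the convexification with wired boundary conditions everywhere else only increases connection probabilities, so it suffices to prove the bound for the measure with wired conditions on the whole outer boundary of $G_\mix$ except the base — and that measure, restricted to the $\bbZ^2$-block before the track exchanges, is dominated by $\phi^{1/0}_{\HR(M;N)}$, to which Proposition~\ref{prop:input_exp} applies (after noting $\phi^{1/0}_{\HR(M;N)}\le\phi^{1/0}_{\HR(N;N)}\le\phi^{1/0}_\bbH$ by monotonicity and restriction).

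Next I would run $\Sigma^\uparrow$ on $G_\mix$ with this boundary-conditioned measure, obtaining the graph $\tilde G_\mix$ whose bottom $N$ rows (around the origin) now form a block of $\bbG_{0,\bb}$. A connection $0 \leftrightarrow \pd\sq(n)$ in $\tilde G_\mix$ pulls back, under the inverse coupling, to a connection $0 \leftrightarrow \pd\sq(n/C')$ in $G_\mix$ for some constant $C'$ depending only on $\eps$ — here one uses that the \stts distort Euclidean distances by at most a bounded factor (a consequence of the bounded-angles property, as noted after the definition of $\calG(\eps)$), and that the path transformations of Figure~\ref{fig:path_transformations} only move vertices locally. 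Combining with the domination by $\phi^{1/0}_\bbH$ and the input estimate~\eqref{eq:input_exp} gives
\begin{align*}
  \phi^0_{\HR(N;N)}\big[0\leftrightarrow\pd\sq(n)\big]
  \le \phi^{1/0}_{\tilde G_\mix}\big[0\leftrightarrow\pd\sq(n)\big]
  = \phi^{1/0}_{G_\mix}\big[0\leftrightarrow\pd\sq(n/C')\big]
  \le C_0\exp(-c_0 n/C'),
\end{align*}
and then letting $M\to\infty$ (the left-hand side being independent of $M$) finishes the proof with $C=C_0$ and $c=c_0/C'$.

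The main obstacle, as indicated, is the bookkeeping around boundary conditions during convexification: one must be scrupulous that every use of monotonicity points the right way (we only ever want upper bounds, starting from free on the bottom), and that the wired conditions introduced by convexification and by the outer boundary of the block are exactly those compatible with the domination by $\phi^{1/0}_\bbH$. A secondary technical point is the translation between the graph-distance boxes $\sq(\cdot)$ of $\bbG_{0,\bb}$ and the Euclidean boxes implicit in~\eqref{eq:input_exp} for $\bbZ^2$, together with the distance distortion under the \stts; both are routine given the bounded-angles property. Everything else — the structure of $\Sigma^\uparrow$, the coupling of Proposition~\ref{prop:coupling}, the preservation of connections — is imported verbatim from Section~\ref{sec:mix} and Section~\ref{sec:iso_square_RSW}.
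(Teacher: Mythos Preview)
Your proposal has a genuine gap at its central step. You write
\[
  \phi^{1/0}_{\tilde G_\mix}\big[0\leftrightarrow\pd\sq(n)\big]
  = \phi^{1/0}_{G_\mix}\big[0\leftrightarrow\pd\sq(n/C')\big],
\]
justifying this by the claim that ``the \stts distort Euclidean distances by at most a bounded factor'' and that the path transformations ``only move vertices locally''. Neither claim gives what you need. The bounded-angles property says that \emph{for a fixed isoradial graph} in $\calG(\eps)$ the graph distance and Euclidean distance are quasi-isometric; it says nothing about how distances behave under a long sequence of \stts. Each individual \stt moves a vertex by $O(1)$, but $\Sigma^\uparrow$ is a composition of order $N\cdot M$ such transformations, and there is no deterministic bounded-factor control on how far a path can drift. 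This is precisely the content of Propositions~\ref{prop:horizontal_transport} and~\ref{prop:vertical_transport}: the drift of a path under track exchanges is controlled only \emph{probabilistically}, and those bounds are what the actual proof uses. As written, the displayed equality is false (the events are not in bijection under the coupling), and even the corresponding inequality is not available without invoking those propositions.

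The paper's proof avoids this by a decomposition you have omitted. It splits $\{0\leftrightarrow\pd\sq(n)\}$ into connections to the left, right, and top of a thin rectangle $\HR(n;\delta_0 n)$. For the top, it applies (an adaptation of) Proposition~\ref{prop:vertical_transport} to control the vertical shrinking of the path under $\Sigma^\uparrow$. For the left and right, it uses the finite-energy property to pay a factor $\exp(\eta\delta_0 n)$ and convert the event into a connection between two \emph{base} points $0$ and $x_{0,n}$; since both endpoints lie on the base, which is untouched by every \stt, this connection is preserved \emph{exactly} by $\Sigma^\uparrow$, and one may then compare with the regular block and invoke Proposition~\ref{prop:input_exp}. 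Choosing $\delta_0$ small balances the finite-energy cost against the exponential decay. Your argument conflates these two very different mechanisms into a single ``bounded distortion'' step that does not hold. A secondary issue is your use of mixed $1/0$ boundary conditions on $G_\mix$: the paper works with free boundary conditions throughout, which is cleaner since the convexification then causes no trouble, and the comparison with $\phi^{1/0}_\bbH$ is only made at the very end, inside the regular $\bbZ^2$-block of $\tilde G_\mix$.
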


\begin{proof}
  Fix a lattice $\bbG_{0, \bb}$  as in the statement. 
  For integers $2n < N$, let $G_\mix$ be the mixture of $\bbG_{0, \bb}$ and $\bbZ^2$, as described in Section~\ref{sec:switching_isos}. 
  Notice that here the order of the regular block (that of $\bbZ^2$) and the irregular one (that of $G_{0, \bb}$) is opposite to that in the previous section.

  In this proof, the mixed graph is only constructed above the base level; 
  it has $2N+1$ vertical tracks $(s_{i})_{-N \leq i \leq N}$ of transverse angle $0$ and $2N + 2$ horizontal tracks $(t_{j})_{0 \leq j \leq 2N+1}$, the first $N+1$ having transverse angles $\beta_0, \beta_1, \dots, \beta_N$, respectively 
  and the following $N+1$ having transverse angles $\frac\pi2$. 
  Finally, $G_\mix$ is a convexification of the piece of square lattice described above.
  
  Set $\tilde G_\mix$ to be the result of the inversion of the regular and irregular blocks of $G_\mix$ 
  using the sequence of transformations $\Sigma^{\uparrow}$.	
  Let $\phi_{G_\mix}$ and $\phi_{\tilde G_\mix}$ be the \rcms 
  with the free boundary conditions on $G_\mix$ and $\tilde G_\mix$ respectively. 
  The latter is then the push-forward of the former by the sequence of transformations $\Sigma^{\uparrow}$.
  
  Let $\delta_0 \in (0,1)$ be a constant that will be set below; 
  it will be chosen only depending on $\eps$ and $q$. 
  Write $\pd_L$, $\pd_R$ and $\pd_T$ for the left, right and top boundaries, respectively, of a rectangular domain $\HR(.;.)$. 

  Consider a configuration $\omega$ on $G_\mix$ such that the event $\{ 0 \leftrightarrow \pd \sq(n) \}$ occurs.
  Then, $0$ is connected in $\omega$ to either 
  $\pd_L \HR(n;\delta_0 n)$, $\pd_R \HR(n;\delta_0 n)$ or $\pd_T \HR(n;\delta_0n)$.
  Thus,
  \begin{align}
    \phi_{G_\mix} \big[ 0 \leftrightarrow \pd \sq(n) \big]
    \leq & \, \phi_{G_\mix} \big[ 0 \xleftrightarrow{\HR(n;\delta_0n)} \pd_L \HR(n;\delta_0n) \big] \nonumber \\
    + & \, \phi_{G_\mix} \big[ 0 \xleftrightarrow{\HR(n;\delta_0n)} \pd_R \HR(n;\delta_0n) \big] \nonumber\\
    + & \, \phi_{G_\mix} \big[ 0 \xleftrightarrow{\HR(n;\delta_0n)} \pd_T \HR(n;\delta_0n) \big].
    \label{eq:con1}
  \end{align}
  Moreover, since the graph $G_\mix$ and $G_{0, \bb}$ are identical in the ball of radius $N$ around $0$ for the graph-distance, 
  \begin{align} \label{eq:GG_mix}
    \phi_{\HR(N;N)}^{0} \big[ 0 \leftrightarrow \pd \sq(n) \big]
    \leq \phi_{G_\mix} \big[ 0 \leftrightarrow \pd \sq(n) \big],
  \end{align}
  where in the left-hand side $\HR(N;N)$ denotes the rectangular domain of $G_{0,\bb}$. 
  We used above the comparison between boundary conditions. 
  
  In conclusion, in order to obtain~\eqref{eq:exp_decay4}, it suffices to prove that the three probabilities of the right-hand side of~\eqref{eq:con1} are bounded by an expression of the form $C e^{-cn}$, uniformly in $N$. 
  We concentrate on this from now on. 
  
  \medbreak

  Let us start with the last line of~\eqref{eq:con1}.
  Recall Proposition~\ref{prop:vertical_transport}; a straightforward adaptation reads:
  \medskip
  
  \noindent \textbf{Adaptation of Proposition~\ref{prop:vertical_transport}.}
  {\em 
    There exist $\delta > 0$ and $c_n >0$ satisfying $c_n\to 1$ as $n \to \infty$ such that, for all $n$ and sizes $N \geq 4n$, 
    \begin{align}
      \phi_{\tilde G_\mix} \big[ 0 \xleftrightarrow{\HR(4n; \delta\delta_0 n)} \pd_T \HR(4n; \delta\delta_0 n) \big]
      \geq c_n \phi_{G_\mix} \big[ 0 \xleftrightarrow{\HR(n;\delta_0n)} \pd_T \HR(n;\delta_0n) \big].
      \label{eq:vertical_transport2}
    \end{align}
  }
  
  The proof of the above is identical to that of Proposition~\ref{prop:vertical_transport}.
  The constant $\delta$ and the sequence $(c_n)_n$ only depend on $\eps$ and $q$. 
  
  By the comparison between boundary conditions, 
  \begin{align*}
    \phi_{\tilde G_\mix} \big[ 0 \xleftrightarrow{\HR(4n; \delta\delta_0 n)} \pd_T \HR(4n; \delta\delta_0 n) \big]
    & \leq \phi_{\HR(N;N)}^{1/0} \big[ 0 \xleftrightarrow{} \pd \sq(\delta\delta_0 n) \big] \\
    & \leq C_0 \exp(-c_0 \delta\delta_0 n).
  \end{align*}
  The second inequality is due to Proposition~\ref{prop:input_exp} and to the fact that the rectangle $\HR(N;N)$ of $\tilde G_\mix$
  is fully contained in the regular block.
  Thus, from~\eqref{eq:vertical_transport2} and the above, we obtain,
  \begin{align}
    \phi_{G_\mix} \big[ 0 \xleftrightarrow{\HR(4n; \delta_0 n)} \pd_T \HR(4n; \delta_0 n)  \big] 
    \leq \frac{C_0}{c_n} \exp(-c_0\delta\delta_0 n).
    \label{eq:con2}
  \end{align}
  For $n$ large enough, we have $c_n > 1/2$, and the left-hand side of~\eqref{eq:con2} is smaller than $2C_0\exp(-c_0\delta\delta_0 n)$.
  Since the threshold for $n$ and the constants $c_0, \delta$ and $\delta_0$ only depend on $\eps$ and $q$, 
  the bound is of the required form. 
  \medbreak
  
  We now focus on bounding the probabilities of connection to the left and right boundaries of $\HR(n;\delta_0n)$. 
  Observe that, for a configuration in the event
  $\big\{ 0 \xleftrightarrow{\HR(n;\delta_0n)} \pd_R \HR(n;\delta_0n) \big\}$,
  it suffices to change the state of at most $\delta_0 n$ edges to connect $0$ to the vertex $x_{0,n}$ 
  (we will assume here $n$ to be even, otherwise $x_{0,n}$ should be replaced by $x_{0,n+1}$).
  By the finite-energy property, there exists a constant $\eta = \eta(\eps, q) > 0$ such that 
  \begin{align*}
    \phi_{G_\mix} \big[ 0 \xleftrightarrow{\HR(n;\delta_0n)} \pd_R \HR(n;\delta_0n) \big]
    \leq \exp ( \eta \delta_0 n ) \, \phi_{G_\mix} \big[ 0 \leftrightarrow x_{0, n} \big].
  \end{align*}
  The points $0$ and $x_{0,n}$ are not affected by the transformations in $\Sigma^{\uparrow}$, therefore 
  \begin{align*}
    \phi_{G_\mix} \big[ 0 \leftrightarrow x_{0, n} \big]
    & = \phi_{\tilde G_\mix} \big[ 0 \leftrightarrow x_{0, n} \big] \\
    & \leq \phi_{\tilde G_\mix} \big[ 0 \leftrightarrow \pd \sq(n) \big] \\
    & \leq \phi_{\HR(N;N)}^{1/0} \big[ 0 \leftrightarrow \pd \sq(n) \big]
    \leq C_0 \exp(-c_0 n),
  \end{align*}
  where in the last line, $\HR(N;N)$ is a subgraph of $\tilde G_\mix$, or equivalently of $\bbZ^2$ since these two are identical. 
  The last inequality is given by Proposition~\ref{prop:input_exp}.
  We conclude that, 
  \begin{align} \label{eq:con3}
    \phi_{G_\mix} \big[ 0 \xleftrightarrow{\HR(n;\delta_0n)} \pd_R \HR(n;\delta_0n) \big]
    \leq C_0 \exp \big[ - (c_0 - \delta_0 \eta) n \big].
  \end{align}
  The same procedure also applies to $\{ 0 \xleftrightarrow{\HR(n;\delta_0n)} \pd_L \HR(n;\delta_0n) \}$.
  
  Suppose now that $\delta_0 = \frac{c_0}{c_0\delta + \eta}$ is chosen such that
  $$
  c := c_0 - \delta_0 \eta =c_0\delta\delta_0 > 0.
  $$
  Note that $\delta_0 \in (0, 1)$ since $\eta \geq c_0 $ and that $c$ depends only on $\eps$ and $q$. 
  Then,~\eqref{eq:con1}, \eqref{eq:con2} and~\eqref{eq:con3} 
  imply that for $n$ larger than some threshold depending only on $\eps$, 
  \begin{align*}
    \phi_{G_\mix} \big[ 0 \leftrightarrow \pd \sq(n) \big]
    \leq 4 C_0 \exp(-cn).
  \end{align*}
  Finally, by~\eqref{eq:GG_mix}, we deduce~\eqref{eq:exp_decay4} for all $N \geq 2n$ and $n$ large enough. 
  The condition on $n$ may be removed by adjusting the constant $C$; 
  the bound on $N$ is irrelevant, since the left-hand side of~\eqref{eq:exp_decay4} is increasing in $N$. 
\end{proof}

The same argument may not be applied again to obtain~\eqref{eq:exp_decay4} for general isoradial square lattices 
since it uses the bound~\eqref{eq:input_exp}, which we have not proved for lattices of the form $\bbG_{0, \bb}$.
Indeed,~\eqref{eq:input_exp} is not implied by~\eqref{eq:exp_decay4} when no rotational symmetry is available. 
A different argument is necessary for this step. 

We draw the attention of the reader to the fact that the lattice of Proposition~\ref{prop:sq_lat_q>4} was not assumed to be doubly-periodic, neither will be the following one. 

\begin{prop} \label{prop:sq_lat2_q>4}
  Let $\bbG_{\ba, \bb}$ be an isoradial square lattice in $\calG(\eps)$ for some $\eps> 0$.
  Then, there exist constants $C, c > 0$ depending only on $\eps$, such that 
  \begin{align} \label{eq:exp_decay5}
    \phi_{\HR(N;N)}^{0} \big[ 0 \leftrightarrow \pd \sq(n) \big]
    \leq C \exp(-c n), \qquad \forall n < N.
  \end{align}
\end{prop}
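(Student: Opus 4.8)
The plan is to transport the half-plane estimate once more by star--triangle transformations, this time from an isoradial square lattice with one constant family of transverse angles to an arbitrary one; the genuinely new point, specific to $q>4$, is that the origin must be kept shielded from the wired boundary conditions that the convexification inevitably creates. First I would upgrade Proposition~\ref{prop:sq_lat_q>4}: running its proof with $1/0$ boundary conditions on the mixed graph (wired on the top and sides, free on the base) instead of free ones changes nothing of substance — the finite-energy estimates and the connection-preserving track exchanges of Propositions~\ref{prop:vertical_transport} and~\ref{prop:horizontal_transport} are insensitive to distant boundary conditions, and the input bound~\eqref{eq:input_exp} already concerns the wired half-plane measure on $\bbZ^2$ — so one gets $\phi_{\HR(N;N)}^{1/0}\big[0\leftrightarrow\pd\sq(n)\big]\le C\exp(-cn)$ for every $\bbG_{0,\bb}\in\calG(\eps)$, with $C,c$ depending only on $\eps$. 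Since the weights~\eqref{eq:parameters} depend only on the (rotation-invariant) subtended angles, this passes to every rotation of $\bbG_{0,\bb}$; exactly as in the proof of Corollary~\ref{cor:sq_RSW}, it thus supplies a suitably oriented half-plane estimate for every isoradial square lattice $\bbG_{\ba,\beta_0}$ with constant horizontal transverse angle $\beta_0$. The orientation must be tracked with care: the rotation identifying $\bbG_{\ba,\beta_0}$ with a rotated $\bbG_{0,\bb'}$ interchanges the two track families, so the free side of the half-plane changes direction; when it does not end up being the side that plays the role of the base in the next step, I would first pass to the full-plane free estimate via Proposition~\ref{prop:exp_decay_half_full} and then read off the base-oriented free half-plane bound, which is all the free part of the next step needs.

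Next I would fix $\bbG_{\ba,\bb}\in\calG(\eps)$ and choose $\beta_0$ to be one of the horizontal transverse angles of $\bbG_{\ba,\bb}$, so that $\bbG_{\ba,\beta_0}\in\calG(\eps)$ (every subtended angle of $\bbG_{\ba,\beta_0}$ already occurs in $\bbG_{\ba,\bb}$, which satisfies~\eqref{eq:bounded_angles}). The lattices $\bbG_{\ba,\bb}$ and $\bbG_{\ba,\beta_0}$ share their vertical train tracks, so one may build a mixed graph $G_\mix$ gluing a block of $\bbG_{\ba,\bb}$ below the base to a block of $\bbG_{\ba,\beta_0}$ above it, take free boundary conditions, and exchange the two blocks by a sequence $\Sigma^\uparrow$ of star--triangle transformations, obtaining $\tilde G_\mix$ in which the $\bbG_{\ba,\beta_0}$ block lies below and surrounds the origin.

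Then, exactly as in the proof of Proposition~\ref{prop:sq_lat_q>4}, I would write $\{0\leftrightarrow\pd\sq(n)\}$ as the union of connections from $0$ to the top, the left and the right of $\HR(n;\delta_0 n)$. The top part is transported to $\tilde G_\mix$ by the adaptation of Proposition~\ref{prop:vertical_transport} and then bounded, after the exchange, by the wired half-plane estimate for $\bbG_{\ba,\beta_0}$ obtained in the first step — its induced boundary conditions being free on the base and, in the worst case, wired above and on the sides. The left and right parts are reduced by finite energy to the probability of a connection $0\leftrightarrow x_{0,n}$ between two vertices unaffected by $\Sigma^\uparrow$, hence again controlled after the exchange by the same half-plane estimate for $\bbG_{\ba,\beta_0}$. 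Choosing $\delta_0>0$ small (depending only on $\eps$) so that the finite-energy loss is absorbed yields $\phi_{G_\mix}\big[0\leftrightarrow\pd\sq(n)\big]\le C\exp(-cn)$ uniformly in the block sizes, and the comparison of boundary conditions~\eqref{eq:GG_mix} then gives~\eqref{eq:exp_decay5}; the dual model is treated identically, being again an isoradial square-lattice random-cluster model at $\beta=1$.

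I expect the main obstacle to be the control of boundary conditions rather than any single computation. For $q>4$ the wired conditions created by the convexification act at unbounded distance, so the entire argument hinges on arranging the mixing so that the origin stays adjacent to the free base, and on feeding the procedure a half-plane estimate of the correct ``free-on-the-base'' type and orientation. Checking that the wired half-plane bound survives running the proof of Proposition~\ref{prop:sq_lat_q>4} with $1/0$ boundary conditions, and keeping the orientation straight through the family-swapping rotation (and, where necessary, through the detour via the full-plane estimate of Proposition~\ref{prop:exp_decay_half_full}), is where the real care is required.
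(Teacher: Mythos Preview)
Your proposal has a genuine gap at its very first step: the claimed upgrade of Proposition~\ref{prop:sq_lat_q>4} to $1/0$ boundary conditions does not go through. The passage from $\HR(N;N)$ to the mixed graph in that proof is the monotonicity
\[
  \phi_{\HR(N;N)}^{0}\big[0\leftrightarrow\partial\sq(n)\big]\;\le\;\phi_{G_\mix}^{0}\big[0\leftrightarrow\partial\sq(n)\big],
\]
which holds because enlarging a domain under \emph{free} boundary conditions increases the measure. With $1/0$ conditions this fails: $\HR(N;N)$ and $G_\mix$ share the free base, but the wired top and sides of $\HR(N;N)$ sit in the \emph{interior} of $G_\mix$, so the boundary conditions induced there by $\phi_{G_\mix}^{1/0}$ are at most wired, giving the reverse inequality $\phi_{G_\mix}^{1/0}\le\phi_{\HR(N;N)}^{1/0}$ on $\HR(N;N)$. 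Hence a bound on $\phi_{G_\mix}^{1/0}$ says nothing about $\phi_{\HR(N;N)}^{1/0}$. The paper notes this obstruction explicitly just after Proposition~\ref{prop:sq_lat_q>4}: \eqref{eq:input_exp} has not been (and, by these methods, cannot be) established for $\bbG_{0,\bb}$. Your detour through Proposition~\ref{prop:exp_decay_half_full} does not help either; it turns a free half-plane estimate into a free full-plane one, not into a $1/0$ estimate, yet in your step~4 you still need the $1/0$ bound for $\bbG_{\ba,\beta_0}$ to control $\phi_{\tilde G_\mix}^{0}$ from above.

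The paper's proof avoids needing any $1/0$ estimate for the intermediate lattice by a different mechanism. It lets the regular block have height $M+1$ with $M$ an independent parameter, and exploits the freedom in choosing the convexification of $\tilde G_\mix$: picking the one in which the tracks of angle $\theta$ are as low as possible makes the whole region below $\tilde t_M$ a genuine rectangular piece of a square lattice $\bbG_{\tilde\ba,\theta}$ (where $\tilde\ba$ now contains \emph{both} the $\alpha_i$ and the $\beta_j$). One can then dominate $\phi_{\tilde G_\mix}^{0}$ on that region by $\phi_{\tilde\rect(-2N-1,N;0,M)}^{\xi}$ with $\xi$ wired only on top; sending $M\to\infty$ washes the wired side away and yields domination by the \emph{free} full-plane measure $\phi_{\bbG_{\tilde\ba,\theta}}^{0}$, to which Proposition~\ref{prop:sq_lat_q>4} applies directly. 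The specific choice $\theta=\tfrac12(\inf\beta_n+\sup\alpha_n)$ guarantees $\bbG_{\tilde\ba,\theta}\in\calG(\eps/2)$. This convexification-plus-limit argument is the essential idea you are missing.
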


\begin{proof}
  Fix a lattice $\bbG_{\ba, \bb}$  as in the statement. 
  The proof follows the same lines as that of Proposition~\ref{prop:sq_lat_q>4}, with certain small alterations. 
  
  Set $\theta = \frac12 ( \inf \{\beta_n : n \in \bbZ\} + \sup \{\alpha_n : n \in \bbZ\} )$
  and write $\bbG_{\ba, \theta}$ for the lattice with transverse angles $\ba$ for vertical tracks and constant angle $\theta$ for all horizontal tracks. 
  We will refer to this lattice as \emph{regular}. 
  
  For integers $2n \leq N < M$, define $G_\mix$ to be the mixture of $\bbG_{\ba, \bb}$ and $\bbG_{\ba, \theta}$ as in the previous proof, with the exception that, while both blocks have width $2N+1$ and the irregular block (that is that of $\bbG_{\ba, \bb}$) has height $N+1$, the regular block (that of $\bbG_{\ba, \theta}$) has height $M+1$. 
  Precisely, $G_\mix$ is the convexification of the lattice with $2N+1$ vertical tracks $(s_{i})_{-N \leq i \leq N}$ of transverse angles $(\alpha_{-N}, \dots, \alpha_N)$ and $M +N + 2$ horizontal tracks $(t_{j})_{0 \leq j \leq M + N+1}$, the first $N+1$ having transverse angles $\beta_0, \dots, \beta_N$ and the following $M+1$ having transverse angle $\theta$.
  
  Recall that $\tilde G_\mix$, which is the result of the inversion of the regular and irregular blocks of $G_\mix$ by $\Sigma^{\uparrow}$, 
  may be chosen to be an arbitrary convexification of the lattice 
  $G_{(\alpha_{-N}, \dots, \alpha_N), (\theta, \dots, \theta, \beta_0, \dots, \beta_N)}$.
  More precisely, once such a convexification $\tilde G_\mix$ is chosen, 
  a series of \stts $\Sigma^{\uparrow}$ may be exhibited. 
  This fact will be useful later. 
  
  Write as before $\phi_{G_\mix}$ and $\phi_{\tilde G_\mix}$ for the \rcms 
  with free boundary conditions on $G_\mix$ and $\tilde G_\mix$, respectively. 
  Then, by the comparison between boundary conditions, 
  \begin{align*}
    \phi_{\HR(N;N)}^0 \big[ 0 \leftrightarrow \pd \sq(n) \big]
    \leq \phi_{G_\mix} \big[ 0 \leftrightarrow \pd \sq(n) \big],
  \end{align*}
  where $\HR(N;N)$ refers to the domain in $\bbG_{\ba, \bb}$, or equivalently in $G_\mix$ since the two are equal. 
  Notice that the above inequality is valid for all $M$. 
  
  Let $\delta_0 \in (0,1)$ be a constant that will be set below.
  Using the same notation and reasoning as in the previous proof, we find
  \begin{align}
    \phi_{G_\mix} \big[ 0 \leftrightarrow \pd \sq(n) \big]
    & \leq 
    \tfrac{1}{c_n} \phi_{\tilde G_\mix} \big[ 0 \xleftrightarrow{\HR(2n; \delta \delta_0 n)} \pd_T \rect(2n; \delta \delta_0 n) \big]
    + 2 \exp (\eta \delta_0 n) \, \phi_{\tilde G_\mix} \big[ 0 \leftrightarrow x_{0,n} \big] \nonumber \\
    & \leq 2\phi_{\tilde G_\mix} \big[ 0 \leftrightarrow \pd \sq(\delta \delta_0 n) \big]
    + 2 \exp (\eta \delta_0 n) \, \phi_{\tilde G_\mix} \big[ 0 \leftrightarrow \pd \sq(n) \big], 
    \label{eq:exp_decay6}
  \end{align}
  where $\delta > 0$ and $\eta > 0$ are constants depending only on $\eps$ and $q$. 
  The latter inequality is only valid for $n$ above a threshold also only depending on $\eps$ and $q$. 
  
  At this point, the previous proof used~\eqref{eq:input_exp} to bound the right-hand side. 
  Since this is no longer available, we will proceed differently.

  As previously stated, we may choose the convexification for $\tilde G_\mix$. 
  Let it be such that the tracks with transverse angle $\theta$ are as low as possible.
  That is, $\tilde G_\mix$ is such that, 
  for any track $t$ with transverse angle $\theta$, any intersection below $t$ involves one track with transverse angle $\theta$. 
  The existence of such a convexification is easily proved; rather than writing a formal proof, 
  we prefer to direct the reader to the example of Figure~\ref{fig:sq_lat2_q>4}.
  \begin{figure}[htb]
    \centering
    \includegraphics[width=0.9\textwidth]{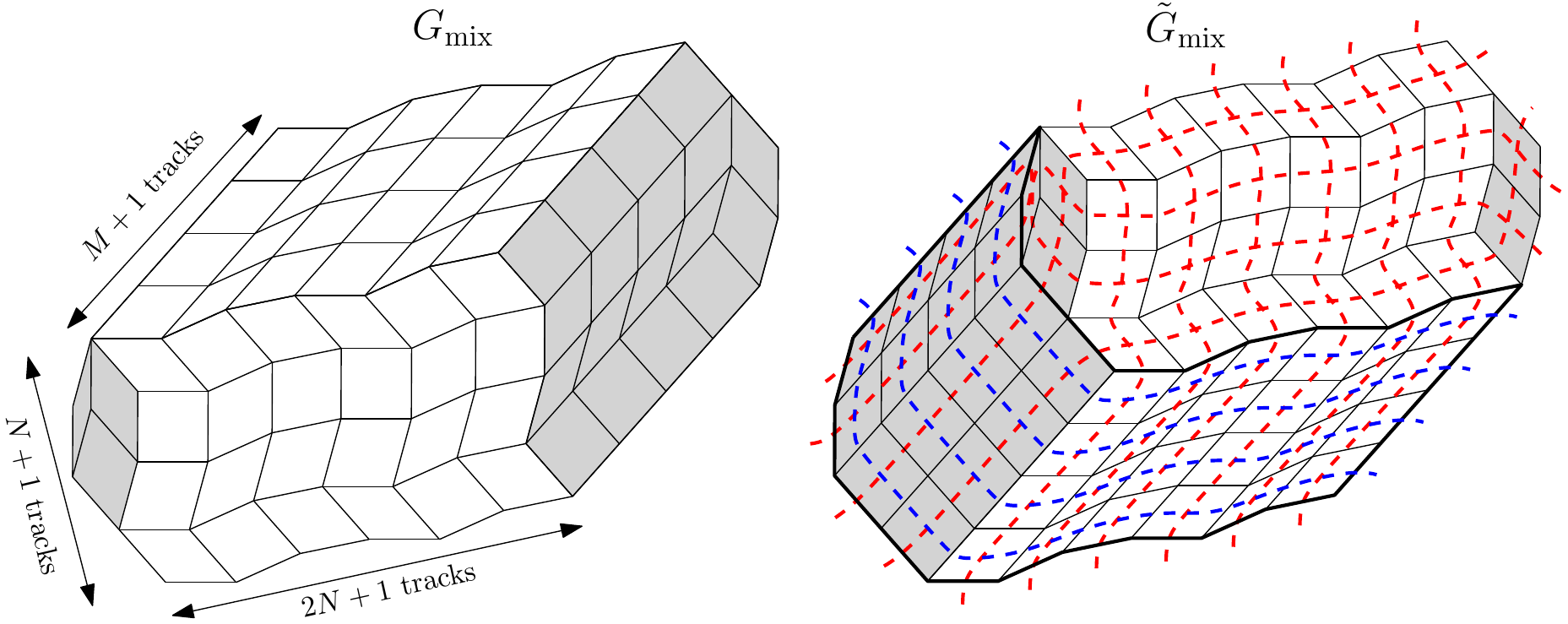}
    \caption{\textbf{Left:} The diamond graph $G_{\mix}^\diamond$ obtained by superposing a block of $\bbG_{\ba, \theta}^\diamond$ to one of $\bbG_{\ba, \bb}^\diamond$. The convexification is drawn in gray. 
      \textbf{Right:}
      The diamond graph $\tilde G_\mix^\diamond$ with convexification (gray) chosen such that the tracks $\tilde t_0, \dots, \tilde t_M$ (blue) are as low as possible. This ensures that the region below $\tilde t_{M}$ (delimited in bold) has a square lattice structure. }
    \label{fig:sq_lat2_q>4}
  \end{figure}

  Write $\tilde t_0, \dots, \tilde t_M$ for the tracks of transverse angle $\theta$ of $\tilde G_\mix$, indexed in increasing order. 
  Call $\tilde s_{-2N-1}, \dots, \tilde s_{N}$ the tracks intersecting $\tilde t_0$, ordered by their intersection points from left to right. 
  Denote by $\tilde \alpha_{-2N-1}, \dots, \tilde \alpha_{N}$ their transverse angles.
  
  The family $\tilde s_{-2N-1}, \dots, \tilde s_{N}$ contains all vertical tracks of the original graph $\bbG_{\ba,\bb}$ 
  (that is those denoted by $s_{-N}, \dots, s_N$) but also the horizontal tracks of $\bbG_{\ba,\bb}$ with transverse angles different from $\theta$.
  Since $\theta < \beta_j$ for all $0 \leq j \leq N$, the latter intersect $t_0$ left of the former. 
  Thus,  $\tilde s_i = s_i$ for $-N \leq i \leq N$, hence the indexing. 
  
  The region of $\tilde G_\mix$ contained below $\tilde t_{M}$ is a (finite part of a) square lattice. 
  Precisely, it is the square lattice $G_{(\tilde \alpha_i)_{-2N-1 \leq i \leq N}, (\theta)_{0 \leq j \leq M}}$.
  Complete the sequence $(\tilde \alpha_i)_{-2N-1\leq i \leq N}$ into a bi-infinite sequence $\tilde \ba = (\tilde \alpha_i)_{i \in \bbZ}$ 
  by declaring all additional terms equal to $\tilde \alpha_0$.
  Write $\tilde \rect(.,.;.,.)$ for the domains of $\bbG_{\tilde \ba,\theta}$ 
  defined in terms of the tracks $(\tilde s_i)_{i \in \bbZ}$ and $(\tilde t_j)_{j \in \bbZ}$.
  Also write $\rect(.,.;.,.)$ for the domains of the original lattice $\bbG_{\ba,\bb}$.
  
  By the comparison between boundary conditions, for any increasing event $A$ depending only on the region of $\tilde G_\mix$ below $\tilde t_{M}$,
  \begin{align*}
    \phi_{\tilde G_\mix} [A] \leq \phi_{\tilde \rect(-2N-1,N;0,M)}^\xi [A],
  \end{align*}
  where $\xi$ are the boundary conditions which are wired on the top of 
  $\tilde \rect(-2N-1,N;0,M)$ and free on the rest of the boundary. 
  Thus,~\eqref{eq:exp_decay6} implies that
  \begin{align*}
    & \phi_{\HR(N;N)}^0 \big[ 0 \leftrightarrow \pd \sq(n) \big] \\
    & \qquad \leq  2\phi_{\tilde \rect(-2N-1, N; 0, M)}^\xi \big[ 0 \leftrightarrow \pd \sq(\delta \delta_0 n) \big]
    + 2 \exp ( \eta \delta_0 n) \, \phi_{\tilde \rect(-2N-1, N; 0, M)}^\xi \big[ 0 \leftrightarrow \pd \sq(n) \big].
  \end{align*}
  Since the above is true for all $M$, we may take $M$ to infinity. 
  Then, the measures $\phi_{\tilde \rect(-2N-1, N; 0, M)}^\xi$ tend decreasingly to the measure 
  $\phi_{\tilde \rect(-2N-1, N; 0, \infty)}^0$ with free boundary conditions in the half-infinite strip.
  \footnote{This step is standard.
    Let $A$ be an increasing event depending only on the state of edges in $\tilde \rect(-2N-1, N; 0, M_0)$ for some $M_0$.
    Then, for any $M  > M_0$, denote by $\Gamma^*$ the highest dually-open horizontal crossing of $\tilde \rect(-2N-1, N; 0, M)$ 
    and set $H$ to be the event that $\Gamma^*$ does not intersect $\tilde \rect(-2N-1, N; 0, M_0)$.
    Then, $\Gamma^*$ may be explored from above, and standard arguments of comparison between boundary conditions imply that 
    $$
    \phi_{\tilde \rect(-2N-1, N; 0, M)}^\xi [A]
    \leq \phi_{\tilde \rect(-2N-1, N; 0, \infty)}^0 [A]
    \, \phi_{\tilde \rect(-2N-1, N; 0, M)}^\xi[H] + \phi_{\tilde \rect(-2N-1, N; 0, M)}^\xi [H^c].
    $$
    By the finite energy property and the fact that $\tilde \rect(-2N-1, N; 0, \infty)$ has constant width, 
    $\phi_{\tilde \rect(-2N-1, N; 0, M)}^\xi [H] \to 1$ as $M \to \infty$. 
    This suffices to conclude.}.
  We conclude that 
  \begin{align}
    & \phi_{\HR(N;N)}^0 \big[ 0 \leftrightarrow \pd \sq(n) \big] \nonumber \\
    & \qquad \leq 
    2 \phi_{\tilde \rect(-2N-1, N; 0, \infty)}^0 \big[ 0 \leftrightarrow \pd \sq(\delta \delta_0 n) \big]
    + 2 \exp ( \eta \delta_0 n) \, \phi_{\tilde \rect(-2N-1, N; 0, \infty)}^0 \big[ 0 \leftrightarrow \pd \sq(n) \big]	\nonumber\\
    & \qquad\leq 
    2 \phi_{\bbG_{\tilde \ba, \theta}}^0 \big[ 0 \leftrightarrow \pd \sq(\delta \delta_0 n) \big]
    + 2 \exp ( \eta \delta_0 n) \, \phi_{\bbG_{\tilde \ba, \theta}}^0 \big[ 0 \leftrightarrow \pd \sq(n) \big].
    \label{eq:exp_decay7}
  \end{align}
  Finally, the square lattice $\bbG_{\tilde \ba,\theta}$ has constant transverse angle $\theta$ for all its horizontal tracks 
  and, by choice of $\theta$, is in $\calG(\eps/2)$.
  Thus, Proposition~\ref{prop:sq_lat_q>4} applies to it (or rather to its rotation $\bbG_{0, (\tilde \alpha_{-j} - \theta + \pi)_j}$). 
  We conclude the existence of constants $C_0, c_0 > 0$ depending on $\eps$ only, such that 
  $$
  \phi_{\bbG_{\tilde \ba, \theta}}^0 \big[ 0 \leftrightarrow \pd \sq(k) \big]
  \leq C_0 \exp(-c_0k), \qquad \forall k \geq 1.
  $$
  Set $\delta_0 = \frac{c_0}{c_0\delta + \eta}$ and
  \begin{align*}
    c = c_0 - \delta_0 \eta =c_0\delta\delta_0 > 0. 
  \end{align*}
  Then $c$ only depends on $\eps$ and $q$
  and the right hand side of~\eqref{eq:exp_decay7} is bounded by $4C \exp(-cn)$, which provides the desired conclusion.
\end{proof}

The second proposition (Proposition~\ref{prop:sq_lat2_q>4}) appears to use a weaker input than the first. 
One may therefore attempt to use the same argument for Proposition~\ref{prop:sq_lat_q>4}, 
so as to avoid using the more involved bound~\eqref{eq:input_exp}. 
Unfortunately, this is not possible, as the sequence of angles $\tilde \ba$ in the second proof may never be rendered constant, since it contains all the horizontal and vertical tracks of $\bbG_{\ba, \bb}$. 

\subsection{Doubly-periodic isoradial graphs}

Let $\bbG$ be an arbitrary doubly-periodic isoradial graph in some $\calG(\eps)$, with grid $(s_n)_{n \in \bbZ}$, $(t_n)_{n \in \bbZ}$.
Denote by $0$ the vertex just below and to the left of the intersection of $t_0$ and $s_0$. We will assume that it is a primal vertex. 
The goal of this section is the following. 

\begin{prop} \label{prop:db_per_q>4}
  There exist constants $c,C > 0$ depending only on $\eps$ and on the size of the fundamental domain of $\bbG$, such that
  \begin{align} \label{eq:exp_decay8}
    \phi_{\HR(N;N)}^{0} \big[ 0 \leftrightarrow \pd \sq(n) \big] \leq C \exp(-c n), \qquad \forall n < N.
  \end{align}
\end{prop}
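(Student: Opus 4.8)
The plan is to transfer the exponential decay \eqref{eq:exp_decay5}, already established for isoradial square lattices, to the doubly-periodic graph $\bbG$, using Proposition~\ref{prop:bi_to_square} to reduce to the square-lattice case in the same way that Theorem~\ref{thm:universality} is reduced from isoradial square lattices to doubly-periodic graphs. The one new difficulty, as flagged at the start of this section, is that one must carry boundary conditions along very carefully, exactly as in Propositions~\ref{prop:sq_lat_q>4} and~\ref{prop:sq_lat2_q>4}.

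First I would note that $\{0\leftrightarrow\pd\sq(n)\}$ equals the locally-defined increasing event $\{0\xleftrightarrow{\sq(n)}\pd\sq(n)\}$ and that $\phi^0_{\HR(N;N)}$ increases (in the FKG order) to the half-plane measure $\phi^0_{\bbH_\bbG}$ as $N\to\infty$, where $\bbH_\bbG$ is the part of $\bbG$ lying on or above $t_0$; so it suffices to bound $\phi^0_{\bbH_\bbG}[0\leftrightarrow\pd\sq(n)]$. Then, as in the proof of Theorem~\ref{thm:universality}, Proposition~\ref{prop:bi_to_square} supplies, for each $M,N$, a finite sequence of \stts — acting strictly above $t_0$, localised in $\rect(-(M+dN),M+dN;0,N)$, and touching no rhombus of $t_0$ — that turns $\rect(-M,M;0,N)$ into a finite piece of a fixed isoradial square lattice $\bbG^{\rmsq}$, with $\bbG^{\rmsq}\in\calG(\eps')$ for $\eps'$ depending only on $\eps$ and the fundamental domain of $\bbG$, and with $\bbG^{\rmsq}$ independent of $M,N$. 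Applying these transformations through the connection-preserving coupling of Proposition~\ref{prop:coupling} and Section~\ref{sec:stt_rc_iso} couples $\phi^0_{\bbH_\bbG}$ with the free-boundary measure on the transformed graph; under this coupling open connections are preserved, and the base — in particular the vertex $0$ — is never moved.

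The connection estimate would then be transported exactly as in Proposition~\ref{prop:sq_lat2_q>4}. One splits $\{0\leftrightarrow\pd\sq(n)\}$ into the three events that $0$ reaches the left, right, or top side of the short half-rectangle $\HR(n;\delta_0 n)$. The first two are reduced, by the finite-energy property (paying a factor $e^{\eta\delta_0 n}$ to pull the reached endpoint down onto the base), to a connection between $0$ and a fixed base vertex at graph-distance of order $n$; such a connection has endpoints on the base and is therefore preserved by all the \stts. The third is reduced, by the adaptation of Proposition~\ref{prop:vertical_transport} (whose drift analysis applies equally to the \stts of Proposition~\ref{prop:bi_to_square}), to a connection from $0$ up to height $\delta\delta_0 n$ in the transformed graph. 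In both cases the resulting quantity is bounded, after sending the auxiliary parameters so that the transformed graph fills the upper half-plane with $\bbG^{\rmsq}$, by $\phi^0_{\bbH_{\bbG^{\rmsq}}}[0\leftrightarrow\pd\sq(k)]\le C_0\exp(-c_0k)$ for $k$ a fixed fraction of $n$, which is the $N\to\infty$ form of Proposition~\ref{prop:sq_lat2_q>4}; choosing $\delta_0$ small enough that the exponential gain beats the finite-energy cost (as in Proposition~\ref{prop:sq_lat2_q>4}) yields \eqref{eq:exp_decay8}, with $c,C$ depending only on $\eps$ and the period of $\bbG$. Feeding this into Proposition~\ref{prop:exp_decay_half_full} then produces \eqref{eq:exp_decay2} in Section~\ref{sec:conclusions_q>4}.

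The hard part is the long range of boundary-condition effects when $q>4$: in the $q\le4$ proof a dual circuit produced by RSW neutralises the boundary conditions around the window where the event lives, whereas here one cannot afford to leave an uncontrolled, distorted collar of $\bbG$ around that window — for $q>4$ it would contribute a wired-like boundary influence that does not decay. Hence the environment ``seen from the origin'' must be arranged, after the \stts, to be a genuine isoradial square lattice — one for which exponential decay is already known — with only boundary influence that can be pushed off to infinity with free conditions. Making the interplay between the localisation of the transformations of Proposition~\ref{prop:bi_to_square}, the domains on which the measures are coupled, the region that ends up square, and the order in which the auxiliary parameters are sent to infinity mutually consistent is the entire technical content; it is of the same nature as the bookkeeping already carried out in Proposition~\ref{prop:sq_lat2_q>4}, but it is the step most likely to hide a subtlety.
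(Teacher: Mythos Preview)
Your overall strategy---three-sides decomposition, finite energy on the lateral sides, vertical transport on the top, then appeal to Proposition~\ref{prop:sq_lat2_q>4}---is correct and matches the paper. The gap is exactly the point you yourself flag in the last paragraph, and it is not just bookkeeping.

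If you apply Proposition~\ref{prop:bi_to_square} to $\bbH_\bbG$ (or to any large finite piece of $\bbG$), you obtain a square-lattice structure only inside the window $\rect(-M,M;0,N)$; outside this window the graph is still a deformed version of $\bbG$. The free measure on the whole transformed graph, restricted to the window, is therefore the square-lattice measure with \emph{uncontrolled} boundary conditions induced by the collar. For $q>4$ those conditions can be essentially wired, and Proposition~\ref{prop:sq_lat2_q>4} gives decay only under free conditions---there is no analogue of Proposition~\ref{prop:input_exp} for general isoradial square lattices. Sending $M,N\to\infty$ does not help: for each fixed $(M,N)$ the collar is present, and the transformed graphs do not converge to $\bbH_{\bbG^{\rmsq}}$ in any sense that carries the free measure along.

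The paper resolves this by \emph{not} working inside $\bbG$. It builds by hand a finite graph $\calD$: a trapezoid $\calD_0\subset\bbG$ containing $\HR(N;N)$ (bounded laterally by two carefully chosen non-horizontal tracks $\tau_L,\tau_R$), then extends the horizontal tracks $t_1,\dots,t_N$ outside $\calD_0$ so that they intersect every non-horizontal track of $\calD_0$ but introduce no new non-horizontal intersections, and finally stacks $M$ extra horizontal tracks $t_{N+1},\dots,t_M$ of constant transverse angle on top. The \stts that eliminate black points then turn the \emph{entire} region of $\calD$ below $t_M$ into a rectangular piece of a square lattice $\bbG_{\tilde\ba,\tilde\bb}$---there is no collar at all. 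Taking $M\to\infty$ (with $N$ fixed) in the free measure on $\overline\calD$ kills the wired influence from above (the strip has bounded width, so a dual crossing separates $0$ from the top with high probability), and one lands on $\phi^0_{\bbG_{\tilde\ba,\tilde\bb}}$, to which Proposition~\ref{prop:sq_lat2_q>4} applies directly.

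One further difference: for the top side the paper does not invoke an analogue of Proposition~\ref{prop:vertical_transport}. Because each sequence $\Sigma_k$ acts only below $t_k$, a path reaching $\pd_T\HR(n;\delta_0 n)$ in $\calD$ still reaches $t_{\delta_0 n-1}$ in $\overline\calD$ (the endpoint can drop by at most one per Figure~\ref{fig:path_transfo_ep}). This is simpler than the drift analysis you propose and avoids having to adapt Proposition~\ref{prop:vertical_transport} to the transformations of Proposition~\ref{prop:bi_to_square}.
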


Again, some care is needed when handling boundary conditions. 
Rather than working with $\bbG$ and modifications of it, we will construct a graph that locally resembles $\bbG$, 
but that allows us to control boundary conditions. 

\begin{proof}
  For $\rho \in [0,\pi)$, write $\calT_\bbG^{(\rho)}$ for the set of tracks of $\bbG$ with asymptotic direction $\rho$ (recall the existence of an asymptotic direction from the proof of Lemma~\ref{lem:square_grid}).
  By periodicity, there exists a finite family $0 \leq \rho_0 < \dots < \rho_T < \pi$ such that 
  \begin{align*}
    \calT_\bbG = \bigsqcup_{\ell = 0}^T \calT_\bbG^{(\rho_\ell)}.
  \end{align*}
  Assume that the lattice is rotated such that the horizontal tracks $(t_n)_{n \in \bbZ}$ have asymptotic direction $\rho_0 = 0$. 
  Fix constants $n < N$.  

  Let $\tau_L$ be the right-most track in $\calT_{\bbG}^{(\rho_T)}$ that intersects $t_0$ left of $0$ and does not intersect $\HR(N;N)$.
  Similarly, define $\tau_R$ as the left-most track in $\calT_{\bbG}^{(\rho_1)}$ that intersects $t_0$ right of $0$ and does not intersect $\HR(N;N)$.  
  Denote by $\calD_0$ the domain of $\bbG$ bounded by $t_0$ below, above by $t_N$, to the left by $\tau_L$ and to the right by $\tau_R$. 
  One may imagine $\calD_0$ as a trapezoid with base $t_0$ and top $t_N$. 
  By definition of $\tau_L$ and $\tau_R$, 
  \begin{align} \label{eq:RcalD}
    \HR(N;N) \subset \calD_0.
  \end{align}

  Complete $\calD_0$ to form a bigger, finite graph $\calD$ as follows. 
  Let $\tilde s_{K_-},\dots, \tilde s_{K_+}$ be the tracks of $\calD_0$ that intersect $t_0$, ordered from left to right, with $\tilde s_0 = s_0$. 
  Orient these tracks upwards. Orient the remaining tracks $t_0, \dots, t_N$ from left to right. 

  In $\calD$, we will make sure that  $t_1, \dots, t_N$ intersect all tracks $(\tilde s_i)_{K_- \leq i \leq K_+}$, 
  but that no additional intersections between tracks $(\tilde s_i)_{K_- \leq i \leq K_+}$ are introduced. 
  One should imagine that the track $t_1$, after exiting $\calD_0$, ``slides down'' on the side of $\calD_0$; 
  $t_2$ does the same: it slides down along the side of $\calD_0$ until reaching $t_1$, then continues parallel to $t_1$, etc. 
  The same happens on the left side. 
  Finally, on top of the graph obtained above add a number of parallel tracks $t_{N+1},\dots, t_{M}$ adjacent to each other, with constant transverse angle, for instance, that of $t_N$, for some $M > N$. 
  Call the resulting graph $\calD$.
  In $\calD$, each track $t_j$ with $0\leq j \leq M$ intersects all tracks $(\tilde s_i)_{K_- \leq i \leq K_+}$. 
  We do not give a more formal description of the construction of $\calD$; we rather direct the readers attention to Figure~\ref{fig:calD} for an illustration.

  \begin{figure}[htb]
    \centering
    \includegraphics[width=0.8\textwidth]{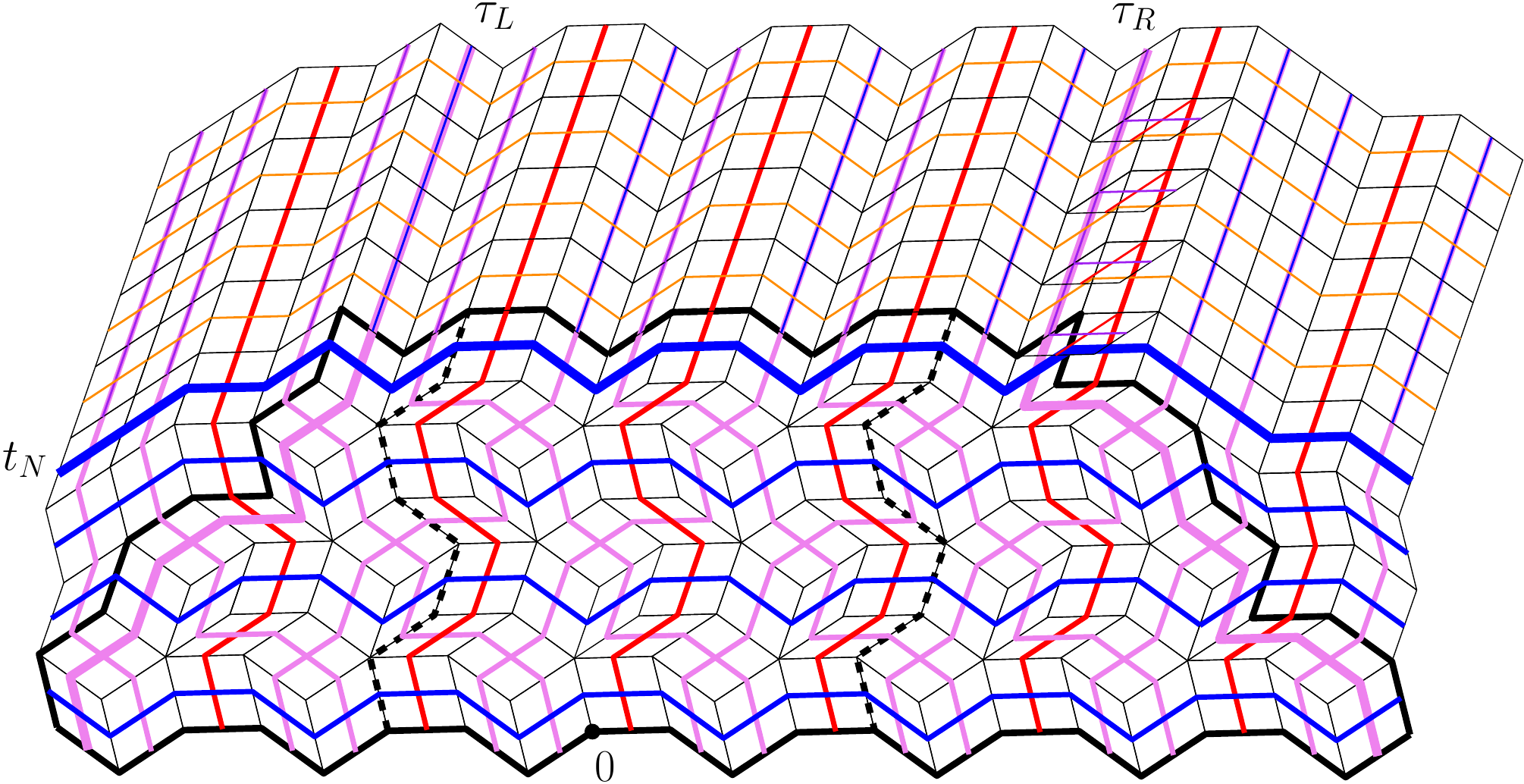}
    \caption{A graph $\calD_0$ (the delimited region) and the completion $\calD$ -- only the diamond graph is depicted. 
      The vertical tracks $(s_i)$ are red, horizontal tracks $(t_j)$ blue, and the others purple.
      The rectangle $\HR(N;N)$ (in reality it should be wider) is delimited by dotted lines. 
      The tracks delimiting $\calD_0$ are $\tau_L$, $\tau_R$ and $t_N$; they are marked in bold.} 
    \label{fig:calD}
  \end{figure}

  \medbreak

  In light of~\eqref{eq:RcalD}, 
  \begin{align} \label{eq:RcalD1}
    \phi_{\HR(N;N)}^{0} \big[ 0 \leftrightarrow \pd \sq(n) \big]
    \leq \phi_{\calD}^{0} \big[ 0 \leftrightarrow \pd \sq(n) \big],
  \end{align}
  where $\HR(N;N)$ refers to the region of the graph $\bbG$. 

  Next, we transform $\calD$ to create a square lattice. 
  Call a \emph{black point} of $\calD$ any intersection of two tracks $\tilde s_i$ and $\tilde s_j$. 
  Then, by a straightforward modification of Proposition~\ref{prop:bi_to_square}, there exist \stts $\sigma_1,\dots, \sigma_K$ applied to $\calD$ such that, in $(\sigma_K \circ \dots \circ \sigma_1)(\calD)$, there is no black point between $t_0$ and $t_1$. 
  Moreover, all transformations $\sigma_1, \dots, \sigma_K$ act between $t_0$ and $t_1$. 

  The existence of $\sigma_1, \dots, \sigma_K$ is proved by eliminating one by one the back points of $\calD$ between $t_0$ and $t_1$, starting with the top most. The main thing to observe is that, by the construction of $\calD$, any black point between $t_0$ and $t_1$ is the intersection of two tracks $\tilde s_i$ and $\tilde s_j$, both of which intersect $t_1$ above. 

  Set $\Sigma_1 = \sigma_K \circ \dots \circ \sigma_1$. 
  Then, one may define recurrently sequences of transformations $(\Sigma_j)_{1 \leq j \leq M}$ such that
  \begin{itemize}
  \item each $\Sigma_j$ acts on $(\Sigma_{j-1} \circ \dots \circ \Sigma_{1}) (\bbG)$ between $t_{j-1}$ and $t_j$;
  \item in $(\Sigma_{j} \circ \dots \circ \Sigma_{1}) (\bbG)$, there are no black points below $t_j$. 
  \end{itemize}

  Let $\overline \calD =  (\Sigma_M \circ \dots \circ \Sigma_{1}) (\bbG)$. 
  Below $t_M$, $\overline \calD$ is a rectangular part of a square lattice with width $K_{+} -K_{-}+1$ and height $M$. 

  Let $\tilde \ba = (\tilde \alpha_i)_{K_- \leq i \leq K_+}$ be the transverse angles of the tracks $(\tilde s_i)_{K_- \leq i \leq K_+}$.
  Also, denote by $\tilde \bb = (\tilde \beta_j)_{j \geq 0}$ the sequence of angles constructed as follows:
  for $0 \leq j \leq N$, $\tilde \beta_j = \beta_j$ which is the transverse angles of $t_j$; for $j > N$, set $\tilde \beta_j = \beta_N$.
  Then, the part of $\overline \calD$ below $t_M$ is a rectangular domain of the vertical strip of square lattice $G_{\tilde \ba, \tilde \bb}$.
  Moreover, also let us denote by $\bbG_{\tilde \ba, \tilde \bb}$ any completion of $G_{\tilde \ba, \tilde \bb}$ into a full plane square lattice.
  We note that the angles $\tilde \ba$ and $\tilde \bb$ also are transverse angles of tracks in $\bbG$, therefore $\bbG_{\tilde \ba, \tilde \bb}\in \calG(\eps)$ as does $\bbG$. 

  By the same argument as for~\eqref{eq:exp_decay7}, for any fixed $j \leq N$, 
  \begin{align} \label{eq:calD1}
    \limsup_{M \to \infty}\phi_{\overline \calD}^0 \big[ 0 \leftrightarrow \pd \sq(j) \big]
    \leq \phi_{\bbG_{\tilde \ba, \tilde \bb}}^0 \big[ 0 \leftrightarrow \pd \tilde \sq(j) \big]. 
  \end{align}
  In the above inequality, $\sq$ denotes a square domain defined in terms of tracks $(s_i)$ and $(t_j)$ whereas $\tilde \sq$ is defined in terms of tracks $(\tilde s_i)$ and $(t_j)$.
  We also notice that the box defined in terms of $(s_i)$ is larger than that of $(\tilde s_i)$.
  Proposition~\ref{prop:sq_lat2_q>4} applies to $\bbG_{\tilde \ba, \tilde \bb}$ and we deduce the existence of constants $c, C > 0$, depending only on $\eps$, such that, for all $j \leq N$, 
  \begin{align} \label{eq:calD2}
    \limsup_{M \to \infty} \phi_{\overline \calD}^0 \big[ 0 \leftrightarrow \pd \sq(j) \big]
    \leq \phi_{\bbG_{\tilde \ba, \tilde \bb}}^0 \big[ 0 \leftrightarrow \pd \sq(j) \big]
    \leq C \exp(-cj).
  \end{align}

  Let us now come back to connections in $\calD$. These can be transformed into connections in $\overline \calD$ via the sequence of \stts $(\Sigma_j)_{1 \leq j \leq M}$.
  We use the same decomposition as in the proofs of Section~\ref{sec:sq_lat_q>4} to transfer the exponential decay in $\overline \calD$ to that in $\calD$. 

  Let $\omega$ be a configuration on $\calD$ such that $0 \leftrightarrow \pd\sq(n)$. 
  Then, either $0$ is connected to the left or right sides of $\HR(n;\delta_0n)$ or it is connected to the top of $\HR(n;\delta_0n)$ inside $\HR(n;\delta_0n)$. The constant $\delta_0$ used in this decomposition will be chosen below and will only depend on $\eps$ and $q$. 
  Thus, we find, 
  \begin{align}
    \phi_{\calD}^0 \big[ 0 \leftrightarrow \pd \sq(n) \big]
    \leq & \, \phi_{\calD}^0 \big[0 \xleftrightarrow{\HR(n;\delta_0n)} \pd_L \HR(n;\delta_0n)\big] \nonumber \\
    + & \, \phi_{\calD}^0 \big[ 0 \xleftrightarrow{\HR(n;\delta_0n)} \pd_R \HR(n;\delta_0n) \big] \nonumber \\
    + & \, \phi_{\calD}^0 \big[ 0 \xleftrightarrow{\HR(n;\delta_0n)} \pd_T \HR(n;\delta_0n) \big].
    \label{eq:con4}
  \end{align}

  Let us now bound the three terms above separately. We start with the first two. 
  By the finite-energy property, there exist
  a constant $\eta > 0$ depending only on $\eps$ and the fundamental domain of $\bbG$
  and  primal vertices $x_-$ and $x_+$ just below $t_0$ (thus on the boundary of $\calD$)
  left of $s_{-n}$ and right of $s_n$, respectively,  such that 
  \begin{align}
    & \phi_{\calD}^0 \big[ 0 \xleftrightarrow{\HR(n;\delta_0n)} \pd_L \HR(n;\delta_0n) \big] 
    \leq \exp (\eta \delta_0 n) \, \phi_{\calD}^0 (0 \xleftrightarrow{} x_- ) \qquad \text{and} \nonumber \\
    & \phi_{\calD}^0 \big[ 0 \xleftrightarrow{\HR(n;\delta_0n)} \pd_R \HR(n;\delta_0n) \big] 
    \leq \exp (\eta \delta_0 n) \, \phi_{\calD}^0 (0 \xleftrightarrow{} x_+ ).
    \label{eq:con5}
  \end{align}
  Since the transformations $\Sigma_1, \dots, \Sigma_M$ preserve connections between points on the boundary of $\calD$, 
  \begin{align}
    \phi_{\calD}^0 \big[ 0 \xleftrightarrow{} x_- \big]
    = \phi_{\overline \calD}^0 \big[ 0 \xleftrightarrow{} x_- \big]
    \leq \phi_{\overline \calD}^0 \big[ 0 \leftrightarrow \pd \sq(n) \big].
    \label{eq:con6}
  \end{align}
  The same holds for  $\phi_{\calD}^0 \big[ 0 \xleftrightarrow{} x_+ \big]$. 

  Since each sequence of transformations $\Sigma_k$ only acts below $t_k$, an open path connecting $0$ to $\pd_T \HR(n;\delta_0n)$ in $\calD$ is transformed into an open path connecting $0$ to $t_{\delta_0 n -1}$ (Figure~\ref{fig:path_transformations}).
  \begin{align}
    \phi_{\calD}^0 \big[ 0 \xleftrightarrow{\HR(n;\delta_0n)} \pd_T \HR(n;\delta_0n) \big] 
    &\leq \phi_{\overline \calD}^0 \big[ 0 \xleftrightarrow{} t_{\delta_0 n -1} \big] \nonumber \\
    &\leq \phi_{\overline \calD}^0 \big[ 0 \xleftrightarrow{} \pd \sq(\delta_0 n -1) \big] \nonumber \\
    &\leq C \exp(-c(\delta_0 n -1)). 
    \label{eq:con7}
  \end{align}
  By injecting~\eqref{eq:con5},~\eqref{eq:con6} and \eqref{eq:con7} into~\eqref{eq:con4}, then further into~\eqref{eq:RcalD1}, we find
  \begin{align*}
    \phi_{\HR(N;N)}^{0} \big[ 0 \leftrightarrow \pd \sq(n) \big] \leq 
    2 \exp (\eta \delta_0 n) \, \phi_{\overline \calD}^0 \big[ 0 \leftrightarrow \pd \sq(n) \big]
    + \phi_{\overline \calD}^0 \big[ 0 \xleftrightarrow{} \pd \sq(\delta_0 n -1) \big]. 
  \end{align*}
  The above is true for all $M$, and we may take $M \to \infty$. Using~\eqref{eq:calD2}, we find
  \begin{align*}
    \phi_{\HR(N;N)}^{0} \big[ 0 \leftrightarrow \pd \sq(n) \big] \leq 
    2C \exp \big[ - (c - \eta \delta_0) n \big] + C \exp \big[ -c (\delta_0 n-1) \big].
  \end{align*}
  Set $\delta_0 = \frac{c}{c+\eta}$ so that $c' := c - \delta_0 \eta =c\delta_0> 0$.
  Then, we deduce that
  \begin{align*}
    \phi_{\HR(N;N)}^{0} \big[ 0 \leftrightarrow \pd \sq(n) \big] \leq 
    3C\, e^c\, \exp ( -c' n).
  \end{align*}
  Since $c$ and $\eta$ only depend on $\eps$, $q$ and the size of the fundamental domain of $\bbG$, we obtain the desired result. 
\end{proof}

\subsection{Conclusion} \label{sec:conclusions_q>4}

Below, we show how the previous two parts imply Theorem~\ref{thm:main2} and Corollary~\ref{cor:phase_transition} for $q > 4$. 
Fix a doubly-periodic isoradial graph $\bbG$ and one of its grids. 

\begin{proof}[Proof of Theorem~\ref{thm:main2}]
  Due to~\eqref{eq:exp_decay8} and Proposition~\ref{prop:exp_decay_half_full}, $\bbG$ satisfies~\eqref{eq:exp_decay2}. 
  Since $\bbG$ satisfies the bounded angles property for some $\eps > 0$ and due to its periodicity, 
  there exists a constant $\alpha > 0$ such that $\sq(n) \subseteq B_{\alpha n}$ for all $n \geq 1$.
  Then,~\eqref{eq:exp_decay2} implies 
  \begin{align}\label{eq:exp_decay}
    \phi_{\bbG}^0 [ 0 \leftrightarrow \partial B_n ]
    \leq \phi_{\bbG}^0 [ 0 \leftrightarrow \partial \sq( \tfrac{n}{\alpha} ) ]
    \leq C\exp \big(- \tfrac{c}{\alpha} n \big). 
  \end{align}
  This implies the second point of Theorem~\ref{thm:main2} with an adjusted value for $c$.


  Let us now consider the model with wired boundary conditions. 
  Recall that if $\omega$ is sampled according to $\phi_\bbG^1$, then its dual configuration follows $\phi_{\bbG^*}^0$.
  Since $\bbG^*$ is also a doubly-periodic isoradial graph, \eqref{eq:exp_decay} applies to it. 

  For a dual vertex $y \in \bbG^*$, let $C(y)$ be the event that there exists a dually-open circuit going through $y$ and surrounding the origin.
  The existence of such a circuit implies that the dual-cluster of $y$ has radius at least $|y|$. 
  Thus,
  \begin{align*}
    \phi_{\bbG}^1 \big[ C(y) \big]
    \leq \phi_{\bbG^*}^0 \big[ y \leftrightarrow y + \pd B_{|y|} \big]
    \leq C \exp (-c |y|),
  \end{align*}
  for some $c,C > 0$ not depending on $y$ \footnote{We implicitly used here that \eqref{eq:exp_decay} applies to $\phi_{\bbG^*}^0$ and to any translate of it. This is true due to the periodicity of $\bbG^*$. A multiplicative constant depending on the size of the fundamental domain is incorporated in $C$.}.
  Since the number of vertices in $\bbG^* \cap B_n$ is bounded by a constant times $n^2$, the Borel-Cantelli lemma applies and we obtain
  \begin{align*}
    \phi_{\bbG}^1 \big[ C(y) \text{ for infinitely many } y \in \bbG^* \big] = 0.
  \end{align*}
  The finite-energy property of $\phi_\bbG^1$ then implies $\phi_{\bbG}^1 \big[ 0 \leftrightarrow \infty \big] > 0$.
\end{proof}

\begin{proof}[Proof of Corollary~\ref{cor:phase_transition} for $q >4$]
  It is a well known fact that $\phi^0_{\bbG, \beta,q} = \phi^1_{\bbG, \beta,q}$ for all but countably many values of $\beta$ 
  (see for instance \cite[Thm 1.12]{HDC_pims} for a recent short proof that can be adapted readily to isoradial graphs).
  Thus, by the monotonicity of the measures $\phi^0_{\bbG,\beta,q}$, for any $\beta <1$, $\phi^0_{\bbG, 1,q}$ dominates $\phi^1_{\bbG, \beta,q}$. Theorem~\ref{thm:main2} then implies $\phi^1_{\bbG, \beta,q}(0\leftrightarrow \infty) = 0$ and \cite[Thm. 5.33]{Gri06} yields $\phi^1_{\bbG, \beta,q} = \phi^0_{\bbG, \beta,q}$. This proves the uniqueness of the infinite volume measure for all $\beta < 1$. The first point of the corollary follows directly from Theorem~\ref{thm:main2} by the monotonicity mentioned above. 
  
  Since measures with $\beta > 1$ are dual to those with $\beta<1$, the uniqueness of the infinite volume measure also applies when $\beta > 1$. 
  The second point of the corollary follows from Theorem~\ref{thm:main2} by monotonicity. 
  %
  %
  %
\end{proof}

\section{Proofs for the quantum random-cluster model} \label{sec:quantum}

\subsection{Discretisation}

Fix $\eps > 0$ and consider the isoradial square lattice $\bbG^\eps := \bbG_{\ba, \bb}$ 
where $\alpha_n = 0$ for all $n$ and $\beta_n = \eps$ if $n$ is even and $\beta_n = \pi - \eps$ if $n$ is odd.
See Figure~\ref{fig:flattened_lattice} for an illustration.

Recall the notation $x_{i,j}$ with $i+j$ even for the primal vertices of $\bbG^\eps$ (while $x_{i,j}$ with $i+j$ odd are the dual vertices). 
Also, recall the notation $\rect(i, j; k, \ell)$ for the domains of $\bbG^\eps$ contained between the vertical tracks $s_i$ and $s_j$ and the horizontal tracks $t_k$ and $t_\ell$. 

\begin{figure}[htb]
  \centering
  \includegraphics[scale=0.8, page=1]{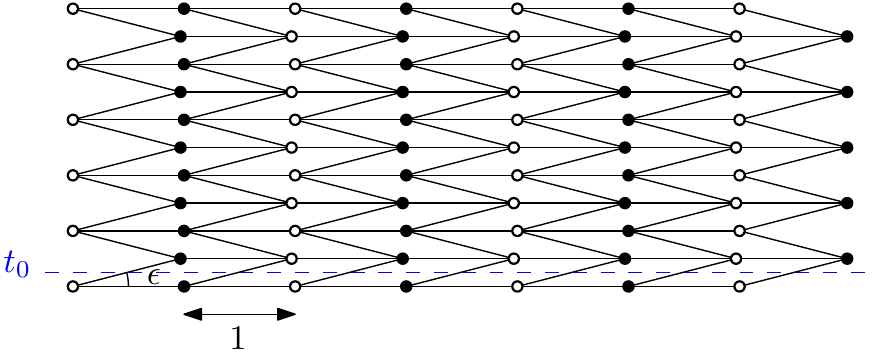}
  \includegraphics[scale=0.8, page=2]{images/flattened_lattice.pdf}
  \caption{A piece of the deformed square lattice $\bbG^\eps = \bbG_{\ba, \bb}$. \textbf{Left:} The diamond graph. \textbf{Right:} The primal lattice with dual vertices.}
  \label{fig:flattened_lattice}
\end{figure}

Observe that $\bbG^\eps$ contains two types of edges: those of length $2\sin(\frac\eps2)$ and those of length $2\cos(\frac\eps2)$. 
As we will take $\eps$ to $0$, we call the first \emph{short edges} and the latter \emph{long edges}. 
The parameters used to define $\phi_{\bbG^\eps}$ are given by~\eqref{eq:parameters}:
for short edges they are obtained by taking $\theta_e = \eps$, call the resulting value $p_\eps$; 
for long edges $\theta_e =\pi - \eps$, call the resulting value $p_{\pi-\eps}$.

When $\eps \rightarrow 0$, we have the following asymptotics,
$$
\def\arraystretch{2}
\begin{array}{>{\displaystyle}r>{\displaystyle}l>{\displaystyle}l}
  \text{if } 1 \leq q < 4, \quad
  &  \displaystyle
  1 - p_\eps \sim \frac{2r\eps}{\sqrt{q(4-q)}},
  & \quad p_{\pi-\eps} \sim \frac{2r \eps \sqrt q}{\sqrt{4-q}}; \\
  \text{if } q = 4, \quad 
  & 1- p_\eps \sim \frac{\eps}{2\pi},
  & \quad p_{\pi-\eps} \sim \frac{2}{\pi} \eps; \\
  \text{if } q > 4, \quad 
  & 1- p_\eps \sim \frac{2r\eps}{\sqrt{q(q-4)}},
  & \quad p_{\pi-\eps} \sim \frac{2r \eps \sqrt q}{\sqrt{q-4}}. \\
\end{array}
$$

Moreover, the length of a long edge converges to $2$, while that of short edges decreases as $\eps + o(\eps)$. 
Thus, in the limit $\eps \rightarrow 0$, the measure converges to the quantum FK model on a dilated lattice $2\bbZ \times \bbR$ with parameters 
$$
\def\arraystretch{2}
\begin{array}{>{\displaystyle}r>{\displaystyle}l>{\displaystyle}l}
  \text{if } 1 \leq q < 4, \quad
  &  \displaystyle
  \lambda_0 = \frac{2r}{\sqrt{q(4-q)}},
  & \quad \mu_0 = \frac{2r \sqrt q}{\sqrt{4-q}}; \\
  \text{if } q = 4, \quad 
  & \lambda_0 = \frac{1}{2\pi},
  & \quad \mu_0 = \frac{2}{\pi}; \\
  \text{if } q > 4, \quad 
  & \lambda_0 = \frac{2r}{\sqrt{q(q-4)}},
  & \quad \mu_0 = \frac{2r \sqrt q}{\sqrt{q-4}}. \\
\end{array}
$$
Note that $\lambda_0$ and $\mu_0$ are continuous in $q$: when $q$ goes to 4 either from above or from below, the common limits of $\lambda_0$ and $\mu_0$ are exactly the values given by $q = 4$.
A precise statement is given below in Proposition~\ref{prop:Qhv}.

For the rest of the section, unless otherwise stated, we consider the quantum random-cluster model $\phi_{\calQ}^\xi$ on $\bbZ \times \bbR$ with parameters $\lambda = 2 \lambda_0$ and $\mu = 2 \mu_0$
and boundary conditions $\xi = 0, 1$. This is simply the limiting model discussed above rescaled by a factor $1/2$. 
The infinite-volume measures with free and wired boundary conditions can be defined via weak limits as in the classical case.
The quantum model with these parameters enjoys a self-duality property similar to that of the discrete model on $\bbG^\eps$ with $\beta =1$. 

To distinguish the subgraphs of $\bbG^\eps$ from those of $\bbZ \times \bbR$, 
we shall always put a superscript $\eps$ for those of $\bbG^\eps$
and those of $\bbZ \times \bbR$ are always written in calligraphic letters.

For any subgraph $\calR$ of $\bbZ \times \bbR$, we write $\phi^\xi_{\calQ, \calR}$ for the quantum \rcm on $\calR$ with boundary conditions $\xi = 0, 1$.

\medbreak

For $1 \leq q \leq 4$, we will consider the counterparts of the horizontal and vertical crossing events given in Lemma~\ref{lem:equiv_RSW}. They are related to their discrete versions via the following convergence. 

\begin{prop} \label{prop:Qhv}
  For $a, b, c, d > 0$, let $\rect^\eps = \rect(2c, \tfrac{2d}{\eps})$ be a subgraph of $\bbG^\eps$ and $\calR = [-c, c] \times [-d, d]$ be a subgraph of $\bbZ \times \bbR$.
  Consider $\xi = 0, 1$, then
  \begin{align}
    & \phi^\xi_{\rect^\eps} \big[ \calC_h(2a; \tfrac{2b}{\eps}) \big] \xrightarrow[\eps \to 0]{}
    \phi^\xi_{\calQ, \calR} \big[ \calC_h(a; b) \big], \label{eq:Qh} \\
    & \phi^\xi_{\rect^\eps} \big[ \calC_v(2a; \tfrac{2b}{\eps}) \big] \xrightarrow[\eps \to 0]{}
    \phi^\xi_{\calQ, \calR} \big[ \calC_v(a; b) \big]. \label{eq:Qv}
  \end{align}
\end{prop}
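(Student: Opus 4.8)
The plan is to prove Proposition~\ref{prop:Qhv} by exhibiting, for each fixed $\eps>0$, an explicit coupling between the discrete random-cluster configuration on $\rect^\eps \subset \bbG^\eps$ and a configuration on the continuum rectangle $\calR \subset \bbZ\times\bbR$, and then showing that this coupling converges as $\eps\to 0$. First I would set up the coupling. A column of short edges of $\bbG^\eps$ sitting above a fixed primal vertex $x_{i,\cdot}$ should be thought of as a discretisation of a vertical line $\{i\}\times\bbR$ (after rescaling heights by $\eps/2$ and horizontal coordinates by $1/2$): each short edge being \emph{closed} corresponds to a \emph{cut}, and since $1-p_\eps \sim \lambda_0\eps$ (reading off the asymptotics stated just before the proposition, with $\lambda_0$ as defined there), the closed short edges in a column of length $\tfrac{2b}{\eps}$ converge in law to a Poisson point process of intensity $\lambda_0$ on $[-b,b]$ — i.e. intensity $2\lambda_0=\lambda$ after the final rescaling by $1/2$, matching the model $\phi_\calQ$. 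Likewise, a long edge between two adjacent columns being \emph{open} corresponds to a \emph{bridge}, and since $p_{\pi-\eps}\sim\mu_0\eps$, the open long edges between two columns converge to a Poisson process of intensity $\mu_0$ (resp. $\mu$ after rescaling). The random-cluster weight $q^{k(\omega)}$ is preserved under this identification because connected components of the discrete configuration converge to connected components of the continuum configuration, and the number of clusters is (a.s.) continuous along the limit for configurations in a bounded region.

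The second step is to make the weak convergence of measures rigorous. I would argue that $\phi^\xi_{\rect^\eps}$, pushed forward under the map ``(closed short edges, open long edges) $\mapsto$ (cuts, bridges)'' and rescaled, converges weakly to $\phi^\xi_{\calQ,\calR}$. The cleanest route is via the partition functions: write $Z^\xi(\rect^\eps)$ as a sum over $\omega$ of $q^{k_\xi(\omega)}\prod_e y_e^{\omega_e}$, factor out $\prod_{\text{short }e}(1-p_\eps)^{-1}\cdots$ appropriately so the dominant contributions are parametrised by finite sets of cuts and bridges, and show term-by-term (dominated convergence, using that in a bounded box only finitely many cuts/bridges appear with overwhelming probability) that $Z^\xi(\rect^\eps)$, suitably normalised, converges to the continuum partition function $Z^\xi_\calQ(\calR)$, which is finite since the number of Poisson points in $\calR$ has exponential moments. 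The same computation applied to $Z^\xi$ restricted to configurations realising the crossing event gives the convergence of the numerator. One has to check that the crossing events $\calC_h(2a;\tfrac{2b}{\eps})$ and $\calC_v(2a;\tfrac{2b}{\eps})$ are ``continuity events'' for the limit, i.e. the continuum crossing event $\calC_h(a;b)$ (resp. $\calC_v(a;b)$) has boundary of measure zero under $\phi^\xi_{\calQ,\calR}$: this follows because the continuum model has no ``exactly tangent'' configurations almost surely (the Poisson processes are non-atomic and the rectangle sides are fixed), so a discrete crossing exists for small $\eps$ iff a continuum crossing exists, up to a null event. I would handle the two boundary conditions $\xi=0,1$ uniformly, noting that wired boundary conditions merely amount to identifying all boundary clusters, which again passes to the limit.

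The third, more technical, point is controlling the geometry of paths under the rescaling: a discrete horizontal crossing of $\rect(2a;\tfrac{2b}{\eps})$ is a path in $\bbG^\eps$ that alternately traverses long edges (essentially horizontal, of limiting length $2$, contributing $1$ to the rescaled horizontal coordinate) and short edges (essentially vertical, contributing $\eps/2 \to 0$ to the rescaled vertical coordinate per edge but appearing in blocks), and one must verify that such a path rescales to a genuine horizontal crossing of $[-a,a]\times[-b,b]$ in the quantum sense — namely a connected subset of $\omega\cap([-a,a]\times[-b,b])$ joining the left and right sides. This is a bookkeeping argument: the vertical extent of any monotone run of short edges in the crossing is controlled because a long edge is available at every integer height up to $O(\eps)$, so the path cannot drift vertically by more than $o(1)$ without using a bridge, and bridges are sparse. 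I expect this path-geometry verification, together with checking that cluster counts converge (which requires ruling out, with probability $\to 1$, ``near-coincidences'' of bridge endpoints and cut locations that could merge or split clusters in the limit), to be the main obstacle; everything else is a routine application of Poisson approximation and weak convergence. Since a closely analogous discretisation is already invoked implicitly in the paper's passage between isoradial and quantum models (Section~\ref{sec:quantum}), I would also point out that the convergence of $\phi_{\bbG^\eps}$ to $\phi_\calQ$ as measures on configurations in bounded regions, established along the way, is exactly what is needed here, so \eqref{eq:Qh} and \eqref{eq:Qv} follow by combining it with the continuity of the crossing events.
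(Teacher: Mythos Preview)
Your proposal is correct and follows essentially the same approach as the paper: both identify closed short edges with cuts and open long edges with bridges, and use the Poisson approximation of the underlying Bernoulli variables (your ``first step'' is exactly the paper's entire argument). Your version is considerably more detailed than the paper's sketch --- the paper simply computes $\bbP[S=k]\to e^{-L\lambda_0}(L\lambda_0)^k/k!$ and declares weak convergence, whereas you spell out partition-function convergence, continuity of the crossing events, and the path-geometry bookkeeping --- but the core idea is identical.
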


For $q \geq 4$, the event to consider is that given by~\eqref{eq:exp_decay3}, which is related to the discrete models as follows.

\begin{prop} \label{prop:Qboundary}
  For any $N, n > 0$, let $\rect^\eps = \rect(2N, \tfrac{2N}{\eps})$ be a subgraph of $\bbG^\eps$ and $\calR = \Lambda(N) = [-N, N]^2$ be a subgraph of $\bbZ \times \bbR$.
  Then,
  \begin{align} \label{eq:Qboundary}
    \phi^\xi_{\rect^\eps} \big[ 0 \leftrightarrow \pd \rect(2n; 2\tfrac{n}{\eps}) \big]
    \xrightarrow[\eps \to 0]{}
    \phi^\xi_{\calQ, \calR} \big[ 0 \leftrightarrow \partial \Lambda(n) \big]
  \end{align}
\end{prop}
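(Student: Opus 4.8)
The plan is to prove \eqref{eq:Qboundary} by exhibiting an explicit coupling of the discrete model $\phi^\xi_{\rect^\eps}$ on $\bbG^\eps$ with the quantum model $\phi^\xi_{\calQ,\calR}$, under which the event $\{0\leftrightarrow\pd\rect(2n;2n/\eps)\}$ on the left matches $\{0\leftrightarrow\pd\Lambda(n)\}$ on the right up to a vanishing error. The starting point is the classical fact (already invoked informally in the paragraph introducing $\lambda_0,\mu_0$) that, under the rescaling which sends a column $\{x_{2i,\cdot}\}$ of $\bbG^\eps$ to the line $\{i\}\times\bbR$ and compresses the vertical coordinate by a factor $\eps$, the family of closed short edges on column $i$ converges to a Poisson point process of cuts on $\{i\}\times\bbR$ and the family of open long edges between columns $i$ and $i+1$ converges to a Poisson point process of bridges on $(i+\tfrac12)\times\bbR$; this is because, by the asymptotics displayed just before Proposition~\ref{prop:Qhv}, $1-p_\eps$ and $p_{\pi-\eps}$ are both $\sim(\text{const})\,\eps$, and a sequence of $\Theta(1/\eps)$ independent Bernoulli$(\Theta(\eps))$ trials along a segment of fixed length converges to a Poisson process with the right intensity. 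The cluster-weight $q^{k(\omega)}$ is handled exactly as in the classical-to-quantum passage of \cite[Sec.~9.3]{Gri10}: the number of clusters is preserved under the limit because clusters in $\omega\cap\rect^\eps$ converge (in the Hausdorff sense on compacts) to clusters in $\omega\cap\calR$, so the Radon--Nikodym factors $q^{k}$ converge and the partition functions converge accordingly.

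Concretely, the steps I would carry out are: (i) Fix $N,n$ and set up the scaling map $\Phi_\eps:\bbR^2\to\bbR^2$, $\Phi_\eps(x,y)=(x/2,\eps y/2)$, so that $\Phi_\eps(\rect^\eps)\to\calR$ and $\Phi_\eps(\rect(2n;2n/\eps))\to\Lambda(n)$. (ii) Show convergence of the \emph{free} (i.e.\ the $\xi=0$) measures: write the discrete measure as a perturbation of the product Bernoulli measure, check that $\bbE_{\rect^\eps}[\,\cdot\,]$ with weights $q^{k_0}$ converges to $\bbE_{\calQ,\calR}[\,\cdot\,]$ with weights $q^{k}$ by dominated convergence on the (locally finite) configuration space, using tightness of the point processes and the fact that $k$ is a.s.\ continuous under $\phi_{\calQ,\calR}$ (no cut or bridge lies exactly at a prescribed height, a.s.). (iii) For the \emph{wired} case, observe that the boundary modification of $k$ (counting all clusters touching $\partial\rect^\eps$ as one) passes to the limit in the same way since $\partial\rect^\eps$ converges to $\partial\calR$. (iv) Finally, transfer the convergence of measures to the convergence of the connection probability: $\{0\leftrightarrow\pd\rect(2n;2n/\eps)\}$ is neither open nor closed for the local topology, but its boundary has zero probability under $\phi_{\calQ,\calR}$ (the event $\{0\leftrightarrow\partial\Lambda(n)\}$ and its closure differ only on configurations with a connection touching $\partial\Lambda(n)$ tangentially, which has probability $0$), so weak convergence of the measures plus this no-atom statement gives \eqref{eq:Qboundary} by the portmanteau theorem.

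The main obstacle is step (iv) together with the continuity of the connection event under the weak limit: one must argue carefully that a discrete path in $\rect^\eps$ crossing from $0$ to $\pd\rect(2n;2n/\eps)$ has, in the limit, a genuine connection in $\calR$ reaching $\partial\Lambda(n)$, and conversely that a quantum connection can be approximated by discrete ones — i.e.\ that no probability mass ``escapes to the boundary'' or is ``created at the boundary'' in the limit. This is where the precise asymptotics $1-p_\eps\sim c\eps$, $p_{\pi-\eps}\sim c'\eps$ and the absence of atoms of the Poisson processes are used; it is exactly the type of argument made for crossing events in Proposition~\ref{prop:Qhv}, so in practice I would state Proposition~\ref{prop:Qboundary} as a corollary of the same coupling and the same no-atom observation, referring to the proof of Proposition~\ref{prop:Qhv} for the common core and only pointing out that here the relevant event $\{0\leftrightarrow\partial\Lambda(n)\}$ is decreasing in the cuts and increasing in the bridges, hence its indicator is continuous $\phi_{\calQ,\calR}$-a.s., which is all that is needed.
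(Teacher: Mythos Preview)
Your proposal is correct and follows essentially the same route as the paper: the core mechanism is the Bernoulli-to-Poisson limit for closed short edges and open long edges along each column, together with the observation that the cluster-weighting $q^{k(\omega)}$ passes to the limit. The paper's actual proof is in fact a much terser sketch (it computes the Poisson limit of the number of closed edges on a segment and then simply asserts weak convergence of the measures), so your steps (ii)--(iv), and in particular the portmanteau/continuity argument for the connection event, spell out details that the paper leaves implicit.
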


\begin{proof}[Proof of Propositions~\ref{prop:Qhv} and~\ref{prop:Qboundary}]
  As we described above, short edges in $\bbG^\eps$ are of length $2 \sin( \tfrac\eps2 )$, each of whom is closed with probability $\lambda_0 \eps$, where $\lambda_0 = \frac{2r}{\sqrt{q(4-q)}}$.
  Given $L > 0$, consider a collection of $N = \frac{L}{\eps}$ such consecutive edges.
  Consider $(X_i)_{1 \leq i \leq N}$ a sequence of i.i.d. Bernoulli random variables of parameter $\lambda_0 \eps$: $X_i = 1$ if the $i$-th edge is closed and $X_i = 0$ otherwise.
  Denote $S = \sum_{i=1}^N X_i$, which counts the number of closed edges in this collection of edges.

  Then, for any fixed $k \geq 0$ and $\eps \rightarrow 0$, we have that
  \begin{align*}
    \bbP [ S = k ] & = {N \choose k} (\lambda_0 \eps)^k (1- \lambda_0 \eps)^{N-k} \\
    & \sim \frac{N^k}{k!} (\lambda_0 \eps)^k e^{-N \lambda_0 \eps} \\
    & = e^{-L \lambda_0} \frac{(L \lambda_0)^k}{k!},
  \end{align*}
  where the quantity in the last line is the probability that a Poisson variable of parameter $L \lambda_0$ takes the value $k$.
  As a consequence, when $\eps \rightarrow 0$, the $N$ vertical short edges will converge to a vertical segment of length $L$, among which closed edges will give us cut points that can be described by a Poisson point process with parameter $\lambda_0$.

  The same reasoning applies to the long edges too.
  This shows that the measures $\phi^\xi_{R^\eps}$ converge weakly to $\phi^\xi_{\calQ, \calR}$, up to a scaling factor of $1/2$.
\end{proof}

Let us briefly discuss the strategy for proving Theorem~\ref{thm:quantum}, we restrict ourselves to the case $q \leq 4$ for illustration. 
In order to prove the RSW property for $\phi_\calQ$, 
one needs to bound uniformly the left hand sides of~\eqref{eq:Qh} and~\eqref{eq:Qv}
for $a = n$ and $b = \rho n$ for any fixed quantity $\rho$.
By duality, we may focus only on lower bounds.

Notice that for any fixed $\eps > 0$, the RSW property obtained in Theorem~\ref{thm:main} provides us with bounds for
$\phi^\xi_{R^\eps} \big[ \calC_h (2n; 2\rho \tfrac{n}{\eps} ) \big]$ and 
$\phi^\xi_{R^\eps} \big[ \calC_v (2n; \tfrac{2n}{\eps} ) \big]$ which are uniform in $n$. 
However, these are not necessarily uniform in $\eps$. 
Indeed, all estimates of Section~\ref{sec:q<4} crucially depend on angles being bounded uniformly away from $0$.

Removing this restriction in general is an interesting but difficult problem. 
However, in the simple case of the lattices $\bbG^\eps$, this is possible, and is done below. 

\subsection{The case $1 \leq q \leq 4$} \label{sec:details_hv_transport_quantum}


To show Theorem~\ref{thm:quantum} for $1 \leq q \leq 4$, it is enough to show the RSW property for the quantum model, the rest of the proof follows as in Section~\ref{sec:conclusion_q<=4}.
To this end, we proceed in the similar way as for isoradial graphs.
More precisely, the following proposition provides us with uniform bounds in $\eps \in (0, \pi)$ for crossing probabilities in $\bbG^\eps$.
Then Proposition~\ref{prop:Qhv} transfers these results to the quantum model and the same argument as in Lemma~\ref{lem:RSW_sq_cond} yields the RSW property for the quantum model.


\begin{prop} \label{prop:RSW_sq_cond_quantum}
  There exist $\delta > 0$, constants $a \geq 3$ and $b > 3a$  and $n_0$ such that, 
  for all $\eps \in (0,\pi)$ and $n\geq n_0$,  
  there exist boundary conditions $\xi$ on the region $\rect^\eps = \rect(bn; \tfrac{bn}{\eps})$ of $\bbG^{\eps}$ such that
  \begin{align}\nonumber
    \phi_{\rect^\eps}^\xi \big[ \calC_h(3an, bn; \tfrac{bn}{\eps}) \big] \geq 1- \delta/2 \quad 
    & \text{and} \quad \phi_{\rect^\eps}^\xi \big[ \calC_h^*( 3an , bn ; \tfrac{bn}{\eps}) \big] \geq 1- \delta/2, \nonumber \\
    \phi_{\rect^\eps}^\xi \big[ \calC_v(an; \tfrac{2n}{\eps}) \big] \geq \delta \quad
    & \text{and} \quad \phi_{\rect^\eps}^\xi \big[ \calC_v^*(an; \tfrac{2n}{\eps} ) \big] \geq \delta, \nonumber \\
    \phi_{\rect^\eps}^\xi \big[ \calC_h(an, 3an; \tfrac{n}{\eps} ) \big] \geq \delta \quad 
    & \text{and} \quad \phi_{\rect^\eps}^\xi \big[ \calC_h^*(an, 3an; \tfrac{n}{\eps} ) \big] \geq \delta.
    \label{eq:RSW_sq_cond_quantum}
  \end{align}
\end{prop}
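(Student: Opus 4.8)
\textbf{Proof plan for Proposition~\ref{prop:RSW_sq_cond_quantum}.}
The strategy is to re-run the proof of Proposition~\ref{prop:transfer_RSW}, but keeping careful track of the dependence on the bounded-angles parameter, starting from the regular square lattice $\bbZ^2$ (which has the RSW property by \cite{DumSidTas13}, with constants that are absolute once $q\in[1,4]$ is fixed) and transforming it into $\bbG^\eps$. The key point is that in the two transport estimates, Propositions~\ref{prop:horizontal_transport} and~\ref{prop:vertical_transport}, the loss in the constant is governed by the two quantities $\eta(\eps)$ and $\delta(\eps)$ appearing in \eqref{eq:eta_sandpile} and \eqref{eq:est_delta}. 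For a generic isoradial square lattice with angles in $[\eps,\pi-\eps]$ one only knows $\eta(\eps)<1$ and $\delta(\eps)>0$, and the resulting $\lambda(\eps)$ blows up as $\eps\to0$. For the special family $\bbG^\eps$, however, the angles take only the two values $\eps$ and $\pi-\eps$, so in every $\stt$ encountered the relevant transverse angles $A,B$ lie in $\{\eps,\pi-\eps\}$ (and their combinations), and one can compute the asymptotics of $\eta$ and $\delta$ explicitly as $\eps\to0$.

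First I would make precise which graph-transformation sequence turns $\bbZ^2$ into $\bbG^\eps$: since $\bbG^\eps$ has vertical tracks with angle $0$ and horizontal tracks alternating between $\eps$ and $\pi-\eps$, it suffices to interleave such tracks into the constant-$\tfrac\pi2$ family of $\bbZ^2$, exactly as in the mixed-graph construction of Section~\ref{sec:mix}, and then rescale. Concretely one builds $G_\mix$ from a block of $\bbZ^2$ and a block of $\bbG^\eps$ (of width $2M+1$ and height $N$, with $N$ of order $n/\eps$), exchanges the blocks by $\Sigma^\uparrow$ and $\Sigma^\downarrow$, and applies Propositions~\ref{prop:horizontal_transport} and~\ref{prop:vertical_transport}; these transport the crossing bounds of \eqref{eq:RSW_sq_cond2} for $\bbZ^2$ to bounds of the form \eqref{eq:RSW_sq_cond} for $\bbG^\eps$, with the aspect ratio inflated by a factor $\lambda(\eps)$ and the vertical-drift constant shrunk by $\delta(\eps)$. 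To close the argument with $a,b$ and $\delta$ \emph{independent of $\eps$}, I need that $\lambda(\eps)$ stays bounded and $\delta(\eps)$ stays bounded away from $0$ as $\eps\to0$ — which is false unless one scales the vertical direction by $1/\eps$, i.e. measures heights in units of $1/\eps$. This is exactly why the statement has $\tfrac{bn}{\eps}$, $\tfrac{2n}{\eps}$, $\tfrac{n}{\eps}$ in the vertical slots: after this change of vertical scale, a drift of $\delta(\eps)\cdot(\text{height in short-edge units})$ becomes a drift of order $\delta(\eps)/\eps$ in rescaled units, and one checks $\delta(\eps)/\eps$ and $(1-\eta(\eps))/\eps$ converge to positive constants.

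The technical heart, then, is the asymptotic estimate announced in the Remark after the proof of Proposition~\ref{prop:horizontal_transport}: that $1-\eta(\eps)\sim \tau(q)\,\eps$ and $\delta(\eps)\sim\tau'(q)\,\eps$ as $\eps\to0$, for positive constants $\tau(q),\tau'(q)$ depending only on $q\in[1,4]$. For $\eta$ this is a direct Taylor expansion of
\[
\eta_{A,B}=\frac{\sin(rA)\sin(r(B-A))}{\sin(r(\pi-A))\sin(r(\pi-(B-A)))}
\]
at the extreme allowed configurations, using $r=\tfrac1\pi\cos^{-1}(\tfrac{\sqrt q}2)\le\tfrac13$; the supremum in \eqref{eq:eta_sandpile} is attained near $A$ or $B$ close to $0$ or $\pi$, and there $1-\eta_{A,B}$ is linear in the small angle with a coefficient that is an explicit trigonometric expression in $r$, hence in $q$. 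For $\delta$ one expands $\tfrac{y_{\pi-\eps}\,p_{\pi-\eps}\,(1-p_\eps)}{q}$ using the asymptotics $1-p_\eps\sim\lambda_0\eps$ and $p_{\pi-\eps}\sim\mu_0\eps$ recorded in Section~\ref{sec:quantum}, giving $\delta(\eps)\asymp \eps$. Plugging these into Lemma~\cite[Lem.~3.11]{GriMan13} (which controls the maximal height of the sandpile $H^k$ in terms of $1-\eta$) shows that the constant $\lambda$ in Proposition~\ref{prop:horizontal_transport}, when heights are measured in units of $1/\eps$, can be taken uniform in $\eps$; likewise the law of large numbers argument in Proposition~\ref{prop:vertical_transport} gives a uniform $\delta$ in rescaled coordinates. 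Feeding the uniform transport estimates into the proof of Proposition~\ref{prop:transfer_RSW} then produces $a,b,\delta,n_0$ independent of $\eps$, as required, with the boundary conditions $\xi$ being those induced on $\rect^\eps$ by free boundary conditions on the relevant $\tilde G_\mix$ (the same for all six bounds).

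\textbf{Main obstacle.} The delicate step is verifying that the blow-up of $\lambda(\eps)$ and the decay of $\delta(\eps)$ are \emph{exactly} of order $1/\eps$ and $\eps$ respectively — no worse — so that a single rescaling of the vertical axis by $1/\eps$ simultaneously tames both, and moreover that the constants that survive depend only on $q$. This requires the explicit expansions above together with a uniform (in $\eps$ and in the number of $\stt$s) control of the sandpile domination, i.e. checking that the proof of \cite[Lem.~6.6--6.7]{GriMan14} goes through with $\eta$ replaced by $\eta(\eps)$ and the height rescaled, using only $\sup_\eps \eps/(1-\eta(\eps))<\infty$. Everything else (convexification, track exchanges, the exploration/FKG arguments behind Lemma~\ref{lem:RSW_sq_cond}) is insensitive to $\eps$ and carries over verbatim.
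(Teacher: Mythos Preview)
Your overall strategy is the paper's strategy: build the mixed graph from a block of $\bbZ^2$ and a block of $\bbG^\eps$, transport the crossing estimates of Lemma~\ref{lem:RSW_to_cond} for $\bbZ^2$ across via track exchanges, and check that after the vertical rescaling by $1/\eps$ the resulting constants are uniform. The paper indeed reduces Proposition~\ref{prop:RSW_sq_cond_quantum} to two $\eps$-uniform transport statements (Propositions~\ref{prop:Gepsilon_h} and~\ref{prop:Gepsilon_v}) and then follows Section~\ref{sec:path_transport} verbatim.

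However, your ``technical heart'' paragraph contains two genuine gaps, and the paper is explicit that the naive approach you describe fails in both places.

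\textbf{Horizontal transport.} You propose to plug $1-\eta(\eps)\sim\tau(q)\eps$ directly into \cite[Lem.~3.11]{GriMan13} and read off a bounded $\lambda$. The paper states plainly that this does not work: the combinatorial count of directed paths in the sandpile bound is of order $(c\lambda/\eps)^n$ (there are $\lambda n/\eps$ vertical steps and $n$ horizontal ones), and balancing this against the probability factor only gives $\lambda$ of order $\log(1/\eps)$, not bounded. The paper's fix is a coarse-graining: group the $1/\eps$ consecutive vertical edges into a single \emph{box}, call a box bad if it contains an edge with passage time $\ge2$, and observe that box paths have combinatorial factor $(c\lambda)^n$ while a box is bad with probability bounded away from $0$ and $1$ uniformly in $\eps$. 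This reduced entropy is what makes $\lambda$ uniform; simply knowing $1-\eta\asymp\eps$ is not enough.

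\textbf{Vertical transport.} Your plan is to use the Bernoulli process \eqref{eq:H_Delta}--\eqref{eq:distribution} with $\delta(\eps)\asymp\eps$ and the same law-of-large-numbers argument. This cannot give the right conclusion: with the Bernoulli process, $H^{n+1}\approx 2\delta(\eps)\,n$, which is of order $\eps n$, not $n/\eps$. (In fact the generic bound \eqref{eq:est_delta} is even worse, of order $\eps^3$, since each of $y_{\pi-\eps}$, $p_{\pi-\eps}$, $1-p_\eps$ is of order $\eps$.) The paper replaces the Bernoulli increments by \emph{geometric} ones: it shows that, in the specific geometry of exchanging a $\pi/2$ track through $\bbG^\eps$, the top of the path is dragged upward by one unit with probability $1-C\eps$ at each elementary exchange (equations \eqref{eq:hjk22}--\eqref{eq:hjk42}). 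Hence the net gain over one full $\Sigma^\uparrow_k$ is a geometric variable of mean $\asymp 1/\eps$, and after $n$ such macro-steps the height is of order $n/\eps$ as required. A second, equally essential, new ingredient is that the alternation of transverse angles $\eps,\pi-\eps$ makes consecutive horizontal drifts cancel, so the domains $D^k$ can be taken with \emph{vertical} sides of width $n+2k$; without this the width would be of order $n/\eps$ and the path could escape laterally before reaching the target height.

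In short: the scaling heuristic you describe is the right intuition, but turning it into a proof requires the box coarse-graining for the sandpile and the geometric-increment/alternating-drift argument for the vertical crossing; neither follows from simply inserting first-order asymptotics into the proofs of Propositions~\ref{prop:horizontal_transport} and~\ref{prop:vertical_transport}.
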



For the rest of the section, we focus on proving Proposition~\ref{prop:RSW_sq_cond_quantum};
the rest of the arguments used to obtain the RSW property for $\phi_{\calQ}$ are standard. 

By symmetry, we may focus on $\eps \leq \pi/2$; thus, difficulties only appear as $\eps \to 0$.
Fix $\eps > 0$. We shall follow the same ideas as in Section~\ref{sec:iso_square_RSW}.
Recall the construction of the mixed lattice $G_\mix$:
for $M, N_1, N_2 > 0$, consider the graph obtained by superimposing 
a horizontal strip of $\bbG^\eps$ of height $N_2+1$ and width $2M+1$ over 
a horizontal strip of regular square graph $\bbG_{0, \frac\pi2}$ of height $N_1+1$ and same width.
Let the lower vertices of this graph be on the line $\bbR \times \{0\}$, with $x_{0,0}$ at $0$. 
Convexify this graph. The graph thus obtained, together with its reflection with respect to $\bbR \times \{0\}$, form $G_\mix$. 

Write $\tilde G_\mix$ for the graph with the regular and irregular blocks reversed. 
See Sections~\ref{sec:mix} and~\ref{sec:iso_square_RSW} for details on this construction and the track exchanging procedure
that allows us to transform $G_\mix$ into $\tilde G_\mix$. 
Figure~\ref{fig:quantum_gluing} contains an illustration of $G_\mix$ and $\tilde G_\mix$.

\begin{figure}[htb]
  \centering
  \includegraphics{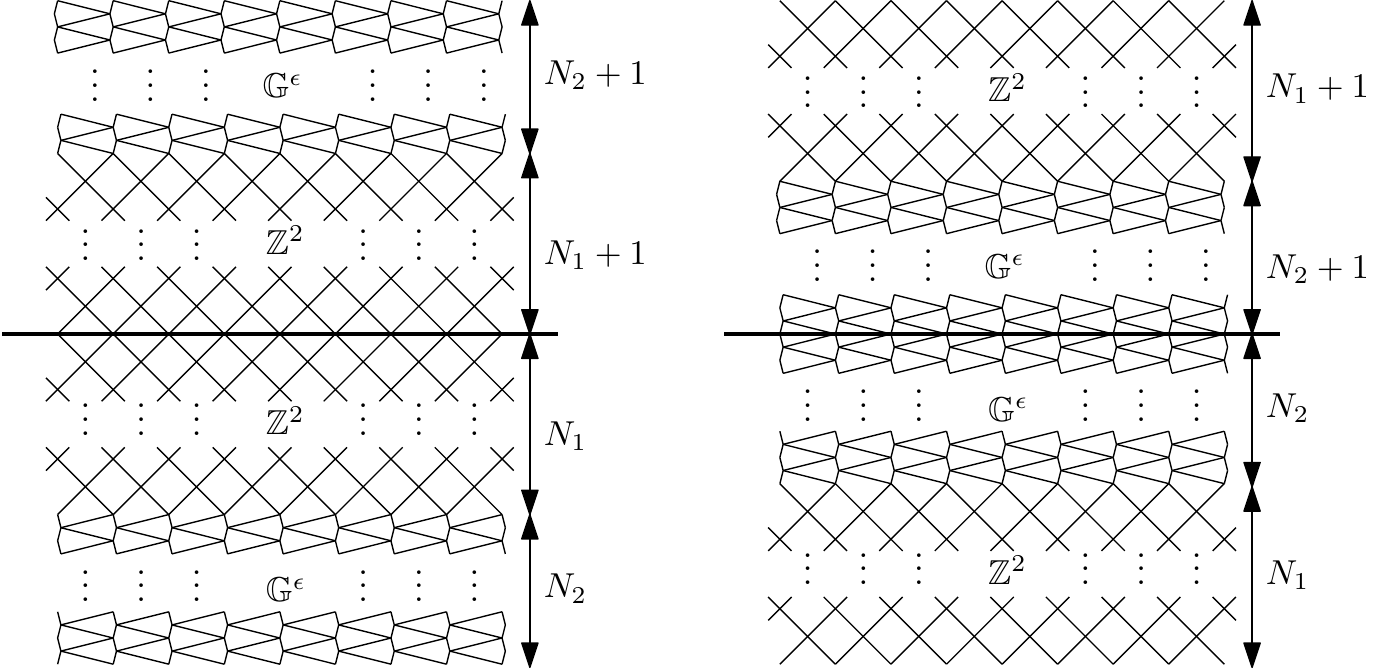}
  \caption{\textbf{Left: } The graph $G_\mix$. \textbf{Right: } The transformed graph $\tilde G_\mix$.}
  \label{fig:quantum_gluing}
\end{figure}


Write $\phi_{G_\mix}$ and $\phi_{\tilde G_\mix}$ for the random-cluster measures with $\beta=1$ on $G_\mix$ and $\tilde G_\mix$, respectively, with free boundary conditions. 
The following adaptations of Propositions~\ref{prop:horizontal_transport} and~\ref{prop:vertical_transport} imply Proposition~\ref{prop:RSW_sq_cond_quantum} as in Section~\ref{sec:path_transport}.

\begin{prop} \label{prop:Gepsilon_h}
  There exist $\lambda_0 := \lambda_0(q) >0$ and $n_0 \geq 1$ such that for all 
  $\lambda > \lambda_0$, $\eps \in (0,\pi/2]$, $\rho_{\rmout} > \rho_\rmin> 0$, $n \geq n_0$ 
  and sizes $M \geq (\rho_{\rmout} + \tfrac{\lambda}{\eps}) n$, $N_1 \geq n$ and $N_2 \geq \frac{\lambda}{\eps} n$, 
  \begin{align} \label{eq:Gepsilon_h}
    \phi_{\tilde G_\mix} \big[ 
    \calC_h(\rho_{\rmin} n, (\rho_{\rmout} + \tfrac{\lambda}{\eps}) n; \lambda \tfrac{n}{\eps}) \big] 
    \geq (1- \rho_{\rmout} e^{-n} ) 
    \phi_{G_\mix}\big[ \calC_h(\rho_{\rmin} n, \rho_{\rmout} n; n) \big].
  \end{align}
\end{prop}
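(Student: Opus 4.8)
\textbf{Proof strategy for Proposition~\ref{prop:Gepsilon_h}.}
The plan is to follow the proof of Proposition~\ref{prop:horizontal_transport} (i.e.\ Lemma~6.7 of~\cite{GriMan14}), but keeping careful track of how the parameter $\eps$ enters the estimates, so that the final drift constant can be chosen uniformly over $\eps \in (0,\pi/2]$. As before, we couple $\phi_{G_\mix}$ with its image under the sequence of \stts $\Sigma^\downarrow$ that pushes the tracks of the regular block below those of the irregular block $\bbG^\eps$, sampling the randomness of each \stt independently. A horizontal crossing of $\rect(\rho_\rmin n,\rho_\rmout n;n)$ in $G_\mix$ is transformed into a horizontal crossing of a wider and taller rectangle in $\tilde G_\mix$; since no \stt of $\Sigma^\downarrow$ affects the base, the transported path keeps its endpoints on the correct sides, and the only thing to control is its vertical (upward) drift. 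Exactly as in Section~\ref{sec:details_hv_transport}, the evolution of the transported path under the successive track exchanges is dominated by a sandpile process $(H^k)$ which grows laterally by one unit at each step and, in each column, grows vertically by one unit with probability at most $\eta$, where $\eta$ is the supremum over the relevant transverse angles of the secondary-outcome probabilities appearing in Figure~\ref{fig:path_transformations}.

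The key new point is the computation of $\eta$ for the lattice $\bbG^\eps$. Here the horizontal tracks have transverse angles alternating between $\eps$ and $\pi-\eps$, so the relevant angles $A,B$ in the formula
$$
\eta_{A,B} = \frac{y_{\pi-A}\,y_{\pi-(B-A)}}{q} = \frac{\sin(rA)\sin(r(B-A))}{\sin(r(\pi-A))\sin(r(\pi-(B-A)))}
$$
are no longer bounded away from $0$: one of them is $\eps$. When $A=\eps$ and $B-A=\pi-2\eps$ (or the symmetric configurations), $\eta_{A,B}\to$ a value strictly less than $1$ as $\eps\to 0$; more precisely a direct expansion using $r\le \tfrac13$ gives $1-\eta_{A,B}\ge \kappa(q)\eps + o(\eps)$ for some $\kappa(q)>0$, uniformly over the finitely many angle configurations that actually occur in $\bbG^\eps$. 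Thus $\eta = \eta(\eps) < 1$ for every $\eps>0$, but $\eta(\eps)\to 1$; this is the crucial quantitative input flagged in the Remark after the proof of Proposition~\ref{prop:horizontal_transport}, and it is why the drift constant $\lambda$ must be allowed to scale like $1/\eps$. Applying~\cite[Lem.~3.11]{GriMan13} to the sandpile process with parameter $\eta(\eps)$, the maximal height of $H^{(\lambda/\eps)n}$ exceeds $(\lambda/\eps)n$ with probability at most $\rho_\rmout e^{-n}$, provided $\lambda/\eps$ is larger than a constant times $1/(1-\eta(\eps))\asymp 1/\eps$, i.e.\ provided $\lambda > \lambda_0(q)$ for a constant $\lambda_0(q)$ independent of $\eps$. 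This yields the bound on the transported path staying inside $\rect(\rho_\rmin n,(\rho_\rmout+\tfrac\lambda\eps)n;\tfrac\lambda\eps n)$, and hence~\eqref{eq:Gepsilon_h}.

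The reduction of Proposition~\ref{prop:Gepsilon_h} to the sandpile domination is, step by step, identical to Section~\ref{sec:details_hv_transport}: (i) set up the coupling $\bbP$ and reduce to the drift estimate~\eqref{eq:up_drift2}; (ii) decompose $\gamma$ into its segments of length one or two crossing pairs of adjacent tracks and invoke Figure~\ref{fig:path_transformations}; (iii) couple $(\gamma^{(k)})$ with $(H^k)$ as in~\cite[Lem.~6.6]{GriMan14}, using only independence of the \stts and the finite-energy estimates; (iv) conclude via the height bound on $H$. Only step (ii)--(iii), through the value of $\eta(\eps)$, carries the $\eps$-dependence, and the work above shows it is benign after rescaling $\lambda$ by $1/\eps$. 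I expect the main obstacle to be precisely the uniform lower bound $1-\eta(\eps)\gtrsim \eps$: one must verify that, among all the angle pairs $(A,B)$ that genuinely arise when exchanging tracks inside $G_\mix$ (which includes the angle $\tfrac\pi2$ of the regular block as well as $\eps$ and $\pi-\eps$), the worst case still satisfies this asymptotic, and that the implied constant depends only on $q$. This is a finite trigonometric check, but it must be done with care since it is the sole ingredient distinguishing the quantum limit from the generic isoradial setting.
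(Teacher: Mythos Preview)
Your strategy has the right overall shape, but there is a genuine gap in the key estimate: a direct application of~\cite[Lem.~3.11]{GriMan13} to the sandpile process with parameter $\eta(\eps)=1-\zeta(q)\eps$ does \emph{not} yield a threshold $\lambda_0$ independent of $\eps$. The paper addresses this explicitly: if one runs the first-passage percolation argument of that lemma with $K=\tfrac{\lambda}{\eps}n$ steps, the combinatorial factor counting admissible paths is of order $(c\lambda/\eps)^n$, and the extra $1/\eps$ in this factor forces $\lambda$ to grow like $\log(1/\eps)$ rather than remain bounded. Your heuristic ``$\lambda/\eps$ larger than a constant times $1/(1-\eta(\eps))\asymp 1/\eps$'' ignores this entropy contribution.

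The missing ingredient is a coarse-graining step. One groups the $\tfrac{1}{\eps}$ vertical edges in each column into \emph{boxes} of height $\tfrac{1}{\eps}$, and calls a box \emph{bad} if it contains at least one edge with passage time $\ge 2$. The probability that a given box is bad is $1-(1-\zeta\eps)^{1/\eps}\to 1-e^{-\zeta}$, which is bounded away from $0$ and $1$ uniformly in $\eps$. A path in the sandpile that reaches height $\tfrac{\lambda}{\eps}n$ in time $\tfrac{\lambda}{\eps}n$ must, at the box scale, traverse at least $(\lambda-3)n$ vertical boxes while encountering at most $n$ bad ones. The number of such \emph{box paths} is at most $\rho_\rmout(c\lambda)^n$ (no $1/\eps$), and a standard large-deviation bound on Bernoulli sums then gives the desired $\rho_\rmout e^{-n}$ for $\lambda$ larger than a constant depending only on $q$.

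A secondary point: the track exchanges in $\Sigma^\downarrow$ are between a track of the \emph{regular} block (transverse angle $\tfrac{\pi}{2}$) and a track of $\bbG^\eps$ (transverse angle $\eps$ or $\pi-\eps$), so the relevant pair is $A=\tfrac{\pi}{2}$, $B\in\{\eps,\pi-\eps\}$, not $A=\eps$, $B-A=\pi-2\eps$ as you wrote. This does not change the asymptotic $1-\eta(\eps)\sim\zeta(q)\eps$, but you should compute with the correct angles.
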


The quantities $\lambda_0$ and $\lambda$ above have no relation to the intensity of the Poisson point process used in the definition of $\phi_\calQ$.

\begin{prop} \label{prop:Gepsilon_v}
  There exists $\eta > 0$ and a sequence $(c_n)_n \in (0,1]^\bbN$ with $c_n \to 1$ such that,
  for all $\eps \in (0,\pi/2]$, $n \geq1$ and sizes 
  $M\geq 3n$, $N_1 \geq N$ and $N_2 \geq \tfrac{n}{\eps}$, 
  \begin{align} \label{eq:Gepsilon_v}
    \phi_{\tilde G_\mix} \big[ \calC_v(3n; \eta \tfrac{n}{\eps}) \big] \geq 
    c_n \phi_{G_\mix} \big[ \calC_v(n; n) \big].
  \end{align}
\end{prop}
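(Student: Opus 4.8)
\textbf{Proposal for the proof of Proposition~\ref{prop:Gepsilon_v}.}
The plan is to mimic the proof of Proposition~\ref{prop:vertical_transport} given in Section~\ref{sec:details_hv_transport}, but to keep track of the $\eps$-dependence of every estimate, so that the bound remains uniform. Recall that in that proof, the key quantity was the parameter $\delta$ of~\eqref{eq:distribution}, extracted in~\eqref{eq:est_delta} as $\delta = \tfrac12\min\{ y_{\pi-\eps}\,p_{\pi-\eps}(1-p_{\eps})/q,\,1\}$. For the lattice $\bbG^\eps$, however, the relevant local configurations during the track exchanges $\Sigma^\uparrow$ between a track of transverse angle $\tfrac\pi2$ (from the regular block) and a track of transverse angle $\eps$ or $\pi-\eps$ (from $\bbG^\eps$) involve only a bounded number of angle patterns; there is no issue of the angles approaching $0$ unless one of them is a short-edge angle $\eps$. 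The first step is therefore to redo the local analysis (the analogue of Figure~\ref{fig:non_decrease} and of estimates~\eqref{eq:CF_proba}--\eqref{eq:path_CF_proba}) for exactly the track-exchange types that occur in the mixed graph $G_\mix$ built from $\bbG^\eps$ and $\bbG_{0,\frac\pi2}$, and to read off how the ``chance of non-decrease'' scales in $\eps$.

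The main point to establish is that, even though the individual probability that the upper endpoint of $\Gamma$ fails to drift down can be as small as order $\eps$ (since it involves a factor like $p_{\pi-\eps}(1-p_\eps)$, which is of order $\eps^2$, or like $p_{\pi-\eps}$, of order $\eps$, from the asymptotics tabulated in Section~\ref{sec:quantum}), there are order $1/\eps$ tracks of $\bbG^\eps$ to be crossed for each unit of Euclidean height. Concretely, the vertical crossing $\calC_v(n;n)$ of the regular block corresponds, after the tracks are sent up, to a path whose top endpoint has performed a random walk over roughly $N_2 = \tfrac\lambda\eps n$ steps, each step being $0$ with probability of order $\eps$ and $-1$ otherwise. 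By the law of large numbers (or a Chernoff bound, using the independence of the \stts as in the classical case, and \cite[Lem.~3.7]{GriMan13} for the stochastic domination), the total downward drift is, with probability $c_n\to1$, at most $(1-c\eps)\cdot\tfrac\lambda\eps n + o(n/\eps)$ for some $c>0$; choosing $\lambda$ appropriately and $\eta>0$ small enough (but independent of $\eps$), the path still reaches Euclidean height $\eta\tfrac n\eps$ in $\tilde G_\mix$. The second step is thus to verify that the per-step ``stay'' probability is bounded below by $\kappa\eps$ for some $\kappa=\kappa(q)>0$ uniformly in small $\eps$ — this is exactly the content of the remark following the proof of Proposition~\ref{prop:horizontal_transport}, which asserts $1-\eta(\eps)\sim\tau(q)\eps$, and the analogous statement here — and then to run the law-of-large-numbers argument with the rescaled step distribution, which gives $\eta$ and $(c_n)$ depending only on $q$.

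I would organise the write-up as follows. First, set up $\bbP$ and the processes $(h^k)$, $(D^k)$ exactly as in the proof of Proposition~\ref{prop:vertical_transport}, noting that $G^{n+1}=\tilde G_\mix$ and that it suffices to prove $\bbP[h^{n+1}\ge\eta\tfrac n\eps]\ge c_n\,\bbP[h^0\ge n]$. Second, carry out the local computation: for each type of \stt appearing in $\Sigma^\uparrow$ for $G_\mix$, identify the probability that the upper endpoint of the current path $\Gamma$ stays put rather than drifting down, and show it is $\ge\kappa(q)\eps$ for $\eps$ small (for $\eps$ bounded away from $0$ the classical bound~\eqref{eq:est_delta} already gives a uniform positive constant, so one only needs the small-$\eps$ regime). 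Third, dominate $(h^k)$ from below by $H^k = n + \sum_{i=1}^k\Delta_i$ with $\bbP(\Delta=0)=\kappa\eps$, $\bbP(\Delta=-1)=1-\kappa\eps$, over the $N_2=\tfrac\lambda\eps n$ relevant steps, invoking the independence of the \stts and the finite-energy/FKG inputs exactly as in the classical proof. Fourth, apply a Chernoff estimate to $H^{N_2}$: its mean is $n - (1-\kappa\eps)\tfrac\lambda\eps n$, so with $\lambda\le\tfrac12\kappa$ (say) and $n$ large, $H^{N_2}\ge\eta\tfrac n\eps$ with probability $c_n\to1$ for a suitable $\eta=\eta(q)>0$; set $\eta$ and $(c_n)$ accordingly. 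The main obstacle is the second step: one must check carefully, using the exact secondary-outcome probabilities of Figure~\ref{fig:path_transformations} and Figure~\ref{fig:non_decrease} specialised to the angles $\{\eps,\pi-\eps,\tfrac\pi2\}$, that the ``stay'' probability really is of order $\eps$ and not $o(\eps)$ — i.e. that the factors $p_{\pi-\eps}$ and $(1-p_\eps)$ enter with the right powers — since a spurious extra factor of $\eps$ would make $\eta$ shrink with $\eps$ and break uniformity. Once that quantitative local estimate is in hand, the rest is the same random-walk argument as in~\cite{GriMan14}.
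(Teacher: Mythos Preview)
Your proposal has a fundamental gap: the height process you set up cannot reach $\eta n/\eps$. You model $H^k = n + \sum_{i=1}^k \Delta_i$ with $\Delta_i\in\{-1,0\}$, so $H^k\le n$ for every $k$; yet the conclusion requires $h^{n+1}\ge \eta n/\eps$, which for small $\eps$ is much larger than $n$. The path must \emph{grow} in height by a factor of order $1/\eps$, and a non-increasing walk cannot achieve this.

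The error lies in the local estimate. In the track exchanges that occur here (a track of angle $\tfrac\pi2$ against one of angle $\eps$ or $\pi-\eps$), the edges $e_1,e_2,e_3$ in the analogue of Figure~\ref{fig:non_decrease} are long edges with subtended angle $\pi-\eps$, so $p_{e_2}\sim C'\eps$ and $y_{e_1}y_{e_3}/q\to 1$. The probability that the top endpoint does \emph{not} drift down is therefore $1-C\eps$, not $\kappa\eps$. More importantly, \eqref{eq:hjk2} sharpens to $\bbP[h_{j+1}^k\ge h+1\mid h_j^k=h]\ge 1-C\eps$ when $h=\sfj$: once the rising track $t_{n-k}$ sits just above the endpoint, the endpoint is dragged upward with it, failing only with probability $O(\eps)$ at each of the $N_2$ inner steps. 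Hence during a single $\Sigma^\uparrow_{n-k}$ the height gains a geometric($C\eps$) increment, and the correct lower bound is $H^{k+1}=\min\{H^k+Y_k-2,\,n-k+N_2\}$ with $Y_k$ i.i.d.\ geometric of parameter $C\eps$; then $\bbE[Y_k-2]\sim 1/(C\eps)$ and after $n+1$ outer steps the height reaches order $n/\eps$.

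A secondary issue: you reuse the trapezoidal domains $D^k$ from the classical proof. Here the tracks of $\bbG^\eps$ have alternating angles $\eps,\pi-\eps$, so consecutive inner track exchanges produce horizontal drifts of alternating sign; the domains may thus be taken with vertical sides, with $D^{k+1}$ only two columns wider than $D^k$. This is what gives the width $3n$ in $\calC_v(3n;\eta n/\eps)$ rather than a width of order $n/\eps$.
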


Proposition~\ref{prop:Gepsilon_h} controls the upward drift of a crossing: 
it claims that with high probability (independently of $\eps$), this drift (in the graph distance) is bounded by a constant times $\frac{1}{\eps}$.
As a result, due to the particular structure of $\bbG^\eps$, the upward drift in terms of the Euclidean distance is bounded by a constant independent of $\eps$.
The proof follows the same idea as that of Proposition~\ref{prop:horizontal_transport} 
with the difference that it requires a better control of~\eqref{eq:Hbound}, which is obtained by a coarse-graining argument.

Proposition~\ref{prop:Gepsilon_v} controls the downward drift of a vertical crossing.
The proof follows the same lines as that of Proposition~\ref{prop:vertical_transport}, 
with a substantial difference in the definition \eqref{eq:H_Delta} of the process $H$ 
which bounds the decrease in height of a vertical crossing when performing a series of track exchanges.
In the proof of Proposition~\ref{prop:vertical_transport}, $H$ was a sum of Bernoulli random variables; 
here the Bernoulli variables are replaced by geometric ones.
This difference may seem subtle, but is essential in obtaining a bound on the Euclidean downward drift which is uniform in $\eps$.

\begin{proof}[Proof of Proposition~\ref{prop:Gepsilon_h}]
  We keep the same notations as in the proof of Proposition~\ref{prop:horizontal_transport}.
  We remind that the process $(H^k)_{0 \leq k \leq n}$ is coupled with the evolution of $(\gamma^{(k)})_{0 \leq k \leq n}$ in such a way that all vertices $x_{i, j}$ visited by $\gamma^{(k)}$ have $(i, j) \in H^k$.
  Moreover, $(H^k)$ can be seen as a growing pile of sand which grows laterally by 1 at each time step and vertically by 1 independently at each column with probability $\eta$.
  The goal here is to estimate $\eta$ in the special case of $G_\mix$ as illustrated in Figure~\ref{fig:quantum_gluing}.
  More precisely, we want to improve the bound given in~\eqref{eq:eta_sandpile}.
  Let $\lambda > 0$ denote a (large) value, we will see at the end of the proof how it needs to be chosen.

  \begin{figure}[htb]
    \centering
    \includegraphics[scale=1]{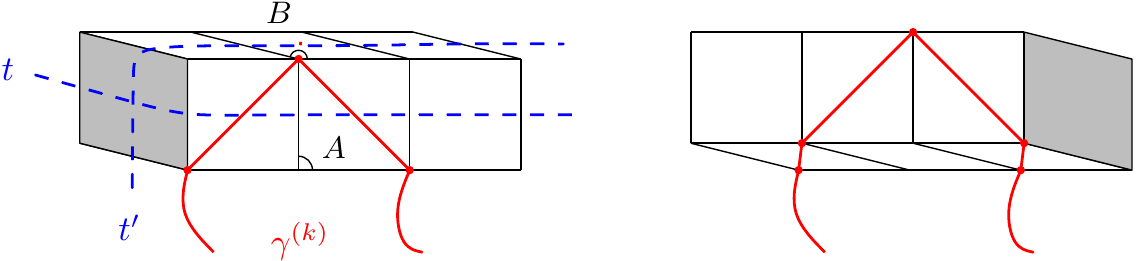}
    \caption{Figure~\ref{fig:sandpile} adapted to the case of $G_\mix$ defined above.
      When we perform \stts, we exchange two tracks, one of transverse angle $\frac\pi2$ and the other $\pi-\eps$ or $\eps$.
      We assume that we are in the case $A = \frac\pi2$ and $B = \pi-\eps$.}
    \label{fig:quantum_sandpile}
  \end{figure}

  In the special case of $G_\mix$ described above, a track exchange is always between tracks with transverse angles $A = \frac\pi2$ and $B = \pi-\eps$ or $\eps$.
  This is illustrated in Figure~\ref{fig:quantum_sandpile}.
  The parameter $\eta_{A, B}$ is the probability that the path $\gamma^{(k)}$, as shown in the figure, drifts upwards by 1 when the dashed edge on the left of the figure is open.
  This can be estimated as follows for $1 \leq q < 4$,
  \begin{align*}
    \eta_{A, B} & = \frac{y_{\pi - A} y_{\pi - (B-A)}}{q} =
    \left\{
      \begin{array}{ll}
        \frac{\sin(r(\tfrac\pi2 - \eps))}{\sin(r(\tfrac\pi2 + \eps))} & \text{if } 1 \leq q < 4, \\
        \frac{\tfrac\pi2 - \eps}{\tfrac\pi2 + \eps} & \text{if } q = 4.
      \end{array}
    \right.
  \end{align*}

  A quick computation then shows that we have
  \begin{align*}
    \eta := \sup_{A, B \in [\eps, \pi-\eps]} \eta_{A, B} =  1 - \zeta(q)  \eps,
    \quad \text{ where } \zeta(q) = \frac{2 \sqrt{2+\sqrt q}}{\pi^2}.
  \end{align*}
  Thus, this value $\eta$ may be used in the process $(H^k)$ bounding the evolution of $(\gamma^{(k)})$.
  Hence, we obtain, for $0 \leq k \leq \frac{\lambda}{\eps}$,
  \begin{align*}
    \bbP \big[  h(\gamma^{(k)}) < \tfrac{\lambda}{\eps} n \big]
    \geq \bbP \big[ \max \{ j : (i, j) \in H^{k} \} < \tfrac{\lambda}{\eps} n \big].
  \end{align*}

  A straightforward application of~\eqref{eq:Hbound} is not sufficient to conclude, 
  as it would provide a value of $\lambda$ of order $\log(\frac{1}{\eps})$ rather than of constant order. 
  We will improve~\eqref{eq:Hbound} slightly by revisiting its proof (given in~\cite[Lem.~3.11]{GriMan13}).
  
  We are interested in the time needed to add a neighboring block from those which are already included by $H^k$.
  To be more precise, with each edge $e$ of $\bbZ \times \bbN$, we associate a time $t_e$: if $e$ is horizontal, set $t_e = 1$; if $e$ is vertical, set $t_e$ to be a geometric random variable with parameter $\zeta(q) \eps$.
  Moreover, we require that the random variables $(t_e)$ are independent.

  For $x, y \in \bbZ \times \bbN$, define $\calP(x, y)$ to be the set of paths going from $x$ to $y$, containing no downwards edge.
  For a path $\gamma \in \calP(x, y)$, write $\tau(\gamma) = \sum_{e \in \gamma} t_e$, 
  which is the total time needed to go through all the edges of $\gamma$.
  Also write $\tau(x, y) = \inf \{ \tau(\gamma) : \gamma \in \calP(x, y) \}$.
  As such, the sets $(H^k)_{k \geq 0}$ can be described by
  \begin{align} \label{eq:Hk_paths}
    H^k = \{ y \in \bbZ \times \bbN : \exists x \in H^0, \tau(x, y) \leq k \},
  \end{align}
  where we recall the definition of $H^0$:
  $$
  H^0 = \{ (i, j) \in \bbZ \times \bbN : -(\rho_{\rmout} + 1) n \leq i-j \text{ and } i+j \leq (\rho_{\rmout} + 1) n \text{ and } j \leq n \}.
  $$
  Thus, our goal is to prove that the time needed to reach any point at level $\frac{\lambda}{\eps} n$ 
  is greater than $\frac{\lambda}{\eps} n$ with high probability.

  Let $\calP_n$ be set of all paths of $\calP(x, y)$ with $x \in H^0$, $y = (i, j)$ with $j = \tfrac\lambda\eps n$
  and which contain at most $n$ horizontal edges.
  Then,~\eqref{eq:Hk_paths} implies
  \begin{align} \label{eq:Hbound_quantum}
    \bbP \big[ \max \{j : (i, j) \in H^{\frac{\lambda}{\eps} n} \} \geq \tfrac{\lambda}{\eps} n \big]
    = \bbP \big[ \exists \gamma \in \calP_n : \tau(\gamma) \leq \tfrac{\lambda}{\eps} n \big].
  \end{align}

  Next comes the key ingredient of the proof.
  We define the notion of \emph{boxes} as follows.
  For $(k, \ell) \in \bbZ \times \bbN$, set 
  $$
  B(k, \ell) = \{ (k, j) \in \bbZ \times \bbN : \tfrac{\ell-1}{\eps} \leq j < \tfrac{\ell}{\eps} \}.
  $$
  We note that different boxes are disjoint and each of them contains $\frac1\eps$ vertical edges.
  A sequence of adjacent boxes is called a \emph{box path}.
  Note that such a path is not necessarily self-avoiding.
  Set $\tilde \calP_n$ to be the set of box paths from some $B(k, n)$ to some $B(\ell, \lambda n)$, where $k$ and $\ell$ are such that $-(\rho_{\rmout} +1) n \leq k \leq (\rho_{\rmout} +1)n$ and $|k - \ell| \leq n$, and in which at most $n$ pairs of consecutive boxes are adjacent horizontally.

  With any path $\gamma \in \calP_n$, we associate the box path $\tilde \gamma \in \tilde \calP_n$ of boxes visited by $\gamma$ (above level $n/\eps$).
  Notice that, since $\gamma$ has at most $n$ horizontal edges, so does $\tilde \gamma$.


  Given a box path $\tilde \gamma = (\tilde \gamma_i) \in \calP_n$, call $\tilde \gamma_i$ a \emph{vertical} box if $\tilde \gamma_{i-1}, \tilde \gamma_i$ and $\tilde \gamma_{i+1}$ have the same horizontal coordinate.
  Since any path $\tilde \gamma \in \tilde \calP_n$ can only have at most $n$ pairs of consecutives boxes that are adjacent horizontally, there are at least $(\lambda-3)n$ vertical boxes in $\tilde \gamma$.
  See Figure~\ref{fig:box_path} for an illustration of the above notions.

  \begin{figure}[htb]
    \centering
    \includegraphics[scale=1.2]{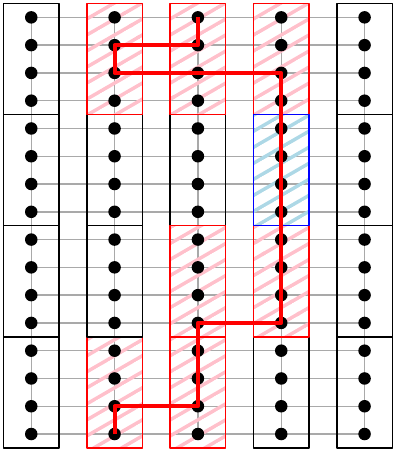}
    \caption{A path $\gamma \in \calP_n$ and its associated box path $\tilde \gamma \in \tilde \calP_n$.
      Note that the box path $\tilde \gamma$ might not be self-avoiding.
      The blue box in the figure is a vertical box for $\tilde \gamma$.}
    \label{fig:box_path}
  \end{figure}

  A box is called \emph{bad} if it contains a vertical edge $e$ such that $t_e \geq 2$.
  We can estimate the probability that a box is bad:
  \begin{align*}
    \bbP[ \text{a given box is bad} ] = 1 - (1- \zeta \eps)^{1/\eps} \to 1 - e^{-\zeta}
    \quad \text{ as } \eps \rightarrow 0.
  \end{align*}
  Notice that for a path $\gamma \in \calP_n$ such that $\tau(\gamma) \leq \frac\lambda\eps n$, there are at most $n$ vertical bad boxes in $\tilde \gamma$.
  Indeed, due to the definition, in any bad vertical box of $\tilde \gamma$, $\gamma$ crosses an edge $e$ with $t_e \geq 2$.
  Therefore,
  \begin{align} 
    \bbP \big[ \exists \gamma \in \calP_n : \tau(\gamma) \leq \tfrac{\lambda}{\eps} n \big]
    & \leq 
    \bbP \big[ \exists \tilde \gamma \in \tilde \calP_n : \text{ there are at most } n \text{ vertical bad boxes in } \tilde \gamma \big] \nonumber \\
    & \leq \sum_{\tilde \gamma \in \tilde \calP_n} \bbP \big[ \text{there are at most } n \text{ vertical bad boxes in } \tilde \gamma \big]\label{eq:Hbound_rewrite}
  \end{align}
  
  Let us now bound the above. 
  First, note that any path $\tilde \gamma \in \tilde\calP_n$ has length at most $\lambda n$ 
  and can have at most $n$ horizontal displacements, which gives
  \begin{align} \label{eq:path_bound}
    |\tilde \calP_n | \leq 2(\rho_\rmout + 1) n 2^n { \lambda n \choose n }
    \leq \rho_\rmout (c \lambda)^n, 
  \end{align}
  for a constant $c > 0$ independent of all the other parameters.
  Secondly, recall that any $\tilde\gamma\in \tilde\calP_n$ has at least $(\lambda-3)n$ vertical boxes. 
  Let $X_1,\dots, X_{(\lambda-3)n}$ be i.i.d. Bernoulli random variables with parameter $\delta = 1 - (1- \zeta \eps)^{1/\eps} $
  that indicate whether the $(\lambda-3)n$ first vertical boxes of $\tilde \gamma$ are bad 
  ($X_i = 1$ if the $i^{\text{th}}$ vertical box of $\tilde \gamma$ is bad and $X_i=0$ otherwise). 
  Then,
  %
  %
  \begin{align}
    \bbP \big[ \text{there are at most } n \text{ vertical bad boxes in } \tilde \gamma \big]
    & \leq \bbP \big[ X_1 + \cdots + X_{(\lambda-3)n} \leq n \big] \nonumber \\
    & \leq \Big[ 
    \Big( \frac{(\lambda-3) \delta}{\lambda-4} \Big)^{\lambda-4}
    (\lambda-3) (1-\delta)
    \Big]^n.
    \label{eq:proba_vb}
  \end{align}
  The last inequality is obtained by large deviation theory.

  Finally, put~\eqref{eq:Hbound_quantum}--\eqref{eq:proba_vb} together, as in \cite[Lem.~3.11]{GriMan13}.
  It follows that if $\lambda$ is chosen larger than some threshold $\lambda_0 > \frac{4-3\delta}{1-\delta}$ that only depends on $\delta$, 
  then
  \begin{align*}
    \bbP \big[ \max \{j : (i, j) \in H^{\frac{\lambda}{\eps} n} \} \geq \tfrac{\lambda}{\eps} n \big]
    \leq \rho_\rmout e^{-n}.
  \end{align*}
  Recall that $\delta \xrightarrow[\eps\to0]{} 1 - e^{-\zeta}$ is uniformly bounded in $\eps > 0$, 
  hence $\lambda_0$ may also be chosen uniform in $\eps$. 

  \begin{rmk}
    We point out that the coarse-graining argument above is essential to the proof due to the reduced combinatorial factor~\eqref{eq:path_bound}.
    In effect, the computation in~\cite[Lem.~3.11]{GriMan13} would have given us a combinatorial factor $(c \lambda / \eps)^n$, and due to the additional $\eps$ in the denominator, one can only show that $\lambda$ should grow as $\log(\frac{1}{\eps})$.
    This improvement is made possible because when, $\eps$ goes to 0, paths in the directed percolation take $\frac1\eps$ more vertical edges than horizontal ones, and bad edges (those with passage-time greater than 2) are of density proportional to $\eps$.
    We can therefore ``coarse grain'' a good number of paths to a unique one, which improves the bound.
  \end{rmk}
\end{proof}


\begin{proof}[Proof of Proposition~\ref{prop:Gepsilon_v}]
  We will adapt the proof of Proposition~\ref{prop:vertical_transport} to our special setting.
  The goal is to have a better control of the downward drift of paths when track exchanges are performed.
  There are two significant differences: 
  (i) a better description of the regions $D^k$ in which vertical paths are contained;
  (ii) a (stochastic) lower bound on $h^k$ by a sum of geometric random variables rather than a sum of Bernoulli variables as in the aforementioned proof.

  In this proof we are only interested in events depending on the graph above the base level, and we will only refer to the upper half-plane henceforth.

  Fix $\eps > 0$ and $n \in \bbN$.
  Let $M \geq 4 n$, $N_1 \geq n$ and $N_2 \geq \frac{n}{\eps}$ where, as illustrated in Figure~\ref{fig:quantum_gluing}, 
  $2M+1$ is the width of the blocks of $\graph{1} = \bbZ^2$ and $\graph{2} = \bbG^\eps$, $N_1+1$ is the height of the block of $\bbZ^2$ and $N_2$ that of $\bbG^\eps$.
  Recall that the sequence of \stts we consider here is $\Sigma^\uparrow$, which consists of pulling up tracks of $\graph{1}$ one by one above those of $\graph{2}$, from the top-most to the bottom-most.
  We write $\bbP$ for the measure taking into account the choice of a configuration $\omega$ according to the \rcm $\phi_{G_\mix}$ as well as the results of the \stts in $\Sigma^\uparrow$ applied to the configuration $\omega$.

  For $0 \leq i \leq N_1$, recall from Section~\ref{sec:mix} the notation 
  \begin{align*}
    \Sigma^\uparrow_{i} = \Sigma_{t_{i}, t_{N_1+N_2+1}} \circ \cdots \circ \Sigma_{t_{i}, t_{N_1+1}}, 
  \end{align*}
  for the sequence of \stts moving the track $t_{i}$ of $\graph{1}$ above $\graph{2}$.
  Then, $\Sigma^\uparrow  = \Sigma^\uparrow_0 \circ \cdots \circ \Sigma^\uparrow_{N_1}$.

  We note that $\omega \in \calC_v(n; n)$ if and only if $\Sigma^\uparrow_{n+1} \circ \cdots \circ \Sigma^\uparrow_{N_1} (\omega) \in \calC_v(n; n)$, 
  since the two configurations are identical between the base $t_0$ and $t_n$.
  Thus, we can assume that $\Sigma^\uparrow_{N_1}, \cdots, \Sigma^\uparrow_{n+1}$ are performed 
  and look only at the effect of $\Sigma^\uparrow_{n}, \cdots, \Sigma^\uparrow_0$ on such a configuration.
  Let us define for $0 \leq k \leq n+1$,
  \begin{align*}
    G^k & = \Sigma^\uparrow_{n-k+1} \circ \cdots \circ \Sigma^\uparrow_{N_1} (G_\mix), \\
    \omega^k & = \Sigma^\uparrow_{n-k+1} \circ \cdots \circ \Sigma^\uparrow_{N_1} (\omega), \\
    D^k & = \{ x_{u, v} \in G^k : |u| \leq n +2k, \,0 \leq v \leq N_2+n \}, \\
    h^k & = \sup \{ h \leq N_2 + n-k : \exists u, v \in \bbZ \text{ with } x_{u, 0} \xleftrightarrow{D^k, \omega^k} x_{v, h}  \}.
  \end{align*}
  That is, $h^k$ is the highest level that may be reached by an $\omega^{k}$-open path lying in the rectangle~$D^k$.
  These notions are illustrated in Figure~\ref{fig:Dk_quantum}

  \begin{figure}[htb]
    \centering
    \includegraphics[width=1\textwidth]{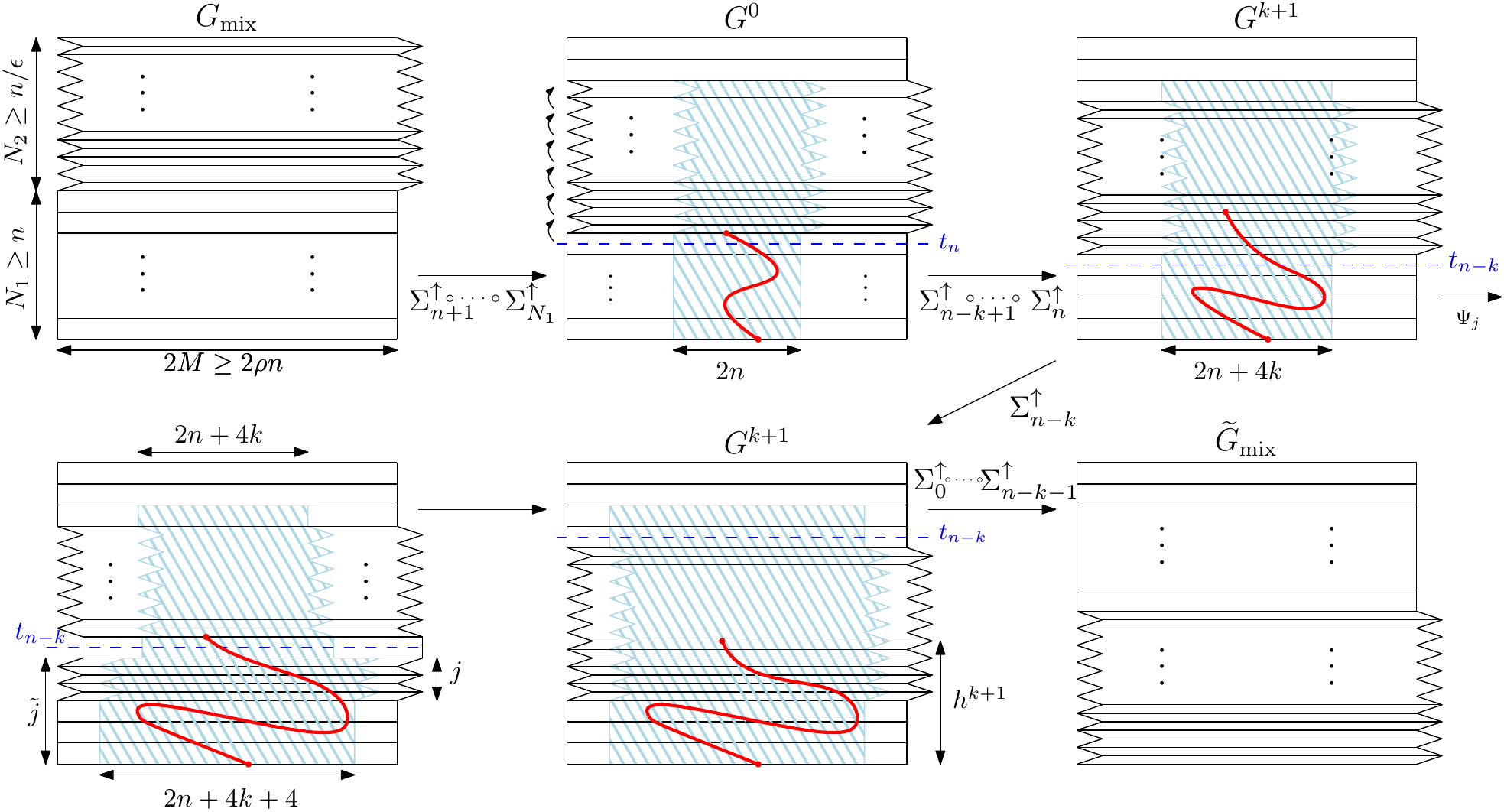}
    \caption{Several stages in the transformation of $G_\mix$ (only the outlines of the diamond graphs are depicted). 
      Pulling up the top $N_1 - n$ tracks of the regular lattice does not affect the event $\calC_v(n; n)$.
      The red vertical crossing is then affected by the track exchanges.
      However, it remains in the hashed domains $(D^k)_{0 \leq k \leq n+1}$, and $(D^k_j)_{0\leq j\leq N_2}$. 
      Its height evolves according to~\eqref{eq:hjk1}--\eqref{eq:hjk3},~\eqref{eq:hjk22} and~\eqref{eq:hjk42}.
      Notice the asymmetric shape of $D^k_j$ in the fourth diagram, where $j$ is even. }
    \label{fig:Dk_quantum}
  \end{figure}

  Due to the above definitions, if $\omega^0 \in \calC_v(n; n)$, then $h^0 \geq n$.
  Hence,
  \begin{align*}
    \bbP [ h^0 \geq n ] \geq \bbP [\omega^0 \in \calC_v(n; n)] =  \phi_{G_\mix}[\calC_v(n; n)].
  \end{align*}
  Moreover, using the fact that $\omega^{n+1}$ follows the law of $\phi_{\tilde G_\mix}$ and the definitions of $D^{n+1}$ and $h^{n+1}$ above, we obtain
  \begin{align*}
    \phi_{\tilde G_\mix}[ \calC_v(3n; \eta \tfrac{n}{\eps}) ] \geq \bbP(h^{n+1} \geq \eta \tfrac{n}{\eps}).
  \end{align*}
  Therefore, it is enough to show
  \begin{align}\label{eq:goal_Gepsilon_v}
    \bbP \big[ h^{n+1} \geq \eta \tfrac{n}{\eps} \big]  \geq c_n \bbP \big[ h^0 \geq n \big],
  \end{align}
  for some $\eta \in (0, \frac12)$ to be specified below and constants $c_n$ with $c_n \rightarrow 1$ as $n \rightarrow \infty$, 
  all independent of $\eps$.

  \medskip

  Fix $0 \leq k \leq n$ and let us examine the $(N_1-(n-k)+1)^{th}$ step of $\Sigma^\uparrow$, that is $\Sigma^\uparrow_{n-k}$.
  Write $\Psi_j := \Sigma_{t_{n-k}, t_{N_1+j}} \circ \cdots \circ \Sigma_{t_{n-k}, t_{N_1+1}}$ for $0 \leq j \leq N_2+1$.
  In other words, $\Psi_j$ is the sequence of \stts that applies to $G^k$ and moves the track $t_{n-k}$ 
  above $j$ tracks of $\graph{2}$, namely $t_{N_1+1}, \dots, t_{N_1+j}$.
  Moreover, $\Psi_{N_2} = \Sigma^\uparrow_{n-k}$; hence, $\Psi_{N_2} (G^{k}) = G^{k+1}$ and $\Psi_{N_2} (\omega^k) = \omega^{k+1}$.




  For $0 \leq j \leq N_2$ write $\sfj := n-k+j$ and define $D^k_j$ as the subgraph of $\Psi_j(G^k)$ induced by vertices $x_{u, v}$ with $0 \leq v \leq N_2 + n$ and
  \begin{align*}
    \left\{
      \begin{array}{rcll}
        & |u| &\leq n+2k+2 \qquad\qquad& \text{ if } v \leq \sfj ,\\
        -(n+2k) \leq & u &\leq n+2k+1 & \text{ if } v = \sfj+1 \text{ and } j \text{ odd},\\
        -(n+2k+1) \leq & u &\leq n+2k & \text{ if } v = \sfj+1 \text{ and } j \text{ even},\\
        & |u| &\leq  n+2k & \text{ if } v > \sfj+1.
      \end{array}
    \right.
  \end{align*}
  We note that $D^k \subseteq D^k_0 \subseteq \cdots \subseteq D^k_{N_2} \subseteq D^{k+1}$.
  Let $\omega^k_j = \Psi_j(\omega^k)$ and
  $$
  h^k_j = \sup \{ h \leq N_2+n-k : \exists u, v \in \bbZ \text{ with } x_{u, 0} \xleftrightarrow{D^k_j, \omega^k_j} x_{v, h} \}.
  $$
  Due to inclusions between the domains, we have $h^k \leq h^k_0$ and $h^k_{N_2} \leq h^{k+1}$.
  Next, we aim to obtain similar equations to~\eqref{eq:hjk1}--\eqref{eq:hjk4}.

  \medskip

  Fix $0 \leq j \leq N_2$ and let ${\mathsf \Sigma} := \Sigma_{t_{n-k}, t_{N_1+j+1}}$ be the track exchange to be applied to $\Psi_j(G^{k})$.
  Moreover, let $\calP_j$ be the set of paths $\gamma$ of $\Psi_j(G^{k})$, contained in $D_j^k$, 
  with one endpoint at height $0$, the other at height $h(\gamma)$, and all other vertices with heights between $1$ and $h(\gamma)-1$.
  
  First we claim that, if $\gamma$ is an $\omega^k_j$-open path of $\calP_j$, then ${\mathsf \Sigma} (\gamma)$ is $\omega_{j+1}^k$-open and contained in $D^k_{j+1}$ (hence contains a subpath of $\calP_{j+1}$ reaching the same height as ${\mathsf \Sigma} (\gamma)$).
  Due to the specific structure of $\bbG^\eps$, we prove this according to the parity of $j$.
  For $j$ even, the transverse angle of the track $t_{N_1+j+1}$ is $\pi-\eps$.
  Thus, as shown by the blue points in Figure~\ref{fig:path_transformations}, ${\mathsf \Sigma}$ induces a possible horizontal drift of $\gamma$ of $+2$ at level $\sfj$ 
  and $+1$ at level $\sfj+1$.
  By its definition, $D^k_{j+1}$ indeed contains ${\mathsf \Sigma} (\gamma)$.
  For $j$ odd, the figure is symmetric, thus we get horizontal drifts of $-2$ and $-1$ at levels $\sfj$ and $\sfj+1$, respectively.

  Let us briefly comment on the differences between the above and the general case appearing in Proposition~\ref{prop:vertical_transport}.
  In Proposition~\ref{prop:vertical_transport}, since the directions of the track exchanges are not necessarily alternating, 
  we may repeatedly obtain horizontal drifts of the same sign.
  This is why the domains $D^k_j$ in Proposition~\ref{prop:vertical_transport} grow with slope 1 (see Figure~\ref{fig:Dk_classical}), 
  and eventually induce a different definition of $D^k$ than the one above.
  In the present case, due to alternating transverse angles which create alternating positive and negative drifts, 
  $D^k$ may be chosen with vertical sides and $D^{k+1}$ is obtained by adding two columns on the left and right of $D^k$.
  While this may seem an insignificant technicality, it allows to bound the horizontal displacement of the vertical crossing
  by $2n$ rather than a quantity of order $\frac{n}{\eps}$, and this is essential for the proof. 

  \medskip

  As a consequence of the discussion above, equations~\eqref{eq:hjk1}--\eqref{eq:hjk3} hold as in the classical case.
  For this proof, we will improve~\eqref{eq:hjk2} and~\eqref{eq:hjk4} to
  \begin{align}
    \bbP [ h_{j+1}^k \geq h +1 \,|\, h_j^k = h ] & \geq 1-C\eps & \text{ if } h = \sfj, \label{eq:hjk22} \\
    \bbP [ h_{j+1}^k \geq h \,|\, h_j^k = h ] & \geq 1-C\eps & \text{ if } h = \sfj +1, \label{eq:hjk42}
  \end{align}
  for some constant $C> 0$ that does not depend on $\eps$, only on $q$. 
  
  Before going any further, let us explain the meaning of~\eqref{eq:hjk1}--\eqref{eq:hjk3}, \eqref{eq:hjk22} and~\eqref{eq:hjk42} through a non-rigorous illustration.
  In applying $\Sigma^\uparrow_{n-k}$, the track $t_{n-k}$ (of transverse angle $\pi/2$) is moved upwards progressively. 
  Let us follow the evolution of a path $\gamma$ reaching height $h^k$ throughout this process. 
  As long as the track $t_{n-k}$ does not reach height $h^k$, the height reached by $\gamma$ is not affected. 
  When $t_{n-k}$ reaches height $h^k$ (as in Figure~\ref{fig:quantum_growth4}; left diagram),
  the height of $\gamma$ may shrink by $1$ or remain the same; \eqref{eq:hjk42} indicates that the former arrises with probability bounded above by~$C\eps$.
  If $\gamma$ shrinks, the following track exchanges do not influence $\gamma$ any more, and we may suppose $h^{k+1} = h^k -1$. 
  Otherwise, the top endpoint of $\gamma$ at the following step is just below $t_{n-k}$ (as in Figure~\ref{fig:quantum_growth4}; centre diagram).
  In the following track exchange, $\gamma$ may increase by $1$ in height.
  By~\eqref{eq:hjk22}, this occurs with probability $1-C\eps$. 
  If the height of $\gamma$ does increase, then it is again just below $t_{n-k}$, and it may increase again. 
  In this fashion, $\gamma$ is ``dragged'' upwards by $t_{n-k}$.
  This continues until $\gamma$ fails once to increase. 
  After this moment, $\gamma$ is not affected by any other track exchange of~$\Sigma^\uparrow_{n-k}$.

  The reasoning above would lead us to believe that $h^{k+1} \geq h^k - 2 +Y$ stochastically, where $Y$ is a geometric random variable with parameter $C\eps$.
  This is not entirely true since the conditioning in~\eqref{eq:hjk22} and~\eqref{eq:hjk42} is not on $\omega_j^k$, but only on $h_j^k$. 
  However, this difficulty may be avoided as in the proof of Proposition~\ref{prop:vertical_transport}.
  Let us render this step rigorous and obtain the desired conclusion~\eqref{eq:goal_Gepsilon_v}, before proving~\eqref{eq:hjk22} and~\eqref{eq:hjk42}.

  Let $(Y_k)_{0 \leq k\leq n}$ be i.i.d. geometric random variables of parameter $C\eps$. 
  Define the Markov process $(H^k)_{0 \leq k\leq n+1}$ by $H^0 =h^0$ and $H^{k+1} =  \min\{H^k +Y_k-2, n-k + N_2\}$.
  Then, the comparison argument of~\cite[Lem.~3.7]{GriMan13} proves that $h^k$ dominates $H^k$ stochastically for any $k$. 
  Precisely, for any $k$, the processes $(H^j)_{0 \leq j \leq n+1}$ and $(h^j)_{0 \leq j \leq n+1}$ may be coupled such that $H^k \leq h^k$ a.s.. 
  We insist that we do not claim that there exists a coupling that satisfies the above inequality simultaneously for all $k$. 
  We do not provide details on how to deduce this inequality from~\eqref{eq:hjk1}--\eqref{eq:hjk3}, \eqref{eq:hjk22} and~\eqref{eq:hjk42}, 
  since this step is very similar to the corresponding argument in~\cite[Lem.~6.9]{GriMan14}. 
  Let us simply mention that the cap of $n-k + N_2$ imposed on $H^k$ 
  comes from the fact that a path may not be dragged upwards above the highest track of the irregular block. 

  By comparing $h^{n+1}$ and $H^{n+1}$, we find
  \begin{align*}
    c_n:=\frac{\bbP \big[ h^{n+1} \geq \eta \tfrac{n}{\eps} \big]}{\big[ h^0 \geq n \big]}
    \geq \bbP \big[ H^{n+1} \geq \eta \tfrac{n}{\eps} \,\big|\,H^0 \geq n \big] 
    \geq \bbP \big[ Y_0 + \dots + Y_n -(n+2) \geq \eta \tfrac{n}{\eps} \big].
  \end{align*}
  The last inequality is due to that, if $H^k +Y_k-2 = n-k + N_2$ at any point during the process, then $h^{n+1} \geq \eta \tfrac{n}{\eps}$ is sure to arrises. 
  Finally notice that $\bbE[ Y_k - 1 ] = \frac{1 - C\eps}{C\eps} > \eta/\eps$ for $\eta < 1/C$ and $\eps$ small enough. 
  The same large deviation argument as in the final step of the proof of Proposition~\ref{prop:vertical_transport} allows us to conclude that $c_n \to 1$, uniformly in $\eps$.
  Thus, we are only left with proving~\eqref{eq:hjk22} and~\eqref{eq:hjk42}, which we do next. 
  \medskip 
  
  \noindent{\bf First we prove~\eqref{eq:hjk42}}. 
  This is similar to the argument proving~\eqref{eq:hjk4}, with a slight improvement on the estimate of the parameter $\delta$. 
  Fix $0 \leq j \leq N_2$ and use the notation introduced above. 
  Without loss of generality, assume also that $j$ is even so that the track exchange ${\mathsf \Sigma} = \Sigma_{t_{n-k}, t_{N_1+j+1}}$ is performed from left to right.
  Denote by $\Gamma = \Gamma(\omega^k_j)$ the $\omega^k_j$-open path of $\calP_j$ that is the minimal element of 
  $\{ \gamma \in \calP_j : h(\gamma) = h^k_j, \gamma \text{ is } \omega^k_j \text{-open} \}$ as in Proposition~\ref{prop:vertical_transport}.
  
  \begin{figure}[htb]
    \centering
    \includegraphics[scale=0.9]{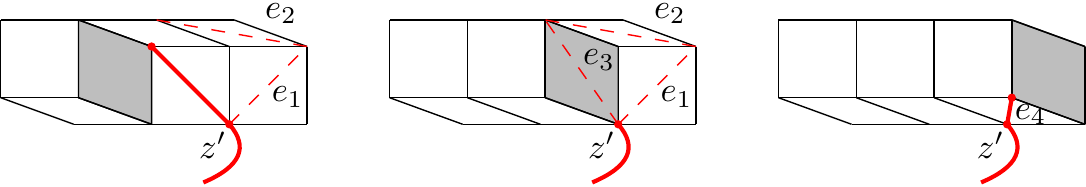}
    \caption{Three \stts contributing to ${\mathsf \Sigma}$ slide the gray rhombus from left to right.
      If the three edges in the middle diagram are all closed, then $e_4$ is open with probability $\frac{y_{e_1}y_{e_3}}{q}$.}
    \label{fig:quantum_growth4}
  \end{figure}
  
  Fix some $\gamma \in \calP_j$ of height $\sfj +1$. 
  Let $z = x_{u, \sfj+1}$ denote the upper endpoint of $\gamma$ and let $z'$ denote the other endpoint of the unique edge of $\gamma$ leading to $z$. Then either $z' = x_{u+1, \sfj}$ or $z' = x_{u-1, \sfj}$.
  
  Conditioning on $\Gamma = \gamma$.
  If $z' = x_{u-1, \sfj}$, then it is always the case that $h({\mathsf \Sigma}(\Gamma) ) \geq \sfj+1$.
  Assume that $z' = x_{u+1, \sfj}$ as in Figure~\ref{fig:quantum_growth4} and consider the edges $e_1, \dots, e_4$ depicted in the image. 
  If $e_1$ is open in $\omega^k_j$ then it is easy to see that $h_{j+1}^k= \sfj +1$, for any outcome of the star-triangle transformations. 
  The same is valid for the edge $e_3$ appearing in the second diagram of Figure~\ref{fig:quantum_growth4}.
  Assume that both $e_1$ and $e_3$ are closed in the second diagram. 
  Then, if in addition $e_2$ is also closed, by the randomness appearing in the star-triangle transformation leading to the fourth diagram,
  \begin{align*}
    \bbP [ e_4 \text{ is open} \,|\, e_1, e_2, e_3 \text{ are closed} ] \geq \frac{y_{e_1}y_{e_3}}{q}.
  \end{align*}
  This is due to the transition probabilities of Figure~\ref{fig:simple_transformation_coupling}.
  Finally, if $e_4$ is open in the last diagram, then the height of $\Gamma$ remains at least $\sfj+1$ for the rest of ${\mathsf \Sigma}$.
  In conclusion,
  \begin{align*}
    \bbP [ h_{j+1}^k \geq \sfj +1 \,|\, \Gamma = \gamma ]
    \geq \frac{y_{e_1}y_{e_3}}{q}
    \bbP [ e_2 \text{ is closed} \,|\, \Gamma = \gamma \, \text{ and } e_1, e_3 \text{ closed} ]
  \end{align*}
  Notice that the edge $e_2$ is above level $\sfj$, hence the conditioning $\Gamma = \gamma$ and $e_1$, $e_3$ closed affects it negatively. 
  Thus, $\bbP [ e_2 \text{ is closed} \,|\, \Gamma = \gamma\, \text{ and } e_1, e_3 \text{ closed} ] \geq 1-p_{e_2}$.
  Using the fact that $y_{e_1} = \sqrt q$, $y_{e_3} \to \sqrt q$ and $p_{e_2} \sim C' \eps$ as $\eps \rightarrow 0$, with $C' > 0$ depending only on $q$, and summing over all possibilities on $\gamma$, we obtain
  \begin{align*}
    \bbP [ h_{j+1}^k \geq \sfj +1 \,|\, h_j^k = \sfj +1 ] & \geq 1 - C\eps,
  \end{align*}  
  for some constant $C$ depending only on $q$. 
  
  \medskip  

  \noindent{\bf Now let us prove~\eqref{eq:hjk22}}.
  The argument is very similar to the above.
  Assume again that the track exchange ${\mathsf \Sigma} = \Sigma_{t_{n-k}, t_{N_1+j+1}}$ is performed from left to right.
  Let $\Gamma$ be defined as above and call $z$ its top endpoint. 
  Condition on $h^k_j = \sfj$. 
  Then, as the vertical rhombus is slid through $t_{n-k}, t_{N_1+j+1}$, it arrives above $z$ as in the second diagram of Figure~\ref{fig:quantum_growth4}. 
  Let $e_1$, $e_2$, $e_3$ and $e_4$ be defined as in Figure~\ref{fig:quantum_growth4}. 
  If $e_1$ or $e_3$ are open in the second diagram, then $h^k_{j+1} = \sfj+1$ for any outcome of the star-triangle transformations 
  \footnote{Due to the conditioning, $e_1$ or $e_3$ may only be open if their top endpoint lies outside $D^k_j$.}. 
  Assume that $e_1$ and $e_3$ are both closed at this stage. 
  Moreover, since the conditioning only depends on edges below level $\sfj+1$, it influences the state of $e_2$ only via boundary conditions.
  Hence, $\bbP [ e_2 \text{ is closed} \,|\, \Gamma \text{ and } e_1, e_3 \text{ closed} ] \geq 1-p_{e_2}$. 
  As discussed above, when $e_1$, $e_2$ and $e_3$ are all closed, 
  the final star-triangle transformation of Figure~\ref{fig:quantum_growth4} produces an open edge $e_4$ with probability bounded below by $1 - C\eps$. We conclude as above. 

\end{proof}

\subsection{The case $q > 4$}

We will adapt the proof of the exponential decay of Section~\ref{sec:q>4} to the quantum case.
More precisely, we only need to do so for the case of isoradial square lattices, that is Section~\ref{sec:sq_lat_q>4}. 
The argument is very similar to that of Section~\ref{sec:sq_lat_q>4}, with the exception that 
Propositions~\ref{prop:Gepsilon_h} and~\ref{prop:Gepsilon_v} are used instead of Propositions~\ref{prop:horizontal_transport} and~\ref{prop:vertical_transport}.

We recall the notation $\HR$ for half-plane rectangles and the additional subscript $\eps$ for domains defined in $\bbG^\eps$.
The key result is the following.

\begin{prop} \label{prop:quantum_sq_lat_q>4}
  There exist constants $C, c > 0$ depending only on $q$ such that, for any $\eps>0$ small enough,
  \begin{align} \label{eq:quantum_exp_decay4}
    \phi_{\HReps (N; \frac{N}{\eps} )}^{0} \big[ 0 \leftrightarrow \pd \HReps (n; \tfrac{n}{\eps}) \big]
    \leq C \exp(-c n), \qquad \forall n < N.
  \end{align}
\end{prop}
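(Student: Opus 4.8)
The plan is to mirror the two-stage proof of Section~\ref{sec:sq_lat_q>4}, transferring exponential decay from the regular lattice $\bbZ^2$ to the deformed lattices $\bbG^\eps$, but with all estimates made uniform in $\eps$. The input is Proposition~\ref{prop:input_exp}: for the regular square lattice $\bbZ^2$ with $q>4$, the half-plane measure $\phi_\bbH^{1/0}$ exhibits exponential decay, with constants $C_0,c_0$ depending only on $q$. Crucially these constants do not depend on $\eps$ since $\bbZ^2$ is a fixed lattice. The output should be the analogue of Proposition~\ref{prop:sq_lat_q>4} for the family $\bbG^\eps$, with constants depending only on $q$.

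The construction proceeds exactly as in the proof of Proposition~\ref{prop:sq_lat_q>4}. For $2n<N$, build the mixed graph $G_\mix$ above the base by superposing a horizontal strip of $\bbZ^2$ (regular block) on top of a strip of $\bbG^\eps$ (irregular block), each of width $2M+1$; the irregular block has height $N+1$ and the regular one $N+1$ as well (or more, as needed), then convexify. Let $\tilde G_\mix = \Sigma^\uparrow(G_\mix)$ be the graph with blocks inverted, so that the ball of radius $N$ around $0$ in $\tilde G_\mix$ lies in the regular block. Then decompose, as in~\eqref{eq:con1}, a connection $\{0\leftrightarrow \pd\HReps(n;\tfrac n\eps)\}$ in $G_\mix$ into connection to the left side, to the right side, or to the top of the thin rectangle $\HReps(n;\tfrac{\delta_0 n}{\eps})$ for a small constant $\delta_0>0$ to be chosen (depending only on $q$). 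For the top connection one applies the adaptation of Proposition~\ref{prop:Gepsilon_v} (the uniform-in-$\eps$ vertical transport): it shows $\phi_{\tilde G_\mix}[0\xleftrightarrow{\HReps(3n;\eta\tfrac n\eps)}\pd_T] \geq c_n \,\phi_{G_\mix}[0\xleftrightarrow{\HReps(n;\tfrac{\delta_0 n}{\eps})}\pd_T]$ with $c_n\to1$ uniformly in $\eps$, and the left side is bounded by $\phi^{1/0}_{\HReps(N;\tfrac N\eps)}[0\leftrightarrow \pd\HReps(\eta\delta_0 n; \tfrac{\eta\delta_0 n}{\eps})]$; via Proposition~\ref{prop:Qboundary} this is comparable to the quantum quantity on $\bbH=\bbZ\times\bbN$, but more directly one keeps $\eps$ fixed and notes that the rectangle of $\tilde G_\mix$ is inside the regular block, so Proposition~\ref{prop:input_exp} applies and gives a bound $C_0\exp(-c_0\eta\delta_0 n)$ uniform in $\eps$. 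For the left and right connections, the finite-energy property (with energy constant $\eta_{\rm fe}(q)>0$ uniform in $\eps$ since the edge-parameters $p_\eps,p_{\pi-\eps}$ are explicit) converts a crossing of the thin rectangle into a genuine connection $0\leftrightarrow x_{0,n}$ at the cost of $\exp(\eta_{\rm fe}\delta_0 n)$; since $x_{0,n}$ is unaffected by $\Sigma^\uparrow$ this equals the corresponding probability in $\tilde G_\mix$, which is again bounded by $\phi^{1/0}_{\HReps(N;\tfrac N\eps)}[0\leftrightarrow \pd\HReps(n;\tfrac n\eps)]\le C_0\exp(-c_0 n)$ by Proposition~\ref{prop:input_exp}. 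Finally choose $\delta_0 = c_0/(c_0\delta+\eta_{\rm fe})$ so that $c := c_0-\delta_0\eta_{\rm fe} = c_0\delta\delta_0>0$ depends only on $q$, giving $\phi_{G_\mix}[0\leftrightarrow \pd\HReps(n;\tfrac n\eps)]\le 4C_0\exp(-cn)$ for $n$ large; comparison of boundary conditions transfers this to $\phi^0_{\HReps(N;\tfrac N\eps)}$ and the threshold on $n$ is absorbed into $C$.

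I expect the main obstacle to be the two uniform-in-$\eps$ transport estimates, namely Propositions~\ref{prop:Gepsilon_h} and~\ref{prop:Gepsilon_v} — but these are already established in Section~\ref{sec:details_hv_transport_quantum}, so for the purposes of this statement the work is to verify that the adaptations stated there (as in the adaptation of Proposition~\ref{prop:vertical_transport} given just after the statement of Proposition~\ref{prop:sq_lat_q>4}) indeed hold for the half-plane connection events in $\bbG^\eps$ rather than the crossing events, which is routine given that the relevant drifts are purely local and the constants $\delta,\eta,(c_n)$ there are already uniform in $\eps$. A secondary point requiring care is that, unlike in Proposition~\ref{prop:sq_lat2_q>4}, no rotational symmetry of $\bbG^\eps$ is available to bootstrap~\eqref{eq:input_exp} from a weaker bound; this is why one must use the genuine half-plane exponential decay of Proposition~\ref{prop:input_exp} for $\bbZ^2$ as the input, exactly as in the proof of Proposition~\ref{prop:sq_lat_q>4}. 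Once Proposition~\ref{prop:quantum_sq_lat_q>4} is in hand, the analogue of Proposition~\ref{prop:exp_decay_half_full} (whose proof uses only the finite-energy and comparison-of-boundary-conditions arguments, both valid for $\bbG^\eps$ and in the limit for $\phi_\calQ$) upgrades it to full-plane decay, and then Proposition~\ref{prop:Qboundary} passes to the limit $\eps\to0$ to yield the exponential decay statement for $\quantumrc{0}{}$ in Theorem~\ref{thm:quantum}.
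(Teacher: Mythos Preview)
Your overall strategy matches the paper's, but there is a genuine gap in the finite-energy step for the lateral connections. You claim that converting a crossing of $\HReps(n;\tfrac{\delta_0 n}{\eps})$ to its side into a connection $0\leftrightarrow x_{0,n}$ costs only $\exp(\eta_{\rm fe}\,\delta_0 n)$, with $\eta_{\rm fe}$ a per-edge finite-energy constant ``uniform in $\eps$''. But that rectangle has graph-height $\tfrac{\delta_0 n}{\eps}$, so you must modify on the order of $\tfrac{\delta_0 n}{\eps}$ edges, not $\delta_0 n$. If the per-edge constant were genuinely uniform in $\eps$, the cost would be $\exp(\eta_{\rm fe}\,\tfrac{\delta_0 n}{\eps})$, which blows up as $\eps\to0$ and destroys the argument. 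The paper's key observation here is that the edges to be opened can all be chosen to be \emph{short} vertical edges of $\bbG^\eps$ (subtended angle $\eps$), for which $p_\eps/(p_\eps+q(1-p_\eps))\geq 1-c_1\eps$; the per-edge cost in the exponent is therefore $O(\eps)$, and $\tfrac{\delta_0 n}{\eps}\cdot O(\eps)=O(\delta_0 n)$ is indeed uniform. Your formula $\delta_0=c_0/(c_0\delta+\eta_{\rm fe})$ is copied verbatim from the proof of Proposition~\ref{prop:sq_lat_q>4}, where the rectangle really has height $\delta_0 n$; here the balance of constants is different and hinges on this $\eps$-asymptotic of $p_\eps$.

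A secondary issue: your stated adaptation of Proposition~\ref{prop:Gepsilon_v} has the wrong height on the $\tilde G_\mix$ side. When the positions of the two blocks are switched (irregular below in $G_\mix$, regular below in $\tilde G_\mix$), the factors $\eps$ and $1/\eps$ are exchanged: a crossing of graph-height $\tfrac{\delta_0 n}{\eps}$ in the irregular block is transported to one of graph-height $\tau\,\delta_0 n$ (not $\eta\tfrac{n}{\eps}$) in the regular block of $\tilde G_\mix$. It is precisely this that places the transported rectangle inside the regular block, where Proposition~\ref{prop:input_exp} can be invoked. Your final exponent $c_0\eta\delta_0 n$ has the right form, but the rectangle $\HReps(3n;\eta\tfrac{n}{\eps})$ you wrote would not lie in the regular block for the sizes you allow.
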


The above has the following direct consequences.

\begin{cor} \label{cor:quantum1_q>4}
  There exist constants $C, c > 0$ depending only on $q$ such that for $\eps$ small enough,
  \begin{align} \label{eq:quantum_exp_decay5}
    \phi_{\recteps(N; \frac{N}{\eps} )}^{0} \big[ 0 \leftrightarrow \pd \recteps(n; \tfrac{n}{\eps}) \big] 
    \leq C \exp(-c n), \qquad \forall n < N.
  \end{align}
\end{cor}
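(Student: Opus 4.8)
This is the quantum counterpart, specialised to the lattices $\bbG^\eps$, of the passage from the half-plane estimate~\eqref{eq:exp_decay3} to the full-plane estimate~\eqref{eq:exp_decay2} performed in Proposition~\ref{prop:exp_decay_half_full}; the half-plane input is now Proposition~\ref{prop:quantum_sq_lat_q>4} (equivalently~\eqref{eq:quantum_exp_decay4}), and the one genuinely new difficulty is that every constant must stay uniform in $\eps$. Fix $q>4$, $\eps$ small and $N>n$. Since $\phi^0_{\recteps(N;N/\eps)}\big[0\leftrightarrow\pd\recteps(n;n/\eps)\big]$ is non-decreasing in $N$, it suffices to bound it by $Ce^{-cn}$ with $C,c$ depending only on $q$. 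Recall that, up to multiplicative constants that remain bounded as $\eps\to0$, the region $\recteps(m;m/\eps)$ is a Euclidean square of side of order $m$ centred at the origin, so that reaching its boundary from $0$ means reaching Euclidean distance of order $m$; this is the ``isotropic'' scaling for which an $\eps$-uniform statement is natural. Exactly as in the proof of Proposition~\ref{prop:exp_decay_half_full}, write $X_{\min}$ for the lowest vertex of the cluster $\sfC(0)$ of the origin and decompose
\begin{align*}
  \phi^0_{\recteps(N;N/\eps)}\big[0\leftrightarrow\pd\recteps(n;n/\eps)\big]
  =\sum_{x_{i,j}}\phi^0_{\recteps(N;N/\eps)}\big[0\leftrightarrow\pd\recteps(n;n/\eps)\ \text{and}\ X_{\min}=x_{i,j}\big].
\end{align*}

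Fix a term of the sum. Translation invariance of $\bbG^\eps$, followed if necessary by the opening of the single short edge joining the two vertices of a fundamental domain (a modification of uniformly bounded cost, since $p_\eps\to1$), carries $x_{i,j}$ to $0$; the cluster of $0$ then lies above the track $t_0$ and reaches Euclidean distance $r:=\max\big(c_1\|x_{i,j}\|,\tfrac{n}{2}\big)$ from $0$, where $\|\cdot\|$ is Euclidean distance and $c_1>0$. Conditioning on the lowest dual left--right crossing below $\sfC(0)$ --- which, $X_{\min}$ having been translated to $0$, lies at the level of $t_0$ --- and exploring it from below, the configuration above it has law $\phi^0$ with free boundary conditions along $t_0$; since $0$ now sits on this free boundary, Proposition~\ref{prop:quantum_sq_lat_q>4} (together with the comparison between boundary conditions, embedding the relevant domain into a large half-plane box) yields a factor at most $C_2e^{-c_2r}$. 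Moreover $\{X_{\min}=x_{i,j}\}$ forces at least one short edge of $\bbG^\eps$, incident to $x_{i,j}$ and leading to a vertex of smaller height, to be closed; since such an edge has parameter $p_\eps$ with $1-p_\eps$ of order $\eps$, the finite-energy property contributes a further factor of order $\eps$. Hence, whenever these short edges lie inside $\recteps(N;N/\eps)$,
\begin{align*}
  \phi^0_{\recteps(N;N/\eps)}\big[0\leftrightarrow\pd\recteps(n;n/\eps)\ \text{and}\ X_{\min}=x_{i,j}\big]\le C_3\,\eps\,e^{-c_3\max(\|x_{i,j}\|,\,n/2)} .
\end{align*}

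Since the fundamental domain of $\bbG^\eps$ has area of order $\eps$, the number of vertices $x_{i,j}$ at Euclidean distance of order $\rho$ from $0$ is of order $\rho/\eps$, and summing the previous bound over all such $x_{i,j}$ gives
\begin{align*}
  \sum_{x_{i,j}}C_3\,\eps\,e^{-c_3\max(\|x_{i,j}\|,\,n/2)}\le C\sum_{\rho\ge0}\rho\,e^{-c_3\max(\rho,\,n/2)}\le C'e^{-c'n},
\end{align*}
with $C',c'$ depending only on $q$. The remaining terms are those for which $x_{i,j}$ lies on the bottom side $t_{-N/\eps}$ of $\recteps(N;N/\eps)$: there are $O(N)$ of them, each with $\|x_{i,j}\|$ of order $N$, and for each the bound above holds without the factor $\eps$, so their total contribution is at most $C_3\,N\,e^{-c_3N}\le Ce^{-cn}$, possibly after enlarging $C$ to absorb the finitely many pairs $(n,N)$ with $N$ below an absolute threshold. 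This proves~\eqref{eq:quantum_exp_decay5}.

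The estimate stays uniform in $\eps$ thanks to the balance between two $\eps$-dependent quantities: the factor $\eps^{-1}$ entering the vertex count --- caused by the degeneration of the rhombi of $\bbG^\eps$, whose fundamental domain has area of order $\eps$ --- and the factor $\eps$ produced by the finite-energy cost of forcing the short edge below $X_{\min}$ to be closed. Without the latter, the naive transport of Proposition~\ref{prop:exp_decay_half_full} would carry an uncontrolled $\eps^{-1}$. The main point to check with care is therefore the per-term estimate $\le C_3\eps\,e^{-c_3r}$ with constants independent of $\eps$: this rests on Proposition~\ref{prop:quantum_sq_lat_q>4} and on the fact that all the local modifications used (the translation between the two vertices of a fundamental domain, the closure of the short edges below $X_{\min}$, and the boundary comparisons) only involve short edges --- for which the finite-energy constants are uniform in $\eps$ --- or else simply bring the origin onto the free boundary of a half-plane, rather than ever forcing open a long edge, whose parameter $p_{\pi-\eps}$ degenerates as $\eps\to0$.
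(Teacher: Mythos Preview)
Your approach is exactly what the paper intends: it calls the corollary ``a straightforward adaptation of Proposition~\ref{prop:exp_decay_half_full}'' and supplies no proof, so your write-up is the content the reader must fill in. You have correctly isolated the one point where the adaptation is not entirely routine: the vertex density of $\bbG^\eps$ is of order $1/\eps$, so a direct transcription of the sum in the proof of Proposition~\ref{prop:exp_decay_half_full} would carry an uncontrolled factor $1/\eps$. Your remedy --- observing that $\{X_{\min}=x_{i,j}\}$ forces the short downward edge $e$ below $x_{i,j}$ to be closed, and that $\phi^0[\,e\text{ closed}\mid \omega_{\setminus e}\,]\le q(1-p_\eps)/(p_\eps+q(1-p_\eps))=O(\eps)$ because $e$ does not affect the event ``$x_{i,j}$ is connected above level $j$'' --- is correct and yields precisely the compensating factor.

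There is one small oversight. You handle separately the boundary terms for which the short edge below $X_{\min}$ lies outside $\recteps(N;N/\eps)$, but you only account for the bottom row. The same happens along one of the two lateral columns $i=\pm N$ (which one depends on the parity of $N$): from those vertices, the short downward edge points out of the box and only the long downward edge remains, so no factor $\eps$ is available there. Since there are $O(N/\eps)$ such vertices, each contributing $e^{-cN}$, their total $\tfrac{N}{\eps}e^{-cN}$ is not uniform in $\eps$. A clean fix: if $X_{\min}=x_{\pm N,j^*}$ is the unique vertex of $\sfC(0)$ at the lowest level (the only way all short downward edges from level $j^*$ are outside), then its sole neighbour in the domain, $x_{\pm N\mp 1,\,j^*+1}$, also lies in $\sfC(0)$, and the short edge from this neighbour down to $x_{\pm N\mp 2,\,j^*}$ is inside and must be closed (its lower endpoint cannot be in $\sfC(0)$ by uniqueness). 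Use this edge to extract the factor $\eps$ instead; the rest of the argument is unchanged.
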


\begin{cor} \label{cor:quantum2_q>4}
  There exist constants $C, c > 0$ depending only on $q$ such that,
  \begin{align} \label{eq:quantum_exp_decay6}
    \phi_{\calQ, \Lambda(N)}^{0} \big[ 0 \leftrightarrow \pd \Lambda(n) \big]
    \leq C \exp(-c n).
  \end{align}
\end{cor}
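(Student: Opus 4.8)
The plan is to derive Corollary~\ref{cor:quantum2_q>4} directly from Corollary~\ref{cor:quantum1_q>4} by taking the discretisation limit $\eps \to 0$, the passage to the limit being justified by the weak convergence recorded in Proposition~\ref{prop:Qboundary}. No new probabilistic input is needed beyond these two statements.

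Concretely, first I would fix $N > n$ and apply Corollary~\ref{cor:quantum1_q>4} with $N$ and $n$ replaced by $2N$ and $2n$, so as to match the scaling conventions of Proposition~\ref{prop:Qboundary} (recall that the limiting quantum lattice is $2\bbZ \times \bbR$ rescaled by a factor $\tfrac12$, which is why $2N$ horizontal tracks correspond to the Euclidean box $\Lambda(N)$). This produces constants $C, c > 0$, depending only on $q$, such that for every $\eps > 0$ small enough
\begin{align*}
  \phi_{\recteps(2N; \frac{2N}{\eps})}^{0}\big[ 0 \leftrightarrow \pd \recteps(2n; \tfrac{2n}{\eps}) \big] \leq C\exp(-2cn).
\end{align*}
The decisive point is that this bound is \emph{uniform in $\eps$}, since $C$ and $c$ do not depend on it. Next I would invoke Proposition~\ref{prop:Qboundary} with $\xi = 0$, $\rect^\eps = \recteps(2N; \frac{2N}{\eps})$ and $\calR = \Lambda(N)$: the left-hand side above converges, as $\eps \to 0$, to $\phi_{\calQ, \Lambda(N)}^{0}[\, 0 \leftrightarrow \pd \Lambda(n)\,]$. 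Passing to the limit in the $\eps$-uniform bound gives $\phi_{\calQ, \Lambda(N)}^{0}[\, 0 \leftrightarrow \pd \Lambda(n)\,] \leq C\exp(-2cn)$, which is exactly \eqref{eq:quantum_exp_decay6} after renaming $2c$ as $c$. Since the bound is moreover uniform in $N$, one may then let $N\to\infty$ to obtain the same exponential decay for the infinite-volume quantum measure $\phi_\calQ^{0}$, as used in Theorem~\ref{thm:quantum}.

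I do not expect any genuine obstacle here: all the substance is carried by Corollary~\ref{cor:quantum1_q>4} — itself a direct consequence of Proposition~\ref{prop:quantum_sq_lat_q>4} via the half-plane-to-full-plane argument of Proposition~\ref{prop:exp_decay_half_full} — and by the convergence of Proposition~\ref{prop:Qboundary} (whose proof is the Poisson-limit computation already carried out in the excerpt). The only thing to be careful about is the bookkeeping of the scaling factors: the factor $2$ between track indices in $\bbG^\eps$ and Euclidean coordinates in $\bbZ\times\bbR$, and the $\tfrac1\eps$ relating vertical track-heights to Euclidean heights. One simply has to check that the conventions in the definition of $\Lambda$, in the statement of Proposition~\ref{prop:Qboundary}, and in Corollary~\ref{cor:quantum1_q>4} are mutually consistent, which they are; feeding $2N$, $2n$ (and $\tfrac{2n}{\eps}$ vertical tracks) into the discrete estimate is precisely what lands on $\Lambda(N)$, $\Lambda(n)$ in the limit.
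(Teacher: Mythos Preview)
Your proposal is correct and follows exactly the approach indicated in the paper: the paper states that Corollary~\ref{cor:quantum2_q>4} is a consequence of the uniformity in $\eps$ of the constants in~\eqref{eq:quantum_exp_decay5}, so that one may take $\eps \to 0$ and apply Proposition~\ref{prop:Qboundary}. Your careful handling of the factor $2$ in the scaling (feeding $2N$, $2n$ into Corollary~\ref{cor:quantum1_q>4} to land on $\Lambda(N)$, $\Lambda(n)$ via Proposition~\ref{prop:Qboundary}) is exactly the bookkeeping the paper leaves implicit.
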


Corollary~\ref{cor:quantum1_q>4} is an straightforward adaptation of Proposition~\ref{prop:exp_decay_half_full}.
Corollary~\ref{cor:quantum2_q>4} is a consequence of the fact that the constants in~\eqref{eq:quantum_exp_decay5} are uniform, thus we can take $\eps \rightarrow 0$ and apply Proposition~\ref{prop:Qboundary}.

To conclude, as in Section~\ref{sec:conclusions_q>4}, Corollary~\ref{cor:quantum2_q>4} implies Theorem~\ref{thm:quantum} for $q > 4$.

We will not give more details on the proofs of Corollaries~\ref{cor:quantum1_q>4} and~\ref{cor:quantum2_q>4} and Theorem~\ref{thm:quantum}. For the rest of the section, we focus on showing Proposition~\ref{prop:quantum_sq_lat_q>4}.

\begin{proof} [Proof of Proposition~\ref{prop:quantum_sq_lat_q>4}]
  We follow the idea of the proof of Proposition~\ref{prop:sq_lat_q>4}.

  Fix $\eps > 0$ as in the statement. 
  For $N > n$, let $G_\mix$ be the mixture of $\graph 1 = \bbG^\eps$ and $\graph 2 = \bbZ^2$, as described in Section~\ref{sec:switching_isos}. 
  In this proof, the mixed lattice is only constructed above the base level; 
  it has $2N+1$ vertical tracks $(s_{i})_{-N \leq i \leq N}$ of transverse angle $0$ and $\tfrac{N}{\eps} + N + 2$ horizontal tracks $(t_{j})_{0 \leq j \leq \frac{N}{\eps} + N+1}$, the first $\frac{N}{\eps} + 1$ having alternate angles $\eps$ and $\pi-\eps$ (we call this the irregular block)
  and the following $N+1$ having transverse angle $\frac\pi2$ (we call this the regular block).
  Finally, $G_\mix$ is a convexification of the piece of square lattice described above.
  
  Set $\tilde G_\mix$ to be the result of the inversion of the regular and irregular blocks of $G_\mix$ 
  using the sequence of transformations $\Sigma^{\uparrow}$.	
  Let $\phi_{G_\mix}$ and $\phi_{\tilde G_\mix}$ be the \rcms 
  with free boundary conditions on $G_\mix$ and $\tilde G_\mix$, respectively. 
  The latter is then the push-forward of the former by the sequence of transformations $\Sigma^{\uparrow}$.
  
  Let $\delta_0 \in (0,1)$ be a constant that will be set below; 
  it will be chosen only depending on $q$. 
  Write $\pd_L$, $\pd_R$ and $\pd_T$ for the left, right and top boundaries, respectively, of a rectangular domain $\HReps(.;.)$. 

  Consider a configuration $\omega$ on $G_\mix$ such that $0 \xleftrightarrow{} \pd \HReps(n; \tfrac{n}{\eps})$.
  Then, as in~\eqref{eq:con1}, we have
  \begin{align}
    \phi_{G_\mix} \big[ 0 \leftrightarrow \pd \HReps(n; \tfrac{n}{\eps}) \big]
    \leq & \, \phi_{G_\mix} \big[ 0 \xleftrightarrow{\HReps(n; \delta_0 \frac{n}{\eps})} \pd_L \HReps(n; \delta_0 \tfrac{n}{\eps}) \big] \nonumber \\
    + & \, \phi_{G_\mix} \big[ 0 \xleftrightarrow{\HReps(n; \delta_0 \frac{n}{\eps})} \pd_R \HReps(n; \delta_0 \tfrac{n}{\eps}) \big] \nonumber\\
    + & \, \phi_{G_\mix} \big[ 0 \xleftrightarrow{\HReps(n; \delta_0 \frac{n}{\eps})} \pd_T \HReps(n; \delta_0 \tfrac{n}{\eps}) \big].
    \label{eq:quantum_con1}
  \end{align}
  Moreover, since the graphs $G_\mix$ and $\bbG^\eps$ are identical in $\HReps(N; \frac{N}{\eps})$, we obtain, 
  \begin{align} \label{eq:quantum_GG_mix}
    \phi_{\HReps(N; \frac{N}{\eps})}^{0} \big[ 0 \leftrightarrow \pd \HReps(n; \tfrac{n}{\eps}) \big]
    \leq \phi_{G_\mix} \big[ 0 \leftrightarrow \pd \HReps(n; \tfrac{n}{\eps}) \big],
  \end{align}
  where we use the comparison between boundary conditions. 
  
  In conclusion, in order to obtain~\eqref{eq:quantum_exp_decay4} it suffices to prove that the three probabilities of the right-hand side of~\eqref{eq:quantum_con1} are bounded by an expression of the form $C e^{-cn}$, where the constants $C$ and $c$ depend only on $q$. 
  We concentrate on this from now on. 
  
  \medbreak

  Let us start with the last line of~\eqref{eq:quantum_con1}.
  Recall Proposition~\ref{prop:Gepsilon_v}; a straightforward adaptation reads:
  \medskip
  
  \noindent {\textbf{Adaptation of Proposition~\ref{prop:Gepsilon_v}.}}
  {\em 
    There exist $\tau > 0$ and $c_n >0$ satisfying $c_n\to 1$ as $n \to \infty$ such that, for all $n$ and sizes $N \geq 4n$, 
    \begin{align}
      \phi_{\tilde G_\mix} \big[ 0 \xleftrightarrow{\HReps(4n; \delta_0 \tau n)} \pd_T \HReps(4n; \delta_0 \tau n) \big]
      \geq c_n \phi_{G_\mix} \big[ 0 \xleftrightarrow{\HReps(n; \delta_0 \frac{n}{\eps} )} \pd_T \HReps(n; \delta_0 \tfrac{n}{\eps} ) \big].
      \label{eq:quantum_Gepsilon_v}
    \end{align}
  }
  
  Indeed, the proof of the above is identical to that of Proposition~\ref{prop:Gepsilon_v} with the only difference that the position of the two graphs are switched, thus the factor $\eps^{-1}$ becomes $\eps$. 
  The constant $\tau$ and the sequence $(c_n)_n$ only depend on $q$. 
  
  Observe that, in $\tilde G_\mix$, the domain $\HR(N; N)$ is fully contained in the regular block and contains $\HR(4n; \delta_0 \tau n )$ if $\delta_0 \tau \leq 1$. 
  Thus, by comparison between boundary conditions, 
  \begin{align*}
    & \phi_{\tilde G_\mix} \big[ 0 \xleftrightarrow{\HR(4n; \delta_0 \tau n )} \pd_T \HR(4n; \delta_0 \tau n ) \big] \\
    & \qquad \leq \phi_{\HR(N; N)}^{1/0} \big[ 0 \xleftrightarrow{\HR(4n; \delta_0 \tau n )} \pd_T \HR(4n; \delta_0 \tau n ) \big] \\
    & \qquad \leq \phi_{\HR(N; N)}^{1/0} \big[ 0 \xleftrightarrow{} \pd \sq(\delta_0 \tau n) \big]
    \leq C_0 \exp(-c_0 \delta_0 \tau n).
  \end{align*}
  where $\HR(N; N)$ is the subgraph of $\bbZ^2$ and the last inequality is given by Proposition~\ref{prop:input_exp}.
  Note that $c_0$ and $\tau$ depend only on $q$.
  Thus, from~\eqref{eq:quantum_Gepsilon_v} and the above we obtain,
  \begin{align}
    \phi_{G_\mix} \big[ 0 \xleftrightarrow{\HR(4n; \delta_0 \tau n)} \pd_T \HR(4n; \delta_0 \tau n)  \big] 
    \leq \frac{1}{c_n} C_0 \exp(-c_0 \delta_0 \tau n).
    \label{eq:quantum_con2}
  \end{align}
  For $n$ large enough, we have $c_n > 1/2$, and the left-hand side of~\eqref{eq:quantum_con2} is smaller than $2C_0\exp(-c_0\delta_0 \tau n)$.
  Since the threshold for $n$ and the constants $c_0, \tau$ and $\delta_0$ only depend on $q$, 
  the bound is of the required form. 
  
  \medbreak
  
  We now focus on bounding the probabilities of connection to the left and right boundaries of $\HReps (n; \delta_0 \tfrac{n}{\eps})$. 

  Observe that, for a configuration such that the event $\{ 0 \xleftrightarrow{\HReps(n; \delta_0 \frac{n}{\eps})} \pd_R \HReps(n; \delta_0 \tfrac{n}{\eps} ) \}$ occurs,
  it suffices to change the state of at most $\delta_0 \tfrac{n}{\eps}$ edges to connect $0$ to the vertex $x_{0,n}$ 
  (we will assume here $n$ to be even, otherwise $x_{0,n}$ should be replaced by $x_{0,n+1}$).
  Moreover, these edges can be chosen to be vertical ones in the irregular block, thus they are all ``short'' edges with subtended angle $\eps$.
  By the finite-energy property, there exists a constant $\tau = \tau(\eps, q) \in (0, 1)$ such that 
  \begin{align*}
    \phi_{G_\mix} \big[ 0 \xleftrightarrow{\HReps(n; \delta_0 \frac{n}{\eps})} \pd_R \HReps(n; \delta_0 \tfrac{n}{\eps}) \big]
    \leq \tau^{- \delta_0 \tfrac{n}{\eps} } \phi_{G_\mix} \big[ 0 \leftrightarrow x_{0,n} \big],
  \end{align*}
  where $\tau$ can be estimated as follows,
  \begin{align*}
    \tau = \frac{p_\eps}{ p_\eps + (1 - p_\eps) q}
    = \frac{y_\eps}{ q + y_\eps} \geq 1 -c_1 \eps,
  \end{align*}
  where $c_1 > 0$ is a constant depending only on $q$.

  The points $0$ and $x_{0,n}$ are not affected by the transformations in $\Sigma^{\uparrow}$, therefore 
  \begin{align*}
    \phi_{G_\mix} \big[ 0 \leftrightarrow x_{0,n} \big]
    & = \phi_{\tilde G_\mix} \big[ 0 \leftrightarrow x_{0,n} \big] \\
    & \leq \phi_{\tilde G_\mix} \big[ 0 \leftrightarrow \pd \sq(n) \big] \\
    & \leq \phi_{\HR(N; N)}^{1/0} \big[ 0 \leftrightarrow \pd \sq(n) \big] \\
    & \leq C_0 \exp(-c_0 n),
  \end{align*}
  where in the last line, $\HR(N; N)$ is the subgraph of $\tilde G_\mix$, or equivalently of $\bbZ^2$ since these two are identical. 
  The last inequality is given by Proposition~\ref{prop:input_exp}.
  We conclude that, 
  \begin{align}
    \phi_{G_\mix} \big[ 0 \xleftrightarrow{\HReps(n; \delta_0 \frac{n}{\eps})} \pd_R \HReps(n; \delta_0 \tfrac{n}{\eps}) \big]
    &\leq C_0 \exp \big[ -(c_0 + \tfrac{\delta_0}{\eps} \log \tau) n \big]  \nonumber \\
    &\leq C_0 \exp \big[ -(c_0 + \tfrac{\delta_0}{\eps} \log (1 -c_1 \eps) ) n \big] . \label{eq:quantum_con3}
  \end{align}
  The same procedure also applies to the event $\big\{ 0 \xleftrightarrow{\HReps(n; \delta_0 \frac{n}{\eps})} \pd_L \HReps(n; \delta_0 \tfrac{n}{\eps}) \big\}$.
  
  Now let $\delta_1 = \frac{c_0 \eps}{c_0 \tau \eps - \log (1 - c_1 \eps) }$ and $\delta_0 = \min \{ \delta_1, \frac12 \}$.
  Notice that $\delta_1 \rightarrow \frac{c_0}{c_0 \tau + c_1} > 0$ when $\eps \to 0$, which gives the following relation,
  \begin{align*}
    c_0 + \frac{\delta_0}{\eps} \log (1 - c_1 \eps)
    \geq c_0 + \frac{\delta_1}{\eps} \log (1 - c_1 \eps)
    = c_0 \delta_1 \tau
    \longrightarrow \frac{c_0^2 \tau}{c_0 \tau + c_1} > 0,
  \end{align*}
  as $\eps \rightarrow 0$.
  Thus, for $\eps$ small enough, we can pick a uniform constant $\delta_0$ such that
  \begin{align*}
    c_0 + \frac{\delta_0}{\eps} \log(1 -c_1 \eps) \geq \frac{1}{2} c_0 \delta_1 \tau =: c.
  \end{align*}
  Then, Equations~\eqref{eq:quantum_con1}, \eqref{eq:quantum_con2} and~\eqref{eq:quantum_con3} 
  imply that for $n$ larger than some threshold depending only on $q$, 
  \begin{align*}
    \phi_{G_\mix} \big[ 0 \leftrightarrow \pd \Lambda(n) \big]
    \leq 4 C_0 \exp(-cn).
  \end{align*}
  Finally, by~\eqref{eq:quantum_GG_mix}, we deduce~\eqref{eq:quantum_exp_decay4} for all $N \geq 4n$ and $n$ large enough. 
  The condition on $n$ may be removed by adjusting the constant $C$. 
\end{proof}

\newpage

\bibliographystyle{acm}
\bibliography{fk}

\end{document}